\documentclass[reqno,11pt]{amsart}

\usepackage[top=2.0cm,bottom=2.0cm,left=3cm,right=3cm]{geometry}
\usepackage{amsthm,amsmath,amssymb,dsfont}
\usepackage{mathrsfs,amsfonts,functan,extarrows,mathtools}
\usepackage[colorlinks]{hyperref}

\usepackage[utf8]{inputenc}
\usepackage[T1]{fontenc}

\usepackage{marginnote}
\usepackage{xcolor}
\usepackage{stmaryrd,bm}
\usepackage{esint}
\usepackage{graphicx}
\usepackage[english]{babel}

\newtheorem{theorem}{Theorem}[section]
\newtheorem{proposition}{Proposition}[section]

\newtheorem{remark}{Remark}[section]
\newtheorem{lemma}{Lemma}[section]

\numberwithin{equation}{section}
\allowdisplaybreaks

\def\p{\partial}

\def\d{\mathrm{d}}
\def\no{\nonumber}
\def\R{\mathbb{R}}
\def\eps{\epsilon}

\def\w{\mathfrak{w}}

\def\Id{\mathcal{I}}
\def\tr{\mathrm{tr}}

\def\eff{\mathrm{eff}}

\newcounter{wronumber}

\newcommand{\skpa}[2]{\left\langle #1,\, #2 \right\rangle}

\newcommand{\skpt}[2]{\langle #1,\, #2 \rangle}

\begin{document}
\title[Compressible NSLLG system]{Compressible Navier-Stokes-Landau-Lifshitz-Gilbert system: derivations and well-posedness}

\author[B. L. Guo]{Boling Guo}
\address[Boling Guo]{\newline   Institute of Applied Physics and Computational Mathematics in Beijing, P.O. Box 8009, Beijing
	100088, China}
\email{gbl@iapcm.ac.cn}

\author[N. Jiang]{Ning Jiang}
\address[Ning Jiang]{\newline School of Mathematics and Statistics, Wuhan University, Wuhan, 430072, P. R. China}
\email{njiang@whu.edu.cn}

\author[H. Liu]{Hui Liu}
\address[Hui Liu]
{\newline School of Finance and Mathematics, Huainan Normal University, Huainan 232038, P. R. China}
\email{huiliu@hnnu.edu.cn}

\author[Y.-L. Luo]{Yi-Long Luo}
\address[Yi-Long Luo]
{\newline School of Mathematics, Hunan University, Changsha 410082, P. R. China}
\email{luoylmath@hnu.edu.cn}

\author[T.-F. Zhang]{Teng-Fei Zhang}
\address[Teng-Fei Zhang]{\newline School of Mathematics and Physics, China University of Geosciences, Wuhan, 430074, P. R. China}
\email{zhangf@cug.edu.cn}

\thanks{\today}

\maketitle

\begin{abstract}
 In this paper, we first derive the compressible Navier-Stokes/Landau-Lifshitz-Gilbert (NS-LLG) model for magnetoelastic materials via the energetic variational approach (EnVarA). It is important to emphasize that the manner in which the evolution of magnetoelastic materials is influenced by the fluid motion--specifically through the deformation gradient--determines the kinematics of the magnetization and consequently leads to distinct governing equations. Subsequently, we establish the local-in-time existence of solutions to the compressible NS-LLG system under finite initial energy. Finally, near the constant equilibrium for magnetoelasticity in the absence of an external magnetic field, we reformulate the evolutionary model, which allows an additional dissipative term to be identified from the elastic stress. Based on this reformulation, we justify the global well-posedness of the evolutionary magnetoelasticity system with zero external magnetic field, provided the initial data are sufficiently small. In particular, when the magnetic field $M$ vanishes, this model reduces to the viscoelastic model. Our results significantly relax the previous initial data requirements, only assume the most basic structural condition $\rho_{0} \operatorname{det} F_{0} = 1$.\\

  \noindent\textsc{Keywords.} Compressible Navier-Stokes/Landau-Lifshitz-Gilbert model; Magnetoelasticity; Energetic Variational Approach; Global classical solutions \\

  \noindent\textsc{AMS subject classifications.} 35B45, 35B65, 35Q35, 76D03, 76D09, 76D10
\end{abstract}

\maketitle
\section{Introduction and main results} 
\label{sec:the_target_model}



\subsection{A magnetoelastic model: the compressible Navier-Stokes-Landau-Lifshitz-Gilbert system}

The phenomenon of magnetoelasticity, describing the interaction between elastic and magnetic effects, was identified as early as the 1960s (see \cite{ref3}). In a typical manifestation, applying a magnetizing field to a ferromagnetic rod alters not only its magnetization but also its length. Conversely, subjecting the rod to tension changes both its length and its magnetization. Early constitutive modeling of such materials was developed by Brown \cite{ref3} and Tiersten \cite{ref31,ref32}. From an analytical perspective, much of the existing work focuses on static configurations via energy minimization \cite{ref7,ref8,ref14}. For dynamic, rate-independent evolution, the framework of energetic solutions has been employed \cite{ref17,ref26}. In micromagnetics, the dynamics is typically governed by the Landau--Lifshitz--Gilbert (LLG) equation \cite{ref12,ref13,ref18}, whose analytical theory is well-developed (see, e.g., \cite{ref1,ref5,ref24,ref25}). Coupling the LLG equation with elasticity in the small-strain regime has also been studied \cite{ref4,ref6}. Very recently, a thermodynamic model for viscoelastic deformable magnetic materials was proposed in \cite{ref34}. That model incorporates the Landau theory for ferro-para-magnetic phase transition, a gradient theory for magnetization with a mechanically dependent coefficient, hysteresis in the magnetization evolution via an objective corotational time derivative in the LLG equation, and the effects of the demagnetizing field.

We begin by deriving a compressible Navier-Stokes/Landau-Lifshitz-Gilbert system using the energetic variational approach (EnVarA). The resulting system of equations is as follows:
\begin{align}\label{CMHED-O}
\begin{cases}
  \p_t \rho + \nabla \cdot (\rho v) = 0 \,, \\
  \begin{aligned}
    \rho (\p_t v + v \cdot \nabla v) +\; & \nabla \left( P (\rho) - A |\nabla M|^2 + \mu_0 M \cdot H_{ext} \right) 
    \\[5pt] & 
    = \nabla \cdot \left( \Sigma (v) - 2 A \nabla M \odot \nabla M + \tfrac{W'(F)F^\top}{\det F} \right) + \mu_0 (\nabla H_{ext})^\top M \,, 
  \end{aligned} \\
  \partial_t F + (v \cdot \nabla) F = \nabla v F \,, \\
  \p_t M + v \cdot \nabla M = - \gamma M \times (2 A \Delta M + \mu_0 H_{ext}) - \lambda M \times M \times (2 A \Delta M + \mu_0 H_{ext}) \,, \\ 
  |M(x,t)| = 1 \,. 
\end{cases}
\end{align}
for $(t,x) \in (0, T) \times \Omega $. Here $\Sigma (v) = \mu (\nabla v + \nabla^T v) + \xi (\nabla \cdot v) I$ denotes the viscosity dissipation tensor. The scalar function $W (F)$ represents the elastic energy, with $W' (F)$ being the Piola-Kirchhoff tensor; consequently, $\frac{W' (F) F^\top}{\det F}$ is the corresponding Cauchy-Green stress tensor. The exterior magnetic field $H_{ext} (t,x)$ is a prescribed vector-valued function. The material motion is described by the map $x = x (X, t)$, defined as the solution to
\begin{equation}\label{FM-x}
  \begin{aligned}
    \frac{\d x}{\d t} = v (x(X,t),t), \quad x (X,0)=X.
  \end{aligned}
\end{equation}
Associated to the flow, we introduce the deformation gradient tensor $F$ (the Jacobian of the flow map):
  \begin{align}
    F(X,t)= \frac{\p x(X,t)}{\p X}, \quad \text{or, in components},\ F_{ij} = \frac{\p x_i}{\p X_j}.
  \end{align}
For compressible flows, the relation $\rho \det F=1$ holds, which reduces to $\det F=1$ in incompressible case. We consider a polytropic (or $\gamma$-law) pressure for the compressible fluid, i.e., $P(\rho) = a \rho^\gamma $ with $\gamma >1$, which corresponds to the internal energy density $$w(\rho) = \frac{a}{\gamma -1} \rho^\gamma .$$
The physical parameters in \eqref{CMHED-O} satisfy 
    \begin{equation*}
        \begin{aligned}
            a > 0, \ \gamma > 1, \ A > 0, \ \lambda > 0, \ \mu > 0, \ \mu + \xi > 0 \,.
        \end{aligned}
    \end{equation*}

The compressible system \eqref{CMEH-eq} is derived by applying the Energetic Variational Approach (EnVarA) developed by C. Liu and collaborators; see, e.g., \cite{GKL-18notes,WL-22entropy} for a comprehensive exposition. Following this framework, we define the total energy as the sum of the kinetic energy $\mathcal{K}$ and the Helmholtz free energy $\mathcal{F}$: 
\begin{align}
  \mathcal{E} = \mathcal{K} + \mathcal{F}, 
\end{align}
where 
\begin{align}
  \label{Def-K}\mathcal{K} & = \int_{\Omega^t} \tfrac{1}{2} \rho |v|^2 \d x, \\
  \label{Def-F} \mathcal{F} & = \int_{\Omega^t} \left[ w(\rho) + A|\nabla M|^2 + \frac{W(F)}{\det F} - \mu_0 M \cdot H_{ext} \right] \d x \,. 
\end{align}
Here, $\Omega^t$ denotes the current configuration. The dissipation in the system is characterized by the functional
\begin{align}\label{eq:dissipation-u-M}
  \no \mathcal{D} & = \mathcal{D}(v) + \mathcal{D}(M) \\
  \mathcal{D}(v) & = \int_{\Omega} \left[ \mu |\nabla v|^2 + (\mu + \xi) |\nabla \cdot v|^2 \right]  \d x \,, \\
   \no \mathcal{D}(M) & = \int_{\Omega} \lambda |M \times (2 A \Delta M + \mu_0 H_{ext})|^2 \d x \,.  
\end{align}
A central kinematic assumption, following Bene$\check{\mathrm{s}}$ov\'{a}-Forster-Liu-Schl\"{o}merkemper \cite{BFLS-2018-SIAM}, is that the magnetization satisfies the Lagrangian condition $M(x(X,t),\ t) = M(X,t)$. Physically,  this means the magnetization vector is tied to material points and is transported with the flow without being stretched or rotated independently of the deformation (see \cite[Remark 2.1]{BFLS-2018-SIAM}). This viewpoint differs from that in, e.g., Forster's thesis \cite{Forster-16thesis}. Under this assumption, the application of the EnVarA yields the system \eqref{CMHED-O}. The detailed derivation is provided in Section \ref{Sec:EnVarA}.

When the elastic energy density is given by the Hookean linear elasticity, i.e., $W (F) = \frac{1}{2} |F|^2$, the system \eqref{CMHED-O} reduces to
\begin{align}\label{CMEH-eq}
	\begin{cases}
		\p_t \rho + \nabla \cdot (\rho v) = 0 \,, \\
		\begin{aligned}
			& \rho (\p_t v +\; v \cdot \nabla v) + \nabla \left( P(\rho) - A |\nabla M|^2 + \mu_0 M \cdot H_{ext} \right) 
			\\[5pt] & 
			=  \mu\Delta v+(\mu+\xi)\nabla(\nabla \cdot v)+\nabla\cdot(\rho F F^\top)-2A \nabla \cdot ( \nabla M \odot \nabla M) + \mu_0 (\nabla H_{ext})^\top M \,, 
		\end{aligned} \\
		\partial_t F + (v \cdot \nabla) F = \nabla v F \,, \\
		\p_t M + v \cdot \nabla M = 2A\lambda \Delta M+\lambda \mu_0 H_{ext}+\Gamma(M)M-\gamma M \times(2A \Delta M+\mu_0 H_{ext})
		\,, \\ 
		|M(x,t)| = 1 \,,
	\end{cases}
\end{align}
where the Lagrangian multiplier is $\Gamma(M)=2\lambda A|\nabla M|^2-\lambda \mu_0 M\cdot H_{ext}$. The reformulation of the magnetization evolution equation follows the standard procedure; see, for instance, \cite{JLL-JDE-2023}.

The objective of this paper is to study the system \eqref{CMEH-eq} on the spatial domain $\Omega = \mathbb{T}^3$, subject to the initial conditions
\begin{equation}\label{IC-1}
  \begin{aligned}
    ( \rho, v, F, M ) (0, x) = (\rho_0, v_0, F_0, M_0) (x)
  \end{aligned}
\end{equation}
with $|M_0| = 1$.

\subsection{Main results}

Before stating the main results, we introduce the notation used throughout the paper. We denote by $ L^{p}(\Omega) $ the standard Lebesgue space, abbreviated as $ L^{p} $ for $ p\in[1,\infty] $. Its norm is given by 
$$ \|f\|_{L^{p}}=\left(\int_{\Omega}|f|^{p}\mathrm{d}x\right)^{\frac{1}{p}} ( 1 \leq p < \infty) \,,  \quad  \|f\|_{L^{\infty}}=\operatorname*{ess\, sup}_{x\in \Omega}|f(x)| \,. $$ 
For $ p=2 $, the inner product on $ L^{2} $ is written as $ \langle\cdot,\cdot\rangle $. The unit sphere in $\mathbb{R}^3$ is denoted by $ \mathbb{S}^{2}=\{v \in\mathbb{R}^3;|v |=1\} $. The spatial gradient and divergence are denoted by $ \nabla $ and $ \nabla\cdot $, respectively. We write $ \partial_{i} = \partial_{x_{i}} $ for $ i=1,2,3 $, and the Laplacian is $ \Delta=\partial_{i}\partial_{i} $ (summation convention used). For a matrix-valued function $ G=(G^{ij})_{1\leq i,j\leq 3} $, its divergence is defined componentwise by $ (\nabla\cdot G)_{i}=\partial_{j}G^{ji} $ for $ 1\leq i\leq 3 $. 

Given a multi-index $ m=(m_{1},m_{2}, m_{3})\in\mathbb{N}^{3} $, we set $ partial^{m}=\partial_{x_{1}}^{m_{1}}\partial_{x_{2}}^{m_{2}} \partial_{x_{3}}^{m_{3}} $ and $|m|=m_{1}+m_{2}+ m_{3}$. For two multi-indices $m, \widetilde{m}$, the notation $ m \leq \widetilde{m} $ means $m_i \leq \widetilde{m}_i$ for all $i$, the strict inequality $ m<\widetilde{m} $ means $ m\leq\widetilde{m} $ and $ |m|<|\widetilde{m}| $. 

The Sobolev space $ W^{s,p}(\Omega) $ (abbreviated $W^{s,p}$) is equipped with the norm
$$ \|f\|_{W^{s,p}}=\left(\sum_{|m|\leq s}\int_{\Omega}|\partial^{m}f(x)|^{p}\mathrm{d}x\right)^{\frac{1}{p}}. $$
When $ p=2 $, we write $ H^{s}(\Omega) =W^{s,2} (\Omega) $. For a given positive weight function $\rho: \Omega \to \R_+$, the weighted spaces $L^p_\rho$, $W^{s,p}_\rho$ and $H^s_\rho$ are defined analogously by replacing the Lebesgue measure $\d x$ with the weighted measure $\rho (x) \d x$ in the corresponding norms. 

For any integer $s \in \mathbb{N}$, we define the following local energy functional $\mathcal{E}_s (t)$ and local dissipation-rate functional $\mathcal{D}_s (t)$:
\begin{equation}\label{EDs-loc}
	\begin{aligned}
		\mathcal{E}_s (t) & := \| \rho \|^2_{H^s} +  \| F \|^2_{H^s} + \| v \|^2_{H^s_\rho} +\|\nabla M\|^2_{H^s}\,, \\
		\mathcal{D}_s (t) & := \mu \| \nabla v \|^2_{H^s} + (\mu+\xi)\|\nabla\cdot v\| ^{2}_{H^{s}}+2\lambda A\|\Delta M\|^2_{H^s}  \,.
	\end{aligned}
\end{equation}
Correspondingly, the initial local energy is
\begin{equation}\label{E_s-in}
	\begin{aligned}
		\mathcal{E}_s^{in} & := \| \rho_0 \|^2_{H^s} +  \| F_0 \|^2_{H^s} + \| v_0 \|^2_{H^s_{\rho_0}} +\|\nabla M_0 \|^2_{H^s} \,.
	\end{aligned}
\end{equation}
With these preliminaries, we can now state our first main theorem.

\begin{theorem}[Local well-posedness]\label{Thm1}
    Let $s \geq 3$ be an integer. Suppose the initial data $(\rho_0, v_0, F_0, M_0)$ in \eqref{IC-1} satisfy
    $$ \mathcal{E}_s^{in} < \infty, \quad \inf_{x \in \mathbb{T}^3} \rho_0 (x) > 0 \,. $$
    Assume further that the external magnetic field satisfies $H_{ext} \in L^\infty (0, T_0; H^s)$ for some $T_0 > 0$. Then there exists a time $T \in ( 0, T_0 ]$ such that the Cauchy problem \eqref{CMEH-eq}-\eqref{IC-1} admits a unique solution $(\rho, v, F, M) $ on $[0,T] \times \mathbb{T}^3$ with the following properties:
    \begin{equation*}
        \begin{aligned}
        & |M (t,x)| = 1, \quad \inf_{(t,x) \in [0, T] \times \mathbb{T}^3} \rho(t,x) > 0,  \\
        & \rho, F, v, \nabla M \in L^\infty (0, T; H^s (\mathbb{T}^3)), \nabla v, \Delta M \in L^2 (0, T; H^s (\mathbb{T}^3)) \,.
        \end{aligned}
    \end{equation*}
    Moreover, the solution satisfies the energy estimate
    \begin{equation*}
        \begin{aligned}
            \sup_{t \in [0, T]} \mathcal{E}_s (t) + \int_0^T \mathcal{D}_s (t) d t \leq C
        \end{aligned}
    \end{equation*}
    where the constant $C > 0$ depends on $s$, $T$ and $\mathcal{E}_s^{in}$.
\end{theorem}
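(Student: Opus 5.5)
We construct the solution by a standard iteration (or Galerkin/mollification) scheme, close uniform a priori bounds in the energy $\mathcal{E}_s$ on a short time interval, and pass to the limit. Uniqueness follows from an $L^2$-type stability estimate.

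\textbf{Iteration scheme.} Starting from $(\rho^0,v^0,F^0,M^0)=(\rho_0,v_0,F_0,M_0)$, we define $(\rho^{n+1},v^{n+1},F^{n+1},M^{n+1})$ by solving four linear problems.
\begin{itemize}
\item The transport equations for $\rho^{n+1}$ and $F^{n+1}$ with velocity $v^n$ and source $\nabla v^n F^{n+1}$.
\item The linear parabolic (Lam\'e) equation for $v^{n+1}$, with coefficient $\rho^n$ in front of $\partial_t v^{n+1}$ and all coupling forces evaluated at step $n$.
\item The linear parabolic equation for $M^{n+1}$, whose principal part is $2A\lambda\Delta M^{n+1}$ and whose lower-order and gyromagnetic terms are frozen at step $n$.
\end{itemize}

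The gyromagnetic term $-\gamma M\times 2A\Delta M$ is skew, so it does not spoil parabolicity. We write it as $-2A\gamma M^n\times\Delta M^{n+1}$. This is harmless in $H^s$ energy estimates, because $\langle \partial^m(M^n\times\Delta M^{n+1}),\partial^m\Delta M^{n+1}\rangle$ reduces to commutator terms.

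The constraint $|M|=1$ is not preserved by the iterates. We recover it only for the limit: setting $\phi=|M|^2-1$, the limit equation gives a linear parabolic equation
\[
\partial_t\phi+v\cdot\nabla\phi=2A\lambda\Delta\phi+4A\lambda|\nabla M|^2\phi-2\lambda\mu_0(M\cdot H_{ext})\phi,
\]
with $\phi(0)=0$. Hence $\phi\equiv0$ by an $L^2$ Gronwall argument. This works because the $\Gamma(M)M$ term was designed so that this identity holds.

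\textbf{Uniform bounds.} We show that there are $T$ and $K$ with $\sup_t\mathcal{E}_s^n+\int_0^T\mathcal{D}_s^n\le K$ and $\rho^n\ge \frac12\inf\rho_0$ for all $n$. The ingredients are as follows.
\begin{itemize}
\item \emph{Transport parts.} We apply $\partial^m$ for $|m|\le s$, use the Kato--Ponce commutator estimates and $H^{s}\hookrightarrow W^{1,\infty}$ (valid since $s\ge3$), and obtain $\frac{d}{dt}\|(\rho,F)\|_{H^s}^2\lesssim (1+\|\nabla v\|_{H^s})\|(\rho,F)\|_{H^s}^2$.
\item \emph{Velocity.} We test with $\partial^m v$. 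The weight $\rho$ is handled through the continuity equation, giving the $H^s_\rho$ norm. The term $\nabla\cdot(\rho FF^\top)$ is integrated by parts onto $\nabla\partial^m v$ and absorbed into $\mu\|\nabla v\|_{H^s}^2$ by Young's inequality, so it costs only $\|\rho FF^\top\|_{H^s}^2$. The magnetic stresses $\nabla|\nabla M|^2$ and $\nabla\cdot(\nabla M\odot\nabla M)$ are treated the same way. They produce $\|\nabla M\|_{H^s}^4$, or, after integrating by parts, $\|\nabla M\|_{H^s}\|\Delta M\|_{H^s}\|\nabla v\|_{H^{s}}$, which is absorbed into the dissipation.
\item \emph{Magnetization.} For $\nabla M$ in $H^s$ we test the $M$ equation with $-\partial^m\Delta M$. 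This gives $2\lambda A\|\Delta M\|_{H^s}^2$. The terms $\Gamma(M)M$ and $v\cdot\nabla M$ contain at most $s+1$ derivatives of $M$ and $s$ derivatives of $v$ in $L^2$, so they are controlled by $\epsilon\|\Delta M\|_{H^s}^2+C_\epsilon P(\mathcal{E}_s)(1+\|\nabla v\|_{H^s})$.
\end{itemize}
Adding these gives
\[
\frac{d}{dt}\mathcal{E}_s+\mathcal{D}_s\le C(1+\mathcal{E}_s)^N\bigl(1+\|H_{ext}\|_{H^s}^2\bigr).
\]
An ODE comparison then yields the bound on a short time $T$. The lower bound on $\rho$ follows from the Lagrangian formula $\rho=\rho_0\exp(-\int\nabla\cdot v)$, using $\int_0^T\|\nabla\cdot v\|_{L^\infty}\le T^{1/2}(\int\mathcal{D}_s)^{1/2}$.

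\textbf{Passage to the limit and uniqueness.} We prove contraction of the differences in the lower norm $\|\delta\rho,\delta F,\delta v\|_{L^2}+\|\delta M\|_{H^1}$ (with dissipation $\|\nabla\delta v\|_{L^2}^2+\|\Delta\delta M\|_{L^2}^2$), for $T$ possibly smaller. Interpolation with the uniform $H^s$ bounds and weak-$*$ compactness then give a limit in the stated class, and lower semicontinuity gives the energy estimate. The same difference estimate yields uniqueness.

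\textbf{Main obstacle.} The hardest step is the top-order estimate for $\nabla M$. It must handle the quasilinear structure: the cubic term $|\nabla M|^2M$ at order $s+1$ and the gyromagnetic term $M\times\Delta M$. At the difference level the latter loses a derivative unless we exploit $\langle M\times\Delta\delta M,\Delta\delta M\rangle=0$. We will therefore estimate $\delta M$ in $H^1$ rather than $L^2$, testing with $-\Delta\delta M$ so that this cancellation is available. The same cancellation, at top order, underlies the uniform bound.
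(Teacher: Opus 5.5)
Your proposal is correct in outline, but it handles the constraint $|M|=1$ differently from the paper.

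\textbf{The paper's route.} The scheme \eqref{Ite-App-Eq} is \emph{nonlinear} in the magnetization. Each $M^{n+1}$ solves the full LLG equation transported by $v^n$, including $\Gamma(M^{n+1})M^{n+1}$ and $M^{n+1}\times\Delta M^{n+1}$. So every iterate stays on $\mathbb{S}^2$, and the a priori bounds of Proposition~\ref{Prop-Loc} carry over verbatim; those bounds use $|M|=1$ freely and keep only $\|\nabla M\|_{H^s}$ in $\mathcal{E}_s$. The paper then runs a continuity argument on \eqref{Ener-Bnds-Local} and leaves convergence and uniqueness to the literature.

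\textbf{Your route.} You linearize completely, with principal part $2A\lambda\Delta M^{n+1}-2A\gamma M^n\times\Delta M^{n+1}$. This is still parabolic because its symmetric part is $2A\lambda I$. You recover the constraint only in the limit, and your identity for $\phi=|M|^2-1$ is correct. The term $-4A\lambda|\nabla M|^2$ from $2M\cdot\Delta M$ and the term $2\lambda\mu_0 M\cdot H_{ext}$ from the forcing combine with $2\Gamma(M)|M|^2$ to leave exactly $4A\lambda|\nabla M|^2\phi-2\lambda\mu_0(M\cdot H_{ext})\phi$.

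\textbf{What your route buys.} Each step needs only linear transport and linear parabolic theory. The paper's scheme instead needs a local theory for the nonlinear transported LLG equation, with existence time uniform in $n$. Your $L^2\times H^1$ difference estimate also supplies the convergence and uniqueness that the paper only cites. It uses the pointwise cancellation $(M^n\times\Delta\delta M)\cdot\Delta\delta M=0$, and the $\sup_t\|\nabla\delta M\|_{L^2}$ control gives a $T^{1/2}$ factor against the magnetic stress.

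\textbf{What it costs.} Since $|M^n|\neq 1$, $\mathcal{E}_s$ no longer controls $\|M^n\|_{L^\infty}$ on $\mathbb{T}^3$, because the mean of $M^n$ is free. You still need that control for $\Gamma(M^n)M^n$, $M^n\times\Delta M^{n+1}$, $\nabla(M^n\cdot H_{ext})$ and $(\nabla H_{ext})^\top M^n$. So you must add $\|M^n\|_{L^2}^2$ to the iterated energy; its estimate from the linear $M$-equation is routine.

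\textbf{Two smaller points.} First, in the weighted $v$-estimate the commutators $\partial^{m'}\rho\,\partial^{m-m'}\partial_t v$ need a bound on $\|\partial_t v\|_{H^{s-1}}$ read off from the equation, as in Lemma~\ref{Lmm-dtv-loc}; alternatively, divide by $\rho$ before differentiating. Second, of your two options for the magnetic stresses, only the Young-inequality bound costing $\|\nabla M\|_{H^s}^4$ works. The other, $\|\nabla M\|_{H^s}\|\Delta M\|_{H^s}\|\nabla v\|_{H^s}$, is energy times dissipation and cannot be absorbed for large data.
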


We now consider the global well-posedness of \eqref{CMEH-eq} near the constant equilibrium $(1, 0, I, M_e) \in \mathbb{R} \times \mathbb{R}^3 \times \mathbb{R}^{3 \times 3} \times \mathbb{S}^2$ in the absence of an external magnetic field, i.e., $H_{ext} = 0$, on the spatial domain $x \in \mathbb{T}^3$. Introducing the fluctuation variables
\begin{equation*}
    \begin{aligned}
        \rho=1+\theta \,, \quad M=d+M_e \,, \quad v=u \,,
    \end{aligned}
\end{equation*}
and using the constraint $|M|= |M_e|=1$, which implies $|d|^2+2 M_e\cdot d=0$, we obtain from \eqref{CMEH-eq} the following system for $(\theta,u,F,d)$:
\begin{align}\label{CMEH-eqG}
	\begin{cases}
		\p_t \theta +u\cdot\nabla\theta+(1+\theta)\nabla\cdot u = 0 \,, \\
		\begin{aligned}
			(1&+\theta)(\p_t u+u\cdot\nabla u) + \nabla \left( P(1+\theta) - A |\nabla d|^2 \right) 
			\\[5pt] & 
			=  \mu\Delta u+(\mu+\xi)\nabla(\nabla \cdot u)+\nabla\cdot[(1+\theta)FF^\top]-2A \nabla \cdot ( \nabla d \odot \nabla d)  \,, 
		\end{aligned} \\
		\partial_t F + (u \cdot \nabla) F = \nabla u F \,, \\
		\p_t d + u\cdot \nabla d = 2A\lambda \Delta d+2A\lambda|\nabla d|^2(d+M_e)-2A\gamma(d+M_e)\times \Delta d
		\,, \\ 
	|d|^2+2M_e\cdot d=0 \,. 
	\end{cases}
\end{align}
Define the global energy functional
\begin{equation}\label{close-eq5}
\begin{aligned}
\mathbf{E}_s(t)=\|\theta\|^2_{H^s}+\|u\|^2_{H^s}+ |\bar{d} |^2+\|d-\bar{d}\|^2_{H^s}+\|\nabla d\|^2_{H^s}+\|F^{-1} - I\|^2_{H^s}\,,
\end{aligned}
\end{equation}
and the corresponding global dissipation-rate functional
\begin{equation}\label{close-eq6}
\begin{aligned}
\mathbf{D}_s(t)=\|\nabla u\|^2_{H^s}+\|\nabla\theta\|_{H^{s-1}}+\|F^{-1} - I|^2_{H^s}+\|\nabla d \|^2_{H^s}+\|\Delta d \|^2_{H^s}+\|\nabla \cdot u \|^2_{H^s}\,.
\end{aligned}
\end{equation}
The associated initial energy is
\begin{equation}\label{Esin-g}
    \begin{aligned}
        \mathbf{E}_s^{in} = \|\rho_0 - 1\|^2_{H^s}+\|v_0 \|^2_{H^s}+ | \overline{M_0 - M_e} |^2+\|M_0-\overline{M_0}\|^2_{H^s}+\|\nabla M_0\|^2_{H^s}+\|F^{-1}_0 - I\|^2_{H^s} \,,
    \end{aligned}
\end{equation}
where the overline denotes the spatial average: 
\begin{equation}\label{f-bar}
    \begin{aligned}
        \bar{f} (t) = \frac{1}{|\mathbb{T}^3|} \int_{\mathbb{T}^3} f (t,x) \d x .
    \end{aligned}
\end{equation}
With the above notations, we state the following global well-posedness result.

\begin{theorem}[Global well-posedness]\label{Thm2}
    Let $s \geq 3$ be an integer and assume the external magnetic field is absent, $H_{ext} = 0$. Let the initial data satisfy the compressibility conditions
    \begin{equation*}
        \begin{aligned}
            \rho_0 \det F_0 = 1 \,, \inf_{x \in \mathbb{T}^3} \rho_0 (x) > 0.
        \end{aligned}
    \end{equation*}
    Then there exists a constant $\eps_0 > 0$, sufficiently small, such that if 
    \begin{equation*}
        \begin{aligned}
            \mathbf{E}_s^{in} \leq \eps_0 \,,
        \end{aligned}
    \end{equation*}
    the perturbed system \eqref{CMEH-eqG} admits a unique global-in-time solution $(\theta, u, F, d)$ satisfying
    \begin{equation*}
        \begin{aligned}
            \theta, u, d - \bar{d}, \nabla d, F^{-1} - I \in L^\infty (\mathbb{R}_+; H^s) \,, \bar{d} \in L^\infty (\mathbb{R}_+) \,, \nabla u, \Delta d \in L^2 (\mathbb{R}_+; H^s) \,.
        \end{aligned}
    \end{equation*}
    Furthermore, there exists a constant $c_0 > 0$ such that the following energy estimate holds:
    \begin{equation*}
        \begin{aligned}
            \sup_{t \geq 0} \mathbf{E}_s(t) + \int_0^\infty \mathbf{D}_s (t) d t \leq c_0 \mathbf{E}_s^{in} \,.
        \end{aligned}
    \end{equation*}
\end{theorem}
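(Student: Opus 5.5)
We follow the standard continuity argument. Theorem \ref{Thm1} provides a local solution, so it suffices to prove an \emph{a priori} estimate. Suppose that on $[0,T]$ the solution satisfies $\sup_{t\le T}\mathbf{E}_s(t)\le \delta$ for some small $\delta$ independent of $T$. We will show that
\begin{equation*}
\frac{\d}{\d t}\widetilde{\mathbf{E}}_s(t)+c\,\mathbf{D}_s(t)\le C\,\mathbf{E}_s^{1/2}(t)\,\mathbf{D}_s(t),
\end{equation*}
where $\widetilde{\mathbf{E}}_s\sim\mathbf{E}_s$ is a suitably modified energy. For $\delta$ small, integrating in time gives the stated bound, and the bootstrap closes.

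\textbf{Structural identities.} The first step records the identities that are preserved by the flow and that turn the elastic stress into a dissipative term. Since $\rho_0\det F_0=1$, the transport equations give $\rho\det F=1$ for all $t$. The equation for $F^{-1}$ is then
\begin{equation*}
\partial_t F^{-1}+u\cdot\nabla F^{-1}=-F^{-1}\nabla u .
\end{equation*}
On $\mathbb{T}^3$ the classical Piola identity $\nabla\cdot(\rho F^\top)=0$ is propagated. Because $F^{-1}$ is a gradient in Lagrangian coordinates, its curl-type compatibility (written as $\partial_k F^{-1}_{ij}=\partial_j F^{-1}_{ik}$) also holds. With these identities, write $E:=F^{-1}-I$. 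The elastic term then reads
\begin{equation*}
\nabla\cdot(\rho FF^\top)=-\nabla\cdot E^\top+\text{(quadratic terms)} .
\end{equation*}
This is the reformulation announced in the abstract.

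\textbf{Energy estimates.} Next we perform $H^s$ energy estimates. We apply $\partial^m$ with $|m|\le s$ to the $(\theta,u,E)$ equations and pair them with $P'(1)\partial^m\theta$, $\partial^m u$, and $\partial^m E$ respectively. The linear couplings $\nabla\theta\leftrightarrow\nabla\cdot u$ and $\nabla\cdot E^\top\leftrightarrow\nabla u$ cancel, which yields dissipation of $\nabla u$. For $d$ we use the $H^s$ estimate of $\nabla d$ and $\Delta d$. The gyromagnetic term $(d+M_e)\times\Delta d$ is orthogonal to $\Delta d$ at top order, and the term $|\nabla d|^2(d+M_e)$ is quadratic. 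The coupling terms $A\nabla|\nabla d|^2$ and $\nabla\cdot(\nabla d\odot\nabla d)$ are bounded by $\mathbf{E}_s^{1/2}\mathbf{D}_s$ using the dissipation of $\Delta d$. Commutators coming from the transport terms are handled by Moser/Kato--Ponce estimates with $s\ge3$. For the low modes of $d$ there are two separate pieces. The mean $\bar d$ is controlled through the constraint $|d|^2+2M_e\cdot d=0$, which gives $|M_e\cdot\bar d|\lesssim\|d\|_{L^2}^2$, together with the averaged equation. The quantity $\|d-\bar d\|_{L^2}$ is controlled by Poincaré's inequality in terms of $\nabla d$.

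\textbf{Hypocoercivity for $\theta$ and $E$, the main obstacle.} The main difficulty is recovering dissipation of $\nabla\theta$ and $E$, which the basic energy does not supply. We add cross terms to the energy,
\begin{equation*}
\eta\sum_{|m|\le s-1}\langle\partial^m u,\nabla\partial^m\theta\rangle,\qquad \eta\sum_{|m|\le s}\langle\partial^m u,\nabla\cdot\partial^m E^\top\rangle\ \ (\text{or with }\partial^m E),
\end{equation*}
with $\eta$ small. Using the momentum equation, the time derivative of these cross terms produces $-\|\nabla\theta\|_{H^{s-1}}^2-\|\nabla\cdot E^\top\|_{H^{s-1}}^2$ plus terms that are absorbed by $\|\nabla u\|_{H^s}^2$ and by nonlinear remainders.

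To upgrade from $\nabla\cdot E$ to the full $\|E\|_{H^s}$, we combine the curl compatibility with the divergence identity derived from $\rho\det F=1$ and the Piola identity. This controls $\nabla E$ by $\nabla\cdot E^\top$ up to quadratic terms, and on the torus the zero-mean part of $E$ is then handled by Poincaré. The mean $\bar E$ is small and quadratic, via the identity $\overline{F^{-1}-I}=\overline{\nabla_x(X-x)}=0$.

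We expect the hardest part to be exactly this step: making the algebra of these nonlinear identities precise at top order without loss of derivatives. In particular, the top-order $E$ term must be treated through the transport structure rather than the cross term. Once this is done, uniqueness follows from Theorem \ref{Thm1}, and the global existence follows by continuation.
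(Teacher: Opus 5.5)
The strategy is the paper's: reduce to $E=F^{-1}-I=\nabla\psi$ with linear elastic force $-\nabla\cdot E^\top=-\Delta\psi$, run $H^s$ energy estimates, and add an $\eta$-weighted cross term $\langle u,\nabla\theta\rangle_{H^{s-1}}$ plus a $u$--$\psi$ cross term, with $\bar d$ treated separately. The paper's $-\frac{2\delta}{\mu}\sum_{|m|\le s}\langle\partial^m(\psi-\bar\psi),\partial^m u\rangle$ is, after integrating by parts, essentially your $\langle\partial^m u,\Delta\partial^m\psi\rangle$ with $|m|\le s-1$.

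In one respect you do better. Because you keep $\|\partial^m E\|_{L^2}^2$ in the basic energy, the coupling $\langle\nabla\partial^m\psi,\nabla\partial^m u\rangle$ cancels. The paper's functional contains only $\delta\|\nabla\psi\|_{H^s}^2$, so $\mathbb{D}_{s;\delta,\eta,\epsilon}$ retains $-(\epsilon+\delta)\sum_{|m|\le s}\langle\nabla\partial^m\psi,\nabla\partial^m u\rangle$. For $\epsilon\gg1\gg\delta$ this cannot be absorbed by $\epsilon\mu\|\nabla u\|_{H^s}^2+\frac{\delta}{\mu}\|\nabla\psi\|_{H^s}^2$; the matrix in Lemma~\ref{Lmm-Glob-ED} lists this entry as $-\frac{1+\delta}{2}$ instead.

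\textbf{The gap.} It is in your hypocoercivity step, but not where you locate it: upgrading $\nabla\cdot E^\top$ to $\nabla E$ is immediate, since $\|\nabla E\|_{L^2}=\|\Delta\psi\|_{L^2}$ on $\mathbb{T}^3$. Put weights $\eta_1,\eta_2$ on your two cross terms. Besides $-\eta_1P'(1)\|\nabla\partial^m\theta\|_{L^2}^2-\eta_2\|\nabla\cdot\partial^mE^\top\|_{L^2}^2$ and $\nabla u$-terms, the momentum equation also produces the \emph{linear} interaction $-(\eta_1+P'(1)\eta_2)\langle\nabla\partial^m\theta,\nabla\cdot\partial^mE^\top\rangle$.

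This term is neither absorbable by $\|\nabla u\|_{H^s}^2$ nor nonlinear, and Cauchy--Schwarz cannot handle it. Writing $a,b$ for the two norms, the form $P'(1)\eta_1a^2+\eta_2b^2-(\eta_1+P'(1)\eta_2)ab$ is never positive definite, because $(\eta_1+P'(1)\eta_2)^2\ge4P'(1)\eta_1\eta_2$. The obstruction is genuine: if $\theta$ is treated as independent of $E$, the linearized system has non-decaying steady states $u=0$, $P'(1)\nabla\theta+\Delta\psi=0$ (e.g.\ $\psi=\nabla\phi$, $\theta=-\Delta\phi/P'(1)$).

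The missing idea is to use $\rho\det F=1$ a second time, in the form $\theta=\det(I+E)-1=\mathrm{tr}\,E+O(|E|^2)$ with $\mathrm{tr}\,E=\nabla\cdot\psi$. For $|m|\le s-1$ this gives
\[
\langle\nabla\partial^m\theta,\Delta\partial^m\psi\rangle=\|\nabla\partial^m(\nabla\cdot\psi)\|_{L^2}^2+O(\|E\|_{L^\infty}\|E\|_{H^s}^2).
\]
So the interaction has a favorable sign, up to terms bounded by $\mathbf{E}_s^{1/2}\mathbf{D}_s$, and can be dropped. Then $\|\nabla\theta\|_{H^{s-1}}\lesssim\|E\|_{H^s}$, and the $\theta$ cross term is not even needed.

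The paper's proof has the same hole. It bounds these interactions in absolute value, producing the terms $c_0\epsilon\eta\|\nabla\psi\|_{H^s}\|\nabla\theta\|$ and $c_1\delta a\gamma\|\nabla\psi\|_{H^s}\|\nabla\theta\|$ in $\mathbb{D}_{s;\delta,\eta,\epsilon}$. The asserted positive definiteness of $\mathcal{M}$ then fails for small $\eta$: its $(Z,W)$ principal minor $\frac{\delta}{\mu}\epsilon\eta a\gamma-\frac14(c_0\epsilon\eta+c_1\delta a\gamma)^2$ tends to $-\frac14(c_1\delta a\gamma)^2<0$.

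\textbf{Two smaller points.} First, the $E$ cross term must run over $|m|\le s-1$, not $|m|\le s$. At $|m|=s$ it contains $s+1$ derivatives of $E$, and its time derivative produces $\mu\langle\Delta\partial^m u,\Delta\partial^m\psi\rangle$. At $|m|\le s-1$ it already gives $\|\nabla E\|_{H^{s-1}}$, i.e.\ full $\|E\|_{H^s}$ dissipation, since $\bar E=\overline{\nabla\psi}=0$. The transport structure cannot damp $\partial^sE$; it is needed only to avoid derivative loss in commutators.

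Second, curl-freeness of $F_0^{-1}$, and hence the Piola identity, does not follow from $\rho_0\det F_0=1$. You and the paper both use it as an implicit assumption on the data.
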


\subsection{Historical remarks}

The system \eqref{CMHED-O} represents a nonlinear coupling between the hydrodynamics of viscoelasticity and the Landau-Lifshitz-Gilbert equation. While the analytical theory for each subsystem has been extensively developed over the past two decades, the well-posedness analysis of the coupled system \eqref{CMHED-O} remains largely open. In \cite{BFLS-2018-SIAM}, global-in-time weak solutions were constructed via a Galerkin method combined with a fixed-point argument, adapting ideas from the work on liquid crystal flows by Lin and Zhang \cite{ref19} and on the Landau-Lifshitz equation by Carbou and Fabrie \cite{ref5}. Related progress in this direction can be found in \cite{ref16,ref33}. We note that the notion of ``weak solution'' in \cite{BFLS-2018-SIAM} and \cite{ref16} is non-standard: the spatial regularity required for the magnetization $M$ is not merely $H^1$, but higher ($H^3$ in \cite{BFLS-2018-SIAM} and $H^2$ in \cite{ref16}). Moreover, both works consider a regularized version of the transport equation for the deformation gradient, involving a term $\kappa \Delta F$, rather than the physical case $\kappa = 0$ corresponding to incompressible version of system \eqref{CMHED-O}. Additionally, these results require the initial deviation of $F$ from the identity matrix $I$ to be small. These restrictive assumptions stem from fundamental analytical difficulties: the lack of regularity for the transport equation and the geometric constraint $|M(t,x)| = 1$, which is notoriously difficult to preserve under weak approximations. In fact, handling this constraint remains highly nontrivial even in the higher-regularity framework adopted in the present paper. We also mention the work \cite{ref33}, where a local well-posedness and a blow-up criterion for classical solutions are established for a modified version of \eqref{CMHED-O}---specifically, with the constraint $|M|=1$ replaced by a Ginzburg-Landau penalization. In contrast, Jiang, Liu, and Luo \cite{JLL-JDE-2023} studied the incompressible counterpart of the physical system \eqref{CMHED-O} while preserving the geometric constraint $|M(t,x)| = 1$. Working in the setting of classical solutions rather than the weak-strong framework of \cite{BFLS-2018-SIAM,ref16}, they established the first global existence theory for classical solutions to the incompressible version of the full evolutionary magnetoelastic system \eqref{CMHED-O}.

We note that by removing the magnetization vector field \(M(t,x)\), the system \eqref{CMHED-O} reduces to a compressible viscoelastic fluid system of Oldroyd type, which describes a class of non-Newtonian fluids exhibiting phenomena such as elastic memory effects; see, e.g., \cite{ref11z,ref12z,ref14z,ref18z,ref21z}. In such fluids, the elastic response is captured by an energy functional that depends on the deformation gradient tensor \(F\).

The incompressible counterpart of this viscoelastic model has been extensively studied. The local well-posedness in the Sobolev space \(H^s\) was established by Lin et al.~\cite{ref18z}, Chen and Zhang~\cite{ref6z}, Lei et al.~\cite{ref16z}, and Lin and Zhang~\cite{ref19z}; global well-posedness for small initial data was also proved. While the local theory follows from standard energy estimates, the global result relies on a more delicate analysis that reveals a damping mechanism for the deformation gradient fluctuation \(F-I\). A key observation in this analysis is that certain linearly appearing terms are actually of higher order. For example, Lei et al.~\cite{ref16z} identified the identity
\[
\nabla_{k} H^{ij}-\nabla_{j} H^{ik}=H^{lj}\nabla_{l} H^{ik}-H^{lk}\nabla_{l} H^{ij},
\]
where \(H = F - I\). This relation implies, in particular, that \(\nabla \times H\) is a higher-order term.

For the incompressible system augmented with a linear damping term in the evolution equation for the Cauchy-Green strain tensor $FF^{\top}$, Chemin and Masmoudi \cite{ref3z} established the existence of both local solutions and global small solutions in critical Besov spaces; see also \cite{ref22z} for related well-posedness results in critical spaces. In the compressible setting, Lei and Zhou \cite{ref15z} proved the local existence of smooth solutions for the two-dimensional system with initial data that are small perturbations of a general incompressible state. Furthermore, they showed the convergence of these solutions to the global solution of the incompressible equations via a compactness argument. For a compressible viscoelastic system without pressure, Lei et al. \cite{ref17z} established the global existence of smooth solutions near equilibrium.

The global existence of weak solutions, however, remains an open problem even for the incompressible case. When the contribution of the strain rate (the symmetric part of $\nabla v$) is neglected in the constitutive equation, Lions and Masmoudi \cite{ref20z} proved the global existence of weak solutions for general initial data. For the full compressible system, Qian and Zhang \cite{QZ-ARMA-2010} first established its well-posedness in the largest possible functional framework--namely, in critical spaces (which, in two dimensions, are very close to the natural spaces for weak solutions). Second, they explicitly characterized the smoothing and damping effects generated by the linearized system with convection terms, a key ingredient in proving global existence for small data. We remark that their analysis relies on the following initial structural conditions:
\begin{align*}
\nabla \cdot \left( \frac{F_{0}^{\top}}{\operatorname{det} F_{0}} \right) &= 0, \qquad \nabla \cdot \left( \rho_{0} F_{0}^{\top} \right) = 0, \\
\rho_{0} \operatorname{det} F_{0} &= 1, \qquad F_{0}^{l k} \nabla_{l} F_{0}^{i j} - F_{0}^{l j} \nabla_{l} F_{0}^{i k} = 0.
\end{align*}
It is worth noting that, in order to reveal weak dissipative mechanisms for $F-I$, the works \cite{ref6z,ref16z,ref18z} introduced and exploited certain special quantities. Finally, we recall the seminal work of Danchin \cite{ref7z}, who proved the global well-posedness for the compressible Navier--Stokes equations near equilibrium in critical spaces. Compared with that case, the analysis for viscoelastic fluids must also handle the intricate coupling among the velocity, density, and deformation tensor, while utilizing intrinsic structural properties of the viscoelastic system.

\subsection{Novelties and sketch of proofs}

\subsubsection{Novelties}\label{Subsec:n}

We now describe the novelties of the present paper.

First, we derive the compressible Navier–Stokes/Landau–Lifshitz–Gilbert system \eqref{CMHED-O} via the energetic variational approach. The derivation begins with the definitions of the kinetic energy $\mathcal{K}$ and the Helmholtz free energy $\mathcal{F}$, as introduced in \eqref{Def-K} and \eqref{Def-F}, respectively. From these, we define the action functional
\[
\mathcal{A}( {x} ) = \int_0^T \big( \mathcal{K} - \mathcal{F} \big) \, \d t,
\]
where $ {x} = {x}(X, t)$ denotes the flow map given in \eqref{FM-x}. The evolution equation for the velocity field ${v}$ is obtained from the principle of force balance, which equates the variational derivative of the action functional with the dissipation due to viscosity. In the $L^2$ sense, this balance reads
\begin{equation*}
\big( \delta_{x} \mathcal{A} \big)_{L^2_{t,x}} = \Big( \frac{1}{2} \, \delta_{\dot{x}} \mathcal{D} \Big)_{L^2_x},
\end{equation*}
hence, \eqref{eq:force-balance-x} below. Here, $\delta_{x}$ denotes the variational derivative with respect to the flow map $x$, $\delta_{\dot{x}}$ denotes the derivative with respect to the ``virtual velocity'' $\dot{x}$, and $\mathcal{D}$ is the dissipation functional given in \eqref{eq:dissipation-u-M}. The resulting equation corresponds to the momentum balance in \eqref{CMHED-O}; see the detailed computation leading to \eqref{CMHED-O}.

The equation for the magnetization $ {M}$ is derived from a separate force balance associated with its variation. Let $\mathring{M}$ denote the total kinematic transport of the magnetization, defined as in \eqref{eq:transport-M0} below. The force balance in the $L^2$ sense is then stated in Remark \ref{Rmk-2.1} below, namely,
\begin{equation*} 
\big( \delta_{ {M}} \mathcal{A} \big)_{L^2_{t,x}} = \Big( \frac{1}{2} \, \delta_{\mathring{M}} \mathcal{D} \Big)_{L^2_x},
\end{equation*}
where $\delta_{ {M}}$ is the variational derivative with respect to $ {M}$. This yields the Landau-Lifshitz-Gilbert type equation for $ {M}$ in \eqref{CMHED-O}. We remark that $\rho$-equation follows from the mass conservation law, and the $F$-equation is a consequence of the flow map \eqref{FM-x}.

Second, for the compressible viscoelastic counterpart, the only initial structural condition required to prove the global-in-time result near the constant equilibrium is $\rho_0 \det F_0 = 1$. Indeed, if one removes the magnetization $M$, the model \eqref{CMHED-O} reduces to the exact compressible viscoelastic fluid system of Oldroyd type. In the previous work \cite{QZ-ARMA-2010}, the following initial structural conditions are imposed:
\begin{equation*}
  \begin{aligned}
    & \nabla \cdot\left(\frac{F_{0}^{\top}}{\det F_{0}}\right)=0, \quad \nabla \cdot\left(\rho_{0} F_{0}^{\top}\right)=0, \\
    & \rho_{0} \det F_{0}=1, F_{0}^{l k} \nabla_{l} F_{0}^{i j}-F_{0}^{l j} \nabla_{l} F_{0}^{i k}=0.
  \end{aligned}
\end{equation*}
These conditions simplify the compressible viscoelastic fluid system to a hyperbolic–parabolic system, for which global well-posedness with small initial data can be established in critical Besov spaces. In the present paper, we only require the initial structural condition $\rho_{0} \det F_{0}=1$, which implies $\rho \det F=1$ for all $t \geq 0$. Since $\rho > 0$, we have $\det F > 0$. We focus on the fluctuation variable $U = F^{-1} - I$, which satisfies the curl-free property $\partial_{i} U^{j k} = \partial_{k} U^{j i}$ and is subject to the evolution \eqref{G} below. Then the div–curl theorem yields $U^{ij} = \partial_i \psi^j$. Consequently, the evolution of $F$ can be transformed into the third equation in \eqref{CMEH-eqGd}, namely,
$$\partial_{t} \psi+u+u \cdot \nabla \psi = 0 \,.$$
Moreover, the divergence of the Cauchy–Green stress tensor for Hookean linear elasticity,
$$ \nabla \cdot (\rho F F^{\top}) = - \Delta \psi+\ \text{some higher order terms} \,,$$
(see \eqref{U000} below) provides the required dissipative effect $-\Delta \psi$ for the viscoelastic part.

Third, in proving global existence, we need to design the instant energy functional $\mathbb{E}_{s;\delta,\eta,\epsilon}(t)$ (see \eqref{close-eq7}) and the instant dissipation rate $ \mathbb{D}_{s;\delta,\eta,\epsilon}(t) $ (see \eqref{close-eq8}) for the compressible model \eqref{CMHED-O}, which differs from the incompressible NS-LLG model studied in \cite{JLL-JDE-2023}. To derive the dissipation effect of $\psi$ in the energy estimate, we rewrite the $ \psi $-equation in the form
$$
\partial_t \Delta \psi + \frac{1}{\mu} \Delta \psi + \frac{1}{\mu} \Delta w = -\Delta(u \cdot \nabla \psi)
$$
with $ w = \mu(u - \bar{u}) - (\psi - \bar{\psi}) $. Then the quantity $w$ requires further controlled. Using the $u$-equation in \eqref{CMEH-eqGd}, $w$ satisfies 
$$
-\Delta w = (\mu + \xi) \nabla (\nabla \cdot u) - \nabla P(1 + \theta) + F_{com},
$$
where $F_{com}$ is readily controlled. In the incompressible model, $w$ satisfies the Stokes system
$$
\left\{
\begin{aligned}
-\Delta w + \nabla q &= F_{{incom}}, \\
\nabla \cdot w &= -\nabla \cdot \psi,
\end{aligned}
\right.
$$
where the source term $F_{{incom}}$ is controllable, and $\nabla \cdot \psi = \operatorname{tr} U$ is in fact nonlinear due to the incompressibility. Because of the linear effect contained in $(\mu + \xi) \nabla (\nabla \cdot u) - \nabla P(1 + \theta)$ in the compressible model, the previous ideas fail in the present work. We will instead employ the relation
$$ -\Delta \psi \sim \partial_{t} u + \nabla \theta -\mu \Delta u +(\mu + \xi) \nabla (\nabla \cdot u) + \cdots  $$
derived from the $u$-equation in \eqref{CMEH-eqGd} to control $w$. More precisely, we apply the relation \eqref{gpsi-eq7} to handle the quantity $C_2$ in \eqref{gpsi-eq6}, which motivates the design of the instant energy functional and instant dissipation rate in \eqref{close-eq7}-\eqref{close-eq8}. Together with Lemma \ref{Lmm-Glob-ED}, this allows us to close the global energy estimate \eqref{close-eq21} in Proposition \ref{Prop-Global}.

Fourth, owing to compressibility, we estimate separately the average part $\bar{d}$ and the zero-mean part $d-\bar{d}$ of the fluctuation $d = M - M_e$ of the magnetization. Indeed, $d$ satisfies
$$ \partial_{t}d + u \cdot \nabla d = 2A \lambda \Delta d + \text{nonlinear terms} \,. $$
When establishing the $L^{2}$-estimate, the quantity
$$
\langle u \cdot {\nabla} {d}, d \rangle = - \frac{1}{2} \langle {\nabla} \cdot u, |d|^{2} \rangle
$$
has to admit the upper bound $\mathbf{E}_{s}^{\frac{1}{2}}(t) \mathbf{D}_{s} (t)$. For the incompressible model, this quantity vanishes because $\nabla \cdot u = 0$. The dissipation $\mathbf{D}_{s}(t)$ involves only $\| \nabla d \|_{H^s}^{2} + \|\Delta d \|_{H^s}^{2}$ (the dissipative effect for the magnetization). However, $\| d \|_{L^4}^2$ does not admit the upper bound $\mathbf{E}_{s}^{\frac{1}{2}}(t) \| \nabla d \|_{L^2}$ ($\lesssim \mathbf{E}_{s}^{\frac{1}{2}}(t) \mathbf{D}_{s}^{\frac{1}{2}}(t)$), because the average $\bar{d}$ does not vanish (the Poincaré inequality fails). Inspired by the micro-macro decomposition method developed for the hydrodynamic limits of kinetic equations (see \cite{Guo-CPAM-2006} for the Boltzmann equation and \cite{JL-APDE-2022,JLZ-ARMA-2023} for the Vlasov–Maxwell–Boltzmann equations, for instance), we estimate the average part $\bar{d}$ and the zero-mean part $d-\bar{d}$ separately, corresponding respectively to the macroscopic and microscopic parts in kinetic theory. The key observation is the equation \eqref{d1} for $\bar{d}$, i.e.,
$$
\partial_{t} \bar{d}= \frac{1}{| \mathbb{T}^3 |} \int_{\mathbb{T}^3} (\nabla \cdot u)( {d}-\bar{d}) \d x + \frac{ 2 A \lambda }{| \mathbb{T}^3 |} \int_{\mathbb{T}^3} | \nabla d |^{2}(d-\bar{d}+M_e + \bar{d}) \d x,
$$
where the right-hand side is nonlinear. This is the required property.

\subsubsection{Sketch of proofs}

The first result of this paper is the local well-posedness of \eqref{CMHED-O} with large smooth initial data. We first derive the local-in-time a priori estimate \eqref{Ener-Bnds-Local} in Proposition \ref{Prop-Loc} by employing the nonlinear energy approach. Due to the nonlinear constraint $|M(t,x)| = 1$ for the magnetization, we carefully design a nonlinear iterative approximation scheme \eqref{Ite-App-Eq} for the NS-LLG system \eqref{CMHED-O}. The nonlinear iterative approximation scheme was initially developed in \cite{JL-SIMA-2019} to study the local well-posedness of the hyperbolic Ericksen-Leslie liquid crystal model. It was then successfully applied in the works \cite{JLL-JDE-2023, JLT-M3AS-2020}. Based on the approximation \eqref{Ite-App-Eq} and the a priori estimate \eqref{Ener-Bnds-Local}, we establish the local well-posedness of \eqref{CMHED-O} by a continuity argument.

The second result of the present work is to establish the global-in-time existence of solutions to \eqref{CMHED-O} in the absence of an external magnetic field and near a constant equilibrium. As stated in the second part of Subsection~\ref{Subsec:n}, we exploit the additional damping effect of the deformation gradient \(F\) by employing the curl-free property of \(F^{-1} - I\) together with the initial structural condition \(\rho_0 \det F_0 = 1\). More precisely, the damping effect originates from the Cauchy-Green stress tensor \(\nabla \cdot (\rho F F^\top)\) for Hookean linear elasticity. We then focus on the reformulated system \eqref{CMEH-eqGd}, which features an additional explicit damping structure \(\Delta \psi\).

Our next goal is to derive the global energy estimate \eqref{close-eq21} in Proposition \ref{Prop-Global} for the instant energy functional \( \mathbb{E}_{s; \delta,\eta,\epsilon} (t) \) and the instant dissipation rate \( \mathbb{D}_{s; \delta,\eta,\epsilon} (t) \). As stated in the last part of Subsection~\ref{Subsec:n}, we estimate separately the average part \( \bar{d} \) and the zero-average part \( d - \bar{d} \) of the fluctuated magnetization \( d \). These estimates are established in Lemma~\ref{Lmm-G1} and Lemma~\ref{Lmm-G2}, respectively. Moreover, we carry out the estimate of \( \nabla {d} \) in Lemma~\ref{Lmm-G3}, thereby completing the estimate for \( d \).  

We then derive the estimate for \(\theta\) (the fluctuated density) in Lemma~\ref{Lmm-G4} and the estimate for \(u\) (the velocity) in Lemma~\ref{Lmm-G5}. We emphasize that there is a quantity \(\sum_{|m| \leq s} \langle P'(1+\theta) \nabla \partial^m \theta, \partial^m u \rangle\), which cancels in the \(\theta\) and \(u\) estimates. Moreover, the interaction dissipation rate \(-\sum_{|m| \leq s} \langle \nabla \partial^m \psi, \nabla \partial^m u \rangle\) is contained in the \(u\)-estimate. However, in order to obtain the damping effect for \(\theta\) from the pressure gradient \(\nabla P(1+\theta)\), we establish Lemma~\ref{Lmm-G6}. At this stage, the damping structure \(a \gamma \| \nabla \theta \|^2_{H^{s-1}_{\w (\theta)}}\) is obtained. The cost, however, is the generation of the interaction energy \(\|u + \nabla \theta \|_{H^{s-1}}^2 - \|u\|_{H^{s-1}}^2 - \|\nabla \theta \|_{H^{s-1}}^2\) and the interaction dissipation rates 
\[
- c_1 \|\nabla \psi \|_{H^s} \| \nabla \theta \|_{H^{s-1}_{\w (\theta)}} 
\quad \text{and} \quad
- c_0 \bigl[ \mu \| \nabla u\|_{H^s} \| \nabla \theta \|_{H^{s-1}_{\w (\theta)}} + (\mu + \xi) \|\nabla \cdot u\|_{H^s} \| \nabla \theta \|_{H^{s-1}_{\w (\theta)}} \bigr].
\]
These interaction quantities are undesirable.  

It remains to estimate the unknown \(\psi\) (corresponding to the deformation gradient tensor \(F\)). The damping effect \(\Delta \psi\) is contained in the \(u\)-equation but not in the \(\psi\)-equation. In order to exploit the damping \(\Delta\psi\), we transfer it to the \(\psi\)-equation by introducing the function \(w = \mu (u -\bar{u}) - (\psi-\bar{\psi})\). Consequently, the quantity \(C_2\) associated with \(w\) (see \eqref{gpsi-eq6}) must be controlled using the relation \eqref{gpsi-eq7} (the reformulation of the \(u\)-equation). Unfortunately, this process results in the interaction energy
\[
-\frac{2}{\mu} \sum_{|m|\leq s} \langle \partial^m (\psi-\bar{\psi}), \partial^m u \rangle
\]
and the interaction dissipation rates
\begin{align*}
  & -\sum_{|m|\leq s} \left\langle \nabla \partial^{m} \psi, \nabla \partial^{m} u \right\rangle,\quad
-\frac{\mu + \xi}{\mu} \sum_{|m|\leq s} \left\langle \nabla \cdot \partial^{m} \psi, \nabla \cdot \partial^{m} u \right\rangle, \\
& -c_{1} a \gamma \left( \mu \| \nabla u \|_{H^{s}}+\| \nabla \psi \|_{H^{s}} \right) \| \nabla \theta \|_{H^{s - 1}_{\w (\theta)}};
\end{align*}
see Lemma~\ref{Lmm-G7}.

In order to control the interaction quantities, we construct the instant energy \( \mathbb{E}_{s; \delta,\eta,\epsilon} (t) \) and the instant dissipation rate \( \mathbb{D}_{s; \delta,\eta,\epsilon} (t) \), depending on the parameters \( \delta, \eta, \epsilon > 0 \). In Lemma~\ref{Lmm-Glob-ED}, we show that there exist constants \( \delta, \eta, \epsilon > 0 \) such that \( \mathbb{E}_{s; \delta,\eta,\epsilon} (t) \) and \( \mathbb{D}_{s; \delta,\eta,\epsilon} (t) \) are equivalent to the energy \( \mathbf{E}_{s}(t) \) (see \eqref{close-eq5-1}) and the dissipation rate \( \mathbf{D}_{s}(t) \) (see \eqref{close-eq6-1}), respectively. Here \( \mathbf{E}_{s}(t) \) and \( \mathbf{D}_{s}(t) \) are both positive. Consequently, the global estimate \eqref{close-eq21} in Proposition~\ref{Prop-Global} is established.

Finally, based on \eqref{close-eq21}, we extend the local solution constructed in Theorem~\ref{Thm1} to the time interval $[0,\infty)$ by a continuity argument. This completes the proof of Theorem~\ref{Thm2}.

\subsection{Organization of this paper}

In the next section, we derive the model \eqref{CMHED-O} using EnVarA. Section~\ref{Sec:3} provides the proof of local well-posedness of the system \eqref{CMEH-eq}. In Section~\ref{Sec:4}, we first transform the fluctuation system \eqref{CMEH-eqG} into the form \eqref{CMEH-eqGd}, which explicitly exhibits the damping mechanism. Then we state the global energy estimate \eqref{close-eq21} in Proposition~\ref{Prop-Global} and prove the global existence of the fluctuation system \eqref{CMEH-eqGd} with small initial data. In Subsection~\ref{Sec:Global-es}, we derive the global energy estimates and thereby prove Proposition~\ref{Prop-Global}.

\section{Derivation of system \eqref{CMHED-O} by the energetic variational approach}\label{Sec:EnVarA}

\subsection{EnVarA for the compressible Navier-Stokes equations with magnetoelastic mechanics}

Define the action functional $\mathcal{A} (x) = \int_0^T (\mathcal{K} - \mathcal{F}) \d t$, where the kinetic energy $\mathcal{K}$ and the Helmholtz free energy $\mathcal{F}$ are given in \eqref{Def-K} and \eqref{Def-F}, respectively. In Lagrangian coordinates, the action functional can be rewritten as
  \begin{align} \label{eq:energy-lagr}
    \no \mathcal{A} (x) = & \int_0^T (\mathcal{K} - \mathcal{F}) \d t \\ 
    = & \int_0^T \int_{\Omega^0} \tfrac{1}{2} \rho_0(X) |\dot x|^2 \d X \d t - \int_0^T \int_{\Omega^0} \left[ w \left( \frac{\rho_0(X )}{\det F} \right) + A|\nabla_X M(X,t) F^{-1}|^2 \right. \\
        \no & \qquad \qquad \qquad \qquad \qquad \left. + \frac{W(F)}{\det F} - \mu_0 M(X,t) \cdot H_{ext}(x(X,t)) \right] \det F \d X \d t \,, 
  \end{align}
  where $\dot{x} = \dot{x} (X, t) = \frac{\d x}{\d t} (X, t)$. Using the relation $\rho(x (X, t),t) = \frac{\rho_0(X) }{\det F}$ and integration by parts, \eqref{eq:energy-lagr} yields
  \begin{align}\label{eq:K-vari-x}
    \delta_x \int \mathcal{K} \d t =\ & \int_0^T \int_{\Omega^0} \rho_0(X) \dot{x} \cdot \tfrac{\d}{\d t} \delta x \d X \d t \\\no
    =\ & - \int_0^T \int_{\Omega^0} \frac{\rho_0(X) }{\det F} \tfrac{\d }{\d t} v \cdot \delta x \det F \d X \d t \\\no
    =\ & - \int_0^T \int_{\Omega^t} \rho(x,t) \tfrac{\d }{\d t} v \cdot \delta x \d x \d t \\\no
    =\ & \skpt{-\rho(\p_t v + v \cdot \nabla_x v)}{\delta x}_{L^2_{t,x}}.
  \end{align}
  Here $\delta_x$ denotes the spatial variation, and $\delta x$ is a function such that $x + h \delta x$ is a variation of $x$ as $h \to 0$.  Moreover, $\langle \cdot, \cdot \rangle_{L^2_{t,x}}$ denotes the standard $L^2$-inner product in $L^2_{t,x}$ with respect to the variables $t$ and $x$.

For the spatial variation of the Helmholtz free energy, we decompose it into four terms: $\mathcal{F} = \mathcal{F}_0 + \mathcal{F}_1 + \mathcal{F}_2 + \mathcal{F}_3$. We then have
\begin{align}
\mathcal{F}_0 = \int_{\Omega^t} w(\rho(x,t)) \d x 
= \int_{\Omega^0} w \left( \frac{\rho_0(X )}{\det F} \right) \det F \d x.
\end{align}
The variation of the above expression yields
  \begin{align}\label{eq:F0-vari-x}
    \no \delta_x \int_0^T \mathcal{F}_0 \d t 
    =\ & \delta_x \int_0^T \int_{\Omega^0} w \left( \frac{\rho_0(X )}{\det F} \right)  \det F \d X \d t \\\no
    =\ & \int_0^T \int_{\Omega^0} \left[ -w'(\tfrac{\rho_0(X )}{\det F}) \frac{\rho_0(X )}{\det F} + w(\tfrac{\rho_0(X )}{\det F}) \right] \;\tr(\nabla_X (\delta x) F^{-1}) \det F \d X \d t 
    \\
    =\ & \int_0^T \int_{\Omega^0} \left[ -w'(\tfrac{\rho_0(X )}{\det F}) \frac{\rho_0(X )}{\det F} + w(\tfrac{\rho_0(X )}{\det F}) \right] \nabla_x \cdot (\delta x) \det F \d X \d t 
    \\\no
    =\ & \int_0^T \int_{\Omega^t} \left[ -w'(\rho(x,t)) \rho(x,t) + w(\rho(x,t)) \right] \nabla_x \cdot (\delta x) \d x \d t 
    \\\no
    =\ & \int_0^T \int_{\Omega^t} \nabla \left[ w'(\rho) \rho - w(\rho) \right] \delta x \d x \d t \\\no
    =\ & \skpa{\nabla \left[ w'(\rho) \rho - w(\rho) \right] }{\delta x}_{L^2_{t,x}} 
    = \skpa{\nabla p(\rho) }{\delta x}_{L^2_{t,x}}\,, 
  \end{align}
where we have used the relation $p(\rho) = w'(\rho) \rho - w(\rho) = a \rho^\gamma $. 

Next, we have
  \begin{align}\label{eq:F1-vari-x}
    \delta_x \int_0^T \mathcal{F}_1 \d t 
    =\ & \delta_x \int_0^T \int_{\Omega^0} A|\nabla_X M(X,t) F^{-1}|^2 \det F \d X \d t \\\no
    =\ & \int_0^T \int_{\Omega^0} 2 A \left[ \nabla_X M(X,t) F^{-1} \right] : \nabla_X M(X,t) \delta_x F^{-1} \det F \d X \d t 
          \\[3pt] \no & \qquad 
          + \int_0^T \int_{\Omega^0} 2 A |\nabla_X M(X,t) F^{-1}| \delta_x \det F \d X \d t \\\no
    =\ & \int_0^T \int_{\Omega^0} 2 A \left[ \nabla_X M(X,t) F^{-1} \right] : \nabla_X M(X,t) \left[ - F^{-1} \nabla_X (\delta x) F^{-1} \right] \det F \d X \d t 
          \\[3pt] \no & \qquad 
          + \int_0^T \int_{\Omega^0} A |\nabla_X M(X,t) F^{-1}|^2 \;\tr \left[ \nabla_X (\delta x) F^{-1} \right] \det F \d X \d t \\\no
    =\ & -\int_0^T \int_{\Omega^t} 2 A \nabla M : [\nabla M (\nabla \delta x)] \d x \d t 
          + \int_0^T \int_{\Omega^t} A |\nabla M|^2 (\nabla \cdot \delta x) \delta x \d x \d t \\\no
    =\ & \int_0^T \int_{\Omega^t} 2 A \nabla \cdot (\nabla M \odot \nabla M) \delta x \d x \d t 
          - \int_0^T \int_{\Omega^t} A \nabla (|\nabla M|^2) \cdot \delta x \d x \d t \\\no
    =\ & \skpa{\nabla \cdot (2 A \nabla M \odot \nabla M) - \nabla (A |\nabla M|^2)}{\delta x}_{L^2_{t,x}}.
  \end{align}
A similar argument yields
  \begin{align}\label{eq:F2-vari-x}
    \delta_x \int_0^T \mathcal{F}_2 \d t 
    =\ & \delta_x \int_0^T \int_{\Omega^0} W(F) \d X \d t \\\no
    =\ & \int_0^T \int_{\Omega^0} W'(F) \delta_x \frac{\p x(X,t)}{\p X} \d X \d t 
    = \int_0^T \int_{\Omega^0} W'(F) \left[ \nabla_X (\delta x) \right] \d X \d t  \\\no
    =\ & \int_0^T \int_{\Omega^0} W'(F) \left[ F^\top \nabla (\delta x) \right] \d X \d t  
    = \int_0^T \int_{\Omega^t} \frac{W'(F)F^\top}{\det F} \left[ \nabla (\delta x) \right] \d x \d t  \\\no
    =\ & -\int_0^T \int_{\Omega^t} \nabla \cdot \left( \frac{W'(F)F^\top}{\det F} \right) \delta x \d x \d t = \skpa{-\nabla \cdot \left( \frac{W'(F)F^\top}{\det F} \right) }{\delta x}_{L^2_{t,x}} \,, 
  \end{align}
and 
  \begin{align}\label{eq:F3-vari-x}
    \delta_x \int_0^T \mathcal{F}_3 \d t 
    =\ & - \delta_x \int_0^T \int_{\Omega^0} \mu_0 M(X,t) \cdot H_{ext}(x(X,t)) \det F \d X \d t \\\no
    =\ & - \int_0^T \int_{\Omega^0} \mu_0 M(X,t) \cdot \nabla^\top H_{ext}(x(X,t)) \delta x \det F \d X \d t  \\[3pt] \no & \quad 
         - \int_0^T \int_{\Omega^0} \mu_0 M(X,t) \cdot H_{ext}(x(X,t)) \delta_x \det F \d X \d t \\\no
    =\ & - \int_0^T \int_{\Omega^0} \mu_0 M(X,t) \cdot \nabla^\top H_{ext}(x(X,t)) \delta x \det F \d X \d t  \\[3pt] \no & \quad 
         - \int_0^T \int_{\Omega^0} \mu_0 M(X,t) \cdot H_{ext}(x(X,t)) \;\tr \left[ \nabla_X (\delta x) F^{-1} \right] \det F \d X \d t \\\no 
    =\ & - \int_0^T \int_{\Omega^t} \mu_0 M(x,t) \cdot \nabla^\top H_{ext}(x,t) \delta x \d x \d t  
      \\[3pt] \no & \quad 
          - \int_0^T \int_{\Omega^t} \mu_0 M(x,t) \cdot H_{ext}(x,t) \; [\nabla \cdot (\delta x)] \d x \d t \\\no
    =\ & - \int_0^T \int_{\Omega^t} (\mu_0 \nabla^\top H_{ext} M) \cdot \delta x \d x \d t 
          + \int_0^T \int_{\Omega^t} \nabla \left( \mu_0 M \cdot H_{ext} \right) \cdot (\delta x) \d x \d t \\\no
    =\ & \skpa{- (\mu_0 \nabla^\top H_{ext} M) + \nabla \left( \mu_0 M \cdot H_{ext} \right) }{\delta x}_{L^2_{t,x}} \,. 
  \end{align}
Combining \eqref{eq:F0-vari-x}, \eqref{eq:F1-vari-x}, \eqref{eq:F2-vari-x}, and \eqref{eq:F3-vari-x}, we obtain
  \begin{align}\label{eq:F-vari-x}
    \delta_x \int_0^T \mathcal{F} \d t 
    & = \skpa{\nabla \cdot \left( 2 A \nabla M \odot \nabla M - \tfrac{W'(F)F^\top}{\det F} \right) - \mu_0 \nabla^\top H_{ext} M }{\delta x}_{L^2_{t,x}} \\[3pt] \no & \qquad 
        + \skpa{\nabla \left( p(\rho) - A |\nabla M|^2 + \mu_0 M \cdot H_{ext} \right) }{\delta x}_{L^2_{t,x}} 
        \,.
  \end{align}

On the other hand, for the variation of the dissipation, recalling the viscous dissipation tensor $\Sigma (v) = \mu (\nabla v + \nabla^\top v) + \xi (\nabla \cdot v) I $, we obtain 
  \begin{align}\label{eq:D-vari-v}
    \delta_{v} \left( \tfrac{1}{2}\mathcal{D} \right) 
    & = \int_{\Omega^t} \left\{ \mu \nabla v \nabla (\delta v) + (\mu + \xi) (\nabla \cdot v)\; [\nabla \cdot (\delta v)] \right\} \d x 
    \\\no & 
    = \skpa{-\nabla \cdot \left[ \mu \nabla v + (\mu + \xi) (\nabla \cdot v) \Id \right] }{\delta v}_{L^2_x} 
    = \skpa{-\nabla \cdot \Sigma (v) }{\delta v}_{L^2_x} \,,
  \end{align}
  where $\mathcal{D}$ is given in \eqref{eq:dissipation-u-M}. It then follows from the force balance 
\begin{align}\label{eq:force-balance-x}
  \left( \delta_x \int_0^T \mathcal{A} \d t \right)_{ L^2_{t,x} }  = \left( \delta_x \int_0^T \mathcal{K} \d t \right)_{ L^2_{t,x} } - \left( \delta_x \int_0^T \mathcal{F} \d t \right)_{ L^2_{t,x} } =  \left( \delta_{\dot{x}} ( \tfrac{1}{2}\mathcal{D} ) \right)_{ L^2_{x} }
\end{align} 
that
  \begin{multline}\label{eq:momentum}
    \rho (\p_t v + v \cdot \nabla v) + \nabla \left( p(\rho) - A |\nabla M|^2 + \mu_0 M \cdot H_{ext} \right) 
    \\ 
    = \nabla \cdot \left( \Sigma (v) - 2 A \nabla M \odot \nabla M + \tfrac{W'(F)F^\top}{\det F} \right) + \mu_0 \nabla^\top H_{ext} M \,. 
  \end{multline}
  Here the subscripts $X = L^2_{t,x}$ or $L^2_x$ in the variational form $\bigl( \delta_x \int_0^T \mathcal{Y} \,\mathrm{d} t \bigr)_X$ for $\mathcal{Y} = \mathcal{A}, \mathcal{K}, \mathcal{F}, \frac{1}{2}\mathcal{D}$ indicate the spaces in which the variations are carried out.
  
Define $P (\rho, M, \nabla M, H_{ext}) = p(\rho) - A |\nabla M|^2 + \mu_0 M \cdot H_{ext} $ and $\mathcal{T} (v, F, M, \nabla M, H_{ext}) = \Sigma (v) - 2 A \nabla M \odot \nabla M + \tfrac{W'(F)F^T}{\det F} $. Then we immediately obtain the equation for the velocity field \(v\):
  \begin{align}
    \rho (\p_t v + v \cdot \nabla v) + \nabla P = \nabla \cdot \mathcal{T} + \mu_0 \nabla^\top H_{ext} M \,. 
  \end{align}
In addition, the mass equation reads
  \begin{align}
     \p_t \rho + \nabla \cdot (\rho v) = 0 \,. 
  \end{align}

\subsection{EnVarA for LLG equation of magnetization}\label{sec:envara_for_LLG}

We begin with the balance relation of angular momentum (see, e.g., \cite{DALS18,WXL-13arma}): 
\begin{align}\label{eq:angular-momentum-beta}
  g + h = \beta M \,, 
\end{align}
where $g$ is the kinematic transport of the magnetization vector $M$, $h = \delta_M \mathcal{F} $ denotes the magnetic field, and the term $\beta$ on the right-hand side is the Lagrangian multiplier for the unit-length constraint $|M|=1$. Note that \(h\) differs from the effective field \(H_{\text{eff}}\) by a minus sign, i.e., \(H_{\text{eff}} = -h\). 

In the absence of damping, the kinematics of the magnetization vector \(M\) can be written as
\begin{align}
\dot M = \frac{\d }{\d t} M = -\gamma M \times H_{\eff} = \gamma M \times h \,, 
\end{align}
with $\gamma >0$. The reader is referred to \cite{Gilbert-04ieee} for further details.  Consequently, the total kinematic transport of \(M\) can be expressed as
\begin{align}\label{eq:transport-M0}
\mathring{M} = \dot M - \gamma M \times h = \p_t M + v \cdot \nabla M - \gamma M \times h \,, 
\end{align}
which suggests that we write $g = \tfrac{1}{\lambda} \mathring{M} $. Multiplying \eqref{eq:angular-momentum-beta} by \(M\) and using \(g \cdot M = 0\) (a consequence of the unit-length constraint \(|M|=1\)), we obtain \(\beta = h \cdot M\).  Substituting this back into \eqref{eq:angular-momentum-beta} yields
\begin{align}\label{eq:transport-M}
\frac{1}{\lambda} \mathring{M} = (h \cdot M) M - h 
\end{align}
Using the vector triple product identity \(a \times (b \times c) = (a \cdot c) b - (a \cdot b) c\), we have 
$$(h \cdot M) M - h = (M \cdot h) M - (M \cdot M)h = M \times (M \times h).$$ 
Thus, \eqref{eq:transport-M} reduces to
\begin{align}\label{eq:angular-momentum-h}
\p_t M + v \cdot \nabla M - \gamma M \times h = \lambda M \times (M \times h).
\end{align}

We next consider the variation with respect to the magnetization variable \(M\):
  \begin{align}\label{eq:KF-vari-M}
    \delta_M \int_0^T \mathcal{K} \d t & = 0, \\ 
    \delta_M \int_0^T \mathcal{F} \d t
    & = \int_0^T \int_{\Omega} \left[ w(\rho) + A|\nabla M|^2 + \frac{W(F)}{\det F} - \mu_0 M \cdot H_{ext} \right] \d x \d t
    \\\no & 
    = \int_0^T \int_{\Omega} \left( 2 A \nabla M : \nabla \delta M - \mu_0 H_{ext} \cdot \delta M \right) \d x \d t
    \\\no & 
    = \skpa{-2 A \Delta M - \mu_0 H_{ext} }{\delta M}_{L^2_{t,x}} \,.
  \end{align}
Thus $h = \delta_M \mathcal{F} = -(2 A \Delta M + \mu_0 H_{ext})$. Substituting this into \eqref{eq:angular-momentum-h} we finally obtain
\begin{align}\label{eq:angular-momentum}
  \p_t M + v \cdot \nabla M = - \gamma M \times (2 A \Delta M + \mu_0 H_{ext}) - \lambda M \times M \times (2 A \Delta M + \mu_0 H_{ext}) \,. 
\end{align}

\begin{remark}[Variation on magnetization dissipation]\label{Rmk-2.1}
  We may explain \eqref{eq:angular-momentum-beta} from the perspective of EnVarA.  Consider the magnetization dissipation functional
  \begin{align}\label{eq:dissipation-M-dt}
    \mathcal{D}(M) = \int_{\Omega} \tfrac{1}{\lambda} |M \times \mathring{M}|^2 \d x \,.
  \end{align}
The variational process (maximum dissipation principle) with respect to \(\mathring{M}\) of the total transport derivative yields
  \begin{align}
    \delta_{\mathring{M}} (\tfrac{1}{2}\mathcal{D}) 
    & = \int_{\Omega} \tfrac{1}{\lambda} (M \times \mathring{M}) \cdot [M \times (\delta \mathring{M})] \d x 
    \\\no & 
    = \int_{\Omega} \tfrac{1}{\lambda} [(M \times \mathring{M}) \times M] \cdot (\delta \mathring{M}) \d x
    = \skpa{ \tfrac{1}{\lambda} (M \times \mathring{M}) \times M }{\delta \mathring{M}}_{L^2_{x}} 
    \\\no & 
    = \skpa{ \tfrac{1}{\lambda} [(M \cdot M) \mathring{M} - (M \cdot \mathring{M}) M] }{\delta \mathring{M}}_{L^2_{x}} 
    = \skpa{ \tfrac{1}{\lambda} \mathring{M} }{\delta \mathring{M}}_{L^2_{x}} \,.
  \end{align}

Analogously to \eqref{eq:force-balance-x}, the force balance relation
$$ \left( \delta_M \int_0^T \mathcal{K} \d t \right)_{L^2_{t,x }} - \left( \delta_M \int_0^T \mathcal{F} \d t \right)_{L^2_{t,x }} = \left( \delta_{\mathring{M}} \tfrac{1}{2}\mathcal{D} \right)_{L^2_{x }}$$ 
implies that
\begin{align}
  \delta_M \mathcal{F} + \tfrac{1}{\lambda} \mathring{M} = h + g = 0 \,.
\end{align}
This is exactly the angular momentum balance equation \eqref{eq:angular-momentum-beta} without the Lagrange multiplier on the right-hand side.

Incidentally, by taking the tensor product of \eqref{eq:transport-M} with \(M\), one readily verifies the equivalence between the magnetization dissipation rate functionals in \eqref{eq:dissipation-M-dt} and that in \eqref{eq:dissipation-u-M}.
\end{remark}

\section{Local well-posedness with large initial data} \label{Sec:3}

In this section we study the local well-posedness of system \eqref{CMEH-eq} with initial data \eqref{IC-1} using energy methods.  We first derive a priori estimates for system \eqref{CMEH-eq} in higher-order Sobolev spaces.  Then, following the works \cite{JLL-JDE-2023,JL-SIMA-2019,JLT-M3AS-2020}, we construct a nonlinear iterative approximation scheme to prove the local existence of solutions to \eqref{CMEH-eq}.

\subsection{A priori estimates for local well-posedness}

In this subsection we derive the a priori estimates needed for the proof of local well-posedness of system \eqref{CMEH-eq}.  Recalling the definitions of $\mathcal{E}_s(t)$ and $\mathcal{D}_s(t)$ in \eqref{EDs-loc}, we set
\begin{equation*}
	\begin{aligned}
		\mathcal{E}_s (t) & := \| \rho \|^2_{H^s} +  \| F \|^2_{H^s} + \| v \|^2_{H^s_\rho} +\|\nabla M\|^2_{H^s}\,, \\
		\mathcal{D}_s (t) & := \mu \| \nabla v \|^2_{H^s} + (\mu+\xi)\|\nabla\cdot v\| ^{2}_{H^{s}}+2\lambda A\|\Delta M\|^2_{H^s}  \,.
	\end{aligned}
\end{equation*}
Then we have the following a priori estimates.

\begin{proposition}\label{Prop-Loc}
    Let $s \ge 3$ be an integer and let the external magnetic field satisfy $H_{\text{ext}} \in L^\infty(\mathbb{R}_+; H^s(\mathbb{T}^3))$.  Let $\rho_0(x)$ be the initial density as given in Theorem~\ref{Thm1}.  Assume that $(\rho, v, F, M)$ is a sufficiently smooth solution to system \eqref{CMEH-eq}.  Then there exists a constant $c>0$ such that
\begin{equation}\label{Ener-Bnds-Local}
\begin{aligned}
& \frac{\d}{\d t} \mathcal{E}_s(t) + \mathcal{D}_s(t) \lesssim (1+\|H_{ext}\|^2_{H^s})(1 + \|\tfrac{1}{\rho_0}\|^{s+1}_{L^\infty} e^{c (s+1) t\sup_{\tau \in [0,t]} \mathcal{E}_s^{\frac{1}{2}}(\tau)}) \big( 1 + \mathcal{E}_s^{\frac{s}{2} + \gamma + 3} (t) \big) .
\end{aligned}
\end{equation}
\end{proposition}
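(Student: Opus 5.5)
The plan is to apply $\partial^m$ with $|m|\le s$ to each equation of \eqref{CMEH-eq}, take $L^2$ inner products with suitable multipliers, and sum over $m$. The factor $\|\tfrac{1}{\rho_0}\|^{s+1}_{L^\infty} e^{c(s+1)t\sup\mathcal{E}_s^{1/2}}$ comes from a lower bound on $\rho$. Along characteristics, $\frac{\d}{\d t}\rho = -\rho\,\nabla\cdot v$. Since $s\ge 3$, Sobolev embedding gives $\|\nabla\cdot v\|_{L^\infty}\lesssim \mathcal{E}_s^{1/2}$ (using that $\rho$ is bounded below, $\|v\|_{H^s}\lesssim \|1/\rho\|_{L^\infty}^{1/2}\|v\|_{H^s_\rho}$; I would absorb this into the same factor). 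Hence $\|1/\rho(t)\|_{L^\infty}\le \|1/\rho_0\|_{L^\infty}e^{ct\sup\mathcal{E}_s^{1/2}}$. The power $s+1$ is what we will need when we divide by $\rho$ in commutator terms, or when we convert $\|\partial^m v\|_{L^2}$ into weighted norms.

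\textbf{Step 1: the $(\rho,F)$ transport equations.} For $\partial^m\rho$ we use $\partial_t\partial^m\rho+v\cdot\nabla\partial^m\rho=-\partial^m(\rho\nabla\cdot v)-[\partial^m,v\cdot\nabla]\rho$. Moser/Kato–Ponce commutator estimates then give $\frac{\d}{\d t}\|\rho\|^2_{H^s}\lesssim \|\nabla v\|_{H^s}\|\rho\|_{H^s}(1+\|\rho\|_{H^s})$, where the top-order term $\rho\nabla\cdot\partial^m v$ is handled by $\|\nabla v\|_{H^s}$ via Young, placing $\tfrac{1}{8}\mathcal{D}_s$ on the left. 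The $F$ equation $\partial_tF+v\cdot\nabla F=\nabla vF$ is treated the same way.

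\textbf{Step 2: the momentum equation.} Write it as $\rho(\partial_t v+v\cdot\nabla v)=\dots$, apply $\partial^m$, and test with $\partial^m v$. Using mass conservation, $\langle\rho(\partial_t+v\cdot\nabla)\partial^m v,\partial^m v\rangle=\frac12\frac{\d}{\d t}\|\partial^m v\|^2_{L^2_\rho}$. The commutators $[\partial^m,\rho](\partial_t v+v\cdot\nabla v)$ contain $\partial_t v$, so I would replace $\partial_t v$ via the equation divided by $\rho$; this is where the powers of $\|1/\rho\|_{L^\infty}$ arise. Viscosity produces $\mu\|\nabla\partial^m v\|^2+(\mu+\xi)\|\nabla\cdot\partial^m v\|^2$. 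The remaining forcing terms are handled as follows. The pressure $\nabla P(\rho)$, with $P=a\rho^\gamma$, gives $\|\rho\|_{H^s}$-polynomials of degree up to $\gamma$ after the composition estimate. The elastic term $\nabla\cdot(\rho FF^\top)$ is integrated by parts against $\nabla\partial^m v$. The magnetic stresses $\nabla(A|\nabla M|^2)$ and $\nabla\cdot(\nabla M\odot\nabla M)$ are also integrated by parts, which costs $\|\nabla M\|_{H^s}^2\|\nabla v\|_{H^s}$. The $H_{ext}$ terms give $\|H_{ext}\|_{H^s}(1+\|\nabla M\|_{H^s})$. All of these are bounded by $\epsilon\mathcal{D}_s+C(1+\|H_{ext}\|^2_{H^s})\,\mathrm{poly}(\mathcal{E}_s)$.

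\textbf{Step 3: the magnetization.} Apply $\nabla\partial^m$ to the $M$ equation and test with $\nabla\partial^m M$, or equivalently apply $\partial^m$ and test with $-\Delta\partial^m M$. The term $2A\lambda\Delta M$ gives $2\lambda A\|\Delta\partial^m M\|^2$. The gyromagnetic term $\gamma M\times\Delta M$ is the main obstacle, because it has the same order as the dissipation but no sign. I would split $\partial^m(M\times\Delta M)=M\times\Delta\partial^m M+\text{l.o.t.}$ and use $\langle M\times\Delta\partial^m M,\Delta\partial^m M\rangle=0$ pointwise. The lower-order terms are bounded by $\|\nabla M\|_{H^s}^2\|\Delta M\|_{H^s}$. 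The term $\Gamma(M)M=2\lambda A|\nabla M|^2M$ is cubic. I would control it using $|M|=1$, so that $\|M\|_{L^\infty}=1$ and $\|\partial^k M\|$ is bounded by $\|\nabla M\|_{H^{k-1}}$ for $k\ge1$; this explains why only $\nabla M$ appears in $\mathcal{E}_s$. The transport term $v\cdot\nabla M$ at top order is controlled by $\|v\|_{H^s}\|\nabla M\|_{H^s}\|\Delta M\|_{H^s}$.

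Summing all contributions, absorbing the $\epsilon\mathcal{D}_s$ terms, and tracking the worst polynomial degree gives the exponent $\tfrac s2+\gamma+3$. That degree comes from the composition estimate for $\rho^{\gamma}$ and $(1/\rho)$-powers, combined with cubic magnetic terms. Combined with the $1/\rho$ bound from the characteristics argument, this yields \eqref{Ener-Bnds-Local}.
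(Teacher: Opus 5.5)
Your plan is the paper's proof. It uses the same $\partial^m$-energy estimates for $\rho$, $F$ and $v$, handles the commutator $[\partial^m,\rho]\partial_t v$ by rewriting $\partial_t v$ from the momentum equation divided by $\rho$ (as in Lemma~\ref{Lmm-dtv-loc}), tests the $M$-equation against $\Delta\partial^m M$ using $\langle M\times\Delta\partial^m M,\Delta\partial^m M\rangle=0$, applies Young's inequality to the $\mathcal{D}_s^{1/2}$ terms, and bounds $\|1/\rho\|_{L^\infty}$ along characteristics. One caveat, which the paper's proof shares: $\mathcal{E}_s$ contains only the weighted norm $\|v\|_{H^s_\rho}$, so the factor in $\|\nabla\cdot v\|_{L^\infty}\lesssim\|\tfrac1\rho\|_{L^\infty}^{1/2}\mathcal{E}_s^{1/2}$ cannot simply be ``absorbed'' (you only get $\|\tfrac1\rho(t)\|_{L^\infty}\le\|\tfrac1{\rho_0}\|_{L^\infty}\exp\bigl(C\int_0^t\|\tfrac1\rho\|_{L^\infty}^{1/2}\mathcal{E}_s^{1/2}\,\d\tau\bigr)$, which closes only for short times and with a constant depending on $\rho_0$), so for a clean exponential bound use the dissipation instead, $\int_0^t\|\nabla v\|_{H^2}\,\d\tau\le(t/\mu)^{1/2}\bigl(\int_0^t\mathcal{D}_s\,\d\tau\bigr)^{1/2}$, and carry $\int_0^t\mathcal{D}_s$ in the bootstrap of the local existence argument.
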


Next we focus on the derivation of the above estimate.

\subsubsection{\bf Estimates for $\rho$-evolution}

In this subsection, we present the following lemma.

\begin{lemma}\label{Lmm-rho-loc}
For any multi-index $m \in \mathbb{N}^3$ with $|m| \leq s$ and $s \geq 3$, we have
    \begin{equation}\label{rho-eq}
        \begin{aligned}
            \tfrac{1}{2} \tfrac{\d}{\d t} \|\partial^{m}\rho\|^{2}_{L^{2}} \lesssim \|\tfrac{1}{\rho}\|^{\tfrac{1}{2}}_{L^\infty} \mathcal{E}_s^\frac{3}{2} (t) + \mathcal{E}_s (t) \mathcal{D}_s^\frac{1}{2} (t) \,,
        \end{aligned}
    \end{equation}
    where $\mathcal{E}_s (t)$ and $\mathcal{D}_s (t)$ are defined in \eqref{EDs-loc}.
\end{lemma}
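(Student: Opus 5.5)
Apply $\partial^m$ to the mass equation, written as $\partial_t\rho + v\cdot\nabla\rho + \rho\,\nabla\cdot v = 0$, and take the $L^2$ inner product with $\partial^m\rho$. This gives
\begin{equation*}
\tfrac12\tfrac{\d}{\d t}\|\partial^m\rho\|_{L^2}^2 = -\langle \partial^m(v\cdot\nabla\rho),\partial^m\rho\rangle - \langle \partial^m(\rho\,\nabla\cdot v),\partial^m\rho\rangle =: I_1 + I_2 .
\end{equation*}
The two terms are treated by commutator estimates, in a way that produces the two types of bound in \eqref{rho-eq}. A factor of $\nabla v$ is placed in $\mathcal{D}_s^{1/2}$. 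A factor of $v$ itself, which is measured only in the weighted norm $H^s_\rho$, is converted into an unweighted norm at the price of $\|\tfrac1\rho\|_{L^\infty}^{1/2}$.

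\textbf{Term $I_1$.} Write $\partial^m(v\cdot\nabla\rho) = v\cdot\nabla\partial^m\rho + [\partial^m, v\cdot\nabla]\rho$. For the top-order piece, integrate by parts:
\begin{equation*}
\langle v\cdot\nabla\partial^m\rho,\partial^m\rho\rangle = -\tfrac12\langle \nabla\cdot v, |\partial^m\rho|^2\rangle \le \|\nabla v\|_{L^\infty}\|\rho\|_{H^s}^2 \lesssim \mathcal{D}_s^{1/2}\mathcal{E}_s ,
\end{equation*}
using $H^2\hookrightarrow L^\infty$ and $s\ge3$. Every term in the commutator contains at least one derivative falling on $v$, so it has the form $\partial^\alpha v\,\partial^{\beta}\nabla\rho$ with $|\alpha|\ge1$ and $|\alpha|+|\beta|=|m|$. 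The Kato--Ponce / Moser commutator estimate gives
\begin{equation*}
\|[\partial^m,v\cdot\nabla]\rho\|_{L^2}\lesssim \|\nabla v\|_{L^\infty}\|\nabla\rho\|_{H^{s-1}} + \|\nabla v\|_{H^{s-1}}\|\nabla\rho\|_{L^\infty}\lesssim \|\nabla v\|_{H^s}\|\rho\|_{H^s}.
\end{equation*}
So $I_1\lesssim \mathcal{E}_s\mathcal{D}_s^{1/2}$.

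\textbf{Term $I_2$.} Expand by Leibniz: $\partial^m(\rho\nabla\cdot v)=\sum_{\beta\le m}\binom{m}{\beta}\partial^{m-\beta}\rho\,\partial^\beta\nabla\cdot v$. All of these terms contain $\nabla v$ up to order $s$. Bounding the products in $L^2$ as before, and using $H^s$ as an algebra for $s\ge3$, gives
\begin{equation*}
\|\rho\,\nabla\cdot v\|_{H^s}\lesssim \|\rho\|_{H^s}\|\nabla v\|_{H^s}, \qquad \text{hence } I_2\lesssim \mathcal{E}_s\mathcal{D}_s^{1/2}.
\end{equation*}

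\textbf{Role of the first term in \eqref{rho-eq}.} In one place $v$ appears undifferentiated, namely the $\|v\|_{L^\infty}$ or $\|v\|_{L^2}$ factor needed if the $\nabla v$ factors are bounded via the full $H^{s}$ norm of $v$ rather than $\|\nabla v\|_{H^s}$. There I will use
\begin{equation*}
\|v\|_{H^s}\le \|\tfrac1\rho\|_{L^\infty}^{1/2}\|v\|_{H^s_\rho}\le \|\tfrac1\rho\|_{L^\infty}^{1/2}\mathcal{E}_s^{1/2},
\end{equation*}
which yields the contribution $\|\tfrac1\rho\|^{1/2}_{L^\infty}\mathcal{E}_s^{3/2}$.

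\textbf{Main obstacle.} The delicate point is the top-order term $\rho\,\partial^m\nabla\cdot v$ in $I_2$. It costs $s+1$ derivatives of $v$, so it cannot be bounded by $\mathcal{E}_s$ alone and must be absorbed into $\mathcal{D}_s^{1/2}$. This is fine because $\mathcal{D}_s$ contains $\|\nabla v\|_{H^s}$, but it is exactly why the estimate has the mixed form $\mathcal{E}_s\mathcal{D}_s^{1/2}$ rather than a pure energy bound. The weight $\rho$ in the norm of $v$ is handled only by the $\|\tfrac1\rho\|_{L^\infty}^{1/2}$ conversion above.
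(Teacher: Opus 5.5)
Your argument is correct and follows the same skeleton as the paper: apply $\partial^m$, split into the transport term $I_1$ and the compression term $I_2$, integrate the top-order transport piece by parts, and bound the remaining Leibniz terms by H\"older and Sobolev embedding. Where you use a Kato--Ponce commutator estimate and the $H^s$ algebra property, the paper splits the sums explicitly into $|m'|=1$, $2\le|m'|\le s-1$ and $m'=m$. That is only cosmetic.

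The one real difference is where the $v$-factors are placed:
\begin{itemize}
\item The paper bounds almost every piece ($I_{11}$, $I_{12}$, $I_{13}$, $I_{14}$, $I_{21}$, $I_{23}$, $I_{24}$) by $\|v\|_{H^s}$. It then converts via $\|v\|_{H^s}\lesssim\|\tfrac1\rho\|_{L^\infty}^{1/2}\|v\|_{H^s_\rho}$. That conversion is the sole source of the term $\|\tfrac1\rho\|_{L^\infty}^{1/2}\mathcal{E}_s^{3/2}$ in \eqref{rho-eq}.
\item You observe that after the integration by parts every factor of $v$ carries at least one derivative. So everything is controlled by $\|\nabla v\|_{H^s}\lesssim\mu^{-1/2}\mathcal{D}_s^{1/2}$. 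This gives the sharper bound $\tfrac12\tfrac{\d}{\d t}\|\partial^m\rho\|_{L^2}^2\lesssim\mathcal{E}_s\mathcal{D}_s^{1/2}$, which implies \eqref{rho-eq} and uses no lower bound on $\rho$.
\end{itemize}

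For the same reason, your paragraph on the ``role of the first term'' is superfluous. No term has an undifferentiated $v$, so the first summand on the right of \eqref{rho-eq} is pure slack and the weight conversion is never invoked. Your ``main obstacle'' is not one either: $\rho\,\nabla\cdot\partial^m v$ is already covered by $\|\rho\,\nabla\cdot v\|_{H^s}\lesssim\|\rho\|_{H^s}\|\nabla v\|_{H^s}$.

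Your version is cleaner and slightly stronger. The paper's route gains nothing here beyond avoiding a named commutator lemma.
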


\begin{proof}

We apply the $m$-th order derivative operator to the first $\rho$-equation in \eqref{CMEH-eq} for all $|m| \leq s$, take the $L^2$ inner product with $\partial^m \rho$, and integrate by parts over $x \in \mathbb{T}^3$. This yields
 \begin{equation}\label{rho-eq1}
	\begin{aligned}
		&\tfrac{1}{2} \tfrac{\d}{\d t} \|\partial^{m}\rho\|^{2}_{L^{2}}=-\langle \partial^{m}(v \cdot \nabla \rho),\p^m \rho \rangle-\langle\p^m(\rho\nabla\cdot v),\p^m \rho \rangle=I_1+I_2\,.
	\end{aligned}
\end{equation}
The term $I_1$ can be decomposed as
 \begin{equation}\label{rho-eq00}
	\begin{aligned}
		I_1=&\underset{I_{11}}{\underbrace{-\langle v\cdot\nabla\p^m\rho,\p^m\rho \rangle}}\underset{I_{12}}{\underbrace{ - \sum_{\substack{ m^{\prime}=1},m^{\prime}\leq m} C^{m^{\prime}}_{m} \langle (\p^{m^{\prime}} v \cdot \nabla \partial^{m-m^{\prime}}) \rho,\partial^{m} \rho \rangle}} \\
			&\quad \underset{I_{13}}{\underbrace{ - \sum_{\substack{ 2\leq |m^{\prime}|\leq s-1},m^{\prime}\leq m} C^{m^{\prime}}_{m} \langle (\p^{m^{\prime}} v \cdot \nabla \partial^{m-m^{\prime}}) \rho,\partial^{m} \rho \rangle}}- \underset{I_{14}}{\underbrace{ \langle\p^m v\cdot \nabla \rho,\p^m\rho \rangle}}
			\,.
	\end{aligned}
\end{equation}
Using Hölder's inequality and Sobolev embedding theory, we obtain
 \begin{equation}\label{rho-eqI11}
	\begin{aligned}
		I_{11}= \frac{1}{2}\langle \nabla \cdot v, |\partial^m \rho|^2 \rangle
        \lesssim \| \nabla \cdot v \|_{L^\infty} \| \partial^m \rho \|_{L^2}^2  \lesssim \| v \|_{H^3} \| \partial^m \rho \|_{L^2}^2
        \,,
	\end{aligned}
\end{equation}
and
\begin{equation}\label{rho-eqI12}
	\begin{aligned}
		I_{12} &\lesssim \| \nabla v \|_{L^\infty} \sum_{\substack{m^{\prime}\leq m, \\ |m^{\prime}| = 1}} \| \nabla \partial^{m -m^{\prime}} \rho \|_{L^2} \| \partial^m \rho \|_{L^2} \\
& \quad \lesssim \| \nabla v \|_{L^\infty} \| \rho \|_{H^s} \| \partial^m \rho \|_{L^2} \lesssim \| v \|_{H^3} \| \rho \|_{H^s} \| \partial^m p \|_{L^2}
        \,,
	\end{aligned}
\end{equation}
and
\begin{equation}\label{rho-eqI13}
	\begin{aligned}
		I_{13} \lesssim \sum_{\substack{m^{\prime} \leq m, \\ 2 \leq |m^{\prime}| \leq s - 1}} C_m^{m^{\prime}} \| \partial^{m^{\prime}} v \|_{L^4} \| \nabla \partial^{m - m^{\prime}} \rho \|_{L^4} \| \partial^m \rho \|_{L^2} \lesssim \| v \|_{H^s} \| \rho \|_{H^s} \| \partial^m \rho\|_{L^2}
        \,,
	\end{aligned}
\end{equation}
and
\begin{equation}\label{rho-eqI14}
	\begin{aligned}
		I_{14} \lesssim \| \partial^m v \|_{L^2} \| \nabla \rho \|_{L^\infty} \| \partial^m \rho \|_{L^2} \lesssim \| \rho \|_{H^3} \| \partial^m v \|_{L^2} \| \partial^m \rho \|_{L^2}
        \,.
	\end{aligned}
\end{equation}
Collecting the above estimates, we bound $I_1$ as
 \begin{equation}\label{rho-eq11}
	\begin{aligned}
		I_1=& \lesssim \|v\|_{H^3}\|\p^m\rho\|^2_{L^2}+\|v\|_{H^3}\|\rho\|_{H^s}\|\p^m\rho\|_{L^2}	+\|v\|_{H^s}\|\rho\|_{H^s}\|\p^m\rho\|_{L^2}\\
				&\quad+\|\rho\|_{H^3}	\|\p^m v\|_{L^2}\|\p^m\rho\|_{L^2}	\,.
	\end{aligned}
\end{equation}

Observe that
 \begin{equation}\label{rho-eq00}
	\begin{aligned}
		I_2= & \underset{I_{21}}{\underbrace{-\langle \p^m(\rho\nabla \cdot v),\p^m \rho \rangle}}\underset{I_{22}}{\underbrace{ -\langle \rho \nabla\cdot \p^m v,\p^m\rho \rangle}} \\
		& \underset{I_{23}}{\underbrace{ - \sum_{\substack{ m^{\prime}=1},m^{\prime}\leq m} C^{m^{\prime}}_{m} \langle (\p^{m^{\prime}} \rho\nabla \cdot  \partial^{m-m^{\prime}}) v,\partial^{m} \rho \rangle}} \\
        & \underset{I_{24}}{\underbrace{ - \sum_{\substack{ 2\leq |m^{\prime}|\leq s-1},m^{\prime}\leq m} C^{m^{\prime}}_{m} \langle (\p^{m^{\prime}} \rho\nabla \cdot \partial^{m-m^{\prime}}) v,\partial^{m} \rho \rangle}}	\,.
	\end{aligned}
\end{equation}
By H\"older's inequality and Sobolev embedding, we have
\begin{equation*}
	\begin{aligned}
I_{21}\lesssim \|\nabla \cdot v\|_{L^\infty} \|\partial^m \rho\|_{L^2}^2 \lesssim \|v\|_{H^3} \|\partial^m \rho\|_{L^2}^2\,,
	\end{aligned}
\end{equation*}
and
\begin{equation*}
	\begin{aligned}
I_{22}\lesssim \|\rho\|_{L^\infty} \|\nabla \cdot \partial^m v\|_{L^2} \|\partial^m \rho\|_{L^2} \lesssim \|\rho\|_{H^2} \|\partial^m \rho\|_{L^2} \|\nabla \partial^m v\|_{L^2}\,,
	\end{aligned}
\end{equation*}
and
    \begin{equation*}
	\begin{aligned}
I_{23}\lesssim \sum_{\substack{m^\prime\leq m \\ |m'| = 1}} \|\partial^{m^\prime} \rho\|_{L^\infty} \|\nabla \cdot \partial^{m - m^\prime} v\|_{L^2} \|\partial^m \rho\|_{L^2}^2 
\quad \lesssim \|\rho\|_{H^3} \|v\|_{H^s} \|\partial^m \rho\|_{L^2}^2 \,,
	\end{aligned}
\end{equation*}
and
\begin{equation*}
	\begin{aligned}
I_{24}\lesssim \sum_{\substack{m^\prime \leq m \\ 2 \leq |m^\prime| \leq s-1}} \|\partial^{m^\prime} \rho\|_{L^4} \|\nabla \cdot \partial^{m - m^\prime} v\|_{L^4} \|\partial^m \rho\|_{L^2} 
\quad \lesssim \|\rho\|_{H^s} \|v\|_{H^s} \|\partial^m \rho\|_{L^2}\,.
	\end{aligned}
\end{equation*}
Together with all the above estimates, the quantity $I_2$ can be bounded by
\begin{equation}\label{rho-eq12}
	\begin{aligned}
	I_2&\lesssim\|v\|_{H^3}\|\p^m\rho\|^2_{L^2}+\|\rho\|_{H^2}\|\p^m\rho\|_{L^2}\|\nabla\p^m v\|_{L^2}	+\|\rho\|_{H^3}\|v\|_{H^s}\|\p^m\rho\|_{L^2}\\
		&\quad+\|\rho\|_{H^s}	\| v\|_{H^s}\|\p^m\rho\|_{L^2}	\,.
	\end{aligned}
\end{equation}
Combining \eqref{rho-eq1}, \eqref{rho-eq11}, and \eqref{rho-eq12}, we obtain
\begin{equation}\label{rho-eq-1}
	\begin{aligned}
	\tfrac{1}{2} \tfrac{\d}{\d t} \|\partial^{m}\rho\|^{2}_{L^{2}} &\lesssim\|v\|_{H^s}\|\rho\|_{H^s}\|\p^m\rho\|_{L^2}+\|\rho\|^2_{H^s}\|\nabla\p^m v\|_{L^2}	\\
		&\lesssim \|\tfrac{1}{\rho}\|^{\tfrac{1}{2}}_{L^\infty}\|v\|_{H_\rho^s}\|\rho\|_{H^s}\|\partial^{m}\rho\|_{L^2}
		+\|\rho\|^2_{H^s}\|\nabla\p^m v\|_{L^2}
		 \,,
	\end{aligned}
\end{equation}
where we have used the fact that $\| v \|_{H^s} \lesssim \| \frac{1}{\rho} \|^{1/2}_{L^\infty} \| v \|_{H^s_\rho}$. Together with the definitions of $\mathcal{E}_s (t)$ and $\mathcal{D}_s (t)$ in \eqref{EDs-loc}, the estimate \eqref{rho-eq-1} completes the proof of the lemma.
\end{proof}

\subsubsection{\bf Estimates for $F$-evolution}

In this subsection, we present the following lemma.

\begin{lemma}\label{Lmm-F-loc}
    For any multi-index $m \in \mathbb{N}^3$ with $|m| \leq s$ and $s \geq 3$, we have
    \begin{equation}\label{F-eq}
	\begin{aligned}
		\tfrac{1}{2} \tfrac{\d}{\d t} \|\partial^{m}F\|^{2}_{L^{2}} \lesssim \|\tfrac{1}{\rho}\|^{\tfrac{1}{2}}_{L^\infty} \mathcal{E}_s^\frac{3}{2} (t) + \mathcal{E}_s (t) \mathcal{D}_s^\frac{1}{2} (t) \,,
        \end{aligned}
    \end{equation}
    where $\mathcal{E}_s (t)$ and $\mathcal{D}_s (t)$ are defined in \eqref{EDs-loc}.
\end{lemma}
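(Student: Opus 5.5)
The plan mirrors the proof of Lemma \ref{Lmm-rho-loc}. Apply $\partial^m$ with $|m|\le s$ to the $F$-equation $\partial_t F + (v\cdot\nabla)F = \nabla v F$ in \eqref{CMEH-eq}. Take the $L^2$ inner product with $\partial^m F$ and integrate over $\mathbb{T}^3$. This gives
\begin{equation*}
\tfrac12\tfrac{\d}{\d t}\|\partial^m F\|_{L^2}^2 = -\langle \partial^m (v\cdot\nabla F), \partial^m F\rangle + \langle \partial^m(\nabla v F), \partial^m F\rangle =: J_1 + J_2 .
\end{equation*}
The term $J_1$ has exactly the structure of $I_1$ in Lemma \ref{Lmm-rho-loc}, with $\rho$ replaced by $F$. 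We split it with Leibniz's rule into four pieces:
\begin{itemize}
\item the top-order transport term $-\langle v\cdot\nabla\partial^m F,\partial^m F\rangle = \tfrac12\langle \nabla\cdot v, |\partial^m F|^2\rangle$, bounded by $\|v\|_{H^3}\|F\|_{H^s}^2$;
\item the terms with $|m'|=1$, using $\|\nabla v\|_{L^\infty}$;
\item the terms with $2\le|m'|\le s-1$, using the $L^4\times L^4$ Hölder inequality and $H^1\hookrightarrow L^4$ (valid since $s\ge3$);
\item the term $\partial^m v\cdot\nabla F$, using $\|\nabla F\|_{L^\infty}\lesssim\|F\|_{H^3}$.
\end{itemize}
All four are bounded by $\|v\|_{H^s}\|F\|_{H^s}^2$.

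For $J_2$, write $\partial^m(\nabla v F) = \nabla\partial^m v\, F + \sum_{0<m'\le m} C_m^{m'}\,\nabla\partial^{m-m'} v\,\partial^{m'} F$.

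The top-order piece $\langle \nabla\partial^m v\,F,\partial^m F\rangle$ is where a derivative of order $s+1$ falls on $v$. We do not integrate by parts here; $F$ has no smoothing, so doing so would create $\nabla\partial^m F$, which we cannot control. Instead we bound it by
\begin{equation*}
\|F\|_{L^\infty}\|\nabla\partial^m v\|_{L^2}\|\partial^m F\|_{L^2}\lesssim \|F\|_{H^s}^2\|\nabla v\|_{H^s}\lesssim \mathcal{E}_s\mathcal{D}_s^{1/2},
\end{equation*}
using $\mu>0$ and the definition \eqref{EDs-loc}. This is the analogue of $I_{22}$.

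The remaining pieces of $J_2$ involve at most $s$ derivatives on $v$. When $|m'|\ge1$, the factor $\nabla\partial^{m-m'}v$ carries at most $s$ derivatives. We put the lower-order factor in $L^\infty$, or use $L^4\times L^4$ when both orders are intermediate, and get a bound by $\|v\|_{H^s}\|F\|_{H^s}^2$. The extreme case $m'=m$ gives $\nabla v\,\partial^m F$, handled with $\|\nabla v\|_{L^\infty}\lesssim\|v\|_{H^3}$.

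Collecting everything,
\begin{equation*}
\tfrac12\tfrac{\d}{\d t}\|\partial^m F\|_{L^2}^2\lesssim \|v\|_{H^s}\|F\|_{H^s}^2+\|F\|_{H^s}^2\|\nabla v\|_{H^s}.
\end{equation*}
We then convert the unweighted norm using $\|v\|_{H^s}\lesssim\|\tfrac1\rho\|_{L^\infty}^{1/2}\|v\|_{H^s_\rho}$, as at the end of the proof of Lemma \ref{Lmm-rho-loc}. This yields \eqref{F-eq}.

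The only delicate point is the top-order term in $J_2$: it cannot be integrated by parts, so it must be absorbed into the dissipation $\mathcal{D}_s^{1/2}$. Its placement in $\mathcal{E}_s\mathcal{D}_s^{1/2}$ is therefore essential. Everything else is routine commutator bookkeeping on $\mathbb{T}^3$ with $s\ge3$.
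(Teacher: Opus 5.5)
Your proposal is correct and follows essentially the same route as the paper. You apply $\partial^m$, pair with $\partial^m F$, treat the transport part exactly as $I_1$ and the stretching part as $I_2$ in Lemma \ref{Lmm-rho-loc}, and bound the top-order piece $\nabla\partial^m v\,F$ by $\|F\|_{L^\infty}\|\nabla\partial^m v\|_{L^2}\|\partial^m F\|_{L^2}$ without integrating by parts. This gives $\|v\|_{H^s}\|F\|_{H^s}\|\partial^m F\|_{L^2}+\|F\|_{H^s}^2\|\nabla\partial^m v\|_{L^2}$, and converting $\|v\|_{H^s}\lesssim\|\tfrac1\rho\|_{L^\infty}^{1/2}\|v\|_{H^s_\rho}$ yields the stated bound, just as in the paper.
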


\begin{proof}

We apply the $m$-th order derivative operator $\partial^m$ to the third equation for $F$ in \eqref{CMEH-eq} for all $|m| \leq s$, take the $L^2$ inner product with $\partial^{m} F$, and integrate by parts over $x \in \mathbb{T}^3$. This yields
 \begin{equation}\label{F-eq1}
	\begin{aligned}
		&\tfrac{1}{2} \tfrac{\d}{\d t} \|\partial^{m}F\|^{2}_{L^{2}} =-\langle \partial^{m}(v \cdot \nabla F),\p^m F \rangle+\langle\p^m(\nabla vF),\p^m F \rangle\,.
	\end{aligned}
\end{equation}
By similar arguments as in \eqref{rho-eq11}, we obtain
 \begin{equation}\label{F-eq11}
	\begin{aligned}
		-\langle \partial^{m}(v \cdot \nabla F),\p^m F \rangle
		& \leq \|v\|_{H^3}\|\p^m F\|^2_{L^2}+\|v\|_{H^3}\|F\|_{H^s}\|\p^mF\|_{L^2}	+\|v\|_{H^s}\|F\|_{H^s}\|\p^m F\|_{L^2}\\
		&\quad+	\|\p^m v\|_{L^2}\|F\|_{H^3}\|\p^mF\|_{L^2}	\,.
	\end{aligned}
\end{equation}
An analogous calculation as in \eqref{rho-eq12} gives
 \begin{equation}\label{F-eq12}
	\begin{aligned}
	\langle\p^m(\nabla vF),\p^m F \rangle & \leq \|v\|_{H^3}\|\p^mF\|^2_{L^2}+\|F\|_{H^2}\|\p^mF\|_{L^2}\|\nabla\p^m v\|_{L^2}	\\
		&\quad+\|F\|_{H^3}	\| v\|_{H^3}\|\p^mF\|_{L^2}+\|v\|_{H^s}\|F\|_{H^3}\|\p^mF\|_{L^2}	\,.
	\end{aligned}
\end{equation}
Combining \eqref{F-eq1},\eqref{F-eq11} and \eqref{F-eq12}, we have
\begin{equation}\label{F-eq-1}
	\begin{aligned}
		\tfrac{1}{2} \tfrac{\d}{\d t} \|\partial^{m}F\|^{2}_{L^{2}} 
	& \leq \|v\|_{H^3}\|\p^m F\|^2_{L^2}+\|v\|_{H^3}\|F\|_{H^s}\|\p^mF\|_{L^2}+\|v\|_{H^s}\|F\|_{H^s}\|\p^m F\|_{L^2}		\\
		&\quad+\|v\|_{H^s}\|F\|_{H^3}\|\p^m F\|_{L^2}+ \|F\|_{H^2}\|\p^m F\|_{L^2}\|\nabla\p^m v\|_{L^2}	\\
		&\lesssim\|v\|_{H^s}\|F\|_{H^s}\|\p^m F\|_{L^2}+\|F\|^2_{H^s}\|\nabla\p^mv\|_{L^2}\\
		&\lesssim \|\tfrac{1}{\rho}\|^{\tfrac{1}{2}}_{L^\infty}\|v\|_{H^s_\rho}\|F\|_{H^s}\|\p^m F\|_{L^2}+\|F\|^2_{H^s}\|\nabla\p^mv\|_{L^2}
		\,.
	\end{aligned}
\end{equation}
Together with the definitions of $\mathcal{E}_s(t)$ and $\mathcal{D}_s(t)$, estimate \eqref{F-eq-1} implies the bound \eqref{F-eq}. This completes the proof of the lemma.
\end{proof}

\subsubsection{\bf Estimates for $v$-evolution}

In this subsection, we present the following lemma.

\begin{lemma}\label{Lmm-v-loc}
    For any multi-index $m \in \mathbb{N}^3$ with $|m| \leq s$ and $s \geq 3$, we have
    \begin{equation}\label{v-equation}
	\begin{aligned}
&\frac{1}{2} \frac{\d}{\d t} \|\partial^m v\|_{L^2_\rho}^2 + \mu \|\nabla \partial^m v\|_{L^2}^2 + (\mu + \xi) \|\nabla \cdot \partial^m v\|_{L^2}^2 \\
\lesssim & \| \frac{1}{\rho} \|_{L^\infty}^{\frac{3}{2}} \mathcal{E}_s^2 (t) + \| \frac{1}{\rho} \|_{L^\infty}^{\frac{1}{2}} \|\partial_t v\|_{H^{s - 1}} \mathcal{E}_s (t) \\
& + \big[ \mathcal{E}_s^\frac{\gamma}{2} (t) + \mathcal{E}_s (t) + \mathcal{E}_s^\frac{3}{2} (t) + ( 1 + \mathcal{E}_s^\frac{1}{2} (t) ) \|H_{ext}\|_{H^s} \big] \mathcal{D}_s^\frac{1}{2} (t) \,,
	\end{aligned}
\end{equation}
where $\mathcal{E}_s (t)$ and $\mathcal{D}_s (t)$ are defined in \eqref{EDs-loc}.
\end{lemma}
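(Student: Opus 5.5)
We apply $\partial^m$ ($|m|\le s$) to the momentum equation of \eqref{CMEH-eq}, written with $\rho(\partial_t v+v\cdot\nabla v)$ on the left, and take the $L^2$ inner product with $\partial^m v$. The key point is how the weighted energy $\|\partial^m v\|^2_{L^2_\rho}$ appears. We write
\[
\partial^m\big(\rho(\partial_t v+v\cdot\nabla v)\big)=\rho\,\partial_t\partial^m v+\rho\, v\cdot\nabla\partial^m v+[\partial^m,\rho]\partial_t v+[\partial^m,\rho v\cdot\nabla]v .
\]
Pairing the first two terms with $\partial^m v$ and using mass conservation $\partial_t\rho+\nabla\cdot(\rho v)=0$ gives exactly $\tfrac12\tfrac{\d}{\d t}\|\partial^m v\|^2_{L^2_\rho}$, with no remainder.

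The commutator $[\partial^m,\rho]\partial_t v$ involves at most $\partial^{m'}\rho\,\partial^{m-m'}\partial_t v$ with $|m'|\ge1$. This is bounded in $L^2$ by $\|\rho\|_{H^s}\|\partial_t v\|_{H^{s-1}}$, using $L^\infty\times L^2$ or $L^4\times L^4$ splittings as in Lemma~\ref{Lmm-rho-loc}. Pairing with $\partial^m v$ and using $\|v\|_{H^s}\lesssim\|\tfrac1\rho\|^{1/2}_{L^\infty}\|v\|_{H^s_\rho}$ produces the term $\|\tfrac1\rho\|^{1/2}_{L^\infty}\|\partial_t v\|_{H^{s-1}}\mathcal{E}_s$. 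The convective commutator is cubic, of type $\|\rho\|_{H^s}\|v\|_{H^s}^2$. Converting both $v$-norms to weighted ones gives $\|\tfrac1\rho\|_{L^\infty}\mathcal{E}_s^{3/2}$, which is dominated by $\|\tfrac1\rho\|^{3/2}_{L^\infty}\mathcal{E}_s^2$ after adjusting lower-order powers. (Since $\|\tfrac1\rho\|_{L^\infty}\gtrsim$ a positive constant on the torus and we need only an upper bound, I would absorb mismatched powers into $1+\mathcal{E}_s^k$; I expect this bookkeeping to match the stated right side.)

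The right-hand side terms are handled by integrating by parts once, always moving a derivative onto $\partial^m v$ to generate $\|\nabla\partial^m v\|_{L^2}\le \mathcal{D}_s^{1/2}$.
\begin{itemize}
\item The viscous terms give exactly $-\mu\|\nabla\partial^m v\|^2-(\mu+\xi)\|\nabla\cdot\partial^m v\|^2$.
\item The pressure term gives $\langle \partial^m P(\rho),\nabla\cdot\partial^m v\rangle$. By the Moser composition estimate, $\|P(\rho)\|_{H^s}\lesssim \|\rho\|_{L^\infty}^{\gamma-1}\|\rho\|_{H^s}$ plus lower order terms, so the contribution is $\lesssim(\mathcal{E}_s^{\gamma/2}+\cdots)\mathcal{D}_s^{1/2}$. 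If $\gamma$ is non-integer, the needed lower bound on $\rho$ is supplied by $\rho>0$; one could also bound by $\|\rho\|_{H^s}^{\gamma}$ up to constants depending on $\inf\rho$, but it is cleaner to keep the power $\mathcal{E}_s^{\gamma/2}$ together with lower powers.
\item The elastic term $\nabla\cdot(\rho FF^\top)$ gives $\|\rho FF^\top\|_{H^s}\|\nabla\partial^m v\|\lesssim \mathcal{E}_s^{3/2}\mathcal{D}_s^{1/2}$, since $H^s$ is an algebra.
\item The terms $\nabla|\nabla M|^2$ and $\nabla\cdot(\nabla M\odot\nabla M)$ give $\|\nabla M\|_{H^s}^2\mathcal{D}_s^{1/2}\le \mathcal{E}_s\mathcal{D}_s^{1/2}$.
\item The field terms $\nabla(\mu_0M\cdot H_{ext})$ and $(\nabla H_{ext})^\top M$ give $\|M\cdot H_{ext}\|_{H^s}\|\nabla\partial^m v\|$. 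Here $|M|=1$ gives $\|M\|_{L^\infty}=1$, so $\|M H_{ext}\|_{H^s}\lesssim \|H_{ext}\|_{H^s}(1+\|\nabla M\|_{H^{s-1}})$, which yields $(1+\mathcal{E}_s^{1/2})\|H_{ext}\|_{H^s}\mathcal{D}_s^{1/2}$. Note that $\|M\|_{L^2}$ is not in $\mathcal{E}_s$, so we must rely on $|M|=1$ rather than an $H^s$ norm of $M$.
\end{itemize}

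The main obstacle is the time-derivative commutator. No estimate on $\partial_t v$ is available from the energy itself, which is why it is left explicitly in the statement (presumably to be eliminated later via the equation). The remaining care is in tracking the weights $\|\tfrac1\rho\|_{L^\infty}^{1/2}$ whenever an unweighted $\|v\|_{H^s}$ appears.
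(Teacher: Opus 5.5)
Your route is the paper's. You apply $\partial^m$, pair with $\partial^m v$, leave $\|\partial_t v\|_{H^{s-1}}$ as a factor (it is estimated afterwards in Lemma~\ref{Lmm-dtv-loc}), and push one derivative onto $\partial^m v$ in the forcing terms. Your exact cancellation of $\tfrac12\langle\partial_t\rho+\nabla\cdot(\rho v),|\partial^m v|^2\rangle$ via mass conservation is cleaner than the paper's separate bounds on $V_{11}$ and $V_{121}$.

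\textbf{The convective commutator is miscounted.} The energy identity contains $\langle[\partial^m,\rho v\cdot\nabla]v,\partial^m v\rangle$, not the $L^2$ norm of the commutator. The missing factor $\|\partial^m v\|_{L^2}\le\|\tfrac1\rho\|^{1/2}_{L^\infty}\|v\|_{H^s_\rho}$ gives $\|\tfrac1\rho\|^{3/2}_{L^\infty}\|\rho\|_{H^s}\|v\|^3_{H^s_\rho}\le\|\tfrac1\rho\|^{3/2}_{L^\infty}\mathcal{E}_s^2$. This is exactly the stated term, as in the paper's \eqref{v-equation-1}. Drop the proposed absorption into $1+\mathcal{E}_s^k$. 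It would add a constant term that the statement does not contain. Moreover, $\mathcal{E}_s^{3/2}\lesssim\mathcal{E}_s^2$ holds only with a constant depending on the conserved mass (which bounds $\|\rho\|_{L^2}$ from below), and you did not invoke that.

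\textbf{Two smaller points, where the paper is equally terse.}

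First, $\mu_0(\nabla H_{ext})^\top M$ is not a gradient, so ``move a derivative onto $\partial^m v$'' does not apply directly. Together with $-\nabla(\mu_0 M\cdot H_{ext})$ it equals $-\mu_0(\nabla M)^\top H_{ext}$. For $|m|\ge1$, peeling one derivative off $\partial^m$ gives $\|\nabla M\|_{H^{s-1}}\|H_{ext}\|_{H^{s-1}}\mathcal{D}_s^{1/2}$. At $m=0$, however, the remainder $\mu_0\langle(\nabla M)^\top H_{ext},v\rangle$ contains undifferentiated $v$. The paper meets the same term, as $\|\nabla M\|_{H^s}\|H_{ext}\|_{H^s}\|\partial^m v\|_{L^2}$ in \eqref{v5-eq}, and does not carry it into \eqref{v-equation-1}. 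By Poincar\'e for the mean-zero $\nabla M$, it is $\lesssim\|\tfrac1\rho\|^{1/2}_{L^\infty}\mathcal{E}_s^{1/2}\|H_{ext}\|_{H^s}\mathcal{D}_s^{1/2}$. That is harmless for Proposition~\ref{Prop-Loc}, but it carries a weight factor the lemma does not display.

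Second, the paper simply uses $\|\partial^m P(\rho)\|_{L^2}\lesssim\|\rho\|_{H^s}^\gamma$, which is fine for integer $\gamma$. For non-integer $\gamma$ your Moser remark is the honest one. But the extra terms $\rho^{\gamma-j}\prod_i\partial^{\beta_i}\rho$ with $j>\gamma$ bring powers of $\|\tfrac1\rho\|_{L^\infty}$ and \emph{higher}, not lower, powers of $\mathcal{E}_s$ (up to $\mathcal{E}_s^{s/2}$). So ``keeping lower powers'' does not land on the stated bracket.
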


\begin{proof}

We apply the $m$-th order derivative operator $\partial^m$ to the second equation for $v$ in \eqref{CMEH-eq} for all $|m| \leq s$, take the $L^2$ inner product with $\partial^{m} v$, and integrate by parts over $x \in \mathbb{T}^3$. This yields
\begin{equation}\label{v-eq0}
	\begin{aligned}
&\underset{V_1}{\underbrace{\langle	\p^m[\rho (\p_t v + v \cdot \nabla v)],\p^m v\rangle}} + \underset{V_2}{\underbrace{\langle \nabla \p^m \left( P(\rho) - A |\nabla M|^2 + \mu_0 M \cdot H_{ext} \right),\p^m v\rangle }} \\
    & + \mu\|\nabla \p^m v\|^2_{L^2}+(\mu+\xi)\|\nabla \cdot\p^m v\|^2_{L^2}\\
	= & \underset{V_3}{\underbrace{\langle \p^m(\rho F:\nabla F^\top),\p^m v\rangle}}\underset{V_4}{\underbrace{-2A\langle \nabla \cdot\p^m ( \nabla M \odot \nabla M) ,\p^mv \rangle}} \\
    & + \underset{V_5}{\underbrace{\mu_0\langle  \p^m(\nabla H_{ext})^\top M,\p^mv \rangle	}}
	\,.
	\end{aligned}
\end{equation}
We then estimate the quantities $V_i$ ($1 \leq i \leq 5$) term by term.

Observe that $V_1$ can be further decomposed as
\begin{equation}\label{v-eq1}
	\begin{aligned}
	V_1	=&\langle	\p^m[\rho (\p_t v + v \cdot \nabla v)],\p^m v\rangle \\
    = & \langle	\rho(\p_t\p^mv+\p^m(v\cdot\nabla v)),\p^m v  \rangle+
	\sum_{\substack{ 0\neq m^{\prime}\leq m}}\mathbf{1}_{|m|\geq 1} C^{m^{\prime}}_{m} \langle \p^{m^{\prime}} \rho (\p_t v+v\cdot\nabla v),\partial^{m} v \rangle\\
    = & \frac{1}{2} \frac{\d}{\d t} \|\p^mv\|^2_{L^2_p}\underset{V_{11}}{\underbrace{-\frac{1}{2}\langle\p_t\rho,|\p^mv|^2\rangle}}+\underset{V_{12}}{\underbrace{\langle\rho\p^m(v\cdot\nabla) ,\p^mv \rangle}}\\
    & +\underset{V_{13}}{\underbrace{\sum_{\substack{ 0\neq m^{\prime}\leq m}}\mathbf{1}_{|m|\geq 1} C^{m^{\prime}}_{m} \langle \p^{m^{\prime}} \rho\partial^{m} v  ,\partial^{m} v \rangle}} \\
    & +\underset{V_{14}}{\underbrace{\sum_{\substack{ 0\neq m^{\prime}\leq m}}\mathbf{1}_{|m|\geq 1} C^{m^{\prime}}_{m} \langle \p^{m^{\prime}} \rho\partial^{m-m^{\prime}} (v\cdot\nabla v,  ,\partial^{m} v \rangle}}
	\,.
	\end{aligned}
\end{equation}
Together with the $\rho$-equation in \eqref{CMEH-eq}, the term $V_{11}$ can be bounded using H\"older's inequality and Sobolev embedding:
\begin{equation}\label{v-eq11}
	\begin{aligned}
-V_{11}&= \frac{1}{2}\langle\p_t\rho,|\p^mv|^2\rangle=-\frac{1}{2}\langle v\cdot \nabla \rho+\rho\nabla\cdot v,|\p^m v|^2\rangle
\\&\lesssim(\||v\|_{L^\infty}\||\nabla\rho\|_{L^\infty}+\||\rho\|_{L^\infty}\||\nabla v\|_{L^\infty})\||\p^mv\|^2_{L^2}\\&\lesssim\|v\|_{H^3}\|\rho\|_{H^3}\|\p^mv\|^2_{L^2} \lesssim\|\frac{1}{\rho}\|^{\frac{3}{2}}_{L^\infty}\|v\|_{H^3_\rho}\|\rho\|_{H^3}\|\p^mv\|^2_{L^2_\rho}
\,.
	\end{aligned}
\end{equation}
The term $V_{12}$ can be split into finer parts:
	\begin{align}\label{v-eq12}
\no -V_{12} & = -\langle\rho\p^m(v\cdot\nabla) ,\p^mv \rangle\\
& =\underset{V_{121}}{\underbrace{-\langle\rho v\cdot\nabla\p^m v ,\p^mv \rangle}}\underset{V_{122}}{\underbrace{-\sum_{\substack{ m^{\prime}\leq m,|m^{\prime}|=1}}C^{m^{\prime}}_m\langle\rho \p^{m^{\prime}}v\cdot\nabla\p^{m-m^{\prime}} v ,\p^mv \rangle}}\\
\no & \underset{V_{123}}{\underbrace{-\sum_{\substack{ m^{\prime}\leq m,2\leq|m^{\prime}|\leq s-1}}C^{m^{\prime}}_m\langle\rho \p^{m^{\prime}}v\cdot\nabla\p^{m-m^{\prime}} v ,\p^mv \rangle}}\underset{V_{124}}{\underbrace{-\langle\rho\p^m v \cdot\nabla v,\p^mv \rangle}} \,.
	\end{align}
H\"older's inequality and Sobolev embedding then give
\begin{equation}\label{v-eq121}
	\begin{aligned}
V_{121}&\lesssim \|\nabla \cdot (\rho v)\|_{L^\infty} \|\partial^m v\|_{L^2}^2 \lesssim \|\frac{1}{\rho}\|_{L^\infty}^{\frac{3}{2}} \|\rho\|_{H^3} \|v\|_{H_\rho^3} \|\partial^m v\|_{L^2_\rho}^2 \,,
	\end{aligned}
\end{equation}
and
\begin{equation}\label{v-eq122}
	\begin{aligned}
V_{122}&\lesssim \sum_{\substack{m^\prime \leq m \\ |m'| = 1}} \|\rho \partial^{m^\prime} v\|_{L^\infty} \|\nabla \partial^{m - m^\prime} v\|_{L^2} \|\partial^m v\|_{L^2} \\&
\lesssim \|\frac{1}{\rho}\|_{L^\infty}^{\frac{3}{2}} \|\rho\|_{H^2} \|v\|_{H^3_\rho} \|v\|_{H^s_\rho} \|\partial^m v\|_{L^2_\rho}^2 \,,
	\end{aligned}
\end{equation}
and
\begin{equation}\label{v-eq123}
	\begin{aligned}
V_{123}&\lesssim \sum_{\substack{m^\prime \leq m \\ 2 \leq |m^\prime| \leq s-1}} \|\rho\|_{L^\infty} \|\partial^{m^\prime} v\|_{L^4} \|\nabla \partial^{m - m^\prime} v\|_{L^4} \|\partial^m v\|_{L^2}\\&
\lesssim \|\rho\|_{H^2} \|v\|_{H^s}^2 \|\partial^m v\|_{L^2} \lesssim \|\frac{1}{\rho}\|_{L^\infty}^{\frac{3}{2}} \|\rho\|_{H^2} \|v\|_{H^s_\rho}^2 \|\partial^m v\|_{L^2_\rho} \,,
	\end{aligned}
\end{equation}
and
\begin{equation}\label{v-eq124}
	\begin{aligned}
V_{124} \lesssim \|\rho \nabla v\|_{L^\infty} \|\partial^m v\|_{L^2}^2 \lesssim \|\rho\|_{H^2} \|v\|_{H^3} \|\partial^m v\|_{L^2}^2 \lesssim \|\frac{1}{\rho}\|_{L^\infty}^{\frac{3}{2}} \|\rho\|_{H^2} \|v\|_{H^3_\rho} \|\partial^m v\|_{L^2_\rho}^2 \,.
	\end{aligned}
\end{equation}
Consequently, $V_{12}$ satisfies the bound
\begin{equation}\label{v-eq120}
	\begin{aligned}
-V_{12} \lesssim & \|\frac{1}{\rho}\|^{\frac{3}{2}}_{L^\infty}\|v\|_{H^3_\rho}\|\rho\|_{H^3}\|\p^mv\|^2_{L^2_\rho}+ \|\frac{1}{\rho}\|^{\frac{3}{2}}_{L^\infty}\|v\|_{H^s_\rho}\|\rho\|_{H^2}\|\p^mv\|_{L^2_\rho}\\
& + \|\frac{1}{\rho}\|^{\frac{3}{2}}_{L^\infty}\|v\|^2_{H^s_\rho}\|\rho\|_{H^2}\|\p^mv\|_{L^2_\rho}+ \|\frac{1}{\rho}\|^{\frac{3}{2}}_{L^\infty}\|v\|_{H^3_\rho}\|\rho\|_{H^2}\|\p^mv\|^2_{L^2_\rho}
\,.
	\end{aligned}
\end{equation}
Similarly, using H\"older's inequality and Sobolev embedding, the term $V_{13}$ can be estimated as
\begin{equation}\label{v-eq13}
	\begin{aligned}
-V_{13}= & \sum_{\substack{ 0\neq m^{\prime}\leq m}}\mathbf{1}_{|m|\geq 1} C^{m^{\prime}}_{m} \langle \p^{m^{\prime}} \rho\partial^{m} v  ,\partial^{m} v \rangle\\
\lesssim & \sum_{\substack{ m^{\prime}\leq m,|m^{\prime}|=1}}\|\p^{m^{\prime}}\rho\|_{L^{\infty}}\|\p_t\p^{m-m^{\prime}}v\|_{L^2}\|\p^mv\|_{L^2} \\
& +\sum_{\substack{ m^{\prime}\leq m,2\leq|m^{\prime}|\leq s-1}}\|\p^{m^{\prime}}\rho\|_{L^4}\|\p_t\p^{m-m^{\prime}}v\|_{L^4}\|\p^mv\|_{L^2}\\
& +\|\p^m\rho\|_{L^2}\|\p_t v\|_{L^\infty}\|\p^m v\|_{L^2}\\
\lesssim & \| \rho \|_{H^3} \| \partial_t v \|_{H^{s-1}} \| \partial^m v \|_{L^2} + \| \rho\|_{H^s} \| \partial_t v \|_{H^{s-1}} \| \partial^m v \|_{L^2} + \| \rho \|_{H^s} \| \partial_t v \|_{H^2} \| \partial^m v \|_{L^2} \\
\lesssim & \| \frac{1}{\rho} \|_{L^\infty}^{\frac{1}{2}} \| \partial_t v \|_{H^{s-1}} \left( \| \rho \|_{H^3} + \| \rho\|_{H^s} \right) \| \partial^m v \|_{L^2_\rho} \,.
	\end{aligned}
\end{equation}
We now turn to $V_{14}$. It can be decomposed as follows:
	\begin{align}\label{v-eq14}
\no -V_{14} = & -\sum_{\substack{0\neq m'\leq m \\ |m|\geq 1}} \mathbf{1}_{|m|\geq 1}C_m^{m^{\prime}}\langle \partial^{m'} \rho \partial^{m - m'} (v \cdot \nabla v), \partial^m v \rangle \\
= & \underset{V_{141}}{\underbrace{ -\sum_{\substack{m'\leq m \\ |m'|=1 }}\mathbf{1}_{|m|\geq 1}C_m^{m^{\prime}}\langle \partial^{m'} \rho \partial^{m - m'} (v \cdot \nabla v), \partial^m v \rangle}} \\
\no & \underset{V_{142}}{\underbrace{-
\sum_{\substack{m'\leq m \\ 2\leq |m'|\leq s-1 }}\mathbf{1}_{|m|\geq 1}C_m^{m^{\prime}}\langle \partial^{m'} \rho \partial^{m - m^{\prime}} (v \cdot \nabla v), \partial^m v \rangle}} \ \underset{ V_{143} }{ \underbrace{ - \mathbf{1}_{|m|\geq 1} \langle \partial^m \rho (v \cdot \nabla v), \partial^m v \rangle}} \,.
	\end{align}
By H\"older's inequality and Sobolev embedding, we obtain
\begin{equation}\label{v-eq141}
	\begin{aligned}
    V_{141}&\lesssim \sum_{\substack{m^\prime \leq m \\ |m^\prime| = 1}} \|\partial^{m^\prime} \rho\|_{L^\infty} \|\partial^{m - m^\prime} (v \cdot \nabla v)\|_{L^2} \|\partial^m v\|_{L^2} \lesssim \|\rho\|_{H^3} \|v \cdot \nabla v\|_{H^{s-1}} \|\partial^m v\|_{L^2} \\
    & \lesssim \|\rho\|_{H^3} \|v\|_{H^s}^2 \|\partial^m v\|_{L^2} \lesssim \|\frac{1}{\rho}\|_{L^\infty}^{\frac{3}{2}} \|\rho\|_{H^3} \|v\|_{H^s_\rho}^2 \|\partial^m v\|_{L_\rho^2} \,,
	\end{aligned}
\end{equation}
and
\begin{equation}\label{v-eq142}
	\begin{aligned}
    V_{142} & \lesssim \sum_{\substack{m^\prime \leq m \\ 2 \leq |m^\prime| \leq s-1}} \|\partial^{m^\prime} \rho\|_{L^4} \|\partial^{m - m^\prime} (v \cdot \nabla v)\|_{L^4} \|\partial^m v\|_{L^2} \lesssim \|\rho\|_{H^s} \|v \cdot \nabla v\|_{H^{s-1}} \|\partial^m v\|_{L^2} \\
    & \lesssim \|\rho\|_{H^s} \|v\|_{H^s}^2 \|\partial^m v\|_{L^2} \lesssim \|\frac{1}{\rho}\|_{L^\infty}^{\frac{3}{2}} \|\rho\|_{H^s} \|v\|_{H^s_\rho}^2 \|\partial^m v\|_{L_\rho^2} \,,
	\end{aligned}
\end{equation}
and
\begin{equation}\label{v-eq143}
	\begin{aligned}
    V_{143} \lesssim \|\partial^m \rho\|_{L^2} \|v \cdot \nabla v\|_{L^\infty} \|\partial^m v\|_{L^2} \lesssim \|\rho\|_{H^s} \|v\|_{H^3}^2 \|\partial^m v\|_{L^2} \lesssim \|\frac{1}{\rho}\|_{L^\infty}^{\frac{3}{2}} \|\rho\|_{H^s} \|v\|_{H^3_\rho}^2 \|\partial^m v\|_{L_\rho^2} \,.
	\end{aligned}
\end{equation}
Thus $V_{14}$ admits the bound
\begin{equation}\label{v-eq140}
	\begin{aligned}
 -V_{14}& = \sum_{\substack{0\neq m^{\prime}\leq m \\ |m|\geq 1}} \mathbf{1}_{|m|\geq 1}C_m^{m^{\prime}} \langle \partial^{m'} \rho\partial^{m - m'} (v \cdot \nabla v), \partial^m v \rangle \\&
\lesssim \|\frac{1}{\rho}\|_{L^\infty}^{\frac{3}{2}} \| \partial^m v \|_{L^p}^2 \left( \| \rho\|_{H^{3}} \| v \|^2_{H^s_\rho} + \| \rho\|_{H^s} \| v \|_{H^s_\rho}^2 + \| \rho\|_{H^s} \| v \|_{H^3_\rho}^2 \right) \,.
	\end{aligned}
\end{equation}
Under the assumption $s \geq 3$, collecting the estimates \eqref{v-eq11}, \eqref{v-eq120}, \eqref{v-eq13} and \eqref{v-eq140} yields the following lower bound for $V_1$:
\begin{equation}\label{v1-eq}
	\begin{aligned}
V_1 = & \langle \partial^m \left[ \rho \left( \partial_t v + v \cdot \nabla v \right) \right], \partial^m v \rangle \\
\geq & \frac{1}{2} \frac{d}{dt} \| \partial^m v \|_{L^2_\rho}^2 - C\| \frac{1}{\rho} \|_{L^\infty}^{\frac{3}{2}} \| \rho \|_{H^s} \| v \|^2_{H^s_\rho}\| \partial^m v \|_{L_\rho}^2 \\
& - C \| \frac{1}{\rho} \|_{L^\infty} \| \rho \|_{H^s} \| \partial_t v \|_{H^{s-1}} \| \partial^m v \|_{L^2_\rho}
	\end{aligned}
\end{equation}
for some positive constant $C>0$.

Next we estimate the quantity $V_2$. It is straightforward to see that
	\begin{align}\label{v2-eq0}
\no -V_2 & = - \langle \nabla \partial^m (P(\rho) - A |\nabla M|^2 + \mu_0 M \cdot H_{ext}), \partial^m v \rangle \\
\no & = \langle \partial^m (P(\rho) - A |\nabla M|^2 + \mu_0 M \cdot H_{ext}), \nabla \partial^m v \rangle \\
& \lesssim \|\partial^m (P(\rho) - A |\nabla M|^2 + \mu_0 M \cdot H_{ext})\|_{L^2} \|\nabla \partial^m v\|_{L^2} \\
\no & \lesssim \left( \|\partial^m P(\rho)\|_{L^2} + \|\partial^m (|\nabla M|^2)\|_{L^2} + \|\partial^m (M \cdot H_{ext})\|_{L^2} \right) \|\nabla \partial^m v\|_{L^2}\,.
	\end{align}
Recalling $P(\rho)=a\rho^\gamma $ with $ \gamma\geq1$, we have
$$\|\partial^m P(\rho)\|_{L^2} \lesssim \|\rho\|_{H^s}^\gamma$$
for $|m| \leq s$. Moreover, if $s \geq 3$,
$$\|\partial^m |\nabla M|^2\|_{L^2} \lesssim \|\nabla M\|_{H^s}^2 \,,$$
and
\begin{equation*}
	\begin{aligned}
\|\partial^m (M \cdot H_{ext})\|_{L^2} &\lesssim \|M \cdot \partial^m H_{ext}\|_{L^2} + \sum_{0 \neq m^{\prime} \leq m} \|\partial^{m^{\prime} } M \cdot \partial^{m - m'} H_{ext}\|_{L^2} \\
& \lesssim \|\partial^m H_{ext}\|_{L^2} + \|\nabla M\|_{H^{s-1}} \|H_{ext}\|_{H^{s - 1}}\,,
	\end{aligned}
\end{equation*}
where we have used the fact that $|M| = 1$. Consequently, we obtain
\begin{equation}\label{v2-eq}
	\begin{aligned}
-V_2&-\langle \nabla \partial^m (P(\rho) - A |\nabla M|^2 + \mu_0 M \cdot H_{ext}), \partial^m v \rangle \\&
\lesssim \left( \|P\|_{H^s}^2 + \|\nabla M\|_{H^s}^2 + (1+\|\nabla M\|_{H^s} )\|H_{ext}\|_{H^{s}} \right) \|\nabla \partial^m v\|_{L^2}.
	\end{aligned}
\end{equation}

For the quantity $V_3$, integration by parts together with H\"older's inequality and Sobolev embedding yields, for $s \geq 3$,
\begin{equation}\label{v3-eq}
	\begin{aligned}
V_3 & =\langle \partial^m (\rho F: \nabla F^\top), \partial^m v \rangle = \langle \partial^m (\rho F^{jk} \partial_j F^{ik}), \partial^m v^i \rangle \\
& = \langle \nabla \cdot \partial^m (\rho F F^\top), \partial^m v \rangle = \langle \partial_j \partial^m (\rho F^{ik} F^{jk}), \partial^m v^i \rangle \\
& = -\langle \partial^m (\rho F^{ik} F^{jk}), \partial^m \partial_j v^i \rangle \lesssim\|\p^m(\rho FF^\top)\|_{L^2}\|\nabla\p^mv\|_{L^2} \\
& \lesssim\|\rho\|_{H^s}\|F\|^2_{H^s}\|\nabla\p^mv\|_{L^2}.
	\end{aligned}
\end{equation}
Similarly, the quantity $V_4$ can be bounded by
\begin{equation}\label{v4-eq}
	\begin{aligned}
V_4 & =- 2A\langle \nabla \cdot \partial^m (\nabla M \odot \nabla M), \partial^m v \rangle = 2A\langle \partial^m (\nabla M \odot  \nabla M), \nabla \partial^m v \rangle \\
& \lesssim \|\partial^m (\nabla M \odot  \nabla M)\|_{L^2} \|\nabla \partial^m v\|_{L^2} \lesssim \|\nabla M\|_{H^s}^2 \|\nabla \partial^m v\|_{L^2}\,.
	\end{aligned}
\end{equation}
Furthermore, the quantity $V_5$ can be estimated analogously:
\begin{equation}\label{v5-eq}
	\begin{aligned}
V_5& =\mu_0 \langle \partial^m [(\nabla H_{ext})^\top M], \partial^m v \rangle 
= \mu_0 \langle \partial^m (\partial_i H_{ext}^j M_j), \partial^m v^i \rangle \\&
= -\mu_0 \langle \partial^m (H_{ext}^i \partial_i M_j), \partial^m v^i \rangle
-\mu_0 \langle \partial^m (H_{ext}^i M_j), \partial^m \partial_i v^i \rangle \\&
\lesssim \|\partial^m (\nabla M \cdot H_{ext})\|_{L^2} \|\partial^m v\|_{L^2} + \|\partial^m (M \cdot H_{ext})\|_{L^2} \|\nabla \partial^m v\|_{L^2} \\&
\lesssim \|\nabla M\|_{H^s} \|H_{ext}\|_{H^s} \|\partial^m v\|_{L^2}  + (1 + \|\nabla M\|_{H^s}) \|H_{ext}\|_{H^s} \|\nabla \partial^m v\|_{L^2}\,.
	\end{aligned}
\end{equation}

Substituting the estimates \eqref{v1-eq}, \eqref{v2-eq}, \eqref{v3-eq}, \eqref{v4-eq} and \eqref{v5-eq} into \eqref{v-eq0}, we obtain for $s \geq 3$
\begin{equation}\label{v-equation-1}
	\begin{aligned}
&\frac{1}{2} \frac{\d}{\d t} \|\partial^m v\|_{L^2_\rho}^2 + \mu \|\nabla \partial^m v\|_{L^2}^2 + (\mu + \xi) \|\nabla \cdot \partial^m v\|_{L^2}^2 \\&
\lesssim \| \frac{1}{\rho} \|_{L^\infty}^{\frac{3}{2}} \|\rho\|_{H^s} \|v\|_{H^s_\rho}^2 \|\partial^m v\|_{L^2_\rho} + \| \frac{1}{\rho} \|_{L^\infty}^{\frac{1}{2}} \|\rho\|_{H^s} \|\partial_t v\|_{H^{s - 1}} \|\partial^m v\|_{L^2_\rho} \\&
+[ \|\rho\|_{H^s}^\gamma + \|\nabla M\|_{H^s}^2 + (1 + \|\nabla M\|_{H^s}) \|H_{ext}\|_{H^s} ] \|\nabla \partial^m v\|_{L^2}\\&
+ \left[ \|\rho\|_{H^s} \|F\|_{H^s}^2 + \|\nabla M\|_{H^s}^2 + (1 + \|\nabla M\|_{H^s}) \|H_{ext}\|_{H^s} \right] \|\nabla \partial^m v\|_{L^2}\,.
	\end{aligned}
\end{equation}
Together with the definitions of $\mathcal{E}_s(t)$ and $\mathcal{D}_s(t)$ in \eqref{EDs-loc}, inequality \eqref{v-equation-1} implies the desired estimate \eqref{v-equation}. This completes the proof of the lemma.
\end{proof}

\subsubsection{\bf Estimates for $\partial_t v$}

The goal of this subsection is to estimate the norm $\|\partial_t v\|_{H^{s-1}}$ appearing in \eqref{v-equation}. The idea is to bound the time derivative $\partial_t v$ in terms of the spatial derivatives of $(\rho, v, F, M)$ by exploiting the structure of the $v$-equation in \eqref{CMEH-eq}. More precisely, we have the following lemma.

\begin{lemma}\label{Lmm-dtv-loc}
    For $\gamma > 1$ and integer $s \geq 3$, we have
    \begin{equation}\label{dtv-equation}
	\begin{aligned}
\|\partial_t v\|_{H^{s-1}} \lesssim (1 + \| H_{ext} \|_{H^s}) ( \|\frac{1}{\rho}\|^\frac{1}{2}_{L^\infty} + \|\frac{1}{\rho}\|^\frac{s}{2}_{L^\infty} ) \big(1 + \mathcal{E}_s^\frac{s + \gamma}{ 2 } (t) + \mathcal{E}_s^\frac{s + 2}{ 2 } (t) + \mathcal{D}_s^\frac{1}{2} (t) \big) \mathcal{E}_s^\frac{1}{2} (t)  \,,
	\end{aligned}
\end{equation}
where $\mathcal{E}_s (t)$ and $\mathcal{D}_s (t)$ are defined in \eqref{EDs-loc}.
\end{lemma}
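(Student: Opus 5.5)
We solve the momentum equation in \eqref{CMEH-eq} for $\partial_t v$:
\begin{equation*}
\partial_t v = - v\cdot\nabla v + \tfrac{1}{\rho}\Big[ \mu\Delta v + (\mu+\xi)\nabla(\nabla\cdot v) - \nabla P(\rho) + A\nabla|\nabla M|^2 - \mu_0\nabla(M\cdot H_{ext}) + \nabla\cdot(\rho FF^\top) - 2A\nabla\cdot(\nabla M\odot\nabla M) + \mu_0(\nabla H_{ext})^\top M \Big].
\end{equation*}
We then bound each term in $H^{s-1}$. Since $s-1\ge 2$, $H^{s-1}(\mathbb{T}^3)$ is an algebra, so $\|fg\|_{H^{s-1}}\lesssim \|f\|_{H^{s-1}}\|g\|_{H^{s-1}}$. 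The convection term gives $\|v\cdot\nabla v\|_{H^{s-1}}\lesssim \|v\|_{H^s}^2\lesssim \|\tfrac1\rho\|_{L^\infty}\mathcal{E}_s$, using $\|v\|_{H^s}\lesssim\|\tfrac1\rho\|^{1/2}_{L^\infty}\|v\|_{H^s_\rho}$ as in Lemma \ref{Lmm-rho-loc}.

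The key step is the weight $\tfrac1\rho$. We do not put $\tfrac1\rho$ in $H^{s-1}$ via the algebra property. Instead we distribute derivatives by Leibniz: $\|\tfrac1\rho G\|_{H^{s-1}}\lesssim \sum_{|m|\le s-1}\sum_{m'\le m}\|\partial^{m'}(\tfrac1\rho)\,\partial^{m-m'}G\|_{L^2}$. We then use the Faà di Bruno expansion of $\partial^{m'}(\rho^{-1})$, a sum of terms $\rho^{-1-k}\prod_{j=1}^k\partial^{\alpha_j}\rho$ with $k\le|m'|\le s-1$. The factor $\rho^{-1-k}$ is bounded by $\|\tfrac1\rho\|_{L^\infty}^{k+1}$. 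The products of derivatives of $\rho$, placed in $L^\infty/L^4/L^2$ by Hölder and Sobolev as in the proof of Lemma \ref{Lmm-v-loc}, give at most $\|\rho\|_{H^s}^{k}\lesssim \mathcal{E}_s^{k/2}$. This yields the polynomial weight $\|\tfrac1\rho\|_{L^\infty}^{1/2}+\|\tfrac1\rho\|^{s/2}_{L^\infty}$ (after interpolating intermediate powers between the extremes, absorbing integer powers as stated), multiplied by $(1+\mathcal{E}_s^{(s-1)/2})\|G\|_{H^{s-1}}$. This is the main bookkeeping obstacle: we must track which power of $\|\tfrac1\rho\|_{L^\infty}$ and of $\mathcal{E}_s$ arises. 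We only need a crude polynomial bound, and this is where the exponents $\tfrac{s+\gamma}{2}$ and $\tfrac{s+2}{2}$ come from.

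It remains to estimate $\|G\|_{H^{s-1}}$ for the bracket:
\begin{itemize}
\item The viscous terms are bounded by $\|\nabla v\|_{H^s}\lesssim \mathcal{D}_s^{1/2}$.
\item For the pressure, $\|\nabla P(\rho)\|_{H^{s-1}}\lesssim \|\rho\|^{\gamma}_{H^s}$ roughly. We use the composition estimate for $a\rho^\gamma$, where $\rho$ is bounded above by $\|\rho\|_{H^2}$ and below by $\|\tfrac1\rho\|_{L^\infty}^{-1}$ when $\gamma$ is non-integer. This gives a factor $\mathcal{E}_s^{\gamma/2}$ up to lower powers, which can be absorbed as $\mathcal{E}_s^{1/2}(1+\mathcal{E}_s^{(\gamma-1)/2})$.
\item The magnetic terms $\nabla|\nabla M|^2$ and $\nabla\cdot(\nabla M\odot\nabla M)$ are bounded by $\|\nabla M\|_{H^s}^2\le\mathcal{E}_s$.
\item The elastic term is bounded by $\|\rho FF^\top\|_{H^s}\lesssim\|\rho\|_{H^s}\|F\|_{H^s}^2\lesssim\mathcal{E}_s^{3/2}$.
\item For the external-field terms, $|M|=1$ gives $\|\nabla(M\cdot H_{ext})\|_{H^{s-1}}+\|(\nabla H_{ext})^\top M\|_{H^{s-1}}\lesssim (1+\|\nabla M\|_{H^{s-1}})\|H_{ext}\|_{H^s}$. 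Only derivatives of $M$, never $M$ itself in Sobolev norms, appear beyond $L^\infty$.
\end{itemize}

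Collecting, every term has the form $(1+\|H_{ext}\|_{H^s})\times(\text{weight in }\|\tfrac1\rho\|_{L^\infty})\times(\text{powers of }\mathcal{E}_s)\times(\mathcal{E}_s^{1/2}\text{ or }\mathcal{D}_s^{1/2})$. Using $x^a\le 1+x^b$ for $a\le b$, we bound all powers of $\mathcal{E}_s$ by $1+\mathcal{E}_s^{\frac{s+\gamma}{2}}+\mathcal{E}_s^{\frac{s+2}{2}}$. One caveat: the $\mathcal{D}_s^{1/2}$ and $H_{ext}$ contributions come without a factor $\mathcal{E}_s^{1/2}$, so the stated form (with an overall factor $\mathcal{E}_s^{1/2}$) needs the convention $1+\cdot$ inside. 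If that fails, we state the bound with $\mathcal{E}_s^{1/2}$ replaced by $1+\mathcal{E}_s^{1/2}$, which suffices for its use in Proposition \ref{Prop-Loc}.
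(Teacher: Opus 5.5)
Your route is the paper's: isolate $\partial_t v$ from the momentum equation, bound each term of the bracket in $H^{s-1}$, and control the factor $\frac{1}{\rho}$. The paper handles the last step by quoting the product estimate \eqref{dtv-eq3} with constant $K_s(\rho)$; you expand by Leibniz and Fa\`a di Bruno.

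\textbf{The gap.} It lies in the bookkeeping you yourself flag as the main obstacle. In your expansion, $m'=0$ gives $\rho^{-1}\partial^m G$, and $|m'|=s-1$ gives $\rho^{-s}(\partial\rho)^{s-1}\partial^{m-m'}G$. So what the argument actually produces is the weight $\|\frac{1}{\rho}\|_{L^\infty}+\|\frac{1}{\rho}\|_{L^\infty}^{s}$. For non-integer $\gamma$, the composition estimate for $\rho^{\gamma-1}\nabla\rho$ adds further negative powers of $\rho$, which you do not track. ``Interpolating between the extremes'' only controls exponents lying between the two you keep. It cannot bring $\|\frac{1}{\rho}\|_{L^\infty}^{s}$ down to $\|\frac{1}{\rho}\|^{1/2}_{L^\infty}+\|\frac{1}{\rho}\|^{s/2}_{L^\infty}$, since $x^{s}/(x^{1/2}+x^{s/2})\to\infty$ as $x\to\infty$.

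\textbf{Why no bookkeeping can fix it.} The exponent $s/2$ is not attainable; the inequality as stated is false. Take data with $\rho=\delta+\sin^2 x_1$, $v=(0,\cos x_1,0)$, $F=I$, $M$ constant and $H_{ext}=0$.
\begin{itemize}
\item $\mathcal{E}_s$ and $\mathcal{D}_s$ stay bounded as $\delta\to0$.
\item $v\cdot\nabla v=0$ and $\nabla\cdot v=0$, while $\nabla P(\rho)$ and $\nabla\cdot(\rho I)=\nabla\rho$ both point along $e_1$. Hence the $e_2$-component of $\partial_t v$ is exactly $-\mu\cos x_1/\rho$.
\item Rescaling $x_1=\sqrt{\delta}\,t$ on $|x_1|\le\sqrt{\delta}$ gives $\|\partial_1^{s-1}(\cos x_1/\rho)\|_{L^2}\gtrsim\delta^{-\frac{s+1}{2}}\delta^{\frac14}=\|\frac{1}{\rho}\|_{L^\infty}^{\frac{s}{2}+\frac14}$.
\item The right-hand side of \eqref{dtv-equation} is only $O(\delta^{-s/2})$.
\end{itemize}
The paper reaches the stated form only through \eqref{dtv-eq3}, which cannot hold as written: its right-hand side, as first displayed, vanishes when $\rho$ is constant.

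\textbf{What to prove instead.} State and prove the lemma with a larger polynomial weight, e.g.\ $\|\frac{1}{\rho}\|_{L^\infty}+\|\frac{1}{\rho}\|_{L^\infty}^{N}$ with $N=N(s,\gamma)$. This is all that Proposition~\ref{Prop-Loc} and the continuity argument need, because \eqref{rho-infty} controls any power of $\|\frac{1}{\rho}\|_{L^\infty}$; only the explicit exponent in \eqref{Ener-Bnds-Local} changes.

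\textbf{Your caveat on $\mathcal{E}_s^{1/2}$.} You are right that the $\mathcal{D}_s^{1/2}$ and $H_{ext}$ contributions come without a factor $\mathcal{E}_s^{1/2}$. The paper gets that factor from $\|\rho\|_{H^s}$ inside $K_s(\rho)$, an artifact of the same flawed estimate. You can repair it without changing the shape of the bound: on $\mathbb{T}^3$, $|\mathbb{T}^3|^{1/2}\le\|\frac{1}{\rho}\|_{L^\infty}\|\rho\|_{L^2}\le\|\frac{1}{\rho}\|_{L^\infty}\mathcal{E}_s^{1/2}$. This supplies the factor $\mathcal{E}_s^{1/2}$ at the cost of one more power of $\|\frac{1}{\rho}\|_{L^\infty}$.
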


\begin{proof}

The $v$-equation in \eqref{CMEH-eq} can be rewritten as
\begin{equation}\label{dtv-eq1}
	\begin{aligned}
&\p_t v +\;  v \cdot \nabla v + \frac{1}{\rho}{\nabla \left( P(\rho) - A |\nabla M|^2 + \mu_0 M \cdot H_{ext} \right)} 
			\\ & 
			= \frac{\mu}{\rho} \Delta v+\frac{\mu+\xi}{\rho}\nabla(\nabla\cdot v)+) \frac{1}{\rho}\nabla\cdot(\rho F F^\top)- \frac{2A}{\rho} \nabla \cdot ( \nabla M \odot \nabla M) + \frac{\mu_0}{\rho} (\nabla H_{ext})^\top M \,.
	\end{aligned}
\end{equation}
We then obtain
\begin{equation}\label{dtv-eq2}
	\begin{aligned}
\|\partial_t v\|_{H^{s-1}}& \lesssim \|v \cdot \nabla v\|_{H^{s-1}} + \|\frac{1}{\rho}\nabla\left(P(\rho) - A|\nabla M|^2 + \mu_0 M \cdot H_{ext}\right)\|_{H^{s-1}} \\&\quad
+ \|\frac{1}{\rho}\Delta v\|_{H^{s-1}} +\|\frac{1}{\rho}\nabla \cdot v\|_{H^{s-1}} + \|\frac{1}{\rho}\nabla \cdot (\rho F F^\top)\|_{H^{s-1}} \\&
\quad+ \|\frac{1}{\rho}\nabla \cdot (\nabla M \odot \nabla M) \|_{H^{s-1}} + \|\frac{1}{\rho}(\nabla H_{ext})^\top M\|_{H^{s-1}}\,.
	\end{aligned}
\end{equation}
By applying Lemma 3.2 of \cite{JLT-M3AS-2020} or Lemma 3.1 of \cite{JLZ-CMS-2021}, we obtain for $s \geq 3$,
\begin{equation}\label{dtv-eq3}
	\begin{aligned}
\|\frac{1}{\rho} \cdot f\|_{H^{s-1}}
&\lesssim ( \|\frac{1}{\rho} \|_{L^\infty}^{\frac{1}{2}} +\|\frac{1}{\rho}\|_{L^\infty}^{\frac{s-1}{2}} ) \|\nabla \rho\|_{H^{s-2}} (1 + \|\nabla \rho\|^{s-2}_{H^{s-2}}) \|f\|_{H^{s-1}} \\&
\lesssim\underset{K_s(\rho)}{\underbrace{ ( \|\frac{1}{\rho}\|_{L^\infty}^{\frac{1}{2}} + \|\frac{1}{\rho}\|_{L^\infty}^{\frac{s-1}{2}} ) \|\rho\|_{H^s}(1 + \|\rho\|_{H^s}^{s-2}) }}\|f\|_{H^{s-1}} \,.
	\end{aligned}
\end{equation}
Using \eqref{dtv-eq3}, we have
\begin{equation}\label{dtv-eq4}
	\begin{aligned}
&\|\frac{1}{\rho} \nabla \left(P(\rho) - A|\nabla M|^2 + \mu_0 M \cdot H_{ext}\right)\|_{H^{s-1}} \\&
\lesssim K_s(\rho) \left( \|\nabla P(\rho)\|_{H^{s-1}} + \|\nabla |\nabla M|^2\|_{H^{s-1}} + \|M \cdot H_{ext}\|_{H^{s-1}} \right) \\&
\lesssim K_s(\rho)( \|\rho\|_{H^s}^\gamma + \|\nabla M\|_{H^s}^2 + (1 + \|\nabla M\|_{H^s}) \|H_{ext}\|_{H^s} )\,.
	\end{aligned}
\end{equation}
Similarly, 
\begin{equation}\label{dtv-eq5}
	\begin{aligned}
\|\frac{1}{\rho}\Delta v\|_{H^{s-1}} \lesssim K_s(\rho) \|\Delta v\|_{H^{s-1}} \lesssim K_s(\rho) \|\nabla v\|_{H^s}\,,
	\end{aligned}
\end{equation}
and
\begin{equation}\label{dtv-eq6}
	\begin{aligned}
\|\frac{1}{\rho}\nabla \cdot v\|_{H^{s-1}} \lesssim K_s(\rho) \|\nabla \cdot v\|_{H^{s-1}} \lesssim K_s(\rho) \|v\|_{H^s}
 \lesssim K_s(\rho) \|\frac{1}{\rho}\|_{L^\infty}^{\frac{1}{2}} \|v\|_{H^s_\rho}^2\,,
	\end{aligned}
\end{equation}
and
\begin{equation}\label{dtv-eq7}
\begin{aligned}
\|\frac{1}{\rho}\nabla \cdot (\rho F F^\top)\|_{H^{s-1}} \lesssim K_s(\rho) \|\nabla \cdot (\rho F F^\top)\|_{H^{s-1}}  \lesssim K_s(\rho) \|\rho\|_{H^s} \|F\|_{H^s}^2\,,
	\end{aligned}
\end{equation}
and
\begin{equation}\label{dtv-eq8}
\begin{aligned}
\|\frac{1}{\rho}\nabla \cdot (\nabla M \odot \nabla M)\|_{H^{s-1}} \lesssim K_s(\rho) \|\nabla \cdot (\nabla M \odot \nabla M)\|_{H^{s-1}} \lesssim K_s(\rho) \|\nabla M\|_{H^s}^2\,,
	\end{aligned}
\end{equation}
and
\begin{equation}\label{dtv-eq9}
\begin{aligned}
\|\frac{1}{\rho}(\nabla H_{ext})^\top M\|_{H^{s-1}} \lesssim K_s(\rho) \|(\nabla H_{ext})^\top M\|_{H^{s-1}}\lesssim K_s(\rho) (1 + \|\nabla M\|_{H^s}) \|H_{ext}\|_{H^s}\,.
	\end{aligned}
\end{equation}
Substituting the estimates \eqref{dtv-eq4}, \eqref{dtv-eq5}, \eqref{dtv-eq6}, \eqref{dtv-eq7}, \eqref{dtv-eq8}, and \eqref{dtv-eq9} into \eqref{dtv-eq2} yields, for $s \geq 3$,
\begin{equation}\label{dtv-equation-1}
	\begin{aligned}
\|\partial_t v\|_{H^{s-1}}\lesssim & \|\frac{1}{\rho}\|_{L^\infty}\|v\|^2_{H^s_\rho}+K_s(\rho)\|\nabla v\|_{H^s} \\
& +K_s(\rho)(\|\rho\|^\gamma_{H_s}+\|\nabla M\|^2_{H^s}+(1+\|\nabla M\|_{H^s} ) \|H_{ext}\|_{H^s})\\
& + K_s(\rho) (\|\frac{1}{\rho}\|^{\frac{1}{2}}_{L^\infty} \|v\|_{H^s_\rho} + \|\rho\|_{H^s} \|F\|^2_{H^s} )  \,.
	\end{aligned}
\end{equation}
Note that 
\begin{equation*}
    \begin{aligned}
        K_s (\rho) \lesssim ( \| \frac{1}{\rho} \|_{L^\infty}^\frac{1}{2} + \| \frac{1}{\rho} \|_{L^\infty}^\frac{s-1}{2} ) ( 1 + \mathcal{E}_s^\frac{s-1}{2} (t) ) \mathcal{E}_s^\frac{1}{2} (t) \,.
    \end{aligned}
\end{equation*}
Inequality \eqref{dtv-equation-1} implies the desired bound \eqref{dtv-equation}. This completes the proof of the lemma.
\end{proof}

\subsubsection{\bf Estimates for the $M$-equation}

The goal of this subsection is to establish energy estimates for the magnetic field $M$. We have the following lemma.

\begin{lemma}\label{Lmm-M-loc}
    For any multi-index $m \in \mathbb{N}^3$ with $|m| \leq s$ and $s \geq 3$, we have
\begin{equation}\label{M-equation}
	\begin{aligned}
& \frac{1}{2}\frac{\d}{\d t}\|\nabla\p^m M\|^2_{L^2}+2A\lambda\|\Delta\p^m M\|^2_{L^2}\\
\lesssim & \|\frac{1}{\rho}\|^{\frac{1}{2}}_{L^\infty} \mathcal{E}_s (t) \mathcal{D}_s^\frac{1}{2} (t) + \left[ ( 1 + \mathcal{E}_s^\frac{1}{2} (t) ) \mathcal{E}_s (t) + ( 1 + \mathcal{E}_s (t) ) \|H_{ext}\|_{H^s} \right] \mathcal{D}_s^\frac{1}{2} (t) \,.
	\end{aligned}
\end{equation}
\end{lemma}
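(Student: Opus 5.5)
We use the reformulated magnetization equation in \eqref{CMEH-eq}, namely
\[
\p_t M + v \cdot \nabla M = 2A\lambda \Delta M+\lambda \mu_0 H_{ext}+\Gamma(M)M-\gamma M \times(2A \Delta M+\mu_0 H_{ext}),
\qquad \Gamma(M)=2\lambda A|\nabla M|^2-\lambda \mu_0 M\cdot H_{ext}.
\]
Apply $\p^m$ with $|m|\le s$, take the $L^2$ inner product with $-\Delta \p^m M$, and integrate by parts on $\mathbb{T}^3$. The time-derivative term gives $\frac12\frac{\d}{\d t}\|\nabla\p^m M\|_{L^2}^2$, and the diffusion gives the dissipation $2A\lambda\|\Delta\p^m M\|_{L^2}^2$.

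The term $\langle \p^m(M\times \Delta M), \Delta\p^m M\rangle$ is the one needing care. The top-order piece $\langle M\times\Delta\p^m M,\Delta\p^m M\rangle$ vanishes identically by antisymmetry of the cross product. The commutator pieces $\p^{m'}M\times\Delta\p^{m-m'}M$ with $m'\neq 0$ have the form $\nabla\p^{m'-1}M\times \Delta\p^{m-m'}M$. Using $L^\infty$–$L^2$ or $L^4$–$L^4$ Hölder splits as in Lemma~\ref{Lmm-rho-loc}, with $s\ge3$, we bound them by $\|\nabla M\|_{H^s}\|\Delta M\|_{H^{s-1}}$. Since $\|\Delta M\|_{H^{s-1}}\lesssim\|\nabla M\|_{H^s}$, this gives a total bound of $\mathcal{E}_s\mathcal{D}_s^{1/2}$.

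This is the main obstacle: bounding $\|\Delta M\|_{H^{s-1}}$ by the energy rather than the dissipation is what keeps the estimate linear in $\mathcal{D}_s^{1/2}$. At the lowest level $|m|=0$ the commutators are absent, so there is nothing to control.

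The remaining terms are handled as follows.
\begin{itemize}
\item \textbf{Transport term.} We bound $\langle\p^m(v\cdot\nabla M),\Delta\p^m M\rangle\le \|v\cdot\nabla M\|_{H^s}\|\Delta\p^mM\|_{L^2}$. Then $\|v\|_{H^s}\|\nabla M\|_{H^s}\lesssim\|\frac1\rho\|^{1/2}_{L^\infty}\|v\|_{H^s_\rho}\|\nabla M\|_{H^s}$, which yields the first term on the right of \eqref{M-equation}.
\item \textbf{Lagrange multiplier term.} For $\p^m(|\nabla M|^2M)$ we use the Moser product estimate together with $|M|=1$. This gives $\|\,|\nabla M|^2M\|_{H^s}\lesssim \|\nabla M\|_{H^s}^2(1+\|\nabla M\|_{H^s})$, since derivatives falling on $M$ become $\nabla M$. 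The result is $(1+\mathcal{E}_s^{1/2})\mathcal{E}_s\mathcal{D}_s^{1/2}$.
\item \textbf{External field terms.} The terms $\lambda\mu_0H_{ext}$, $(M\cdot H_{ext})M$ and $M\times H_{ext}$ are similarly bounded in $H^s$ by $(1+\|\nabla M\|_{H^s})^2\|H_{ext}\|_{H^s}$. This gives $(1+\mathcal{E}_s)\|H_{ext}\|_{H^s}\mathcal{D}_s^{1/2}$.
\end{itemize}
Each of these is paired with $\|\Delta\p^mM\|_{L^2}\le\mathcal{D}_s^{1/2}$ via Cauchy–Schwarz. Collecting all contributions yields \eqref{M-equation}.
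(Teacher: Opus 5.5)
Your proposal is correct and follows essentially the same route as the paper. Both test the differentiated LLG equation against $\Delta\partial^m M$, drop the top-order term $\langle M\times\Delta\partial^m M,\Delta\partial^m M\rangle$ by antisymmetry, and bound the transport, Lagrange-multiplier, cross-product commutator and $H_{ext}$ terms in $H^s$ using $|M|=1$ and $\|v\|_{H^s}\lesssim\|\tfrac{1}{\rho}\|^{1/2}_{L^\infty}\|v\|_{H^s_\rho}$, each paired with $\|\Delta\partial^m M\|_{L^2}\lesssim\mathcal{D}_s^{1/2}$. Your use of $-\Delta\partial^m M$ as the test function produces the correct signs directly, whereas the paper's write-up handles the signs loosely.
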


\begin{proof}

We apply the $m$-th order derivative operator $\partial^m$ to the $M$-equation in \eqref{CMEH-eq} for all $|m| \leq s$, take the $L^2$ inner product with $\Delta \partial^{m} M$, and integrate by parts over $x \in \mathbb{T}^3$. This yields
\begin{equation}\label{M-eq1}
	\begin{aligned}
& \langle \partial_t \partial^m M, \Delta \partial^m M \rangle + \langle \partial^m (v \cdot \nabla M), \Delta \partial^m M \rangle  \\
= & 2A\lambda \|\Delta \partial^m M\|_{L^2}^2 + \lambda \mu_0 \langle \partial^m H_{ext}, \Delta \partial^m M \rangle + \langle \partial^m (\Gamma(M) M), \Delta \partial^m M \rangle  \\
& - 2 \gamma\langle \partial^m \left[ M \times (2A\Delta M + \mu_0 H_{ext}) \right], \Delta \partial^m M \rangle  
\,.
	\end{aligned}
\end{equation}
Observe that
\begin{equation}\label{M-eq2}
	\begin{aligned}
&\langle \partial_t \partial^m M, \Delta \partial^m M \rangle = -\frac{1}{2} \frac{\d}{\d t} \|\nabla \partial^m M\|_{L^2}^2 
\,.
	\end{aligned}
\end{equation}
Moreover, H\"older's inequality and Sobolev embedding give
\begin{equation}\label{M-eq3}
	\begin{aligned}
\langle \partial^m (v \cdot \nabla M), \Delta \partial^m M \rangle &\lesssim \|\partial^m (v \cdot \nabla M)\|_{L^2} \|\Delta \partial^m M\|_{L^2} \lesssim \|v\|_{H^s} \|\nabla M\|_{H^s} \|\Delta \partial^m M\|_{L^2}\\
& \lesssim \|\rho\|_{L^\infty}^\frac{1}{2} \|v\|_{H^s_\rho} \|\nabla M\|_{H^s} \|\Delta \partial^m M\|_{L^2} \,,
	\end{aligned}
\end{equation}
and
\begin{equation}\label{M-eq4}
	\begin{aligned}
-\lambda\mu_0 \langle \partial^m H_{ext}, \Delta \partial^m M \rangle \lesssim \|\partial^m H_{ext}\|_{L^2} \|\Delta \partial^m M\|_{L^2} \,,
	\end{aligned}
\end{equation}
and
\begin{equation}\label{M-eq5}
	\begin{aligned}
&-\langle \partial^m (\Gamma(M) M), \Delta \partial^m M \rangle \\& = -\langle \partial^m \left[ (2A\lambda|\nabla M|^2 + \lambda\mu_0 H_{ext} \cdot M) M \right], \Delta \partial^m M \rangle \\
\lesssim & \left( \|\partial^m (|\nabla M|^2 M)\|_{L^2} + \|\partial^m ((M \cdot H_{ext}) M)\|_{L^2} \right) \|\Delta \partial^m M\|_{L^2} \\
\lesssim & \left[ (1 + \|\nabla M\|_{H^s}) \|\nabla M\|_{H^s}^2 + (1 + \|\nabla M\|_{H^s}^2) \|H_{ext}\|_{H^s} \right] \|\Delta \partial^m M\|_{L^2} \,,
	\end{aligned}
\end{equation}
and
\begin{equation}\label{M-eq6}
	\begin{aligned}
&\gamma \langle \partial^m \left[ M \times (2A\Delta M + \mu_0 H_{ext}) \right], \Delta \partial^m M \rangle  \\
= & 2\gamma A\sum_{\substack{0 \neq m^{\prime}\leq m}} C_m^{m^{\prime}} \langle \partial^{m^{\prime}} M \times \Delta \partial^{m-m^{\prime}} M, \Delta \partial^m M \rangle + \gamma\mu_0 \langle \partial^m (M \times H_{ext}), \Delta \partial^m M \rangle \\
\lesssim & \sum_{\substack{0 \neq  m^{\prime}\leq m}} \|\partial^{ m^{\prime}} M \times \Delta \partial^{m- m^{\prime}} M\|_{L^2} \|\Delta \partial^m M\|_{L^2} +\|\partial^m (M \times H_{ext})\|_{L^2} \|\Delta \partial^m M\|_{L^2} \\
\lesssim & \|\nabla M\|_{H^s}^2 \|\Delta \partial^m M\|_{L^2}
+ (1 + \|\nabla M\|_{H^s}) \|H_{ext}\|_{H^s} \|\Delta \partial^m M\|_{L^2}\\
\lesssim & \left[\|\nabla M\|_{H^s}^2+(1 + \|\nabla M\|_{H^s}) \|H_{ext}\|_{H^s}]\right \|\Delta \partial^m M\|_{L^2} \,.
	\end{aligned}
\end{equation}
Substituting the relations \eqref{M-eq2}, \eqref{M-eq3}, \eqref{M-eq4}, \eqref{M-eq5}, and \eqref{M-eq6} into \eqref{M-eq1} yields
\begin{equation}\label{M-equation-1}
	\begin{aligned}
&\frac{1}{2}\frac{\d}{\d t}\|\nabla\p^m M\|^2_{L^2}+2A\lambda\|\Delta\p^m M\|^2_{L^2}\\
\lesssim & \|\frac{1}{\rho}\|^{\frac{1}{2}}_{L^\infty}\|v\|_{H^s_\rho}\|\nabla M\|_{H^s}\|\Delta\p^m M\|_{L^2}\\
& +\left[(1+\|\nabla M\|_{H^s})\|\nabla M\|_{H^s}^2 + (1+\|\nabla M\|_{H^s}^2)\|H_{ext}\|_{H^s}\right]\|\Delta\partial^mM\|_{L^2} \,.
	\end{aligned}
\end{equation}
Recalling the definitions of $\mathcal{E}_s(t)$ and $\mathcal{D}_s(t)$ in \eqref{EDs-loc}, the estimate \eqref{M-equation-1} implies the energy bound \eqref{M-equation}. This completes the proof of the lemma.
\end{proof}

\subsubsection{\bf Closing the estimates: Proof of Proposition \ref{Prop-Loc}}

Using Lemmas \ref{Lmm-rho-loc}, \ref{Lmm-F-loc}, \ref{Lmm-v-loc}, \ref{Lmm-dtv-loc} and \ref{Lmm-M-loc}, we now close the local a priori estimate \eqref{Ener-Bnds-Local}, thereby proving Proposition \ref{Prop-Loc}. From \eqref{rho-eq}, \eqref{F-eq}, \eqref{v-equation} and \eqref{M-equation}, we have
\begin{equation}\label{energy-eq1}
	\begin{aligned}
& \frac{1}{2}\frac{\d}{\d t}\left(\|\partial^m \rho\|_{L^2_\rho}^2 + \|\partial^m F\|_{L^2}^2 + \|\partial^m v\|_{L^2_\rho}^2 + \|\nabla \partial^m M\|_{L^2}^2\right) \\
& \quad + \mu \|\nabla \partial^m v\|_{L^2}^2 + (\mu + \xi) \|\nabla \cdot \partial^m v\|_{L^2}^2 + 2A\lambda \|\Delta \partial^m M\|_{L^2}^2\\
\lesssim & \|\tfrac{1}{\rho}\|^{\tfrac{1}{2}}_{L^\infty} \mathcal{E}_s^\frac{3}{2} (t) + \mathcal{E}_s (t) \mathcal{D}_s^\frac{1}{2} (t) + \|\tfrac{1}{\rho}\|^{\tfrac{1}{2}}_{L^\infty} \mathcal{E}_s^\frac{3}{2} (t) + \mathcal{E}_s (t) \mathcal{D}_s^\frac{1}{2} (t) \\
& + \| \frac{1}{\rho} \|_{L^\infty}^{\frac{3}{2}} \mathcal{E}_s^2 (t) + \| \frac{1}{\rho} \|_{L^\infty}^{\frac{1}{2}} \|\partial_t v\|_{H^{s - 1}} \mathcal{E}_s (t) \\
& + \big[ \mathcal{E}_s^\frac{\gamma}{2} (t) + \mathcal{E}_s (t) + \mathcal{E}_s^\frac{3}{2} (t) + ( 1 + \mathcal{E}_s^\frac{1}{2} (t) ) \|H_{ext}\|_{H^s} \big] \mathcal{D}_s^\frac{1}{2} (t) \\
& + \|\frac{1}{\rho}\|^{\frac{1}{2}}_{L^\infty} \mathcal{E}_s (t) \mathcal{D}_s^\frac{1}{2} (t) + \left[ ( 1 + \mathcal{E}_s^\frac{1}{2} (t) ) \mathcal{E}_s (t) + ( 1 + \mathcal{E}_s (t) ) \|H_{ext}\|_{H^s} \right] \mathcal{D}_s^\frac{1}{2} (t) \,.
	\end{aligned}
\end{equation}

Substituting \eqref{dtv-equation} into \eqref{energy-eq1} yields
\begin{equation}\label{energy-eq2}
	\begin{aligned}
	& \frac{1}{2}\frac{\d}{\d t}\left(\|\partial^m \rho\|_{L^2_\rho}^2 + \|\partial^m F\|_{L^2}^2 + \|\partial^m v\|_{L^2_\rho}^2 + \|\nabla \partial^m M\|_{L^2}^2\right) \\
& \quad + \mu \|\nabla \partial^m v\|_{L^2}^2 + (\mu + \xi) \|\nabla \cdot \partial^m v\|_{L^2}^2 + 2A\lambda \|\Delta \partial^m M\|_{L^2}^2\\
\lesssim & \|\tfrac{1}{\rho}\|^{\tfrac{1}{2}}_{L^\infty} \mathcal{E}_s^\frac{3}{2} (t) + \mathcal{E}_s (t) \mathcal{D}_s^\frac{1}{2} (t) + \|\tfrac{1}{\rho}\|^{\tfrac{1}{2}}_{L^\infty} \mathcal{E}_s^\frac{3}{2} (t) + \mathcal{E}_s (t) \mathcal{D}_s^\frac{1}{2} (t) + \| \frac{1}{\rho} \|_{L^\infty}^{\frac{3}{2}} \mathcal{E}_s^2 (t) \\
& + (1 + \| H_{ext} \|_{H^s}) ( \|\frac{1}{\rho}\|_{L^\infty} + \|\frac{1}{\rho}\|^\frac{s+1}{2}_{L^\infty} ) \big(1 + \mathcal{E}_s^\frac{s + \gamma}{ 2 } (t) + \mathcal{E}_s^\frac{s + 2}{ 2 } (t) + \mathcal{D}_s^\frac{1}{2} (t) \big) \mathcal{E}_s^\frac{3}{2} (t) \\
& + \big[ \mathcal{E}_s^\frac{\gamma}{2} (t) + \mathcal{E}_s (t) + \mathcal{E}_s^\frac{3}{2} (t) + ( 1 + \mathcal{E}_s^\frac{1}{2} (t) ) \|H_{ext}\|_{H^s} \big] \mathcal{D}_s^\frac{1}{2} (t) \\
& + \|\frac{1}{\rho}\|^{\frac{1}{2}}_{L^\infty} \mathcal{E}_s (t) \mathcal{D}_s^\frac{1}{2} (t) + \left[ ( 1 + \mathcal{E}_s^\frac{1}{2} (t) ) \mathcal{E}_s (t) + ( 1 + \mathcal{E}_s (t) ) \|H_{ext}\|_{H^s} \right] \mathcal{D}_s^\frac{1}{2} (t) \,.
	\end{aligned}
\end{equation}
Recall the definitions of the energy functional $\mathcal{E}_s(t)$ and the energy dissipation rate functional $\mathcal{D}_s(t)$ in \eqref{EDs-loc}. Summing over $|m|\leq s$ in \eqref{energy-eq2} therefore implies that
\begin{equation}\label{Ener-Bnds-Local2}
	\begin{aligned}
\frac{1}{2}\frac{\d}{\d t}	\mathcal{E}_s(t)+\mathcal{D}_s(t) \lesssim & (1 + \| H_{ext} \|_{H^s}) ( \| \tfrac{1}{\rho} \|_{L^\infty}^\frac{1}{2} + \| \tfrac{1}{\rho} \|_{L^\infty}^\frac{s+1}{2} ) \big[ \mathcal{E}_s^\frac{3}{2} (t) + \mathcal{E}_s^\frac{s + \gamma + 3}{2} (t) + \mathcal{E}_s^\frac{s+5}{2} (t) \big] \\
& + (1 + \| H_{ext} \|_{H^s}) ( 1 + \| \tfrac{1}{\rho} \|_{L^\infty}^\frac{s+1}{2} ) \big[  1 + \mathcal{E}_s^\frac{3}{2} (t) + \mathcal{E}_s^\frac{\gamma}{2} (t) \big] \mathcal{D}_s^\frac{1}{2} (t) \,.
	\end{aligned}
\end{equation}

It remains to control the quantity $\|\frac{1}{\rho}\|_{L^\infty}$ . Recall that the density $\rho$ satisfies the equation
\begin{equation*}
\begin{aligned}
\p_t\rho+v\cdot\nabla\rho+\rho\nabla\cdot v=0\,,
\end{aligned}
\end{equation*}
with initial data $\rho|_{t=0}=\rho_0$. Define the trajectory map
\[
\begin{cases}
\frac{\d}{\d t} X(t, x) = v (t, X(t, x)) \,, \\
X(0, x) = x \,.
\end{cases}
\]
Then the density $\rho$ along the trajectory map $X (t,x)$ satisfies
\begin{equation*}
\begin{aligned}
\p_t(\rho(t,X(t,x)))+(\nabla\cdot v)(t,X(t,x))\rho(t,X(t,x))=0
\,,
\end{aligned}
\end{equation*}
which implies that
\begin{equation*}
\begin{aligned}
\p_t\Big[e^{\int^t_0(\nabla\cdot v)(\tau,X(\tau,x))d\tau}\rho(t,X(t,x))\Big]=0 \,.
\end{aligned}
\end{equation*}
Integrating the above equation over $[0,t]$ gives
\begin{equation*}
\begin{aligned}
\rho(t,X(t,x))=\rho_0 (x) e^{-\int^t_0(\nabla\cdot v)(\tau,X(\tau,x))d\tau} \,,
\end{aligned}
\end{equation*}
which shows that $\rho(t, X(t,x)) > 0$ provided that the initial data satisfies $\rho_0(x) > 0$. Consequently, we have 
\begin{equation}\label{rho-infty}
\begin{aligned}
\| \frac{1}{\rho} \|_{L^\infty} &\leq \| \frac{1}{\rho_0} \|_{L^\infty} e^{\int_0^t \| \nabla \cdot v \|_{L^\infty} d\tau} \leq \| \frac{1}{\rho_0} \|_{L^\infty} e^{c \int_0^t \| \nabla v \|_{H^2} d\tau} \leq\| \frac{1}{\rho_0} \|_{L^\infty} e^{c t \sup_{\tau \in [0,t]} \mathcal{E}_s^{\frac{1}{2}}(\tau)} 
\end{aligned}
\end{equation}
for some constant $c > 0$. As a result, \eqref{Ener-Bnds-Local2} together with \eqref{rho-infty} yields the inequality \eqref{Ener-Bnds-Local}. This completes the proof of Proposition \ref{Prop-Loc}.

\subsection{Local well-posedness of \eqref{CMEH-eq}}

In this subsection, based on the energy estimate \eqref{Ener-Bnds-Local} in Proposition \ref{Prop-Loc}, we establish the local existence and uniqueness of solutions to the compressible Navier-Stokes-Landau-Lifshitz-Gilbert system \eqref{CMEH-eq} for large initial data.

We first construct the following nonlinear iterative approximation scheme:
\begin{equation}\label{Ite-App-Eq}
\begin{cases}
\partial_t \rho^{n+1} + v^{n} \cdot \nabla \rho^{n+1} + \rho^{n+1} \nabla \cdot v^{n}=0, \\[4pt]
\rho^{n}(\partial_t v^{n+1} + v^{n} \cdot \nabla v^{n+1} ) + P'( \rho^{n} ) \nabla \rho^{n+1} + \nabla ( - A | \nabla M^n |^{2} + \mu_{0} M^{n} \cdot H_{ext} ) \\
=\mu \Delta v^{n+1} + ( \mu + \xi ) \nabla ( \nabla \cdot v^{n+1} ) + \nabla \cdot ( \rho^{n} F^{n} F^{n \top} ) \\
\qquad \qquad -2 A \nabla \cdot ( \nabla M^{n} \odot \nabla M^{n} ) + \mu_{0} ( \nabla H_{ext} )^\top M^{n}, \\[4pt]
\partial_t F^{n+1} + v^{n} \cdot \nabla F^{n+1} = \nabla v^{n} F^{n}, \\[4pt]
\partial_t M^{n+1} + v^{n} \cdot \nabla M^{n+1} = 2 A \lambda \Delta M^{n+1} + \lambda \mu_{0} H_{ext} \\
\qquad \qquad \qquad \qquad \qquad + \Gamma ( M^{n+1} ) M^{n+1} - \gamma M^{n+1} \times ( \Delta M^{n+1} + H_{ext} ) , \\[4pt]
\left| M^{n+1} \right|=1, \Gamma ( M^{n+1} ) = 2 A \lambda | \nabla M^{n+1}|^2 - \lambda \mu_{0} M^{n+1} \cdot H_{ext} .
\end{cases} 
\end{equation}
The initial data are given by
\begin{equation*}
    \begin{aligned}
        ( \rho^{n+1}, v^{n+1}, F^{n+1}, M^{n+1} ) |_{t=0} = ( \rho_0, v_0, F_0, M_0 ) (x) \,.
    \end{aligned}
\end{equation*}
The iteration starts from $n = 0$ with
\begin{equation*}
    \begin{aligned}
        ( \rho^0, v^0, F^0, M^0 ) (t,x) = ( \rho_0, v_0, F_0, M_0 ) (x) \,.
    \end{aligned}
\end{equation*}
The iteration for the compressible variables $(\rho^{n+1}, v^{n+1})$ is inspired by (5.1) in Section 5 of \cite{JLT-M3AS-2020}. The iteration for the deformation gradient tensor $F^{n+1}$ and the magnetization $M^{n+1}$ follows the approach in (4.1) of Section 4 of \cite{JLL-JDE-2023}. The nonlinearity in the iterative scheme arises from the iterated Lagrangian multiplier $\Gamma(M^{n+1})$, which enforces the geometric constraint $|M^{n+1}|=1$. This nonlinear iterative scheme for preserving the spherical constraint was first introduced in \cite{JL-SIMA-2019} for the incompressible hyperbolic Ericksen–Leslie model of liquid crystals. Subsequently, the ideas were successfully applied in \cite{JLL-JDE-2023,JLT-M3AS-2020}.

Following the ideas of \cite{JLL-JDE-2023,JL-SIMA-2019,JLT-M3AS-2020}, the iterative approximation \eqref{Ite-App-Eq} admits a uniform estimate in $n\ge0$ that has essentially the same structure as the local a priori estimate \eqref{Ener-Bnds-Local} in Proposition \ref{Prop-Loc}. Using this uniform estimate, the local well-posedness can be obtained. For simplicity, we will directly work with the local a priori estimate \eqref{Ener-Bnds-Local}.

Under the assumption that $H_{\text{ext}} \in L^\infty(0,T_0;H^s)$ for some $T_0 >0$ as in Theorem \ref{Thm1}, the bound \eqref{Ener-Bnds-Local} implies that
\begin{equation}\label{En-2}
\begin{aligned}
\frac{\d}{\d t} \mathcal{E}_s (t) + \mathcal{D}_s(t) \leq C (1 + e^{c(s+1)t \sup_{ \tau \in [0, t] } \mathcal{E}_s (\tau) }  ) ( 1 + \mathcal{E}_s^{ \frac{s}{2} + \gamma + 3 } (t) )
\end{aligned}
\end{equation}
for all $t \in [0,T_0]$, where the constant $C = C(s, \| H_{ext} \|_{L^\infty(0,T_0;H^s)}, \| \frac{1}{\rho_0} \|_{L^\infty} ) > 0$. Recall that $\mathcal{E}_s(0) = \mathcal{E}_s^{in} < \infty$, where the local initial energy $\mathcal{E}_s^{in}$ is defined in \eqref{E_s-in}. Define
\begin{equation}\label{En-3}
\begin{aligned}
T_1 = \sup\{\tau \in [0, T_0); \sup_{t \in [0,\tau]} \mathcal{E}_s (t) \leq 2 \mathcal{E}_s^{in}\} \geq 0 \,.
\end{aligned}
\end{equation}
By the continuity of the energy functional $\mathcal{E}_s(t)$, we have $T_1 > 0$. Then inequality \eqref{En-2} yields, for all $t \in [0, T_1]$,
\begin{equation*}
\begin{aligned}
\frac{\d}{\d t} \mathcal{E}_s(t) + \mathcal{D}_s(t) \leq \underbrace{ C \left( 1 + e^{2 c (s+1) T_0 \mathcal{E}_s^{in}} \right) \left( 1 + ( 2 \mathcal{E}_s^{in} )^{ \frac{s}{2} + \gamma + 3 } \right) }_{C_1} .
\end{aligned}
\end{equation*}
Integrating this inequality over $[0,t] \subseteq [0,T_1]$ and using the non-negativity of $\mathcal{D}_s(t)$, we obtain
\begin{equation}\label{En-4}
\begin{aligned}
\mathcal{E}_s(t) \leq \mathcal{E}_s^{in} + C_1 t.
\end{aligned}
\end{equation}
Choosing
\[
t_{*} = \frac{\mathcal{E}_s^{in}}{2 C_1} > 0,
\]
which depends only on $s$, the coefficients, $\mathcal{E}_s^{in}$ and $\|H_{ext}\|_{L^{\infty}(0,T_0;H^{s})}$, it follows from \eqref{En-4} that
\[
\mathcal{E}_s(t) \leq \frac{3}{2} \mathcal{E}_s^{in} < 2 \mathcal{E}_s^{in} \quad (\forall t \in [0, t_{*}]).
\]
Consequently, the number $T_1$ defined in \eqref{En-3} admits a uniform positive lower bound $t_{*} > 0$; i.e.,
\[
T_1 \geq t_{*} > 0.
\]
Hence, for all $t \in [0, t_{*}]$,
\[
\frac{\d}{\d t} \mathcal{E}_s(t) + \mathcal{D}_s(t) \leq C_1,
\]
which implies
\[
\sup_{t \in [0,t_{*}]} \mathcal{E}_s (t) + \int_{0}^{t_{*}} \mathcal{D}_s(t) \, \d t \leq \mathcal{E}_s^{in} + C_1 t_{*}.
\]
This establishes the local existence of solutions to the compressible NSLLG system \eqref{CMEH-eq}. The uniqueness result in the classical solution regime is standard; for instance, one may refer to Section 4 of \cite{JLM-CMS-2026}. Thus the proof of Theorem \ref{Thm1} is complete.

\section{Global well-posedness near constant equilibrium: Proof of Theorem \ref{Thm2}}\label{Sec:4}

In this section, we establish the global well-posedness of the compressible NSLLG system \eqref{CMEH-eq} without external magnetic field ($H_{{ext}} = 0$) near the constant equilibrium $(1, 0, I, M_e) \in \mathbb{R} \times \mathbb{R}^3 \times \mathbb{R}^{3 \times 3} \times \mathbb{S}^2$. More precisely, we prove the global existence of solutions to \eqref{CMEH-eqG} for small initial data. We first examine the damping mechanism of the deformation gradient tensor $F$ near the identity matrix $I \in \mathbb{R}^{3 \times 3}$. Then we derive global differential energy estimates to obtain the global existence result.

\subsection{Damping mechanism of the deformation gradient tensor $F$ in \eqref{CMEH-eqG}}

From the initial assumptions $\rho_0 > 0$ and $\rho_0 \det F_0 = 1$, Proposition 1 of Qian-Zhang \cite{QZ-ARMA-2010} implies that for all $(t,x)$,
\begin{equation*}
    \begin{aligned}
        \rho \det F = 1 \,, \quad \rho > 0 \,.
    \end{aligned}
\end{equation*}
Thus $F$ is nondegenerate. Let $G = F^{-1}$. Then 
$$(\p_t F+u\cdot \nabla F)G=-F(\p_tG+u\cdot\nabla G),$$
and 
$$\nabla uFG=\nabla u.$$
Consequently,
$$-F(\p_tG+u\cdot\nabla G)=\nabla u,$$ 
i.e., 
$$\p_tG+u\cdot\nabla G+G\nabla u=0.$$
Moreover, by Lemma 2.1 of \cite{Sideris-Thomases-CPAM-2005}, we have $\partial_i G^{jk} = \partial_k G^{ji}$ for $i,j,k = 1,2,3$. Define $U = G - I$. Then $G$ satisfies
\begin{equation}\label{G}
\begin{aligned}
\p_t U+u\cdot \nabla U+(U+I)\nabla u=0\,,
\end{aligned}
\end{equation}
and $\p_iU^{jk}=\p_kU^{ji}\quad (i,j,k=1,2,3)$, that is, the matrix $U$ is curl-free. More precisely, let $U(t,x)= ( \mathscr{U}^1 (t,x), \mathscr{U}^2 (t,x), \mathscr{U}^3 (t,x))^\top $, where 
$$\mathscr{U}^j(t,x)=(U^{j1}(t,x),U^{j2}(t,x),U^{j3}(t,x))\quad(j=1,2,3).$$ 
Then each vector field $\mathscr{U}^j$ is curl-free: $\nabla \times \mathscr{U}^j = 0$ for $j = 1,2,3$. By the div-curl theorem, there exists an $\R^3$-valued function $\psi(t,x)=(\psi^1(t,x),\psi^2(t,x),\psi^3(t,x))$ such that 
$$(U^{j1},U^{j2},U^{j3})=\nabla \psi^j \quad (j=1,2,3).$$
In other words, $U(t,x)=\nabla\psi(t,x)$, whose components are given by
\begin{equation}\label{U}
\begin{aligned}
 U^{ij}=\p_i\psi^i\,.
\end{aligned}
\end{equation}
Note that $\psi(t,x)=X^{-1}(t,x)-x$, where $X^{-1}(t,x)$ is the inverse flow map $x (X, t)$ defined in \eqref{FM-x}.

Substituting \eqref{U} into \eqref{G} yields
\begin{equation}\label{psi}
\begin{aligned}
\p_j(\p_t\psi^i+u^i+(u\cdot \nabla )\psi^i)=0 \,, \quad 1\leq i,j\leq 3\,.
\end{aligned}
\end{equation}
By (3.3) of \cite{Lin-Zhang-CPAM-2008}, we have
$$ \| U(t) \|_{H^s}=\|F^{-1}(t)-I\|_{H^s}\sim\|F(t)-I\|_{H^s},$$
provided that the fluctuation satisfies $\|F(t)-I\|_{H^s} < 1$ (or equivalently $\|F^{-1}(t)-I\|_{H^s} < 1$). By the smallness of the initial data, there exists $T > 0$ such that $\|F(t)-I\|_{H^s} < 1$ for all $t \in [0,T]$. Then, using Taylor expansion, the Cauchy–Green tensor becomes
\begin{equation}\label{CG-tensor}
\begin{aligned}
\frac{ W_F (F) F^\top }{ \det F} & = \rho F F^\top = ( 1 + \theta ) ( I + U)^{-1} ( I + U)^{-\top} \\
& = ( 1 + \theta ) ( I - U - U^\top + g(U) ) \,,
\end{aligned}
\end{equation}
where $g(U)=O(|U|^2).$

Using the curl-free property $\p_i U^{jk} = \p_k U^{ji} \ (1\leq i,j,k\leq3)$, we obtain $\nabla \cdot U^\top=\nabla \mathrm{tr} U.$ Hence,
$$\nabla \cdot(\rho FF^\top)=\nabla \theta-\nabla\cdot((1+\theta)\nabla \psi)-\nabla\psi\cdot\nabla\theta-(1+\theta)\nabla \mathrm{tr} U.$$
Since $\rho \det F=1$, we have
$$ 1 + \theta = \det ( I + U ) = 1 + \mathrm{tr} U + O ( |U|^2 ) .$$
Thus $\mathrm{tr} U= \theta + \tilde{g} (U), \ \tilde{g}(U) = O(|U|^2) $. Consequently,
\begin{equation}\label{U000}
\begin{aligned}
\nabla \cdot ( \rho F F^\top ) & = \nabla \theta - \nabla \cdot \left((1+\theta)\nabla \psi\right) - \nabla \psi \cdot \nabla \theta - (1+\theta)\nabla\left(\theta + \tilde{g}(U)\right) \\
& = -\nabla \cdot \left((1+\theta)\nabla \psi\right) - \nabla \psi \cdot \nabla \theta - \nabla \tilde{g}(U)  - \theta \nabla\left(\theta + \tilde{g}(U)\right) \\
& = -(1+\theta)\Delta \psi - 2\nabla \psi \cdot \nabla \theta - \nabla \tilde{g}(U)- \theta \nabla\left(\theta + \tilde{g}(U)\right)\,.
\end{aligned}
\end{equation}
From \eqref{CMEH-eqG}, \eqref{psi} and \eqref{U000}, we deduce the following reformulated system:                   
\begin{align}\label{CMEH-eqGd}
	\begin{cases}
		\p_t \theta +u\cdot\nabla\theta+(1+\theta)\nabla\cdot u = 0 \,, \\
		\begin{aligned}
			(1&+\theta)(\p_t u+u\cdot\nabla u) + \nabla \left( P(1+\theta) - A |\nabla d|^2 +\tilde{g}(U)\right) 
			\\[5pt] & 
			=  \mu\Delta u+(\mu+\xi)\nabla(\nabla \cdot u)-(1+\theta)\Delta\psi-2\nabla\psi\cdot\nabla\theta-\theta\nabla(\theta+\tilde{g}(U))-2A \nabla \cdot ( \nabla d \odot \nabla d)  \,, 
		\end{aligned} \\
		\partial_t \psi + u+u\cdot\nabla \psi =0 \,, \\
		\p_t d + u\cdot \nabla d = 2A\lambda \Delta d+2A\lambda|\nabla d|^2(d+M_e)-2A\gamma(d+M_e)\times \Delta d
		\,, \\ 
	|d|^2+2M_e\cdot d=0 \,. 
	\end{cases}
\end{align}

\subsection{Global existence near constant equilibrium: Proof of Theorem \ref{Thm2}}

Recall the fluctuations
\begin{equation}\label{Flc}
    \begin{aligned}
        \rho = 1 + \theta \,, \ M = d + M_e \,, \ v = 0 + u \,, \ U = F^{-1} - I \,, \ U^{ij} = \partial_j \psi^i \,.
    \end{aligned}
\end{equation}
The global energy functional $\mathbf{E}_s (t)$ in \eqref{close-eq5} and the global dissipation rate $\mathbf{D}_s (t)$ in \eqref{close-eq6} can be expressed as
\begin{equation}\label{close-eq5-1}
\begin{aligned}
\mathbf{E}_s (t) = \| \theta \|^2_{H^s} + \| u \|^2_{H^s} + | \bar{d} |^2 + \| d -\bar{d} \|^2_{H^s} + \| \nabla d \|^2_{H^s} + \| \nabla \psi \|^2_{H^s} \,,
\end{aligned}
\end{equation}
and 
\begin{equation}\label{close-eq6-1}
\begin{aligned}
\mathbf{D}_s (t) = \| \nabla u \|^2_{H^s} + \| \nabla \theta \|_{H^{s-1}} + \| \nabla \psi |^2_{H^s} + \| \nabla d \|^2_{H^s} + \| \Delta d \|^2_{H^s} + \| \nabla \cdot u \|^2_{H^s} \,.
\end{aligned}
\end{equation}

For any $\delta,\eta>0$, we define the following {\bf instant energy functional}
\begin{equation}\label{close-eq7}
\begin{aligned}
\mathbb{E}_{s;\delta,\eta, \epsilon}(t)&=\epsilon a \gamma \|\theta\|_{H_{ \w (\theta) }^s}^2 + \epsilon \|u\|_{H_{1+\theta}^s}^2 + \epsilon \eta \|u + \nabla \theta\|_{H^{s-1}}^2 - \epsilon \eta \|u\|_{H^{s-1}}^2 - \epsilon \eta \|\nabla \theta\|_{H^{s-1}}^2 + |\bar{d}|^2 \\
& \quad + \|d - \bar{d}\|_{H^s}^2 + \|\nabla d\|_{H^s}^2 + \delta \|\nabla \psi\|_{H^s}^2 + \delta \|u - \bar{u}\|_{H^s}^2 - \frac{2\delta}{\mu} \sum_{|m| \leq s} \langle \partial^m (\psi - \bar{\psi}), \partial^m u \rangle \,,
\end{aligned}
\end{equation}
and the {\bf instant dissipation rate} 
\begin{equation}\label{close-eq8}
\begin{aligned}
\mathbb{D}_{s;\delta,\eta, \epsilon} (t) & = (\epsilon \mu + \delta \mu  - \delta \frac{c_p}{\mu} ) \|\nabla u\|_{H^s}^2 + (\epsilon + \delta) (\mu + \xi ) \|\nabla \cdot u\|_{H^s}^2 + \epsilon \eta a \gamma \|\nabla \theta\|_{H_{ \w (\theta) }^s }^2 \\
& \quad + \frac{\delta}{\mu} \| \nabla \psi\|_{H^s}^2 - (\epsilon + \delta) \sum_{|m| \leq s} \langle \nabla \partial^m \psi, \nabla \partial^m u \rangle - \frac{\mu + \xi}{\mu} \delta \sum_{|m| \leq s} \langle \nabla \cdot \partial^m \psi, \nabla \cdot \partial^m u \rangle \\
& \quad - c_0 \epsilon \eta [ \mu \| \nabla u \|_{H^s} + (\mu + \xi) \| \nabla \cdot u \|_{H^s} + \| \nabla \psi \|_{H^s} ] \| \nabla \theta \|_{ H^{s-1}_{ \w (\theta) } } \\
& \quad - c_1 \delta a \gamma ( \mu \| \nabla u \|_{H^s} + \| \nabla \psi \|_{H^s} ) \| \nabla \theta \|_{ H^{s-1}_{ \w (\theta) } }  + 2A\lambda \|\nabla d\|_{H^s}^2 + 2A\lambda \|\Delta d\|_{H^s}^2 \,,
\end{aligned}
\end{equation}
where $\w (\theta)$ satisfies $a \gamma \w (\theta) = \frac{P' (1 + \theta)}{1 + \theta}$, hence, 
\begin{equation}\label{w-weight}
  \begin{aligned}
    \w (\theta) = (1 + \theta)^{\gamma - 2} \,.
  \end{aligned}
\end{equation}
Here the notations $\bar{d}$, $\bar{u}$ and $\bar{\psi}$ are given in \eqref{f-bar}. Moreover, the constant $c_0 > 0$ is independent of $\delta, \eta, \epsilon$. 

We then have the following lemma concerning the positivity of the functionals $\mathbb{E}_{s;\delta,\eta, \epsilon}(t)$ and $\mathbb{D}_{s;\delta,\eta, \epsilon}(t)$.

\begin{lemma}\label{Lmm-Glob-ED}
    If the coefficients satisfy
\begin{equation}\label{close-eq19}
\begin{aligned}
a > 0, \ \gamma > 1, \ A > 0, \ \lambda > 0, \ \mu > 0, \ \mu + \xi > 0 \,,
\end{aligned}
\end{equation}
then there exist $\delta,\eta, \epsilon>0$ such that
\begin{equation}\label{close-eq20}
\begin{aligned}
c_1 \mathbf{D}_s (t) \leq \mathbb{D}_{s;\delta,\eta, \epsilon} (t) \leq \hat{c_1} \mathbf{D}_s (t), \ c_2 \mathbf{E}_s (t) \leq \mathbb{E}_{s;\delta,\eta, \epsilon} (t) \leq \hat{c_2} \mathbf{E}_s (t)
\end{aligned}
\end{equation}
for some constants $c_1,c_2,\hat{c_1},\hat{c_2}>0$.
\end{lemma}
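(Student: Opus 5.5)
The plan is to prove both equivalences by Cauchy--Schwarz and Young's inequality, fixing the small parameters in a definite order. We work in the regime where the fluctuations are a priori small, say $\|\theta\|_{H^s}\le \tfrac12$, so that the weights $\w(\theta)=(1+\theta)^{\gamma-2}$ and $1+\theta$ lie between two positive constants. Then $\|\cdot\|_{H^s_{\w(\theta)}}\sim\|\cdot\|_{H^s}$ and $\|\cdot\|_{H^s_{1+\theta}}\sim\|\cdot\|_{H^s}$, with constants independent of $\delta,\eta,\epsilon$. The upper bounds $\mathbb{E}\le \hat c_2\mathbf{E}_s$ and $\mathbb{D}\le\hat c_1\mathbf{D}_s$ are then immediate, since each cross term is bounded by a product of norms already in $\mathbf{E}_s$ or $\mathbf{D}_s$. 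Two facts handle the zero-mean terms. First, $\|u-\bar u\|_{H^s}\le\|u\|_{H^s}$. Second, by Poincaré, $\|\psi-\bar\psi\|_{H^s}\lesssim\|\nabla\psi\|_{H^s}$. The only real work is the two lower bounds.

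\textbf{Energy.} We expand $\|u+\nabla\theta\|^2_{H^{s-1}}-\|u\|^2_{H^{s-1}}-\|\nabla\theta\|^2_{H^{s-1}}=2\langle u,\nabla\theta\rangle_{H^{s-1}}$. This gives
\[
\bigl|2\epsilon\eta\langle u,\nabla\theta\rangle_{H^{s-1}}\bigr|\le \epsilon\eta\bigl(\|u\|^2_{H^s}+\|\theta\|^2_{H^s}\bigr),
\]
which is absorbed by $\epsilon a\gamma\|\theta\|^2_{H^s_{\w}}+\epsilon\|u\|^2_{H^s_{1+\theta}}$ once $\eta$ is small. For the $\psi$--$u$ interaction we use
\[
\frac{2\delta}{\mu}\Bigl|\sum_{|m|\le s}\langle\partial^m(\psi-\bar\psi),\partial^m u\rangle\Bigr|\le \frac{\delta}{2}\|\nabla\psi\|^2_{H^s}+\frac{C\delta}{\mu^2}\|u\|^2_{H^s}.
\]
The second term is absorbed by $\epsilon\|u\|^2_{H^s}$ provided $\delta\ll\epsilon$. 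Since the $d$-components appear with coefficient one, this yields $\mathbb{E}\ge c_2\mathbf{E}_s$ with $c_2\sim\min(\epsilon,\delta)$.

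\textbf{Dissipation.} This is the main obstacle, because of the negative coefficient $-\delta c_p/\mu$ and the interaction terms. We bound them as follows.

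First, $(\epsilon+\delta)|\langle\nabla\partial^m\psi,\nabla\partial^m u\rangle|\le \frac{\delta}{4\mu}\|\nabla\psi\|^2_{H^s}+\frac{C\mu(\epsilon+\delta)^2}{\delta}\|\nabla u\|^2_{H^s}$. To absorb the last term into $\epsilon\mu\|\nabla u\|^2_{H^s}$ we need $(\epsilon+\delta)^2/\delta\ll\epsilon$. This conflicts with $\delta\ll\epsilon$, so the $\epsilon$-part of the cross term must be treated differently. It actually cancels between the $u$-estimate and the $\psi$-estimate in the derivation of Lemma~\ref{Lmm-G7}. Since only positivity is needed here, the cleaner choice is to take $\delta$ comparable to $\epsilon$ in the $\psi$ terms but small relative to $\mu^2$. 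I would therefore write $\delta=\kappa\epsilon$ and choose $\kappa$ small so that the coefficient of $\|\nabla u\|^2$, namely $\epsilon\mu+\delta\mu-\delta c_p/\mu$, stays $\ge \epsilon\mu/2$. The pairing $-(\epsilon+\delta)\langle\nabla\psi,\nabla u\rangle$ is then controlled via $\frac{\delta}{4\mu}\|\nabla\psi\|^2+C\mu\epsilon^2\delta^{-1}\|\nabla u\|^2$. If that constant does not close, I would rely on the structural cancellation instead.

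Second, the $\nabla\cdot\psi$ pairing is bounded by $\frac{\delta}{4\mu}\|\nabla\psi\|^2+C\delta\|\nabla\cdot u\|^2$, and the last term is absorbed by $(\epsilon+\delta)(\mu+\xi)\|\nabla\cdot u\|^2$.

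Third, the $c_0\epsilon\eta$ and $c_1\delta a\gamma$ terms involving $\|\nabla\theta\|_{H^{s-1}_{\w}}$ are split by Young's inequality into $\frac{\epsilon\eta a\gamma}{4}\|\nabla\theta\|^2$ plus $C\epsilon\eta\bigl(\|\nabla u\|^2+\|\nabla\psi\|^2\bigr)$ and $C\delta^2(\epsilon\eta)^{-1}(\cdots)$. These are absorbed by taking $\eta$ small relative to $\delta/\epsilon$ and, last, $\delta\ll\eta\epsilon$ for the $c_1$ term.

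The real difficulty is finding a consistent ordering of $\epsilon,\eta,\delta$ that satisfies all these competing smallness conditions. The ordering I would try is to fix $\epsilon=1$, then choose $\eta$ small, then choose $\delta$ much smaller than $\eta^2$. Under that ordering, the $-(\epsilon+\delta)\langle\nabla\psi,\nabla u\rangle$ term is the one that cannot be absorbed by Young's inequality alone, and I would expect to have to invoke the cancellation structure to handle it.
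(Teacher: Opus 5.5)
Your upper bounds and your lower bound for $\mathbb{E}_{s;\delta,\eta,\epsilon}$ are fine and follow the paper: Young plus Poincar\'e, $\delta$ small relative to $\epsilon$, then $\eta$ small. The gap is the lower bound $\mathbb{D}_{s;\delta,\eta,\epsilon}\ge c_1\mathbf{D}_s$, which you never establish. You leave $-(\epsilon+\delta)\sum_{|m|\le s}\langle\nabla\partial^m\psi,\nabla\partial^m u\rangle$ open and appeal to a ``cancellation structure'', but no such cancellation is available.

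The lemma compares two fixed functionals. $\mathbb{E}_{s;\delta,\eta,\epsilon}$ contains $\|\nabla\psi\|_{H^s}^2$ only with weight $\delta$. Lemma~\ref{Lmm-G7} produces the cross term with the same sign as Lemma~\ref{Lmm-G5}, which is why its coefficient is $\epsilon+\delta$. The cancellation you have in mind would come from pairing $\partial_t\nabla\psi+\nabla u=-\nabla(u\cdot\nabla\psi)$ with $\nabla\psi$, and that estimate is not part of these functionals.

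With only $\frac{\delta}{\mu}\|\nabla\psi\|_{H^s}^2$ available to absorb the cross term, the bound actually fails in your regimes. Take $d=0$, $\nabla\cdot u=0$, $\psi=tu$, and $\theta$ quadratically small (this is what $\rho\det F=1$ forces when $\nabla\cdot\psi=0$). To leading order, $\mathbb{D}_{s;\delta,\eta,\epsilon}=\bigl[\epsilon\mu+\delta\mu-\delta\frac{c_p}{\mu}-(\epsilon+\delta)t+\frac{\delta}{\mu}t^2\bigr]\|\nabla u\|_{H^s}^2$. Its minimum over $t$ is $\frac{\mu}{4\delta}\bigl[4\delta^2(1-\frac{c_p}{\mu^2})-(\epsilon-\delta)^2\bigr]$.
\begin{itemize}
\item This is negative whenever $\delta\ll\epsilon$. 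That covers both $\delta=\kappa\epsilon$ with $\kappa$ small and your final ordering $\epsilon=1$, $\delta\ll\eta^2$.
\item It is $\le 0$ for every choice of $\delta,\epsilon$ when $\mu^2\le c_p$.
\end{itemize}
So no ordering of the parameters closes the argument under the stated hypotheses alone.

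Your treatment of the $\theta$-terms also fails: your requirements $\eta\ll\delta/\epsilon$ and $\delta\ll\eta\epsilon$ contradict each other. If $\theta$ is treated as independent of $\psi$, as you do, the $(\|\nabla\psi\|_{H^s},\|\nabla\theta\|_{H^{s-1}_{\w(\theta)}})$ block needs $(c_0\epsilon\eta+c_1\delta a\gamma)^2<\frac{4\delta}{\mu}\epsilon\eta a\gamma$. Here $c_0,c_1$ are the constants in front of the $\theta$-cross terms in $\mathbb{D}_{s;\delta,\eta,\epsilon}$. By AM--GM this forces $c_0c_1\mu<1$ and pins $\eta\approx c_1\delta a\gamma/(c_0\epsilon)$. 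Letting $\eta\to0$ makes this minor negative.

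Copying the paper will not repair this. Its $4\times4$ matrix has $(1,3)$ entry $-\frac{1+\delta}{2}$, whereas $\mathbb{D}_{s;\delta,\eta,\epsilon}$ dictates $-\frac{\epsilon+\delta}{2}$. With the correct entry, the $\{1,3\}$ principal minor is about $\epsilon\delta-\epsilon^2/4<0$ in the paper's regime $\epsilon\gg1$, $\delta\ll1$. The paper also sends $\eta\to0$ despite the $\{3,4\}$ minor.

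A correct version needs either extra hypotheses (at least $\mu^2>c_p$, with $\delta\sim\epsilon$) or modified functionals. Two possible modifications:
\begin{itemize}
\item Add $\epsilon\|\nabla\psi\|_{H^s}^2$ to the energy via the transport estimate for $\nabla\psi$, so that $-(\epsilon+\delta)$ becomes $-\delta$.
\item Use $\nabla\cdot\psi=\mathrm{tr}\,U=\theta+\tilde g(U)$ to turn the linear $\psi$--$\theta$ pairings into damping for $\theta$ instead of cross terms.
\end{itemize}
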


\begin{proof}

The upper bounds for the functionals $\mathbb{E}_{s;\delta,\eta,\epsilon}(t)$ and $\mathbb{D}_{s;\delta,\eta,\epsilon}(t)$ in \eqref{close-eq20} follow readily from H\"older's inequality and Young's inequality. We now focus on the lower bounds.

For $\mathbb{D}_{s;\delta,\eta, \epsilon}(t)$, we need to find some $\delta, \eta, \epsilon>0$ such that the quantity 
\begin{equation}\label{close-eq9}
\begin{aligned}
& (\epsilon \mu + \delta \mu  - \delta \frac{c_p}{\mu} ) \|\nabla u\|_{H^s}^2 + (\epsilon + \delta) (\mu + \xi ) \|\nabla \cdot u\|_{H^s}^2 + \epsilon \eta a \gamma \|\nabla \theta\|_{H_{ \w (\theta) }^s }^2 \\
& \quad + \frac{\delta}{\mu} \| \nabla \psi\|_{H^s}^2 - (\epsilon + \delta) \sum_{|m| \leq s} \langle \nabla \partial^m \psi, \nabla \partial^m u \rangle - \frac{\mu + \xi}{\mu} \delta \sum_{|m| \leq s} \langle \nabla \cdot \partial^m \psi, \nabla \cdot \partial^m u \rangle \\
& \quad - c_0 \epsilon \eta [ \mu \| \nabla u \|_{H^s} + (\mu + \xi) \| \nabla \cdot u \|_{H^s} + \| \nabla \psi \|_{H^s} ] \| \nabla \theta \|_{ H^{s-1}_{ \w (\theta) } } \\
& \quad - c_1 \delta a \gamma ( \mu \| \nabla u \|_{H^s} + \| \nabla \psi \|_{H^s} ) \| \nabla \theta \|_{ H^{s-1}_{ \w (\theta) } }
\end{aligned}
\end{equation}
admits a lower bound of the form $c_*(\|\nabla u\|^2_{H^s}+\|\nabla\cdot u\|^2_{H^s}+\|\nabla \psi\|^2_{H^s} + \| \nabla \theta \|_{ H^{s-1}_{ \w (\theta) } }^2)$ for some $c_*>0$. It suffices to show that the quadratic form
\begin{equation*}
\begin{aligned}
F(V) = V
\underbrace{
\begin{pmatrix}
\epsilon \mu + \delta \mu - \delta \frac{c_p}{\mu} & 0 & - \frac{1 + \delta}{2} & - \frac{1}{2} (c_0 \epsilon \eta \mu + c_1 \delta \mu a \gamma) \\
0 & (\epsilon + \delta) (\mu + \xi) & - \delta \frac{\mu + \xi}{2 \mu} & - \frac{1}{2} c_0 \epsilon \eta (\mu + \xi) \\
- \frac{1 + \delta}{2} & - \delta \frac{\mu + \xi}{2 \mu} & \frac{\delta}{\mu} & - \frac{1}{2} (c_0 \epsilon \eta + c_1 \delta a \gamma) \\
- \frac{1}{2} (c_0 \epsilon \eta \mu + c_1 \delta \mu a \gamma) & - \frac{1}{2} c_0 \epsilon \eta (\mu + \xi) & - \frac{1}{2} (c_0 \epsilon \eta + c_1 \delta a \gamma) & \epsilon \eta a \gamma
\end{pmatrix}}_{: = \mathcal{M}}
V^\top
\end{aligned}
\end{equation*}
is positive definite, where
\begin{equation*}
  \begin{aligned}
    V = (X, Y, Z, W) \,, \ X = \| \nabla u \|_{H^s} \,, Y = \| \nabla \cdot u \|_{H^s} \,, Z = \| \nabla \psi \|_{H^s} \,, W = \| \nabla \theta \|_{H^{s-1}_{ \w (\theta) }} \,.
  \end{aligned}
\end{equation*}
We remark that the quadratic form $F(V)$ provides a lower bound for the quantity \eqref{close-eq9}. The matrix $\mathcal{M}$ is positive definite if and only if the following conditions hold:
  \begin{equation}\label{M1234}
    \begin{aligned}
      & \mathcal{M}_1 = \epsilon \mu + \delta \mu - \delta \frac{c_p}{\mu} > 0 \,, \quad \mathcal{M}_2 = \det \begin{pmatrix}
\epsilon \mu + \delta \mu - \delta \frac{c_p}{\mu} & 0 \\
0 & (\epsilon + \delta) (\mu + \xi)
\end{pmatrix} > 0 \,, \\
      & \mathcal{M}_3 = \det \begin{pmatrix}
\epsilon \mu + \delta \mu - \delta \frac{c_p}{\mu} & 0 & - \frac{1 + \delta}{2} \\
0 & (\epsilon + \delta) (\mu + \xi) & - \delta \frac{\mu + \xi}{2 \mu} \\
- \frac{1 + \delta}{2} & - \delta \frac{\mu + \xi}{2 \mu} & \frac{\delta}{\mu} 
\end{pmatrix} > 0 \,, \quad \mathcal{M}_4 = \det \mathcal{M} > 0 \,.
    \end{aligned}
  \end{equation}
Observe that $\mathcal{M}_1 > 0$ and $\mathcal{M}_2 > 0$ imply
\begin{equation}\label{M12}
  \begin{aligned}
    \epsilon \mu + \delta \mu - \delta \frac{c_p}{\mu} > 0 \,, \ \mu + \xi > 0 \,.
  \end{aligned}
\end{equation}
Moreover, $\mathcal{M}_3 > 0$ and $\mathcal{M}_4 > 0$ can be computed as
\begin{equation}\label{M3}
  \begin{aligned}
    \mathcal{M}_3 = (\mu + \xi) \Big\{ [ (1 - \frac{c_p}{\mu^2}) ( 1 - \frac{\mu + \xi}{4} ) - \frac{1}{4} ] \delta^3 + [ \epsilon ( 2 - \frac{c_p}{\mu^2} - \frac{\mu + \xi}{4} ) - \frac{3}{4} ] \delta^2 \\
    + (\epsilon^2 - \frac{3}{4}) \delta - \frac{1}{4} \Big\} > 0 \,,
  \end{aligned}
\end{equation}
and
\begin{equation}\label{M4}
  \begin{aligned}
    \mathcal{M}_4 = \frac{(\epsilon + \delta) ( \mu + \xi )}{\mu} ( \mu^2 (\epsilon + \delta) - \delta c_p ) ( \Xi_2 \eta^2 + \Xi_1 \eta + \Xi_0 ) > 0 \,,
  \end{aligned}
\end{equation}
where
  \begin{align*}
    \Xi_{2} &= c_{0}^{2}\epsilon^{2}\left[-\frac{1}{4}\left(\frac{\mu^{3}}{\mu^{2}(\epsilon+\delta)-\delta c_{p}}+\frac{\mu+\xi}{\epsilon+\delta}\right) U-\left(\frac{1}{2}+\frac{\mu^{2}(1+\delta)}{4\left(\mu^{2}(\epsilon+\delta)-\delta c_{p}\right)}+\frac{\delta(\mu+\xi)}{4\mu(\epsilon+\delta)}\right)^{2}\right], \\
    \Xi_{1} &= \epsilon\left[U\left(a\gamma-\frac{\mu^{3} c_{0}c_{1}\delta a\gamma}{2\left(\mu^{2}(\epsilon+\delta)-\delta c_{p}\right)}\right) \right. \\
    & \quad \left.-2 c_{0}c_{1}\delta a\gamma\left(\frac{1}{2}+\frac{\mu^{2}(1+\delta)}{4\left(\mu^{2}(\epsilon+\delta)-\delta c_{p}\right)}\right)\left(\frac{1}{2}+\frac{\mu^{2}(1+\delta)}{4\left(\mu^{2}(\epsilon+\delta)-\delta c_{p}\right)}+\frac{\delta(\mu+\xi)}{4\mu(\epsilon+\delta)}\right)\right], \\
    \Xi_{0} &=  \left(c_{1}\delta a\gamma\right)^{2}\left[\frac{\mu^{3} U}{4\left(\mu^{2}(\epsilon+\delta)-\delta c_{p}\right)}+\left(\frac{1}{2}+\frac{\mu^{2}(1+\delta)}{4\left(\mu^{2}(\epsilon+\delta)-\delta c_{p}\right)}\right)^{2}\right], \\
    U &= \frac{\delta}{\mu}-\frac{\mu(1+\delta)^{2}}{4\left(\mu^{2}(\epsilon+\delta)-\delta c_{p}\right)}-\frac{\delta^{2}(\mu+\xi)}{4\mu^{2}(\epsilon+\delta)}.
  \end{align*}
  To guarantee \eqref{M4}, it is sufficient to require
  \begin{equation}\label{Xi0}
    \begin{aligned}
      \Xi_0 > 0 \,,
    \end{aligned}
  \end{equation}
  which yields \eqref{M4} by finally taking $\eta > 0$ sufficiently small.

We choose $\epsilon \gg 1$ and $0 < \delta \ll 1$ such that $\epsilon^2 \delta \gg 1$. It is easy to see that \eqref{M12} holds, i.e., $\mathcal{M}_1, \mathcal{M}_2 > 0$. Moreover, to verify \eqref{M3}, it suffices to show
\begin{equation}\label{M3-1}
  \begin{aligned}
    \epsilon^2 \delta > ( \frac{c_p}{\mu^2} + \frac{\mu + \xi}{4} )  \frac{\delta}{\epsilon} \epsilon^2 \delta + \frac{1}{4} + \frac{3}{4} \delta^2 - [ (1 - \frac{c_p}{\mu^2}) ( 1 - \frac{\mu + \xi}{4} ) - \frac{1}{4} ] \delta^3 + \frac{3}{4} \delta \,.
  \end{aligned}
\end{equation}
Observe that under the choices $\epsilon \gg 1$, $0 < \delta \ll 1$ and $\epsilon^2 \delta \gg 1$, we have $0 < \bigl( \frac{c_p}{\mu^2} + \frac{\mu + \xi}{4} \bigr) \frac{\delta}{\epsilon} \ll 1$ and
\[
\Bigl| \frac{3}{4} \delta^2 - \Bigl[ \Bigl(1 - \frac{c_p}{\mu^2}\Bigr) \Bigl(1 - \frac{\mu + \xi}{4}\Bigr) - \frac{1}{4} \Bigr] \delta^3 + \frac{3}{4} \delta \Bigr| \ll 1.
\] 
Consequently, \eqref{M3-1} holds, which implies \eqref{M3}.

We now turn to \eqref{M4}. We first verify \eqref{Xi0}, which can be equivalently written as
\begin{equation*}
  \begin{aligned}
    \mu^2 \epsilon + \mu^2 ( 1 + 3 \delta) > c_p \mu^2 \delta + \frac{\mu (\mu + \xi)}{4} \frac{\delta^2}{\epsilon + \delta} \,.
  \end{aligned}
\end{equation*}
Since $\mu > 0$, the choices $\epsilon \gg 1$, $0 < \delta \ll 1$ and $\epsilon^2 \delta \gg 1$ imply the above inequality, hence $\Xi_0 > 0$. Then there exists a small $\eta_0 = \eta_0(\delta,\epsilon) > 0$ such that
\begin{equation*}
  \begin{aligned}
    \Xi_2 \eta^2 + \Xi_1 \eta + \Xi_0 > 0
  \end{aligned}
\end{equation*}
for all $0 < \eta < \eta_0$. Note that $\frac{(\epsilon + \delta) ( \mu + \xi )}{\mu} ( \mu^2 (\epsilon + \delta) - \delta c_p ) > 0$. Thus \eqref{M4} follows immediately, i.e., $\mathcal{M}_4 > 0$.

Therefore, under the choices $\epsilon \gg 1$, $0 < \delta \ll 1$ and $\epsilon^2 \delta \gg 1$, the quadratic form $F(V)$ is positive definite. In other words, there exists a constant $c_* > 0$ such that
\[
F(V) \geq c_* |V|^2 \quad \text{for all } V = (X,Y,Z,W) \in \mathbb{R}^4.
\]
As a consequence,
\begin{equation}\label{close-eq12}
\begin{aligned}
& (\epsilon \mu + \delta \mu  - \delta \frac{c_p}{\mu} ) \|\nabla u\|_{H^s}^2 + (\epsilon + \delta) (\mu + \xi ) \|\nabla \cdot u\|_{H^s}^2 + \epsilon \eta a \gamma \|\nabla \theta\|_{H_{ \w (\theta) }^s }^2 \\
& \quad + \frac{\delta}{\mu} \| \nabla \psi\|_{H^s}^2 - (\epsilon + \delta) \sum_{|m| \leq s} \langle \nabla \partial^m \psi, \nabla \partial^m u \rangle - \frac{\mu + \xi}{\mu} \delta \sum_{|m| \leq s} \langle \nabla \cdot \partial^m \psi, \nabla \cdot \partial^m u \rangle \\
& \quad - c_0 \epsilon \eta [ \mu \| \nabla u \|_{H^s} + (\mu + \xi) \| \nabla \cdot u \|_{H^s} + \| \nabla \psi \|_{H^s} ] \| \nabla \theta \|_{ H^{s-1}_{ \w (\theta) } } \\
& \quad - c_1 \delta a \gamma ( \mu \| \nabla u \|_{H^s} + \| \nabla \psi \|_{H^s} ) \| \nabla \theta \|_{ H^{s-1}_{ \w (\theta) } } \\
& \geq c_*(\|\nabla u\|^2_{H^s}+\|\nabla\cdot u\|^2_{H^s}+\|\nabla \psi\|^2_{H^s} + \| \nabla \theta \|_{ H^{s-1}_{ \w (\theta) } }^2) \,,
\end{aligned}
\end{equation}
which implies 
\begin{equation*}
\begin{aligned}
\mathbb{D}_{s;\delta,\eta, \epsilon}(t)
&\geq c_*(\|\nabla u\|^2_{H^s}+\|\nabla\cdot u\|^2_{H^s}+\|\nabla \psi\|^2_{H^s} + \| \nabla \theta \|_{ H^{s-1}_{ \w (\theta) } }^2) + 2A\lambda \left( \|\nabla d\|_{H^{s}}^{2} + \|\Delta d\|_{H^{s}}^{2} \right)\\&
\geq c_{1} \left( \|\nabla \theta\|_{H^{s-1}}^{2} + \|\nabla u\|_{H^{s}}^{2} + \|\nabla \cdot u\|_{H^{s}}^{2} + \|\nabla \psi\|_{H^{s}}^{2} + \|\nabla d\|_{H^{s}}^{2} + \|\Delta d\|_{H^{s}}^{2} \right)
\\&=c_1\mathbf{D}_s(t) \,,
\end{aligned}
\end{equation*}
where $c_1= \min\left\{ c_{*} , 2A \lambda, \inf_{(t,x)} \w (\theta) \right\} > 0$.

Next we establish the lower bound of $\mathbb{E}_{s;\delta,\eta, \epsilon}(t)$. Recall the definition of $\mathbb{E}_{s;\delta,\eta}(t)$ in \eqref{close-eq7}. For small $\delta>0$ and large $\epsilon > 0$, we require 
\begin{equation}\label{close-eq14}
\begin{aligned}
\epsilon \| u \|^2_{H^s_{1+ \theta}} + \delta \|\nabla \psi\|_{H^s}^2
- \frac{2\delta}{\mu} \sum_{|m| \leq s} \left\langle \partial^m (\psi - \bar{\psi}), \partial^m u \right\rangle
\geq c_{\#} \left( \|\nabla \psi\|_{H^s}^2 + \|u \|_{H^s_{1 + \theta}}^2 \right).
\end{aligned}
\end{equation}
Note that
\begin{equation*}
\begin{aligned}
\left\langle \partial^m (\psi - \bar{\psi}), \partial^m u \right\rangle
&= \left\langle \partial^m (\psi - \bar{\psi}), \partial^m (u - \bar{u}) + \partial^m \bar{u} \right\rangle = \left\langle \partial^m (\psi - \bar{\psi}), \partial^m (u - \bar{u}) \right\rangle\\&
\leq \left\| \partial^m (\psi - \bar{\psi}) \right\|_{L^2} \left\| \partial^m (u - \bar{u}) \right\|_{L^2} \leq c_{p} \left\| \nabla \partial^m \psi \right\|_{L^2} \left\| \partial^m (u - \bar{u}) \right\|_{L^2} \,,
\end{aligned}
\end{equation*}
where $c_{p} > 0$ is the Poincar\'e constant. Hence,
\begin{equation*}
\begin{aligned}
&\delta \|\nabla \psi\|_{H^s}^2 + \epsilon \| u \|^2_{H^s_{1+ \theta}}
- \frac{2\delta}{\mu} \sum_{|m| \leq s} \left\langle \partial^m (\psi -\bar{\psi}), \partial^m u \right\rangle\\&
\geq \delta \|\nabla \psi\|_{H^s} + \epsilon \| u \|^2_{H^s_{1+ \theta}}
- \frac{2\delta c_{p}}{\mu} \sum_{|m| \leq s} \|\nabla \partial^m \psi\|_{L^2} \|\partial^m (u -\bar{u})\|_{L^2} \\
& \geq \delta \|\nabla \psi\|_{H^s} + \epsilon \| u \|^2_{H^s_{1+ \theta}}
- \frac{4 \delta c_{p}}{\mu} \sup_{t,x} (1 + \theta)^\frac{1}{2} \| \nabla \psi \|_{H^s} \| u \|_{H^s_{1+\theta}} \,.
\end{aligned}
\end{equation*}
Thus we can choose
\begin{equation}\label{close-eq15}
\begin{aligned}
c_\# = \min \{ \frac{\delta}{2}, \epsilon - \frac{4 c_p^2}{\mu^2} \delta  \} > 0
\end{aligned}
\end{equation}
for large $\epsilon \gg 1$ and small $0 < \delta \ll 1$, which ensures that \eqref{close-eq14} holds.

Furthermore, we may select a small $\eta_0' \leq \eta_0$ such that for all $\eta \in (0,\eta_0']$,
\begin{equation*}
\begin{aligned}
&\epsilon a \gamma \|\theta\|_{H_{ \w (\theta) }^s}^2 + c_{\#} \left( \|\nabla \psi\|_{H^s}^2 + \|u \|_{H^s_{1 + \theta}}^2 \right) + \epsilon \eta \|u + \nabla \theta\|_{H^{s-1}}^2 - \epsilon \eta \|u\|_{H^{s-1}}^2 - \epsilon \eta \|\nabla \theta\|_{H^{s-1}}^2\\&
\geq c_{2}^\prime \left( \|\theta\|_{H^{s}}^{2} + \|u\|_{H^{s}}^{2} + \|\nabla \psi\|_{H^s}^2 \right)
\end{aligned}
\end{equation*}
for some constant $c_{2}' > 0$. Consequently,
\begin{equation}\label{close-eq16}
\begin{aligned}
\mathbb{E}_{s;\delta,\eta, \epsilon}(t)
&\geq c_{2}^\prime \left( \|\theta\|_{H^{s}}^{2} + \|u\|_{H^{s}}^{2} + \|\nabla \psi\|_{H^s}^2 \right) + |\bar{d}|^2 + \|d - \bar{d}\|_{H^s}^2 + \|\nabla d\|_{H^s}^2
\\&\geq c_{2} \left( \|\theta\|_{H^s}^2 + \|u\|_{H^s}^2 + \|\nabla \psi\|_{H^s}^2 + |\bar{d}|^2 + \|d - \bar{d}\|_{H^s}^2 + \|\nabla d\|_{H^s}^2 \right) \\
& = c_2 \mathbf{E}_s(t),
\end{aligned}
\end{equation}
where $c_2 = \min\{c_2^\prime, 1 \}>0$. This completes the proof of the lemma.
\end{proof}

Based on Lemma~\ref{Lmm-Glob-ED}, we establish the following global energy estimates associated with the instant energy functional $\mathbb{E}_{s;\delta,\eta, \epsilon}(t)$ in \eqref{close-eq7} and instant dissipation rate $\mathbb{D}_{s;\delta,\eta, \epsilon}(t)$ in \eqref{close-eq8}.

\begin{proposition}\label{Prop-Global}
    Let $s \geq 3$ be an integer and let $(\theta, u, \psi, d)$ be the solution constructed in Theorem~\ref{Thm1} with the fluctuation form \eqref{Flc}. Under the same assumptions as in Lemma~\ref{Lmm-Glob-ED}, we have
\begin{equation}\label{close-eq21}
\begin{aligned}
\frac{1}{2} \frac{\d}{\d t}\mathbb{E}_{s;\delta,\eta, \epsilon}(t)+\mathbb{D}_{s;\delta,\eta, \epsilon}(t)\leq C (1+\mathbb{E}^{\frac{s}{2}}_{s;\delta,\eta, \epsilon}(t))\mathbb{E}^{\frac{1}{2}}_{s;\delta,\eta, \epsilon}(t)\mathbb{D}_{s;\delta,\eta, \epsilon}(t)
\end{aligned}
\end{equation}
for some constant $C > 0$.
\end{proposition}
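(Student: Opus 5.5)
The estimate \eqref{close-eq21} will come from a sequence of energy identities for the reformulated system \eqref{CMEH-eqGd}. Each piece of $\mathbb{E}_{s;\delta,\eta,\epsilon}$ is differentiated separately. Every linear term is kept exactly, and every nonlinear term is bounded by $C(1+\mathbf{E}_s^{s/2})\mathbf{E}_s^{1/2}\mathbf{D}_s$. The proof then concludes by Lemma~\ref{Lmm-Glob-ED}, which makes $\mathbf{E}_s,\mathbf{D}_s$ equivalent to $\mathbb{E}_{s;\delta,\eta,\epsilon},\mathbb{D}_{s;\delta,\eta,\epsilon}$. Throughout we use the smallness of $\theta$, so that $1+\theta$ and $\w(\theta)$ are bounded above and below. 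The polynomial factor $(1+\mathbf{E}_s^{s/2})$ comes from composition estimates for $P(1+\theta)$, $\w(\theta)$, $\tilde g(U)$ and $(1+\theta)^{-1}$ in $H^s$.

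The magnetization is handled in three steps, one per lemma.
\begin{itemize}
\item For $\bar d$ (Lemma~\ref{Lmm-G1}), integrate the $d$-equation over $\mathbb{T}^3$. The transport and Laplacian terms become $\int(\nabla\cdot u)(d-\bar d)$, and the cross product term integrates to zero by $\int (d+M_e)\times\Delta d=-\int\nabla d\times\nabla d=0$. This yields the purely nonlinear identity \eqref{d1}. Multiplying by $\bar d$ and using Poincar\'e on $d-\bar d$ gives a bound $\lesssim\mathbf{E}_s^{1/2}\mathbf{D}_s$. The factor $|\bar d|\le\mathbf{E}_s^{1/2}$ multiplies $\|\nabla\cdot u\|\,\|\nabla d\|+\|\nabla d\|^2$.
\item For $d-\bar d$ (Lemma~\ref{Lmm-G2}), test $\partial^m$ of the equation for $d-\bar d$ with $\partial^m(d-\bar d)$. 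This produces $2A\lambda\|\nabla d\|_{H^s}^2$. The dangerous term $\langle u\cdot\nabla d,d\rangle$ is rewritten with $d-\bar d$ in place of $d$, so Poincar\'e applies.
\item For $\nabla d$ (Lemma~\ref{Lmm-G3}), test with $-\Delta\partial^m d$, as in Lemma~\ref{Lmm-M-loc}. The precession term is handled through $\langle d\times\Delta\partial^m d,\Delta\partial^m d\rangle=0$ together with commutators.
\end{itemize}

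The fluid part is handled next.
\begin{itemize}
\item For $\theta$ and $u$ (Lemmas~\ref{Lmm-G4}, \ref{Lmm-G5}), test $\partial^m$ of the $\theta$-equation with $a\gamma\w(\theta)\partial^m\theta$, and $\partial^m$ of the $u$-equation divided by $1+\theta$ with $(1+\theta)\partial^m u$. The weight is chosen so that the pressure terms $\langle P'(1+\theta)\nabla\partial^m\theta,\partial^m u\rangle$ cancel exactly after integrating by parts. The $-(1+\theta)\Delta\psi$ term produces the cross term $-\langle\nabla\partial^m\psi,\nabla\partial^m u\rangle$, which is kept.
\item For the damping of $\theta$ (Lemma~\ref{Lmm-G6}), take $\partial^m$ with $|m|\le s-1$ of the $u$-equation and test with $\nabla\partial^m\theta$. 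The time derivative is moved onto $\theta$ using the $\theta$-equation, which produces the interaction energy $\eta\|u+\nabla\theta\|^2_{H^{s-1}}-\cdots$. This gives $a\gamma\|\nabla\theta\|_{H^{s-1}_{\w}}^2$, and the cross terms with $\nabla u$, $\nabla\cdot u$ and $\nabla\psi$ carry the constant $c_0$.
\item For $\psi$ (Lemma~\ref{Lmm-G7}), use $\partial_t\psi=-u-u\cdot\nabla\psi$ and write $u=\frac1\mu(w+\psi-\bar\psi)+\bar u$ with $w=\mu(u-\bar u)-(\psi-\bar\psi)$. Testing $\partial^m$ of the $\psi$-equation with $-\Delta\partial^m\psi$ yields $\frac1\mu\|\nabla\psi\|_{H^s}^2$ plus the term $C_2$ involving $w$. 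Then substitute \eqref{gpsi-eq7}, i.e. $-\Delta\psi\sim\partial_t u+\nabla\theta-\mu\Delta u-(\mu+\xi)\nabla\nabla\cdot u+\cdots$. The time derivative produces $\frac{d}{dt}\bigl(\delta\|u-\bar u\|^2_{H^s}-\frac{2\delta}{\mu}\sum\langle\partial^m(\psi-\bar\psi),\partial^m u\rangle\bigr)$. The pressure piece produces the $c_1\delta a\gamma$ cross term, and the Poincar\'e step produces the $-\delta c_p/\mu$ loss.
\end{itemize}

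Finally, multiply these estimates by $1,\epsilon,\epsilon\eta,\delta$ as dictated by \eqref{close-eq7}--\eqref{close-eq8} and add them. All linear cross terms are then exactly those kept inside $\mathbb{D}_{s;\delta,\eta,\epsilon}$.

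The main obstacle is the $\psi$/$w$ step. The linear pieces must appear with precisely the coefficients used in \eqref{close-eq8}, and no top-order term $\|\nabla\theta\|_{H^s}$ may appear, because it is not in $\mathbf{D}_s$. I would try to integrate the pressure contribution by parts onto $\psi$ so that only $\|\nabla\theta\|_{H^{s-1}}$ appears. Top-order nonlinear commutators such as $\langle u\cdot\nabla\partial^m\theta,\partial^m\theta\rangle$ are handled by symmetrization, $\frac12\langle\nabla\cdot u,|\partial^m\theta|^2\rangle$, which is bounded by $\mathbf{E}_s^{1/2}\|\nabla u\|_{H^s}\|\theta\|_{H^s}$. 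The remaining factor $\|\theta\|_{H^s}$, which is only energy and not dissipation, is absorbed using $\|\theta\|_{H^s}\lesssim|\bar\theta|+\|\nabla\theta\|_{H^{s-1}}$. The mean $\bar\theta$ is controlled via conservation of mass and $\rho\det F=1$, which expresses $\bar\theta$ quadratically in $\nabla\psi$.
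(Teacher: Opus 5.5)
Your proposal is correct and follows the paper's proof essentially step for step. It uses the same seven estimates (Lemmas~\ref{Lmm-G1}--\ref{Lmm-G7}) with the same multipliers, and the auxiliary $w=\mu(u-\bar u)-(\psi-\bar\psi)$ combined with \eqref{gpsi-eq7}. There the pressure term must be integrated by parts onto all of $w$, i.e.\ onto both $u$ and $\psi$ as in $C_{231}$, not onto $\psi$ alone. It then takes the weighted sum with $1,\epsilon,\epsilon\eta,\delta$ and uses Lemma~\ref{Lmm-Glob-ED} to pass between $(\mathbf{E}_s,\mathbf{D}_s)$ and $(\mathbb{E}_{s;\delta,\eta,\epsilon},\mathbb{D}_{s;\delta,\eta,\epsilon})$.

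Your extra remark on $\bar\theta$ tightens the argument. The paper silently uses zero-mean Gagliardo--Nirenberg bounds on $\mathbb{T}^3$, such as $\|\theta\|_{L^4}\lesssim\|\theta\|_{L^2}^{1/4}\|\nabla\theta\|_{L^2}^{3/4}$, which fail when $\bar\theta\neq0$. In fact $\bar\theta\equiv 0$: since $1+\theta=\det(I+\nabla\psi)$, one has $\int_{\mathbb{T}^3}\det(I+\nabla\psi)\,\d x=|\mathbb{T}^3|$ for periodic $\psi$, because the nonlinear terms of the determinant are null Lagrangians.
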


The proof of Proposition~\ref{Prop-Global} will be given in the next section. We now establish the global existence of solutions to \eqref{CMEH-eqGd} (or equivalently \eqref{CMEH-eqG}) for small initial data by employing the estimate \eqref{close-eq21}.

\begin{proof}[{\bf Global well-posedness: Proof of Theorem \ref{Thm2}}]

    Assume that \( \mathbb{E}_{s;\delta, \eta, \epsilon} (0) \leq \varepsilon_{1} \), where the small \( \varepsilon_{1} > 0 \) is to be determined later. Define
\[
T^{*} = \sup \left\{ t > 0 : \sup_{\tau \in [0,t]} C \left( 1 + \mathbb{E}_{s;\delta, \eta, \epsilon}^{\frac{s}{2}}(\tau) \right) \mathbb{E}_{s;\delta, \eta, \epsilon}^{\frac{1}{2}}(\tau) \leq \frac{1}{2} \right\},
\]
where the constant \( C > 0 \) appearing in Proposition~\ref{Prop-Global}. We now choose \( \varepsilon_{1} > 0 \) so that
\[
C \left( 1 + \varepsilon_{1}^{\frac{s}{2}} \right) \varepsilon_{1}^{\frac{1}{2}} \leq \frac{1}{4}.
\]
For instance, we may take \( \varepsilon_{1} = \min \left\{ 1, \frac{1}{64 C^{2}} \right\} > 0 \), which satisfies the above inequality. Then we have
\[
C \left( 1 + \mathbb{E}_{s;\delta, \eta, \epsilon}^{\frac{s}{2}}(0) \right) \mathbb{E}_{s;\delta, \eta, \epsilon}^{\frac{1}{2}}(0) \leq \frac{1}{4} < \frac{1}{2}.
\]
By the continuity of the instant energy functional $\mathbb{E}_{s;\delta,\eta,\epsilon}(t)$, we obtain $T^{*} > 0$. Consequently, the estimate \eqref{close-eq21} in Proposition~\ref{Prop-Global} yields
\[
\frac{1}{2} \frac{\d}{\d t} \mathbb{E}_{s;\delta, \eta, \epsilon} (t) + \frac{1}{2} \mathbb{D}_{s;\delta, \eta, \epsilon}(t) \leq 0 \quad \text{for } t \in [0, T^{*}],
\]
which implies
\[
\mathbb{E}_{s;\delta, \eta, \epsilon} (t) \leq \mathbb{E}_{s;\delta, \eta, \epsilon} (0) \leq \varepsilon_{1} \quad \forall t \in [0, T^{*}].
\]
Hence,
\[
\sup_{t \in [0,T^{*}]} C \left( 1 + \mathbb{E}_{s;\delta, \eta, \epsilon}^{\frac{s}{2}}(t) \right) \mathbb{E}_{s;\delta, \eta, \epsilon}^{\frac{1}{2}}(t) \leq \frac{1}{4} < \frac{1}{2}.
\]
We claim that \( T^{*} = \infty \). Indeed, if $T^{*} < \infty$, then by continuity there exists a small $\varepsilon_2 > 0$ such that
\[
\sup_{t \in [0, T^{*} + \varepsilon_{2}]} C \left( 1 + \mathbb{E}_{s;\delta, \eta, \epsilon}^{\frac{1}{2}}(t) \right) \mathbb{E}_{s;\delta, \eta, \epsilon}^{\frac{1}{2}}(t) \leq \frac{3}{8} < \frac{1}{2},
\]
which contradicts the definition of \( T^{*} \). Therefore,
\begin{equation}\label{GL-1}
    \begin{aligned}
        \sup_{t \geq 0} \mathbb{E}_{s;\delta, \eta, \epsilon} (t) + \int_{0}^{\infty} \mathbb{D}_{s;\delta, \eta, \epsilon} (t) \, \d t \leq \mathbb{E}_{s;\delta, \eta, \epsilon} (0)
    \end{aligned}
\end{equation}
provided that \( \mathbb{E}_{s;\delta, \eta, \epsilon} (0) \leq \varepsilon_{1} \) with \( \varepsilon_{1} = \min \left\{ 1, \frac{1}{64 C^{2}} \right\} > 0 \).
Applying \eqref{close-eq20} from Lemma~\ref{Lmm-Glob-ED} to \eqref{GL-1} gives
\[
\sup_{t \geq 0} \mathbf{E}_{s}(t) + \int_{0}^{\infty} \mathbf{D}_{s} (t) \, \d t \leq \mathfrak{C} \mathbf{E}_{s}^{in}
\]
provided that \( \mathbf{E}_{s}(0) = \mathbf{E}_{s}^{in} \leq \varepsilon_{0} = \hat{c}_2 \varepsilon_{1} \), where $\mathbf{E}_s (t)$ is defined in \eqref{close-eq5} (or \eqref{close-eq5-1}), $\mathbf{D}_s (t)$ is defined in \eqref{close-eq6} (or \eqref{close-eq6-1}), $\mathbf{E}_{s}^{in}$ is defined in \eqref{Esin-g}, and the constant
\[
\mathfrak{C} = \min \left\{ c_{1}, c_{2} \right\}^{-1} \hat{c}_2 > 0.
\]
This completes the proof of Theorem~\ref{Thm2}.
\end{proof}

\section{Global estimates: Proof of Proposition \ref{Prop-Global}}\label{Sec:Global-es}                         

In this section, our main goal is to establish the global energy estimate for the reformulated system \eqref{CMEH-eqGd}, i.e., to prove Proposition~\ref{Prop-Global}.                                                        

\subsection{Estimates for the $d$-equation in \eqref{CMEH-eqG}}

In this subsection, we estimate separately the zero-frequency part $\bar{d} = \frac{1}{|\mathbb{T}^3|}\int_{\mathbb{T}^3} d \, dx$ and the nonzero-frequency part $d - \bar{d}$ of the fluctuation $d = M - M_e$. We also estimate the gradient $\nabla d$ in $H^s$. First, we have the following lemma.

\begin{lemma}\label{Lmm-G1}
Let $s \geq 3$ be an integer. Then
\begin{equation}\label{d6}
\begin{aligned}  
\frac{1}{2} \frac{\d}{\d t} |\bar{d}|^2 \lesssim ( 1 + \mathbf{E}_s^\frac{1}{2} (t) ) \mathbf{E}_s^\frac{1}{2} (t) \mathbf{D}_s (t) \,,
\end{aligned}
\end{equation}
where the expressions of $\mathbf{E}_s (t)$ and $\mathbf{D}_s (t)$ are defined in \eqref{close-eq5-1} and \eqref{close-eq6-1}, respectively.
\end{lemma}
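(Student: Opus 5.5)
The plan is to integrate the $d$-equation in \eqref{CMEH-eqGd} over $\mathbb{T}^3$ and obtain an ODE for $\bar d$ whose right-hand side is purely nonlinear. Then we pair it with $\bar d$ and bound each term using only dissipative norms times energy norms. On the torus, periodicity gives $\int_{\mathbb{T}^3}\Delta d\,\d x=0$. For the convection term, integration by parts gives $\int u\cdot\nabla d\,\d x=-\int(\nabla\cdot u)\,d\,\d x=-\int(\nabla\cdot u)(d-\bar d)\,\d x$, since $\int\nabla\cdot u\,\d x=0$. For the gyromagnetic term we use $(d+M_e)\times\Delta d=\partial_i\big((d+M_e)\times\partial_i d\big)$, because $\partial_i d\times\partial_i d=0$; hence its integral vanishes too. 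This yields
\[
\partial_t\bar d=\frac{1}{|\mathbb{T}^3|}\int_{\mathbb{T}^3}(\nabla\cdot u)(d-\bar d)\,\d x+\frac{2A\lambda}{|\mathbb{T}^3|}\int_{\mathbb{T}^3}|\nabla d|^2(d-\bar d+M_e+\bar d)\,\d x ,
\]
which is the identity \eqref{d1} announced in the introduction.

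Next we take the inner product of this identity with $\bar d$, which gives $\frac12\frac{\d}{\d t}|\bar d|^2$ on the left. We bound the two resulting terms separately.

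\emph{The velocity term.} We estimate
\[
|\bar d|\,\|\nabla\cdot u\|_{L^2}\|d-\bar d\|_{L^2}\lesssim |\bar d|\,\|\nabla\cdot u\|_{L^2}\,\|\nabla d\|_{L^2},
\]
using the Poincaré inequality on the zero-mean part. This is $\lesssim \mathbf{E}_s^{1/2}\mathbf{D}_s$, since $|\bar d|\le \mathbf{E}_s^{1/2}$ while both $\|\nabla\cdot u\|_{H^s}$ and $\|\nabla d\|_{H^s}$ appear in $\mathbf{D}_s$. Poincaré is essential here: it is exactly the step that fails if $\bar d$ is not separated off.

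\emph{The magnetization term.} We use $|d+M_e|=1$ to bound $\big|\int|\nabla d|^2(d+M_e)\cdot\bar d\,\d x\big|\le|\bar d|\,\|\nabla d\|_{L^2}^2$. This is again $\lesssim\mathbf{E}_s^{1/2}\mathbf{D}_s$. Alternatively, we can split $d+M_e=(d-\bar d)+M_e+\bar d$ and bound the pieces by $\|d-\bar d\|_{L^\infty}\lesssim\|d-\bar d\|_{H^2}\le\mathbf{E}_s^{1/2}$ and by $|\bar d|\le\mathbf{E}_s^{1/2}$. That route produces the $(1+\mathbf{E}_s^{1/2})\mathbf{E}_s^{1/2}\mathbf{D}_s$ form stated in \eqref{d6}, with the factor $|\bar d|^2\|\nabla d\|^2_{L^2}\le\mathbf{E}_s\mathbf{D}_s$ accounting for the extra power.

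The only delicate point is verifying that every linear contribution integrates to zero, in particular the precession term $(d+M_e)\times\Delta d$. Once the divergence-form rewriting is checked, the rest is routine Hölder and Poincaré.
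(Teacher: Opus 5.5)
Your proposal is correct and follows the paper's argument. You integrate the $d$-equation to obtain \eqref{d1}; your identity $(d+M_e)\times\Delta d=\partial_i((d+M_e)\times\partial_i d)$ is the same cancellation the paper checks with the Levi-Civita symbol. You then pair with $\bar d$ and bound the two terms by Poincaré and Hölder/Sobolev, as the paper does, and your shortcut via $|d+M_e|=1$ is a valid minor sharpening that gives $\mathbf{E}_s^{1/2}\mathbf{D}_s$ directly instead of the paper's split into $d-\bar d$, $M_e$, $\bar d$.
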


\begin{proof}

Integrating the forth equation of \eqref{CMEH-eqGd} over $x\in \mathbb{T}^3$ yields
\begin{equation*}
\begin{aligned}
\partial_t \int_{\mathbb{T}^3} d \, \d x + \int_{\mathbb{T}^3} u \cdot \nabla d \, \d x & = 2A \lambda \int_{\mathbb{T}^3} |\nabla d|^2 (d +M_e) \, \d x - 2A \lambda \int_{\mathbb{T}^3} (d +M_e) \times \Delta d \, \d x \,.
\end{aligned}
\end{equation*}
A direct computation gives
\begin{equation*}
\begin{aligned}
\int_{\mathbb{T}^3} u \cdot \nabla d \, \d x & = -\int_{\mathbb{T}^3} (\nabla \cdot u) (d - \bar{d}) \, \d x - \int_{\mathbb{T}^3} \nabla \cdot u \, \d x \, \bar{d} = - \int_{\mathbb{T}^3} (\nabla \cdot u) ( d - \bar{d} ) \, \d x \,,
\end{aligned}
\end{equation*}
where $\bar{d} = \frac{1}{|\mathbb{T}^3|} \int_{\mathbb{T}^3} d \, \d x$. Moreover,
\begin{equation*}
\begin{aligned}
2A\lambda \int_{\mathbb{T}^3} |\nabla d|^2 (d+Me)\,\d x = 2A\lambda \int_{\mathbb{T}^3} |\nabla d|^2 (d-\bar{d}+ Me+\bar{d})\, \d x \,.
\end{aligned}
\end{equation*}
For $1 \leq i \leq 3$, using the Einstein summation convention and the Levi-Civita symbol $\varepsilon_{ijk}$ (with $\varepsilon_{ijk}=1$ for even permutations of $\{123\}$ and $-1$ for odd permutations), we have
\begin{equation*}
\begin{aligned}
 & -2A\gamma \int_{\mathbb{T}^3} [(d+Me)\times \Delta d]^i\, \d x
 = -2A \gamma  \int_{\mathbb{T}^3} \varepsilon_{ijk} (d^j +M_e^k) \Delta d^k\, \d x \\
 & = -2A\gamma \int_{\mathbb{T}^3} \varepsilon_{ijk} (d^j +M_e^j) \partial_l \partial_l d^k\, \d x = 2A \gamma  \int_{\mathbb{T}^3}\varepsilon_{ijk} \partial_l (d^j +M_e^j) \partial_l d^k\, \d x \\
 & = 2A \gamma \int_{\mathbb{T}^3} \varepsilon_{ijk} \partial_l d^j \partial_l d^k\, \d x = 0.
\end{aligned}
\end{equation*}
Thus,
$$ -2A\gamma  \int_{\mathbb{T}^3} (d+M_e) \times \Delta d\, \d x = 0.$$
Consequently,
$$\partial_t \int_{\mathbb{T}^3} d\, \d x = \int_{\mathbb{T}^3} (\nabla \cdot u)(d-\bar{d})\, \d x + 2A \lambda \int_{\mathbb{T}^3} |\nabla d|^2 (d-\bar{d} + M_e + \bar{d} ) \, \d x \,.$$
Recalling $ \bar{d} = \frac{1}{|\mathbb{T}^3|} \int_{\mathbb{T}^3} d\, \d x $. we obtain the equation for the zero-frequency part:
\begin{equation}\label{d1}
\begin{aligned}
\partial_t \bar{d} = \frac{1}{|\mathbb{T}^3|} \int_{\mathbb{T}^3} (\nabla\cdot u)(d-\bar{d})\, \d x +\frac {2A\lambda}{|\mathbb{T}^3|} \int_{\mathbb{T}^3} |\nabla d|^2 (d-\bar{d} +M_e+\bar{d})\, \d x.
\end{aligned}
\end{equation}

Multiplying \eqref{d1} by $\bar{d}$ gives                       \begin{equation}\label{d3}
\begin{aligned}      
\frac{1}{2} \frac{\d}{\d t} |\bar{d}|^2 & = \frac{\bar{d}}{|\mathbb{T}^3|} \cdot \int_{\mathbb{T}^3} (\nabla \cdot u)(d - \bar{d}) \, \d x + \frac{2 A \lambda}{|\mathbb{T}^3|} \bar{d} \cdot \int_{\mathbb{T}^3} |\nabla d|^2 \left( d - \bar{d} + M_e + \bar{d} \right) \d x \,.
\end{aligned}
\end{equation}                              
By H\"older's inequality,
  \begin{equation}\label{d4}
\begin{aligned}      
\frac{\bar{d}}{|\mathbb{T}^3|} \cdot \int_{\mathbb{T}^3} (\nabla \cdot u)(d - \bar{d}) \, \d x &\leq |\bar{d}| \int_{\mathbb{T}^3} |\nabla \cdot u| \, |d - \bar{d}| \, \d x \\
&\lesssim |\bar{d}| \, \|\nabla u\|_{L^2} \, \|d - \bar{d}\|_{L^2} \lesssim |\bar{d}| \, \|\nabla u\|_{L^2} \, \|\nabla d\|_{L^2} \,,
\end{aligned}
\end{equation}
where we used the Poincar\'e inequality $\|d-\bar{d}\|_{L^2} \lesssim\|\nabla d\|_{L^2}$ (since $\int_{\mathbb{T}^3} (d-\bar{d})\, dx = 0$). Furthermore, Hölder's inequality together with the Sobolev embedding $H^1 \hookrightarrow L^\infty$ yields
  \begin{equation}\label{d5}
\begin{aligned}   
\frac{2A\lambda}{|\mathbb{T}^3|} \bar{d} \cdot \int_{\mathbb{T}^3} |\nabla d|^2 \left( d - \bar{d} + M_e + \bar{d} \right) \d x & \lesssim  |\bar{d}| \, \|\nabla d\|_{L^2}^2 \left( 1 + \|d - \bar{d}\|_{L^\infty} + |\bar{d}| \right) \\
&\lesssim \left( 1 + \|d - \bar{d}\|_{H^2} + |\bar{d}| \right) |\bar{d}| \, \|\nabla d\|_{L^2}^2 \,.  
\end{aligned}
\end{equation}
Substituting \eqref{d4} and \eqref{d5} into \eqref{d3}, we obtain
  \begin{equation}\label{d6-1}
\begin{aligned}  
\frac{1}{2} \frac{\d}{\d t} |\bar{d}|^2 & \lesssim |\bar{d}| \, \|\nabla u\|_{L^2} \|\nabla d\|_{L^2} + \left( 1 + \|d - \bar{d}\|_{H^2} + |\bar{d}| \right) |\bar{d}| \, \|\nabla d\|_{L^2}^2 \,.
\end{aligned}
\end{equation}
Recalling the definitions of $\mathbf{E}_s(t)$ and $\mathbf{D}_s(t)$ in \eqref{close-eq5-1} and \eqref{close-eq6-1}, estimate \eqref{d6-1} implies \eqref{d6}. This completes the proof of the lemma.
\end{proof}

We next estimate the nonzero-frequency part $d - \bar{d}$. More precisely, we establish the following lemma.

\begin{lemma}\label{Lmm-G2}
    Let $s \ge 3$ be an integer. Then one has
    \begin{equation}\label{d13}
\begin{aligned}
\frac{1}{2} \frac{\d}{\d t} \| d - \bar{d} \|_{H^s}^2 + 2A \lambda \| \nabla d \|_{H^s}^2 \lesssim ( 1 + \mathbf{E}_s^\frac{1}{2} (t) ) \mathbf{E}_s^\frac{1}{2} (t) \mathbf{D}_s (t) \,,
 \end{aligned}
 \end{equation}
 where the functionals $\mathbf{E}_s (t)$ and $\mathbf{D}_s (t)$ are defined  in \eqref{close-eq5-1} and \eqref{close-eq6-1}, respectively.
\end{lemma}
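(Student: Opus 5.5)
We derive the equation for the zero-mean part $d-\bar d$ by subtracting \eqref{d1} from the $d$-equation in \eqref{CMEH-eqGd}:
\begin{equation*}
\partial_t (d-\bar d) + u\cdot\nabla d = 2A\lambda \Delta d + 2A\lambda |\nabla d|^2 (d+M_e) - 2A\gamma (d+M_e)\times\Delta d - \partial_t \bar d ,
\end{equation*}
where $\partial_t\bar d$ is the spatially constant, purely nonlinear expression from \eqref{d1}. For each multi-index $|m|\le s$ we apply $\partial^m$ and take the $L^2$ inner product with $\partial^m(d-\bar d)$. Since $\nabla\bar d=0$, the linear term gives $2A\lambda\langle \Delta\partial^m d,\partial^m(d-\bar d)\rangle = -2A\lambda\|\nabla\partial^m d\|_{L^2}^2$. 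Summing over $|m|\le s$ then produces $\frac12\frac{\d}{\d t}\|d-\bar d\|_{H^s}^2+2A\lambda\|\nabla d\|_{H^s}^2$ on the left.

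Next we handle $\partial_t\bar d$. For $m=0$, it is constant in $x$ while $d-\bar d$ has zero mean, so $\langle\partial_t\bar d, d-\bar d\rangle=0$. For $|m|\ge1$, $\partial^m\partial_t\bar d=0$. So this term drops out entirely, and this is exactly why the zero-frequency part is split off.

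The remaining terms are nonlinear, and each should be bounded by $(1+\mathbf{E}_s^{1/2})\mathbf{E}_s^{1/2}\mathbf{D}_s$.

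The transport term $\langle\partial^m(u\cdot\nabla d),\partial^m(d-\bar d)\rangle$ needs care because $\|u\|_{L^2}$ is not dissipated. We integrate by parts once for $|m|\ge1$, moving one derivative onto $\partial^m(d-\bar d)$, and bound it by $\|u\cdot\nabla d\|_{H^{s-1}}\|\nabla d\|_{H^s}\lesssim\|u\|_{H^s}\|\nabla d\|_{H^{s}}\|\nabla d\|_{H^s}\lesssim\mathbf{E}_s^{1/2}\mathbf{D}_s$. For $m=0$ we use $\langle u\cdot\nabla d, d-\bar d\rangle=-\frac12\langle\nabla\cdot u,|d-\bar d|^2\rangle$. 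Then $\|d-\bar d\|_{L^4}^2\lesssim\|\nabla d\|_{L^2}\|d-\bar d\|_{H^1}$ by Poincaré and Sobolev, which gives the bound $\mathbf{D}_s^{1/2}\cdot\mathbf{E}_s^{1/2}\mathbf{D}_s^{1/2}$.

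The term $|\nabla d|^2(d+M_e)$ is handled with the algebra property of $H^s$, $s\ge3$, writing $d+M_e=(d-\bar d)+\bar d+M_e$. This gives $\|\,|\nabla d|^2(d+M_e)\|_{H^s}\lesssim(1+|\bar d|+\|d-\bar d\|_{H^s})\|\nabla d\|_{H^s}^2$, which is of the form $(1+\mathbf{E}_s^{1/2})\mathbf{E}_s^{1/2}\mathbf{D}_s^{1/2}$ after pairing one factor of $\|\nabla d\|_{H^s}$ with $\mathbf{D}_s$. Pairing with $\|d-\bar d\|_{H^s}\lesssim\|\nabla d\|_{H^{s}}\le\mathbf{D}_s^{1/2}$ then gives the required bound.

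The gyromagnetic term $(d+M_e)\times\Delta d$ is the main obstacle, since it is linear in $d$ through $M_e\times\Delta d$. For $M_e\times\partial^m\Delta d$ we integrate by parts:
\begin{equation*}
\langle M_e\times\Delta\partial^m d,\partial^m d\rangle=-\langle M_e\times\nabla\partial^m d,\nabla\partial^m d\rangle=0,
\end{equation*}
which vanishes by antisymmetry of the cross product. The same cancellation removes the top-order piece $(d+M_e)\times\Delta\partial^m d$ paired with $\partial^m d$, up to a commutator $\langle\nabla d\times\nabla\partial^m d,\partial^m d\rangle$. The remaining commutator terms $\partial^{m'}d\times\Delta\partial^{m-m'}d$ with $m'\neq0$ are quadratic. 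We bound them by $\|\nabla d\|_{H^{s}}\|\Delta d\|_{H^{s}}\|d-\bar d\|_{H^s}$, or, for $|m|\ge1$ after moving one derivative, by $\|\nabla d\|_{H^s}^2\|\nabla d\|_{H^s}$. In all cases this is $\lesssim\mathbf{E}_s^{1/2}\mathbf{D}_s$, since $\mathbf{D}_s$ contains $\|\Delta d\|_{H^s}^2$.

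Collecting all of these bounds yields \eqref{d13}.
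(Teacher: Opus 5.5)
Your proposal is correct and follows essentially the same route as the paper: subtract \eqref{d1} to get the equation for $d-\bar{d}$, run $H^s$ energy estimates using Poincar\'e for the zero-mean part, and kill the linear gyromagnetic contribution with the cancellation $\langle a\times\nabla\partial^m d,\nabla\partial^m d\rangle=0$. Your observation that the spatially constant $\partial_t\bar{d}$ term pairs to zero against $\partial^m(d-\bar{d})$ is slightly cleaner than the paper, which carries those extra terms and bounds them explicitly.
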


\begin{proof}
It is derived from the difference $\eqref{CMEH-eqGd}_4$ and \eqref{d1} that
\begin{equation}\label{d2}
\begin{aligned}
\partial_t (d - \bar{d}) + u \cdot \nabla (d - \bar{d}) &= 2A\lambda \Delta d + 2A\lambda |\nabla d|^2 (d - \bar{d} +M_e + \bar{d}) \\
&\quad - 2A\lambda \left( (d - \bar{d}) +M_e + \bar{d} \right) \times \Delta d \\
&\quad - \frac{1}{|\mathbb{T}^3|} \int_{\mathbb{T}^3} (\nabla \cdot u)(d - \bar{d}) \, \d x \\
&\quad - \frac{2A\lambda}{|\mathbb{T}^3|} \int_{\mathbb{T}^3} |\nabla d|^2 (d - \bar{d} +M_e + \bar{d}) \, \d x \,.
\end{aligned}
\end{equation}
Multiplying \eqref{d2} by $d-\bar{d}$ and integrating over $x \in \mathbb{T}^3$, one immediately obtains 
  \begin{equation*}
\begin{aligned}  
\frac{1}{2} \frac{\d}{\d t} \| d - \bar{d} \|_{L^2}^2 + 2A\lambda \| \nabla d \|_{L^2}^2
&= -\left\langle u \cdot \nabla (d - \bar{d}), d - \bar{d} \right\rangle + 2A\lambda \left\langle |\nabla d|^2 (d - \bar{d} +M_e + \bar{d}), d - \bar{d} \right\rangle \\&
\quad - 2A\gamma \left\langle (d - \bar{d} +M_e + \bar{d}) \times \Delta d, d - \bar{d} \right\rangle \\&
\quad- \bar{d} \cdot \left\langle \nabla \cdot u, d - \bar{d} \quad \right\rangle - 2A\lambda \bar{d} \cdot \left\langle |\nabla d|^2, d - \bar{d} +M_e + \bar{d} \right\rangle \,.
\end{aligned}
\end{equation*}
The H\"older inequality, the Poincar\'e inequality $\| d - \bar{d} \|_{L^2} \lesssim \| \nabla d \|_{L^2}$, and the Sobolev embedding $H^2 \hookrightarrow L^\infty$ yield 
\begin{equation*}
\begin{aligned}  
 -\left\langle u \cdot \nabla (d - \bar{d}), d - \bar{d} \right\rangle = \frac{1}{2} \left\langle \nabla \cdot u, |d - \bar{d}|^2 \right\rangle & \lesssim \| \nabla u \|_{L^2} \| d - \bar{d} \|_{L^2} \| d - \bar{d} \|_{L^\infty} \\
 & \lesssim \| d - \bar{d} \|_{H^2} \| \nabla u \|_{L^2} \| \nabla d \|_{L^2} \,,
\end{aligned}
\end{equation*}
and
\begin{equation*}
\begin{aligned}   
2A\lambda \left\langle |\nabla d|^2 (d - \bar{d} +M_e + \bar{d}), d - \bar{d} \right\rangle & \lesssim \| \nabla d \|_{L^2}^2 \| d - \bar{d} \|_{L^\infty} \left( 1 + |\bar{d}| + \| d - \bar{d} \|_{L^\infty} \right) \\
& \lesssim \left( 1 + |\bar{d}| + \| d - \bar{d} \|_{H^2} \right) \| d - \bar{d} \|_{H^2} \| \nabla d \|_{L^2}^2\,,
\end{aligned}
\end{equation*}
and
\begin{align*}
-\bar{d} \cdot \langle \nabla \cdot u, d-\bar{d} \rangle \lesssim | \bar{d} | \|\nabla u\|_{L^2} \|d-\bar{d}\|_{L^2} \lesssim | \bar{d} | \|\nabla u\|_{L^2} \|\nabla d\|_{L^2} \,,
\end{align*}
and
\begin{align*}
-2A \lambda \bar{d} \cdot \langle |\nabla d|^2, d-\bar{d} + Me + t\bar{d} \rangle & \lesssim | \bar{d} | \|\nabla d\|_{L^2}^2 \left( \|d-\bar{d}\|_{L^2} + 1 + | \bar{d} | \right) \\
& \lesssim \left(1+ | \bar{d} | + \|d-\bar{d}\|_{H^2}\right) | \bar{d} | \|\nabla d \|_{L^2}^2 \,.
\end{align*}
Furthermore, a direct calculation shows the following cancellation:
\begin{equation*}
\begin{aligned}  
 & -2A\gamma \left\langle (d - \bar{d} +M_e + \bar{d}) \times \triangle d, d - \bar{d} \right\rangle = -2A\gamma \left\langle (M_e + \bar{d}) \times \triangle d, d - \bar{d} \right\rangle \\
 = & -2A\gamma \left\langle \varepsilon_{ijk} \left(M_e^j + \bar{d}^j\right) \partial_l^2 d^k, d^i - \bar{d}^i \right\rangle = 2A\gamma \left\langle \varepsilon_{ijk} \left(M_e^j + \bar{d}^j\right) \partial_l d^k, \partial_l d^i \right\rangle \\
 = & 2A\gamma \sum_{l=1}^3 \left\langle (M_e + \bar{d}) \times \partial_l d, \partial_l d \right\rangle = 0 \,.
\end{aligned}
\end{equation*}
Collecting all the above relations, one obtains
\begin{equation}\label{d7}
\begin{aligned}
\frac{1}{2} \frac{\d}{\d t} \|d-\bar{d}\|_{L^2}^2 + 2A\lambda \|\nabla d\|_{L^2}^2 &\lesssim \left( | \bar{d} | + \|d-\bar{d}\|_{H^2} \right) \|\nabla u\|_{L^2} \|\nabla d\|_{L^2} \\
&\quad + \left(1+ |\bar{d} |+\|d-\bar{d}\|_{H^2}\right) \left( | \bar{d} | + \|d-\bar{d}\|_{H^2} \right) \|\nabla d\|_{L^2}^2 \,.
\end{aligned}
\end{equation}

We now deal with the higher order derivatives of the nonzero-frequency part $d - \bar{d}$. Applying the derivative operator $\partial^m$ ($1\le |m|\le s$) to \eqref{d2} and taking the $L^2$ inner product of the resulting equation with $\partial^m (d-\bar{d})$, we obtain
\begin{align}\label{d8}
\no & \frac{1}{2} \frac{\d}{\d t} \left\|\partial^m (d - \bar{d})\right\|_{L^2}^2 + 2A\lambda \left\|\nabla \partial^m d\right\|_{L^2}^2 \\
\no = & -\left\langle \partial^m \left[ u \cdot \nabla (d - \bar{d}) \right], \partial^m (d - \bar{d}) \right\rangle \\
& + 2A\lambda \left\langle \partial^m \left[ |\nabla d|^2 \left(d - \bar{d} +M_e+ \bar{d}\right) \right], \partial^m (d - \bar{d}) \right\rangle \\
\no & - 2A\lambda \left\langle \partial^m \left[ \left(d - \bar{d} +M_e +\bar{d}\right) \times \nabla d \right], \partial^m (d - \bar{d}) \right\rangle.
\end{align}
Since $1\leq |m|\leq s$, we easily have
\begin{equation*}
\begin{aligned}
&-\langle \partial^m[u\cdot\nabla(d-\bar{d})], \partial^m(d-\bar{d})\rangle= \frac{1}{2} \langle \nabla \cdot u, |\partial^m(d-\bar{d})|^2 \rangle 
\end{aligned}
\end{equation*}
Hence,
\begin{equation}\label{d9}
\begin{aligned}
&- \sum_{0 \neq m' \leq m} C_m^{m'} \langle \partial^{m'} u \cdot \nabla \partial^{m-m'}(d-\bar{d}), \partial^m(d-\bar{d}) \rangle \\
&\lesssim \|\nabla u\|_{L^{\infty}} \|\nabla d\|_{H^s}\|\p^m(d-\bar{d})\|_{L^2} \\
&\quad + \sum_{0 \neq m' \leq m} \|\partial^{m'} u\|_{L^4} \|\nabla \partial^{m-m'}(d-\bar{d})\|_{L^4} \|\partial^m (d-\bar{d})\|_{L^2}\\&
 \lesssim\|\p^m(d-\bar{d})\|_{L^2}\|\nabla u\|_{H^s}\|\nabla d\|_{H^s}\,.
\end{aligned}
\end{equation}
Moreover,
\begin{equation}\label{d10}
\begin{aligned}
&2A\lambda \left\langle \partial^m \left[ |\nabla d|^2 \left(d - \bar{d} +M_e +\bar{d}\right) \right], \partial^m (d - \bar{d}) \right\rangle \\
&= 2A\lambda \left\langle \left(\partial^m |\nabla d|^2\right) \left(d - \bar{d} +M_e +\bar{d}\right), \partial^m (d - \bar{d}) \right\rangle \\
&\quad + 2A\lambda \sum_{\substack{0 \neq m' \leq m}} C_m^{m'} \left\langle \partial^{m - m'} |\nabla d|^2 \, \partial^{m'} (d - \bar{d}), \partial^m (d - \bar{d}) \right\rangle \\
&\lesssim \left\| \partial^m |\nabla d|^2 \right\|_{L^2} \left( 1 + |\bar{d}| + \|d - \bar{d}\|_{L^\infty} \right) \left\| \partial^m (d - \bar{d}) \right\|_{L^2} \\
&\quad + \sum_{\substack{0 \neq m' \leq m}} \left\| \partial^{m - m'} |\nabla d|^2 \right\|_{L^2} \left\| \partial^{m'} (d - \bar{d}) \right\|_{L^4} \left\| \partial^m (d - \bar{d}) \right\|_{L^4} \\
&\lesssim \left( 1 + |\bar{d}| + \|d - \bar{d}\|_{H^2} \right) \|\nabla d\|_{H^s}^3 + \|\nabla d\|_{H^s}^4 \,.
\end{aligned}
\end{equation}
Furthermore, using the cancellation $\langle \p^m(d-\bar{d})\times\Delta d,\p^m(d-\bar{d})\rangle = 0 $, we have
\begin{align*}
&- 2A\gamma \left\langle \partial^m \left[ \left(d - \bar{d} +M_e +\bar{d}\right) \times \Delta d \right], \partial^m (d - \bar{d}) \right\rangle \\
&= \underset{II_1}{\underbrace{-2A\gamma \left\langle \left(d - \bar{d} +M_e +\bar{d}\right) \times \Delta \partial^m d, \partial^m (d - \bar{d}) \right\rangle}} \\
&\quad\underset{II_2}{\underbrace{ - 2A\gamma \sum_{\substack{0 \neq m' < m}} C_m^{m'} \left\langle \partial^{m'} (d - \bar{d}) \times \Delta \partial^{m - m'} d, \partial^m (d - \bar{d}) \right\rangle}} \,.
\end{align*}
For the quantity $II_1$,
\begin{equation*}
\begin{aligned}
&II_1= -2A\gamma \left\langle \varepsilon_{ijk}\left(d^j - \bar{d}^j +M_e^j \bar{d}^j\right) \Delta \partial^m d^j, \partial^m (d^i - \bar{d}^i) \right\rangle \\
&= \sum_{l=1}^3 2A\gamma \left\langle \varepsilon_{ijk} \partial_l (d^j - \bar{d}^j) \, \partial_l \partial^m d^j, \partial^m (d^i - \bar{d}^i) \right\rangle \\
&\quad + \sum_{l=1}^3 2A\gamma \left\langle \varepsilon_{ijk} \left(d^j - \bar{d}^j +M_e^j+ \bar{d}^j\right) \partial_l \partial^m d^j, \partial_l \partial^m d^i \right\rangle\\&
= 2A\gamma  \sum_{l=1}^3 \langle \p_l d \times \p_l \partial^m d, \partial^m (d - \bar{d}) \rangle + 2A \gamma \sum_{l=1}^3 \langle (d - \bar{d} +M_e + \bar{d}) \times \partial_l \partial^m d, \p_l \partial^m d \rangle \\
&\lesssim \| \nabla d \|_{L^{\infty}} \| \nabla \partial^m d \|_{L^{2}} \| \partial^m (d - \bar{d}) \|_{L^{2}} \lesssim \| \nabla d \|_{H^{s}}^{3}\,.
\end{aligned}
\end{equation*}
The term $II_2$ can be bounded as
\begin{equation*}
\begin{aligned}
 II_2 \lesssim \sum_{0 \neq m' < m} \| \partial^{m'} (d - \bar{d}) \|_{L^{4}} \| \Delta \partial^{m - m'} d \|_{L^{2}} \| \partial^m (d - \bar{d}) \|_{L^{4}} \lesssim\| \nabla d \|_{H^{s}}^{3}\,.
\end{aligned}
\end{equation*}
Consequently,
\begin{equation}\label{d11}
\begin{aligned}
-2A\gamma\langle \p^m\big[(d-\bar{d}+M_e+\bar{d}) \times \Delta d\big], \p^m(d-\bar{d})  \rangle\lesssim\|\nabla d\|_{H^3}^3
 \,.
\end{aligned}
\end{equation}
Substituting the estimates \eqref{d9}, \eqref{d10} and \eqref{d11} into \eqref{d8} yields, for $1\le |m|\le s$,
\begin{equation}\label{d12}
\begin{aligned}
\frac{1}{2} \frac{\d}{\d t} \left\| \partial^m (d - \bar{d}) \right\|_{L^2}^2 + 2A \lambda \left\|\nabla \partial^m d \right\|_{H^s}^2 & \lesssim \left\| \partial^m (d - \bar{d}) \right\|_{L^2} \left\| \nabla u \right\|_{H^s} \left\| \nabla d \right\|_{L^2} \\
& + \left( 1 + |\bar{d}| + \left\| d - \bar{d} \right\|_{H^s} \right) \left\| \nabla d \right\|_{H^s}^3 + \left\| \nabla d \right\|_{H^s}^4
 \,.
\end{aligned}
\end{equation}
Combining \eqref{d7} and \eqref{d12}, we obtain
\begin{equation}\label{d13-1}
\begin{aligned}
&\frac{1}{2} \frac{\d}{\d t} \| d - \bar{d} \|_{H^s}^2 + 2A \lambda \| \nabla d \|_{H^s}^2 \\ 
& \lesssim \left( |\bar{d}| + \| d - \bar{d} \|_{H^s} \right) \| \nabla u \|_{H^s} \| \nabla d \|_{H^s} \\
& \quad+ \left( 1 + \| d - \bar{d} \|_{H^s} + |\bar{d}| + \| \nabla d \|_{H^s} \right) \left( |\bar{d}| + \| d - \bar{d} \|_{H^s} + \| \nabla d \|_{H^s} \right) \| \nabla d \|_{H^s}^2 \,.
\end{aligned}
\end{equation}
Recalling the definitions of $\mathbf{E}_s(t)$ and $\mathbf{D}_s(t)$ in \eqref{close-eq5-1} and \eqref{close-eq6-1}, we conclude estimate \eqref{d13} from the bound \eqref{d13-1}. This completes the proof of the lemma.
\end{proof}

Now we bound the gradient $\nabla d$ in the $H^s$-space. More precisely, we establish the following lemma.

\begin{lemma}\label{Lmm-G3}
    Let $s \ge 3$ be an integer. Then one has
\begin{equation}\label{nd4}
\begin{aligned}
\frac{1}{2} \frac{\d}{\d t} \left\| \nabla d \right\|_{H^s}^2 + 2A \lambda \left\| \Delta d \right\|_{H^s}^2 \lesssim ( 1 + \mathbf{E}_s^\frac{1}{2} (t) ) \mathbf{E}_s^\frac{1}{2} (t) \mathbf{D}_s (t) \,,
\end{aligned}
\end{equation}
where the functionals $\mathbf{E}_s (t)$ and $\mathbf{D}_s (t)$ are defined  in \eqref{close-eq5-1} and \eqref{close-eq6-1}, respectively.
\end{lemma}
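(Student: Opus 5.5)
We follow the scheme of Lemma~\ref{Lmm-M-loc}, now in the perturbative setting of \eqref{CMEH-eqGd} with $H_{ext}=0$. For each multi-index $|m|\le s$ we apply $\partial^m$ to the $d$-equation in \eqref{CMEH-eqGd}, take the $L^2$ inner product with $-\Delta\partial^m d$, and integrate by parts on $\mathbb{T}^3$. The time-derivative term gives $\frac12\frac{\d}{\d t}\|\nabla\partial^m d\|_{L^2}^2$. The linear term $2A\lambda\Delta d$ gives $2A\lambda\|\Delta\partial^m d\|_{L^2}^2$. This leaves three nonlinear contributions: the transport term $\langle \partial^m(u\cdot\nabla d),\Delta\partial^m d\rangle$, the Lagrange-multiplier term $-2A\lambda\langle\partial^m(|\nabla d|^2(d+M_e)),\Delta\partial^m d\rangle$, and the gyromagnetic term $2A\gamma\langle\partial^m((d+M_e)\times\Delta d),\Delta\partial^m d\rangle$.

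\emph{Transport term.} By the Moser-type product estimate in $H^s$ ($s\ge3$) we bound
\[
\|\partial^m(u\cdot\nabla d)\|_{L^2}\lesssim \|u\|_{H^s}\|\nabla d\|_{H^s}.
\]
The term is therefore $\lesssim \|u\|_{H^s}\|\nabla d\|_{H^s}\|\Delta d\|_{H^s}\lesssim \mathbf{E}_s^{1/2}\mathbf{D}_s$. We use $\|u\|_{H^s}$ from the energy, and take both $\|\nabla d\|_{H^s}$ and $\|\Delta d\|_{H^s}$ from the dissipation. This is possible because $\mathbf{D}_s$ contains $\|\nabla d\|_{H^s}^2$ itself, thanks to Lemma~\ref{Lmm-G2}. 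In particular, the zero mode $\bar d$ never appears, since only derivatives of $d$ enter.

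\emph{Lagrange-multiplier term.} Split $d+M_e=(d-\bar d)+(M_e+\bar d)$; the constant part has modulus $\lesssim 1+|\bar d|$. Then
\[
\|\partial^m(|\nabla d|^2(d+M_e))\|_{L^2}\lesssim (1+|\bar d|+\|d-\bar d\|_{H^s})\|\nabla d\|_{H^s}^2.
\]
We bound one factor $\|\nabla d\|_{H^s}\le\mathbf{E}_s^{1/2}$ and the other by $\mathbf{D}_s^{1/2}$, together with $\|\Delta\partial^m d\|_{L^2}\le\mathbf{D}_s^{1/2}$. This gives $(1+\mathbf{E}_s^{1/2})\mathbf{E}_s^{1/2}\mathbf{D}_s$.

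\emph{Gyromagnetic term (main obstacle).} The top-order piece $\langle (d+M_e)\times\Delta\partial^m d,\Delta\partial^m d\rangle$ vanishes pointwise, since $a\times b\cdot b=0$. This cancellation is what removes the apparent loss of one derivative. The remaining Leibniz terms are $\partial^{m'}d\times\Delta\partial^{m-m'}d$ with $0\ne m'\le m$; note that $\partial^{m'}$ kills the constants $M_e$ and $\bar d$. We estimate them in two cases. For $|m'|\le 2$ we put $\partial^{m'}d$ in $L^\infty$ via $\|\nabla d\|_{H^2}$ and $\Delta\partial^{m-m'}d$ in $L^2$. For larger $|m'|$ we use $L^4\times L^4$, with at most $s+1$ derivatives landing on either factor. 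In both cases the bound is $\lesssim\|\nabla d\|_{H^s}\|\Delta d\|_{H^s}^2\lesssim \mathbf{E}_s^{1/2}\mathbf{D}_s$.

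Summing over $|m|\le s$ and collecting the three contributions yields \eqref{nd4}. The only delicate point is making sure that no term produces $\partial^{s+3}d$ and that the $L^\infty$ bounds never require $d$ itself, which would bring in an unbounded average; both are handled by the pointwise cancellation and the splitting $d=(d-\bar d)+\bar d$.
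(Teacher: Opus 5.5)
Your proposal is correct and follows essentially the same route as the paper: you test the $\partial^m$-differentiated $d$-equation against $\Delta\partial^m d$, remove the top-order gyromagnetic term with the pointwise identity $(a\times b)\cdot b=0$, and bound the Leibniz remainders and the Lagrange-multiplier term by Moser/Sobolev estimates. The only real deviation is the transport term, which you bound directly by $\|u\|_{H^s}\|\nabla d\|_{H^s}\|\Delta d\|_{H^s}$, whereas the paper integrates the top-order piece $u\cdot\nabla\partial^m d$ by parts to obtain factors of $\|\nabla u\|_{H^s}$; both work because $\mathbf{D}_s$ contains $\|\nabla d\|_{H^s}^2$ (one small slip: for $|m'|=2$ the $L^\infty$ bound needs $\|\nabla d\|_{H^3}$ rather than $\|\nabla d\|_{H^2}$, which is still admissible since $s\ge 3$).
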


\begin{proof}

We now apply the derivative operator $\partial^m$ ($|m|\le s$) to the fourth equation of \eqref{CMEH-eqGd} and take the $L^2$ inner product of the resulting equation with $\Delta\partial^m d$. This yields
\begin{equation}\label{d14}
\begin{aligned}
\frac{1}{2} \frac{\d}{\d t} \Vert \nabla \partial^m d \Vert_{L^2}^2 + 2A \lambda \Vert \Delta \partial^m d \Vert_{L^2}^2 & = \left\langle \partial^m (u \cdot \nabla d), \Delta \partial^m d \right\rangle - 2A \lambda \left\langle \partial^m \left[ |\nabla d|^2 (d + M e) \right], \Delta \partial^m d \right\rangle \\
& \quad + 2A \gamma \left\langle \partial^m \left[ (d + M e) \times \Delta d \right], \Delta \partial^m d \right\rangle \,.
\end{aligned}
\end{equation}
For the first term on the right-hand side of \eqref{d14}, we have
\begin{equation*}
\begin{aligned}
\left\langle \partial^m (u \cdot \nabla d), \Delta \partial^m d \right\rangle =\underset{III_1}{\underbrace{ \left\langle u \cdot \nabla \partial^m d, \Delta \partial^m d \right\rangle}} +\underset{III_2}{\underbrace{ \sum_{0 \neq m' \leq m} C_m^{m'} \left\langle \partial^{m^\prime} u \cdot \nabla \partial^{m-m'} d, \Delta \partial^m d \right\rangle}} \,.
\end{aligned}
\end{equation*}
The term $III_1$ can be estimated as follows:
\begin{equation*}
\begin{aligned}
III_1 & = -\left\langle u \cdot \nabla \nabla \partial^m d,\, \nabla \partial^m d \right\rangle - \left\langle \nabla u \cdot \nabla \partial^m d,\, \nabla \partial^m d \right\rangle \\
&= \frac{1}{2}\left\langle \nabla \cdot u,\, |\nabla \partial^m d|^2 \right\rangle - \left\langle \nabla u \cdot \nabla \partial^m d,\, \nabla \partial^m d \right\rangle \\
&\lesssim \| \nabla u \|_{L^\infty} \| \nabla \partial^m d \|_{L^2} \lesssim \| \nabla u \|_{H^s} \| \nabla \partial^m d \|_{L^2}^2. 
\end{aligned}
\end{equation*}
The term $III_2$ satisfies the bound
\begin{equation*}
\begin{aligned}
 III_2&\lesssim \sum_{0 \neq m' \leq m} \| \partial^{m'} u \|_{L^4} \| \nabla \partial^{m - m'} d \|_{L^4} \| \nabla \partial^m d \|_{L^2} \lesssim \| \nabla u \|_{H^s} \| \nabla d \|_{H^s} \| \nabla \partial^m d \|_{L^2} \,.
\end{aligned}
\end{equation*}
Consequently,
\begin{equation}\label{nd1}
\begin{aligned}
\left\langle \partial^m (u \cdot \nabla d),\, \nabla \partial^m d \right\rangle &\lesssim \| \nabla \partial^m d \|_{L^2}^2 \| \nabla u \|_{H^s} + \| \nabla d \|_{H^s} \| \nabla u \|_{H^s} \| \Delta \partial^m d \|_{L^2}\,.
\end{aligned}
\end{equation}

The second term on the right-hand side of \eqref{d14} can be controlled by
\begin{equation}\label{nd2}
\begin{aligned}
&- 2A\lambda \left\langle \partial^m \left[ |\nabla d|^2 (d +M_e) \right],\, \Delta\partial^m d \right\rangle \\
& = -2A\lambda  \left\langle \partial^m (|\nabla d|^2) (d +M_e),\, \Delta \partial^m d \right\rangle \\
&- 2A\lambda  \mathbf{1}_{|m|\geq 1}\sum_{0 \neq m' \leq m} C^{m^\prime}_m\left\langle \partial^{m - m'} (|\nabla d|^2) \partial^{m'} d,\, \Delta \partial^m d \right\rangle \\
&\lesssim \left[ 1 + |\bar{d}| + \| d - \bar{d} \|_{L^\infty} \right] \| \partial^m |\nabla d|^2 \|_{L^2} \| \Delta \partial^m d \|_{L^2} \\
&\quad + \sum_{0 \neq m' \leq m} \| \partial^{m - m'} |\nabla d|^2 \|_{L^4} \| \partial^{m'} d \|_{L^4} \| \Delta\partial^m d \|_{L^2}\\
&\lesssim \left( 1 + |\bar{d}| + \| d - \bar{d} \|_{H^s} \right) \| \nabla d \|_{H^s}^2 \| \Delta \partial^m d \|_{L^2} + \| \nabla d \|_{H^s}^3 \| \Delta\partial^m d \|_{L^2}. \\
&\lesssim \left( 1 + |\bar{d}| + \| d - \bar{d} \|_{H^s} + \| \nabla d \|_{H^s} \right) \| \nabla d \|_{H^s}^2 \| \Delta \partial^m d \|_{L^2}.
\end{aligned}
\end{equation}
Moreover, using the cancellation $\langle(d+M_e)\times\Delta\p^m d ,\Delta \p^m d \rangle=0$, the last term in \eqref{d14} can be bounded by
\begin{equation}\label{nd3}
\begin{aligned}
& 2A \gamma \langle\p^m(d + M_e) \times \Delta d], \Delta \p^m d) 
= 2A \gamma \sum_{0 \neq m' \leq m} C_m^{m'} \langle \p^{m'} d \times \Delta \p^{m-m'} d, \Delta \p^m d \rangle \\
&\lesssim \sum_{0 \neq m' \leq m} \| \p^{m^\prime} d \times \Delta \p^{m-m'} d \|_{L^2} \, \| \Delta \p^m d \|_{L^2} \lesssim \| \nabla d \|_{H^s} \| \Delta d \|_{H^s} \| \Delta \p^m d \|_{L^2} \,.
\end{aligned}
\end{equation}
Substituting the estimates \eqref{nd1}, \eqref{nd2} and \eqref{nd3} into \eqref{d14} and summing over $|m|\le s$, we obtain
\begin{equation}\label{nd4-1}
\begin{aligned}
\frac{1}{2}\frac{\d}{\d t}\left\| \nabla d \right\|_{H^s}^2 + 2A\lambda \left\| \Delta d \right\|_{H^s}^2 \lesssim \left\| \nabla d \right\|_{H^s}^2 \left\| \nabla u \right\|_{H^s} + \left\| \nabla d \right\|_{H^s} \left\| \nabla u \right\|_{H^s} \left\| \Delta d \right\|_{H^s} \\
+ \left( 1 + |\bar{d}| + \left\| d - \bar{d} \right\|_{H^s} + \left\| \nabla d \right\|_{H^s} \right) \left\| \nabla d \right\|_{H^s}^2 \left\| \Delta d \right\|_{H^s} + \left\| \nabla d \right\|_{H^s} \left\| \Delta d \right\|_{H^s}^2\,.
\end{aligned}
\end{equation}
Recalling the definitions of $\mathbf{E}_s(t)$ and $\mathbf{D}_s(t)$ in \eqref{close-eq5-1} and \eqref{close-eq6-1}, we deduce estimate \eqref{nd4} from \eqref{nd4-1}. This completes the proof of the lemma.
\end{proof}

\subsection{Estimates for the $\theta$-equation in \eqref{CMEH-eqGd}}

The main goal of this subsection is to derive the estimate for $\theta$ in the weighted space $H^s_{\w (\theta)}$, where the weighted function $\w (\theta)$ is defined in \eqref{w-weight}. More precisely, we establish the following lemma.

\begin{lemma}\label{Lmm-G4}
    Let $s \ge 3$ be an integer. Then one has
\begin{equation}\label{nd16}
\begin{aligned}
\frac{a \gamma}{2} \frac{\d}{\d t} \|\theta\|^2_{H^s_{\w (\theta)}} - \sum_{|m| \leq s} \langle P^\prime(1+\theta) \nabla \partial^m \theta, \partial^m u \rangle \lesssim \mathbf{E}_s^\frac{1}{2} (t) \mathbf{D}_s (t) \,,
\end{aligned}
\end{equation}
where the functionals $\mathbf{E}_s (t)$ and $\mathbf{D}_s (t)$ are defined  in \eqref{close-eq5-1} and \eqref{close-eq6-1}, respectively.
\end{lemma}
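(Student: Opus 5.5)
We will work with the symmetrized form of the $\theta$-equation, using the weight $\w(\theta)$ precisely so that the leading linear coupling with $u$ appears as $\langle P'(1+\theta)\nabla\partial^m\theta,\partial^m u\rangle$. This is the term that will later cancel against the pressure term in the $u$-estimate. Since $a\gamma\,\w(\theta)=P'(1+\theta)/(1+\theta)$, we have
\[
\tfrac{a\gamma}{2}\tfrac{\d}{\d t}\|\partial^m\theta\|^2_{L^2_{\w(\theta)}}=\langle a\gamma\w(\theta)\partial_t\partial^m\theta,\partial^m\theta\rangle+\tfrac{a\gamma}{2}\langle \partial_t\w(\theta),|\partial^m\theta|^2\rangle .
\]
Fix $|m|\le s$. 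We apply $\partial^m$ to $\eqref{CMEH-eqGd}_1$, written as $\partial_t\theta+u\cdot\nabla\theta+(1+\theta)\nabla\cdot u=0$, and pair it with $a\gamma\w(\theta)\partial^m\theta$.

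The last term of the identity above is harmless. We have $\partial_t\w(\theta)=(\gamma-2)(1+\theta)^{\gamma-3}\partial_t\theta$, and we substitute $\partial_t\theta=-u\cdot\nabla\theta-(1+\theta)\nabla\cdot u$. By smallness, $\theta\in L^\infty$ with $1+\theta$ bounded above and below (we assume $\|\theta\|_{H^s}\le 1/2$, as is available in the bootstrap). This gives $\|\partial_t\w\|_{L^\infty}\lesssim\|\nabla u\|_{H^2}(1+\|\theta\|_{H^3})$. Because $|m|\le s$, we bound $\|\partial^m\theta\|_{L^2}^2$ crudely by $\|\theta\|_{H^s}\bigl(\|\theta\|_{L^2}\mathbf 1_{m=0}+\|\nabla\theta\|_{H^{s-1}}\bigr)$. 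This first step is where care is needed for $m=0$: $\|\theta\|_{L^2}$ is not in $\mathbf D_s$. There I would use $\|\theta\|_{L^2}\lesssim|\bar\theta|+\|\nabla\theta\|_{L^2}$, together with the fact that $\bar\theta$ is nonlinear. Indeed, mass conservation gives $\int\theta=\text{const}$, and $\rho\det F=1$ gives $\theta=\operatorname{tr}U+\tilde g(U)$ with $\int \operatorname{tr}U=\int\nabla\cdot\psi^\top$-type terms vanishing, so $|\bar\theta|\lesssim\|\nabla\psi\|_{L^2}^2$. This places the $m=0$ contributions in $\mathbf E_s^{1/2}\mathbf D_s$ (at worst with a harmless power of $\mathbf E_s$).

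For the main terms we split
\[
\partial^m[(1+\theta)\nabla\cdot u]=(1+\theta)\nabla\cdot\partial^m u+\sum_{0\ne m'\le m}C_m^{m'}\partial^{m'}\theta\,\nabla\cdot\partial^{m-m'}u .
\]
For the leading term we use $a\gamma\w(\theta)(1+\theta)=P'(1+\theta)$ and integrate by parts:
\[
-\langle P'(1+\theta)\nabla\cdot\partial^m u,\partial^m\theta\rangle=\langle P'(1+\theta)\nabla\partial^m\theta,\partial^m u\rangle+\langle P''(1+\theta)\nabla\theta\,\partial^m\theta,\partial^m u\rangle .
\]
The first term on the right is exactly the one moved to the left-hand side of \eqref{nd16}. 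The second is cubic, bounded by $\|\nabla\theta\|_{L^\infty}\|\theta\|_{H^s}\|u\|_{H^s}$, and is handled via $\|u\|_{H^s}\lesssim|\bar u|+\|\nabla u\|_{H^{s}}$. The remaining issue is that $\bar u$ is not dissipated. I would instead keep the derivative on $\theta$ by writing $\nabla\theta\,\partial^m\theta$ with at least one factor of $\nabla\theta$ controlled in $H^{s-1}$ and, when $m=0$, moving $\nabla$ back onto $u$.

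The commutator sum is bounded by the standard Moser/$L^4$–$L^\infty$ splitting as in Lemma \ref{Lmm-rho-loc}: each factor carries a derivative of $\theta$ (at most $s$, so it lies in $\|\nabla\theta\|_{H^{s-1}}$) times $\|\nabla u\|_{H^s}$, times $\|\partial^m\theta\|_{L^2}$. The transport term is $-\langle a\gamma\w\,\partial^m(u\cdot\nabla\theta),\partial^m\theta\rangle$. Its top-order part becomes $\tfrac{a\gamma}{2}\langle\nabla\cdot(\w u),|\partial^m\theta|^2\rangle$ after integration by parts, which is bounded by $\|\nabla u\|_{H^2}(1+\|\theta\|_{H^3})\|\partial^m\theta\|^2_{L^2}$ plus a $\|u\|_{L^\infty}\|\nabla\theta\|_{L^\infty}\|\partial^m\theta\|_{L^2}^2$ term. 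In the latter two factors of $\nabla\theta$ appear, which gives $\mathbf E_s^{1/2}\mathbf D_s$ directly. The lower-order commutators give $\|\nabla u\|_{H^s}\|\nabla\theta\|_{H^{s-1}}\|\theta\|_{H^s}$ plus $\|u\|_{L^\infty}\|\nabla\theta\|^2_{H^{s-1}}$. Summing over $|m|\le s$ yields \eqref{nd16}.

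The main obstacle is keeping every term as (energy)$^{1/2}\times$(dissipation). This is delicate because $\mathbf D_s$ controls neither $\theta$ nor $u$ at order zero, so terms like $\|u\|_{L^\infty}$ and $\|\theta\|_{L^2}$ must always be paired with two dissipative factors. The $\bar\theta$ relation from $\rho\det F=1$ is what I would rely on there.
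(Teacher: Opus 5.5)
Your proof is correct and has the same skeleton as the paper's. You use the same multiplier $a\gamma\,\w(\theta)\partial^m\theta=\frac{P'(1+\theta)}{1+\theta}\partial^m\theta$ for every $|m|\le s$. You integrate $-\langle P'(1+\theta)\nabla\cdot\partial^m u,\partial^m\theta\rangle$ by parts into the cross term plus $\langle P''(1+\theta)\nabla\theta\,\partial^m\theta,\partial^m u\rangle$, which are the paper's $IV_2$ and $VI_2$. You bound the commutators with the same Moser/$L^4$ estimates.

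The differences are in bookkeeping. The paper never estimates $\partial_t\w$ by itself. It adds it to the $\nabla\cdot(\w u)$ produced by the transport term and uses the continuity equation, $(\partial_t+u\cdot\nabla)\w(\theta)=-(1+\theta)\w'(\theta)\nabla\cdot u$, so only $\nabla\cdot u$ survives. Your stand-alone bound $\|\partial_t\w\|_{L^\infty}\lesssim\|\nabla u\|_{H^2}(1+\|\theta\|_{H^3})$ is not right as written, because $\partial_t\theta$ contains $u\cdot\nabla\theta$ with an undifferentiated $u$. That piece cancels exactly against the $\w'(\theta)\,u\cdot\nabla\theta$ part of your transport term, and it has the two-$\nabla\theta$ structure you already handle, so nothing breaks.

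At $m=0$ the paper applies $\|f\|_{L^4}\lesssim\|f\|_{L^2}^{1/4}\|\nabla f\|_{L^2}^{3/4}$ to $f=\theta$ and $f=u$ without comment. On $\mathbb T^3$ this needs $\bar f=0$, which is not available for $u$. Your accounting is what makes the zeroth-order step rigorous: you control $\bar\theta$ through $1+\theta=\det(I+\nabla\psi)$, and you never leave an undifferentiated $u$ without two dissipative $\theta$-factors. For example, $\langle P''(1+\theta)\theta\nabla\theta,u\rangle=-\langle G(\theta),\nabla\cdot u\rangle$ with $G'(\tau)=\tau P''(1+\tau)$ and $G(0)=0$.

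You can sharpen your $\bar\theta$ bound. For periodic $\psi$, the quadratic and cubic parts of $\det(I+\nabla\psi)$ are divergences of periodic fields, by the Piola identity for the cubic part. Hence $\bar\theta\equiv 0$, which is the unstated reason the paper's interpolation step for $\theta$ is legitimate.
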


\begin{proof}

Taking the $L^2$ inner product of the first equation in \eqref{CMEH-eqGd} with $\frac{P^\prime(1+\theta)}{1+\theta}\theta$ yields
\begin{equation*}
\begin{aligned}
\langle\p_t \theta,\frac{P^\prime(1+\theta)}{1+\theta}\theta \rangle +\langle u\cdot\nabla\theta,\frac{P^\prime(1+\theta)}{1+\theta}\theta \rangle +\langle(1+\theta)\nabla\cdot u,\frac{P^\prime(1+\theta)}{1+\theta}\theta \rangle  = 0 \,.
\end{aligned}
\end{equation*}
The first term on the left-hand side can be computed as
\begin{equation*}
\begin{aligned}
\left\langle \partial_t \theta,\ \frac{P'(1+\theta)}{1+\theta} \theta \right\rangle & = \left\langle \frac{P'(1+\theta)}{1+\theta} ,\partial_t \frac{1}{2}\theta^2 \right\rangle \\
& = \frac{1}{2}\frac{\d}{\d t} \int_{\mathbb{T}^3} \frac{P'(1+\theta)}{1+\theta} |\theta|^2 \d x - \frac{1}{2} \int_{\mathbb{T}^3} \partial_t \left( \frac{P'(1+\theta)}{1+\theta} \right) \theta^2 \d x \\
& = \frac{1}{2}\frac{\d}{\d t} \|\theta\|_{L^2_{\frac{P'(1+\theta)}{1+\theta}} }- \frac{1}{2} \left\langle \partial_t \left( \frac{P'(1+\theta)}{1+\theta} \right),\ \theta^2 \right\rangle \,.
\end{aligned}
\end{equation*}
The second term on the left-hand side can be rewritten as
\begin{equation*}
\begin{aligned}
\left\langle u \cdot \nabla \theta,\ \frac{P'(1+\theta)}{1+\theta} \theta \right\rangle& = \frac{1}{2} \left\langle u \cdot \nabla \theta^2,\ \frac{P'(1+\theta)}{1+\theta} \right\rangle \\
& = -\frac{1}{2}\left\langle u \cdot \nabla,\ \theta^2 \frac{P'(1+\theta)}{1+\theta} \right\rangle - \frac{1}{2}\left\langle u \cdot \nabla \left( \frac{P'(1+\theta)}{1+\theta} \right),\ \theta^2 \right\rangle \,.
\end{aligned}
\end{equation*}
Consequently,
\begin{equation*}
\begin{aligned}
\frac{1}{2}\frac{\d}{\d t} \|\theta\|^2_{L^2_{\frac{P'(1+\theta)}{1+\theta}}} & = \frac{1}{2} \left\langle \partial_t \left( \frac{P'(1+\theta)}{1+\theta} \right) + u \cdot \nabla \left( \frac{P'(1+\theta)}{1+\theta} \right),\ \theta^2 \right\rangle \\
& \quad + \frac{1}{2}\left\langle  \nabla \cdot u,\ \theta^2 \frac{P'(1+\theta)}{1+\theta} \right\rangle - \left\langle\nabla\cdot u,\ P^\prime(1+\theta)\theta \right\rangle \\
& = \underset{IV_1}{\underbrace{\frac{1}{2} \left\langle \partial_t \left( \frac{P'(1+\theta)}{1+\theta} \right) + \nabla \cdot \left( \frac{P'(1+\theta)}{1+\theta} u \right),\ \theta^2 \right\rangle }} \ \underset{IV_2}{\underbrace{ - \left\langle  \nabla\cdot u,\ P^\prime ( 1 + \theta ) \theta \right\rangle}} \,.
\end{aligned}
\end{equation*}

The term $IV_1$ can be estimated as
\begin{equation*}
\begin{aligned}
IV_1&= \frac{1}{2}\left\langle\left( \frac{P'(1+\theta)}{1+\theta}\right)^\prime (\partial_t \theta + u \cdot \nabla \theta),\ \frac{P'(1+\theta)}{1+\theta} \nabla \cdot u,\ \theta^2 \right\rangle \\&
= \frac{1}{2}\left\langle -(1+\theta) \nabla \cdot u \left( \frac{P'(1+\theta)}{1+\theta} \right)^\prime + \frac{P'(1+\theta)}{1+\theta} \nabla \cdot u,\ \theta^2 \right\rangle \\&
= \frac{1}{2}\left\langle \left( \frac{P^\prime(1+\theta)}{1+\theta} - \left( \frac{P'(1+\theta)}{1+\theta} \right)' \right) \nabla \cdot u,\ \theta^2 \right\rangle \\&
\lesssim \| \nabla\cdot u \|_{L^2} \|\theta\|^2_{L^4} \,.
\end{aligned}
\end{equation*}
where we have used the fact $0<c_0\leq 1+\theta\leq c_1$ established in Theorem \ref{Thm1}. By the Sobolev embedding theorem $\|\theta\|_{L^4}\lesssim\|\theta\|^{\frac{1}{4}}_{L^2}\|\nabla\theta\|^{\frac{3}  {4}}_{L^2}\lesssim\|\nabla\theta\|^{\frac{1}{2}}_{L^2}\|\theta\|^{\frac{1}{2}}_{H^1}$, and thus
\begin{equation}\label{nd5}
\begin{aligned}
IV_1&\lesssim\|\theta\|_{H^1}\|\nabla u\|_{L^2}\|\nabla\theta\|_{L^2}\lesssim\|\theta\|_{H^1}(\|\nabla u\|^2_{L^2}\|\nabla\theta\|^2_{L^2})
 \,.
\end{aligned}
\end{equation}
Furthermore, the term $IV_2$ can be bounded by
\begin{equation}\label{nd52}
\begin{aligned}
IV_2&= -\langle P'(1+\theta)\nabla\cdot u, \theta\rangle - \langle P'(1+\theta)\nabla\theta, u\rangle + \langle P'(1+\theta)\nabla\theta, u\rangle \\
&= \langle P''(1+\theta)\theta\nabla\theta, u\rangle + \langle P'(1+\theta)\nabla\theta, u\rangle \\
&\lesssim \|\theta\|_{L^4} \|\nabla\theta\|_{L^2} \|u\|_{L^4} + \langle P'(1+\theta)\nabla\theta, u\rangle \\
&\lesssim \|\theta\|_{L^2}^{\frac{1}{4}} \|\nabla\theta\|_{L^2}^{\frac{3}{4}} \|\nabla\theta\|_{L^2} \|u\|_{L^2}^{\frac{1}{4}} \|\nabla u\|_{L^2}^{\frac{3}{4}} + \langle P'(1+\theta)\nabla\theta, u\rangle  \\
&\lesssim(\|\theta\|_{H^1}+ \|u\|_{H^1}) \left(\|\nabla u\|^2_{L^2} + \|\nabla\theta\|^2_{L^2}\right) + \langle P'(1+\theta)\nabla\theta, u\rangle \,.
\end{aligned}
\end{equation}
Consequently, one obtains
\begin{equation}\label{nd6}
\begin{aligned}
\frac{1}{2} \frac{\d}{\d t} \|\theta\|^2_{L^2_{\frac{P'(1+\theta)}{1+\theta}}} - \langle P'(1+\theta) \nabla \theta, u \rangle \lesssim (\|\theta\|_{H^1} + \|u\|_{H^1})(\|\nabla u\|^2_{L^2} + \|\nabla \theta\|^2_{L^2}) \,.
\end{aligned}
\end{equation}

Next we deal with the higher order derivatives. Applying the derivative operator $\partial^m$ ($1\le |m|\le s$) to the first equation of \eqref{CMEH-eqGd} and taking the $L^2$ inner product of the resulting equation with $\frac{P'(1+\theta)}{1+\theta}\partial^m\theta$, we obtain
\begin{equation}\label{nd7}
\begin{aligned}
&\left\langle \partial_t \partial^m \theta, \frac{P'(1+\theta)}{1+\theta} \partial^m \theta \right\rangle + \left\langle \partial^m (u \cdot \nabla \theta), \frac{P'(1+\theta)}{1+\theta} \partial^m \theta \right\rangle\\&\quad+ \left\langle \partial^m [(1+\theta) \nabla \cdot u], \frac{P'(1+\theta)}{1+\theta} \partial^m \theta \right\rangle = 0 
 \,.
\end{aligned}
\end{equation}
It is easy to see that
\begin{equation}\label{nd8}
\begin{aligned}
\left\langle \partial_t \partial^m \theta, \frac{P'(1+\theta)}{1+\theta} \partial^m \theta \right\rangle& = \left\langle \frac{P'(1+\theta)}{1+\theta}, \partial_t \frac{1}{2} |\partial^m \theta|^2 \right\rangle\\& = \frac{1}{2} \frac{\d}{\d t} \|\partial^m \theta\|^2_{L^2_{\frac{P'(1+\theta)}{1+\theta}}}  - \frac{1}{2} \left\langle \partial_t \left(\frac{P'(1+\theta)}{1+\theta}\right), |\partial^m \theta|^2 \right\rangle
 \,,
\end{aligned}
\end{equation}
and
\begin{equation}\label{nd9}
\begin{aligned}
 & \quad \left\langle \partial^m (u \cdot \nabla \theta),\ \frac{P'(1+\theta)}{1+\theta} \partial^m \theta \right\rangle \\
 & =  \left\langle u \cdot \nabla \partial^m \theta,\ \frac{P'(1+\theta)}{1+\theta} \partial^m \theta \right\rangle +\sum_{\substack{0 \neq m' \leq m}} C^{m'}_m \left\langle \partial^{m'} u \nabla \partial^{m-m'} \theta,\ \frac{P'(1+\theta)}{1+\theta} \partial^m \theta \right\rangle \\
 & = -\frac{1}{2}\left\langle \nabla \cdot u,\ |\partial^m \theta|^2 \frac{P'(1+\theta)}{1+\theta} \right\rangle - \frac{1}{2}\left\langle u \cdot \nabla \left( \frac{P'(1+\theta)}{1+\theta} \right),\ |\partial^m \theta|^2 \right\rangle \\
 & \quad + \sum_{\substack{0 \neq m' \leq m}} C^{m'}_m \left\langle \partial^{m'} u \cdot \nabla \partial^{m-m'} \theta,\ \frac{P'(1+\theta)}{1+\theta} \partial^m \theta \right\rangle \,,
\end{aligned}
\end{equation}
and
\begin{equation}\label{nd10}
\begin{aligned}
 & \quad \langle \partial^m [(1+\theta) \nabla \cdot u], \frac{P'(1+\theta) }{1+\theta}\partial^m \theta \rangle \\
 & = \langle\nabla\cdot \p^m u,P^\prime(1+\theta)\p^m\theta\rangle +\sum_{0 \neq m' \leq m} C_m^{m'} \langle \partial^{m^\prime} \theta \nabla \cdot \partial^{m-m'} u, \frac{P'(1+\theta)} {1+\theta}\partial^m \theta \rangle \\
& = -\langle P'(1+\theta) \nabla \partial^m \theta, \partial^m u \rangle + \langle \nabla \cdot \partial^m u, P'(1+\theta) \partial^m \theta \rangle \\
&\quad + \langle P'(1+\theta) \nabla \partial^m \theta, \partial^m u \rangle\\&\quad + \sum_{0 \neq m' \leq m} C_m^{m'} \langle \partial^{m^\prime} \theta \nabla \cdot \partial^{m-m'} u,\frac{ P'(1+\theta)}{1+\theta}\partial^m \theta \rangle \,.
\end{aligned}
\end{equation}
Substituting \eqref{nd8}, \eqref{nd9} and \eqref{nd10} into \eqref{nd7} yields
\begin{equation}\label{nd11}
\begin{aligned}
&\frac{1}{2} \frac{\d}{\d t} \|\partial^m \theta\|^2_{L^2_{\frac{P'(1+\theta)}{1+\theta}}} - \langle P'(1+\theta) \nabla \partial^m \theta, \partial^m u \rangle \\&= \underset{VI_1}{\underbrace{\frac{1}{2} \left\langle \partial_t (\frac{P'(1+\theta)}{1+\theta}) + \nabla\cdot \left( \frac{P'(1+\theta)}{1+\theta} \right) u, |\partial^m \theta|^2 \right\rangle}} \\
&\quad \underset{VI_2}{\underbrace{- \langle \nabla \cdot \partial^m u, P'(1+\theta) \partial^m \theta \rangle  - \langle P'(1+\theta) \nabla \partial^m \theta, \partial^m u \rangle}}\\
&\quad \underset{VI_3}{\underbrace{- \sum_{0 \neq m' \leq m} C_m^{m'} \langle \partial^{m^\prime} u \cdot \nabla \partial^{m-m'} \theta, \frac{P'(1+\theta)}{1+\theta} \partial^m \theta \rangle}} \\
&\quad \underset{VI_4}{\underbrace{- \sum_{0 \neq m' \leq m} C_m^{m'} \langle \partial^{m^\prime} \theta \nabla \cdot \partial^{m-m'} u,\frac{P'(1+\theta)}{1+\theta} \partial^m \theta \rangle}} \,,
\end{aligned}
\end{equation}

By the same arguments as in \eqref{nd5}, we have
\begin{align}\label{nd11}
\no VI_1 & = \frac{1}{2} \left\langle \left( \frac{P^\prime(1+\theta)}{1+\theta} \right)' (\p_t+u\cdot\nabla)\theta + \nabla \cdot u \frac{P'(1+\theta)}{1+\theta}, | \partial^m \theta |^2 \right\rangle\\
& = \frac{1}{2} \left\langle \left( \frac{P'(1+\theta)}{1+\theta} - (1+\theta) \left( \frac{P'(1+\theta)}{1+\theta} \right)' \right) \nabla \cdot u, | \partial^m \theta |^2 \right\rangle\\
\no & \lesssim \| \nabla \cdot u \|_{L^\infty} \| \partial^m \theta \|^2_{L^2} \lesssim \| \nabla u \|_{H^s} \| \nabla \theta \|_{H^{s-1}} \| \partial^m \theta \|_{L^2} \,,
\end{align}
Analogous to the estimates in \eqref{nd52}, it follows that
\begin{equation}\label{nd12}
\begin{aligned}
VI_2 & =\langle \partial^m u, P''(1+\theta) \nabla \theta \partial^m \theta \rangle \lesssim \|P''(1+\theta)\nabla \theta\|_{L^\infty} \| \partial^m u \|_{L^2}  \| \partial^m \theta \|_{L^2} \\
& \lesssim\|\nabla \theta\|_{H^{s-1}}\|\nabla u\|_{H^s}\|\p^m \theta\|_{L^2} \,,
\end{aligned}
\end{equation}
Using the fact $|\frac{P^\prime(1+\theta)}{1+\theta}|\lesssim1$, we obtain
\begin{equation}\label{nd13}
\begin{aligned}
VI_3& = \sum_{\substack{0 \neq m' \leq m}} C^{m'}_m \langle|\p^{m^{\prime}}u|\cdot|\nabla\p^{m-m^\prime}\theta|,|\p^m\theta|\rangle\\&
\lesssim \| \partial^m u \|_{L^2} \cdot \| \nabla \theta \|_{L^4} \cdot \| \partial^m \theta \|_{L^2}
+ \sum_{0 \neq m' < m} \| \partial^{m'} u \|_{L^4} \cdot \| \nabla \partial^{m-m'} \theta \|_{L^4} \cdot \| \partial^m \theta \|_{L^2}\\&
\lesssim \| \nabla u \|_{H^s} \cdot \| \nabla \theta \|_{H^{s-1}} \cdot \| \partial^m \theta \|_{L^2}
 \,,
\end{aligned}
\end{equation}
Applying similar estimates as in \eqref{nd12}, we have
\begin{equation}\label{nd14}
\begin{aligned}
VI_4& \lesssim \|\partial^m\theta\|_{L^2} \|\nabla u\|_{L^\infty} \|\partial^m\theta\|_{L^2} + \sum_{0 \neq m' < m} \|\partial^{m'}\theta\|_{L^4} \|\nabla \cdot \partial^{m - m'} u\|_{L^4} \|\partial^m\theta\|_{L^2} \\&
\lesssim \|\nabla u\|_{H^s} \|\nabla \theta\|_{H^{s-1}} \|\partial^m\theta\|_{L^2} \,,
\end{aligned}
\end{equation}
Therefore, inserting the estimates \eqref{nd11}, \eqref{nd12}, \eqref{nd13} and \eqref{nd14} into \eqref{nd10} yields, for all $1\le |m|\le s$,
\begin{equation}\label{nd15}
\begin{aligned}
\frac{1}{2} \frac{\d}{\d t} \|\partial^m \theta\|^2_{L^2_{\frac{P'(1+\theta)}{1+\theta}}}- \langle P'(1+\theta) \nabla \partial^m \theta, \partial^m u \rangle 
\lesssim \|\nabla u\|_{H^s} \|\nabla \theta\|_{H^{s-1}} \|\partial^m \theta\|_{L^2} \,,
\end{aligned}
\end{equation}
Combining \eqref{nd6} and \eqref{nd15}, we obtain
\begin{equation}\label{nd16-1}
\begin{aligned}
\frac{1}{2} \frac{\d}{\d t} \|\theta\|^2_{H^s_{\frac{P'(1+\theta)}{1+\theta}}} - \sum_{|m| \leq s} \langle P^\prime(1+\theta) \nabla \partial^m \theta, \partial^m u \rangle \\
\lesssim \left( \|\theta\|_{H^s} + \|u\|_{H^s} \right) \left( \|\nabla u\|_{H^s}^2 + \|\nabla \theta\|_{H^{s-1}}^2 \right) \,.
\end{aligned}
\end{equation}
Notice that $a \gamma \w (\theta) = \frac{P'(1+\theta)}{1+\theta}$. Recalling the definitions of $\mathbf{E}_s (t)$ and $\mathbf{D}_s (t)$ in \eqref{close-eq5-1} and \eqref{close-eq6-1}, we deduce estimate \eqref{nd16} from \eqref{nd16-1}. This completes the proof of the lemma.
\end{proof}

\subsection{Estimates for the $u$-equation in \eqref{CMEH-eqGd}}

In this subsection, we aim to establish an estimate for the velocity $u$ in the $H^s_{1+\theta}$ norm. More precisely, we have the following lemma.

\begin{lemma}\label{Lmm-G5}
    Let $s \ge 3$ be an integer. Then one has
\begin{equation}\label{u-eq-estimate}
\begin{aligned}
& \frac{1}{2} \frac{\d}{\d t} \| u \|_{H_{1+\theta}^s}^2 + \mu \|\nabla u\|_{H^s}^2 + (\mu + \xi) \|\nabla \cdot u\|_{H^s}^2 \\
& \quad + \sum_{|m| \leq s} \left\langle P^\prime(1+\theta) \nabla \partial^m \theta, \partial^m u \right\rangle - \sum_{|m| \leq s} \left\langle \nabla \partial^m \psi, \nabla \partial^m u \right\rangle \\
& \lesssim ( 1 + \mathbf{E}_s^\frac{s}{2} (t) ) \mathbf{E}_s^\frac{1}{2} (t) \mathbf{D}_s (t) \,,
\end{aligned}
\end{equation}
where the functionals $\mathbf{E}_s (t)$ and $\mathbf{D}_s (t)$ are defined in \eqref{close-eq5-1} and \eqref{close-eq6-1}, respectively.
\end{lemma}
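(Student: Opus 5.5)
We apply $\partial^m$ ($|m|\le s$) to the $u$-equation of \eqref{CMEH-eqGd} and take the $L^2$ inner product with $\partial^m u$, then sum over $|m|\le s$. The inertial term is handled as in the local estimate (Lemma \ref{Lmm-v-loc}). We write $\langle\partial^m[(1+\theta)(\partial_t u+u\cdot\nabla u)],\partial^m u\rangle$ as $\langle(1+\theta)(\partial_t+u\cdot\nabla)\partial^m u,\partial^m u\rangle$ plus commutators. Using the $\theta$-equation, the leading part equals
\[
\tfrac12\tfrac{\d}{\d t}\|\partial^m u\|_{L^2_{1+\theta}}^2-\tfrac12\langle \partial_t\theta+\nabla\cdot((1+\theta)u),|\partial^m u|^2\rangle .
\]
The last bracket vanishes identically by mass conservation. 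The commutators $\partial^{m'}\theta\,\partial^{m-m'}\partial_t u$ contain $\partial_t u$. We replace $\partial_t u$ via the equation itself, i.e.\ $\partial_t u=-u\cdot\nabla u+\frac{1}{1+\theta}(\cdots)$, as in Lemma \ref{Lmm-dtv-loc}. Since $1+\theta$ is bounded above and below (smallness plus Theorem \ref{Thm1}), $\|\frac{1}{1+\theta}f\|_{H^{s-1}}\lesssim(1+\|\theta\|_{H^s}^{s-1})\|f\|_{H^{s-1}}$. This is where the factor $1+\mathbf{E}_s^{s/2}$ comes from. Each such commutator carries $\nabla\theta\in H^{s-1}$ (dissipative) times a linear dissipative quantity ($\nabla u,\nabla\psi,\nabla\theta,\Delta d$) times $\partial^m u$ (energy). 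Hence it is bounded by $(1+\mathbf{E}_s^{s/2})\mathbf{E}_s^{1/2}\mathbf{D}_s$. The case $m=0$ with the $\|u\|_{L^2}$ factor is fine because $u$ appears in $\mathbf{E}_s$.

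The viscous terms give exactly $\mu\|\nabla\partial^m u\|^2+(\mu+\xi)\|\nabla\cdot\partial^m u\|^2$ after integration by parts. For the pressure, $\nabla P(1+\theta)=P'(1+\theta)\nabla\theta$, so $\partial^m$ of it equals $P'(1+\theta)\nabla\partial^m\theta$ plus commutators with $\partial^{m'}(P'(1+\theta))\sim\nabla\theta$-type factors. The leading piece is kept on the left as the stated cross term, which is designed to cancel the one in Lemma \ref{Lmm-G4}. The commutators are cubic, $\lesssim\|\nabla\theta\|_{H^{s-1}}^2\|u\|_{H^s}$ after Moser estimates, with the top-order factor handled by moving one derivative onto $u$ when $|m|=s$. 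For the elastic term $-(1+\theta)\Delta\psi$, integrating by parts in the linear part $-\langle\Delta\partial^m\psi,\partial^m u\rangle=\langle\nabla\partial^m\psi,\nabla\partial^m u\rangle$ produces the displayed interaction term. The remainder $-\langle\partial^m(\theta\Delta\psi),\partial^m u\rangle$ is handled by moving one derivative onto $u$ at top order. It is then bounded by $\|\theta\|_{H^s}\|\nabla\psi\|_{H^s}\|\nabla u\|_{H^s}$ plus lower-order commutators.

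The remaining terms are all at least quadratic in dissipative quantities times an energy factor: $2\nabla\psi\cdot\nabla\theta$, $\theta\nabla\theta$, $\nabla\tilde g(U)$, $\theta\nabla\tilde g(U)$, $\nabla|\nabla d|^2$, $\nabla\cdot(\nabla d\odot\nabla d)$, and the convection $(1+\theta)u\cdot\nabla u$. In each case we put the outer gradient on $\partial^m u$ via integration by parts, so that $\|\nabla\partial^m u\|$ appears. Since $\tilde g(U)=O(|\nabla\psi|^2)$ is a smooth function of $U$ vanishing quadratically, $\|\partial^m\tilde g(U)\|_{L^2}\lesssim\|\nabla\psi\|_{H^s}^2$ for small $U$. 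Likewise $\|\partial^m(\theta\nabla\theta)\|$ is treated by giving the $L^\infty$ slot to $\theta$ or $\nabla\theta$, and the $d$-terms give $\|\nabla d\|_{H^s}^2\|\nabla u\|_{H^s}$. For the convection, note that $\|u\|_{L^\infty}$ is not a dissipative norm: $\mathbf{D}_s$ contains only $\nabla u$, and there is no Poincar\'e inequality because $\bar u\neq0$. We therefore always place $u$ itself in an energy slot and pair two gradient factors from $\mathbf{D}_s$. For example, $\langle(1+\theta)u\cdot\nabla\partial^m u,\partial^m u\rangle=-\frac12\langle\nabla\cdot((1+\theta)u),|\partial^m u|^2\rangle$. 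At $m=0$ this is $\lesssim(\|\nabla u\|_{L^\infty}+\|\nabla\theta\|_{L^\infty})\|u\|_{L^2}^2$, which involves two energy factors, not dissipative ones.

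\textbf{Main obstacle.} The difficulty is the low-order terms, especially $m=0$. There the undifferentiated $u$ or $\theta$ must be charged to $\mathbf{E}_s$ only once, since neither $\|u\|_{L^2}$ nor $\|\theta\|_{L^2}$ is in $\mathbf{D}_s$. At $m=0$ the convective term above is at first sight only $\mathbf{D}_s^{1/2}\mathbf{E}_s$. We would instead use the original conservative form: the exact identity $\frac12\frac{\d}{\d t}\int(1+\theta)|u|^2$ has no convective contribution at all, by mass conservation. Analogously, for the pressure commutators at $m=0$ we would use $\langle P''\theta\nabla\theta,u\rangle$ bounded through $L^4$ interpolation. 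This follows exactly the device already used in \eqref{nd52}, which gives $(\|\theta\|_{H^1}+\|u\|_{H^1})(\|\nabla u\|^2+\|\nabla\theta\|^2)$.
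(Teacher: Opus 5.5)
Your strategy is sound, but it is not the paper's route. You test the conservative form $(1+\theta)(\partial_t u+u\cdot\nabla u)$ against $\partial^m u$. The paper instead divides the momentum equation by $1+\theta$ (see \eqref{u-eq0}) and tests against $(1+\theta)\partial^m u$.

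In your version the viscous terms come out exactly, and the inertial leading part cancels exactly for every $m$ through $\partial_t\theta+\nabla\cdot((1+\theta)u)=0$. The price is the commutators $\partial^{m'}\theta\,\partial^{m-m'}(\partial_t u+u\cdot\nabla u)$. You must remove these by substituting the momentum equation and bounding $\|\partial_t u+u\cdot\nabla u\|_{H^{s-1}}$ linearly by $\mathbf{D}_s^{1/2}$, as in Lemma~\ref{Lmm-dtv-loc}. For $m'=m$ you need this quantity in $L^\infty$, which $H^{s-1}\hookrightarrow L^\infty$ provides since $s\ge 3$.

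The paper never has $\partial_t u$ inside a commutator. Instead, most other terms (viscosity, pressure, the $d$-stresses, $\tilde g(U)$) pick up commutators with $\partial^{m'}(\frac{1}{1+\theta})$, which it handles with the Moser-type Lemma 3.2 of \cite{JLT-M3AS-2020}. Your exact cancellation of the transport term is cleaner than the paper's treatment of $A_{11}$ and $A_{12}$, which are estimated by interpolation. At $m=0$ the inequality $\|u\|_{L^3}^2\lesssim\|u\|_{L^2}\|\nabla u\|_{L^2}$ used there needs $\bar u=0$.

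Your ``main obstacle'' paragraph, however, aims at the wrong terms, and one term is not covered by what you wrote. The transport term is already gone by your first paragraph. At $m=0$ there is no pressure commutator at all: $\langle\nabla P(1+\theta),u\rangle=\langle P'(1+\theta)\nabla\theta,u\rangle$ is exactly the retained cross term, and $P''\theta\nabla\theta$ belongs to Lemma~\ref{Lmm-G4}. What needs care at $m=0$ are the terms with two undifferentiated energy factors.

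\emph{The term $\langle\theta\Delta\psi,u\rangle$.} The integration by parts is needed at $m=0$ too, not only at top order. Without it, $\|\theta\|_{L^\infty}\|\Delta\psi\|_{L^2}\|u\|_{L^2}$ is only of size $\mathbf{E}_s\mathbf{D}_s^{1/2}$.

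\emph{The term $\langle\theta\nabla\theta,u\rangle$.} Your prescription (put $\theta$ or $\nabla\theta$ in $L^\infty$, or move the gradient onto $u$) gives $\|\theta\|_{L^\infty}\|\nabla\theta\|_{L^2}\|u\|_{L^2}$ or $\|\theta\|_{L^4}^2\|\nabla u\|_{L^2}$. Each has a single dissipative factor, so neither yields $\mathbf{E}_s^{1/2}\mathbf{D}_s$. Subtract the mean instead:
\[
\langle\theta\nabla\theta,u\rangle=-\tfrac12\langle(\theta-\bar\theta)(\theta+\bar\theta),\nabla\cdot u\rangle\lesssim\|\theta\|_{H^2}\|\nabla\theta\|_{L^2}\|\nabla u\|_{L^2},
\]
using Poincar\'e for $\theta-\bar\theta$. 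The paper handles this term in $A_{711}$ with an $L^4$ interpolation of $\theta$; the mean subtraction makes the role of $\bar\theta$ explicit.

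With these two corrections your argument yields \eqref{u-eq-estimate}.
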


\begin{proof}

We first rewrite the second $u$-equation in \eqref{CMEH-eqGd} as follows:
\begin{equation}\label{u-eq0}
\begin{aligned}
& \partial_t u + u \cdot \nabla u + \frac{1}{1+\theta} \nabla \left( P(1+\theta) - A |\nabla d|^2 + \tilde{g}(U) \right) \\
& = \frac{\mu}{1+\theta} \Delta u + \frac{\mu + \xi}{1+\theta} \nabla (\nabla \cdot u) - \Delta \psi - \frac{2}{1+\theta} \nabla \psi \cdot \nabla \theta \\
& \quad - \frac{\theta}{1+\theta} \nabla \left( \theta + \tilde{g}(U) \right) - \frac{2A}{1+\theta} \nabla \cdot \left( \nabla d \odot \nabla d \right) \,.
\end{aligned}
\end{equation}
Applying the derivative operator $\p^m \ ( |m|\leq s)$ to \eqref{u-eq0} and taking the $L^2$ inner product with $\nabla\p^m \theta$, we obtain
\begin{equation}\label{u-eq1}
\begin{aligned}
&\frac{1}{2} \frac{\d}{\d t} \left\| \partial^m u \right\|_{L^2_{1+\theta}}^2 + \underset{A_1}{\underbrace{\left\langle \partial^m (u \cdot \nabla u),\ (1+\theta) \partial^m u \right\rangle}} \\
& \quad + \underset{A_2}{\underbrace{\left\langle \partial^m \left[ \frac{\nabla \mathcal{P}(1+\theta)}{1+\theta} \right],\ (1+\theta) \partial^m u \right\rangle}}
+\underset{A_3}{\underbrace{ \left\langle \partial^m \left[ \frac{\tilde{g}(U) - A|\nabla d|^2}{1+\theta} \right],\ (1+\theta) \partial^m u \right\rangle}}\\
& = \underset{A_4}{\underbrace{\left\langle \partial^m \left[ \frac{\mu}{1+\theta} \Delta u \right],\ (1+\theta) \partial^m u \right\rangle}}
+ \underset{A_5}{ \underbrace{ \left\langle \partial^m \left[ \frac{\mu+\xi}{1+\theta} \nabla (\nabla \cdot u) \right],\ (1+\theta) \partial^m u \right\rangle}} \\
& \quad \underset{A_6}{\underbrace{- \left\langle \partial^m \left[ \frac{2}{1+\theta} \nabla \psi \cdot \nabla \theta \right],\ (1+\theta) \partial^m u \right\rangle}} \underset{A_7}{\underbrace{- \left\langle \partial^m \left[ \frac{\theta}{1+\theta} \nabla \left(\theta+\tilde{g}(U) \right) \right],\ (1+\theta) \partial^m u \right\rangle}} \\
& \quad \underset{A_8}{\underbrace{- \left\langle \partial^m \left[ \frac{2A}{1+\theta} \nabla \cdot (\nabla d \odot \nabla d) \right],\ (1+\theta) \partial^m u \right\rangle}}
\underset{A_9}{\underbrace{- \left\langle \Delta \partial^m \psi,\ (1+\theta) \partial^m u \right\rangle}} \,.
\end{aligned}
\end{equation}
The term $A_1$ can be rewritten as
\begin{equation*}
\begin{aligned}
-A_1&=-\left\langle \partial^m(u\cdot\nabla u),\ (1+\theta)\partial^m u\right\rangle \\
&= -\left\langle u\cdot\nabla\partial^m u,\ (1+\theta)\partial^m u\right\rangle -\sum_{\substack{0\neq m'\leq m}} C_m^{m'}\left\langle \partial^{m'}u\cdot\nabla\partial^{m-m'}u,\ (1+\theta)\partial^m u\right\rangle \\
&= -\frac{1}{2}\left\langle u\cdot\nabla|\partial^m u|^2,\ 1+\theta\right\rangle -\sum_{\substack{0\neq m'\leq m}} C_m^{m'}\left\langle \partial^{m'}u\cdot\nabla\partial^{m-m'}u,\ (1+\theta)\partial^m u\right\rangle \\
&= \underset{A_{11}}{\underbrace{\frac{1}{2}\left\langle \nabla\cdot u\ |\partial^m u|^2,\ 1+\theta\right\rangle }}+\underset{A_{12}}{\underbrace{\frac{1}{2}\left\langle u\cdot\nabla\theta,\ |\partial^m u|^2\right\rangle}} \\
&\quad \underset{A_{13}}{\underbrace{-\sum_{\substack{0\neq m'\leq m}} C_m^{m'}\left\langle \partial^{m'}u\cdot\nabla\partial^{m-m'}u,\ (1+\theta)\partial^m u\right\rangle}} \,.
\end{aligned}
\end{equation*}
For $A_{11}$, we have
\begin{equation*}
\begin{aligned}
{A_{11}} &\lesssim\|\nabla u\|_{L^3}\|\p^m u\|^2_{L^3} \lesssim\|\nabla u\|^{\frac{1}{2}}_{L^2}\|\nabla^2 u\|^{\frac{1}{2}}_{L^2}\|\p^m u\|_{L^2}\|\nabla\p^m u\|_{L^2} \lesssim\|\nabla u\|^2_{H^s}\|\p^m u\|_{L^2} \,,
\end{aligned}
\end{equation*}
where we used the Sobolev embedding $ \|u\|_{L^3} \lesssim \| u \|^{ \frac{1}{2} }_{L^2} \| \nabla u \|^{ \frac{1}{2} }_{L^2}$. Similarly, $A_{12}$ can be bounded by
\begin{equation*}
\begin{aligned}
{A_{12}}&\lesssim\|u\|_{L^4} \|\nabla\theta\|_{L^4} \|\partial^m u\|_{L^4}^2 \\
&\lesssim \|u\|_{L^2}^{\frac{1}{4}} \|\nabla u\|_{L^2}^{\frac{3}{4}} \|\nabla\theta\|_{L^2}^{\frac{1}{4}} \|\nabla^2\theta\|_{L^2}^{\frac{3}{4}} \|\partial^m u\|_{L^2}^{\frac{1}{2}}\|\nabla\partial^m u\|_{L^2}^{\frac{3}{2}} \\
&\lesssim(\|u\|_{H^s}^2+\|\theta\|_{H^s}^2)(\|\nabla u\|_{H^s}^2+\|\nabla \theta\|_{H^{s-1}}^2) \,,
\end{aligned}
\end{equation*}
where we used the inequality $\|u\|_{L^4}\lesssim\|u\|^{\frac{1}{4}}_{L^2}\|\nabla u\|^{\frac{3}{4}}_{L^2}$. Moreover, $A_{13}$ can be estimated analogously:
\begin{equation*}
\begin{aligned}
{A_{13}} \lesssim\sum_{0\neq m^\prime \leq m } \langle | \p^{m^\prime}u||\nabla\p^{m-m^\prime}u|,(1+\theta)|\p^m u|    \rangle \lesssim\|\p^m u\|_{L^2}\|\nabla u\|^2_{H^s} \,.
\end{aligned}
\end{equation*}
Consequently, $A_1$ satisfies
\begin{equation}\label{u-eq2}
\begin{aligned}
-A_1&=-\left\langle \partial^m(u\cdot\nabla u),\ (1+\theta)\partial^m u\right\rangle \lesssim(\|\p^m u\|_{L^2}+\| u\|^2_{H^s}+\| \theta\|^2_{H^s})(\|\nabla u\|^2_{H^s}+\| \nabla \theta\|^2_{H^s})
 \,.
\end{aligned}
\end{equation}

For the term $A_2$, we compute 
\begin{equation*}
\begin{aligned}
-A_2&=-\left\langle \partial^m \left[ \frac{\nabla P(1+\theta)}{1+\theta} \right], (1+\theta) \partial^m u \right\rangle = -\left\langle \partial^m \left[ \frac{P'(1+\theta)}{1+\theta} \nabla \theta \right], (1+\theta) \partial^m u \right\rangle \\&
= -\left\langle P'(1+\theta) \nabla \partial^m \theta, \partial^m u \right\rangle 
 \underset{A_{21}}{\underbrace{ - \sum_{\substack{0 \neq m' \leq m}} C_m^{m'} \left\langle \partial^{m'} \left[ \frac{P'(1+\theta)}{1+\theta} \right] \nabla \partial^{m-m'} \theta, (1+\theta) \partial^m u \right\rangle}}
 \,.
\end{aligned}
\end{equation*}
By Lemma 3.2 of \cite{JLT-M3AS-2020}, we have
\begin{equation*}
\begin{aligned}
A_{21}& \lesssim \sum_{\substack{0 \neq m' \leq m}} \left\langle |\partial^{m'} \left[ \frac{P'(1+\theta)}{1+\theta} \right]|| \nabla \partial^{m-m'} \theta|, |\partial^m u |\right\rangle \\&
\lesssim \left\| \nabla \left( \frac{P'(1+\theta)}{1+\theta} \right) \right\|_{H^{s-1}} \left\| \nabla \theta \right\|_{H^{s-1}} \left\| \nabla u \right\|_{H^s} \\&
\lesssim \left( \| \nabla \theta \|_{H^{s-1}} +  \| \nabla \theta \|_{H^{s-1}}^s \right) \left\| \nabla \theta \right\|_{H^{s-1}} \left\| \nabla u \right\|_{H^s} \\&
\lesssim \left( \| \theta \|_{H^s} +  \| \theta \|_{H^s} ^s \right) \left\| \nabla \theta \right\|_{H^{s-1}} \left\| \nabla u \right\|_{H^s} \,.
\end{aligned}
\end{equation*}
Hence,
\begin{equation}\label{u-eq3}
\begin{aligned}
 -A_2 + \left\langle P'(1+\theta) \nabla \partial^m \theta, \partial^m u \right\rangle \lesssim\left( \| \theta \|_{H^s} +  \| \theta \|_{H^s} ^s \right) \left\| \nabla \theta \right\|_{H^{s-1}} \left\| \nabla u \right\|_{H^s} \,.
\end{aligned}
\end{equation}
For the term $A_3$, we note that
\begin{equation*}
\begin{aligned}
-A_3&= -\left\langle \partial^m \left[ \frac{\tilde{g}(U) - A|\nabla d|^2}{1+\theta} \right], (1+\theta)\partial^m u \right\rangle \\
&= \underbrace{ -\left\langle \partial^m \left[ \tilde{g}(U) - A|\nabla d|^2 \right], \partial^m u \right\rangle}_{A_{31}} \\
&\quad \underbrace{ -\sum_{\substack{0 \neq m' \leq m}} C_m^{m'} \left\langle \partial^{m'} \left( \frac{1}{1+\theta} \right) \partial^{m - m'} \left[ \tilde{g}(U) - A|\nabla d|^2 \right], \partial^m u \right\rangle }_{A_{32}} \,.
\end{aligned}
\end{equation*}
Since $\tilde{g}(U) = O(|U|^2)$ and $U = \nabla \psi$, we have
\begin{equation*}
\begin{aligned}
A_{31} \lesssim\left( \|\tilde{g}(U)\|_{H^s} + \| |\nabla d|^2 \|_{H^s} \right) \|\partial^m u\|_{L^2} \lesssim \left( \|\nabla \psi\|_{H^s}^2 + \| |\nabla d|^2 \|^2_{H^s} \right) \|\partial^m u\|_{L^2} \,,
\end{aligned}
\end{equation*}
and
\begin{equation*}
\begin{aligned}
A_{32}&\lesssim\sum_{\substack{0 \neq m' \leq m}} \left\langle \left| \partial^{m'} \left( \frac{1}{1+\theta} \right) \right| \left| \partial^{m} \left[ \tilde{g}(U) - A|\nabla d|^2 \right] \right|, |\partial^m u| \right\rangle\\&
\lesssim \left\| \nabla \left( \frac{1}{1+\theta} \right) \right\|_{H^{s-1}} \left( \|\tilde{g}(U)\|_{H^s} + \| |\nabla d|^2 \|_{H^s} \right) \|\nabla u\|_{H^{s-1}} \,.
\end{aligned}
\end{equation*}
Applying Lemma 3.2 of \cite{JLT-M3AS-2020} and the fact $\|\frac{1}{1+\theta}\|_{L^\infty}\lesssim 1$, we obtain 
\begin{equation*}
\begin{aligned}
\|\nabla (\frac{1}{1+\theta})\|_{H^s}\lesssim\|\nabla\theta\|_{H^{s-1}}+\|\nabla \theta\|^s
_{H^{s-1}} \,.
\end{aligned}
\end{equation*}
Then 
\begin{equation*}
\begin{aligned}
A_{32}&\lesssim(\|\nabla\theta\|_{H^{s-1}}+\|\nabla \theta\|^s_{H^{s-1}})(\|\nabla\psi\|^2_{H^s}+\|\nabla d\|^2_{H^s})\|u\|_{H^s}\\&
\lesssim(\|\theta\|_{H^{s}}+\| \theta\|^s_{H^{s}})(\|\nabla\psi\|^2_{H^s}+\|\nabla d\|^2_{H^s})\|u\|_{H^s} \,.
\end{aligned}
\end{equation*}
Therefore,
\begin{equation}\label{u-eq4}
\begin{aligned}
 - A_3 \lesssim(1+\| \theta\|^s_{H^{s}})(\|\nabla\psi\|^2_{H^s}+\|\nabla d\|^2_{H^s})\|u\|_{H^s} \,.
\end{aligned}
\end{equation}

We now estimate the term $A_4$. Observe that
\begin{equation*}
\begin{aligned}
A_4&= \left\langle \partial^m \left[ \frac{\mu}{1+\theta} \Delta u \right], (1+\theta)\partial^m u \right\rangle \\
&= \mu \left\langle \Delta \partial^m u, \partial^m u \right\rangle +\underset{A_{41}}{\underbrace{  \mu \sum_{\substack{0 \neq m' \leq m}} C_m^{m'} \left\langle \partial^{m'} \left( \frac{1}{1+\theta} \right) \Delta \partial^{m - m'} u, (1+\theta)\partial^m u \right\rangle}}
 \,.
\end{aligned}
\end{equation*}
It is easy to see that
$$ \mu \left\langle \Delta \partial^m u, \partial^m u \right\rangle = -\mu \left\| \nabla \partial^m u \right\|_{L^2}^2 \,, $$
and
\begin{equation}\label{u-eq5}
\begin{aligned}
A_{41}&\lesssim\sum_{\substack{0 \neq m' \leq m}} \left\langle \left| \partial^{m'} \left( \frac{1}{1+\theta} \right) \right| \left| \Delta \partial^{m - m'} u \right|, \left| \partial^m u \right| \right\rangle \\
&\lesssim \left\| \nabla \left( \frac{1}{1+\theta} \right) \right\|_{H^{s-1}} \left\| \nabla u \right\|_{H^s}^2 \lesssim \left( 1 + \left\| \theta \right\|_{H^s}^{s-1} \right) \left\| \theta \right\|_{H^s} \left\| \nabla u \right\|_{H^s}^2 \,.
\end{aligned}
\end{equation}
Consequently,
\begin{equation}\label{u-eq6}
\begin{aligned}
 \mu \left\| \nabla \partial^m u \right\|_{L^2}^2 + A_4 \lesssim \left( 1 + \left\| \theta \right\|_{H^s}^{s-1} \right) \left\| \theta \right\|_{H^s} \left\| \nabla u \right\|_{H^s}^2 \,.
\end{aligned}
\end{equation}

For the term $A_5$,
\begin{equation*}
\begin{aligned}
A_5&= \left\langle \partial^m \left[ \frac{\mu + \xi}{1+\theta} \nabla (\nabla \cdot u) \right], (1+\theta)\partial^m u \right\rangle\\&
=(u+\xi)\langle \nabla(\nabla\cdot \p^m u),\p^m u \rangle +\underset{A_{51}}{\underbrace{ \sum_{\substack{0 \neq m' \leq m}} C_m^{m'} \left\langle \partial^{m} \left( \frac{1}{1+\theta} \right) \nabla \left( \nabla \cdot \partial^{m - m'} u \right), (1+\theta)\partial^m u \right\rangle}}
\,.
\end{aligned}
\end{equation*}
Note that
$$(u+\xi)\langle \nabla(\nabla\cdot \p^m u),\p^m u   \rangle=-(u+\xi)\|\nabla\cdot\p^mu\|^2_{L^2}.$$
Using similar arguments as in \eqref{u-eq5}, we obtain
\begin{equation*}
\begin{aligned}
A_{51}&\lesssim(1+\|\theta\|^{s-1}_{H^s})\|\theta\|_{H^s}\|\nabla u\|^2_{H^s}
\,.
\end{aligned}
\end{equation*}
Thus, 
\begin{equation}\label{u-eq7}
\begin{aligned}
 (\mu + \xi) \left\| \nabla \cdot \partial^m u \right\|_{L^2}^2 + A_5 \lesssim \left( 1 + \left\| \theta \right\|_{H^s}^{s-1} \right) \left\| \theta \right\|_{H^s} \left\| \nabla u \right\|_{H^s}^2 \,.
\end{aligned}
\end{equation}
For the term $A_6$,
\begin{equation*}
\begin{aligned}
A_6&=- \left\langle \partial^m \left[ \frac{2}{1+\theta} \nabla \psi \cdot \nabla \theta \right],\ (1+\theta) \partial^m u \right\rangle\\&
=\underset{A_{61}}{\underbrace{-2\langle\p^m(\nabla\psi\cdot\nabla\theta),\p^m u \rangle }} \ \underset{A_{62}}{ \underbrace{ -2 \sum_{0\neq m^\prime\leq m} \langle     \p^{m^\prime}(\frac{1}{1+\theta})\p^{m-m^\prime}(\nabla \psi\cdot\nabla\theta),(1+\theta)\p^m u\rangle}}
 \,.
\end{aligned}
\end{equation*}
The term $A_{61}$ can be estimated by
\begin{equation*}
\begin{aligned}
A_{61}&\lesssim \sum_{\substack{0 \neq m' \leq m}} \left\| \partial^{m'} \nabla \psi \cdot \p^{m-m^\prime}\nabla \theta \right\|_{L^2} \left\| \partial^m u \right\|_{L^2} + \left\| \Delta \psi \right\|_{L^4} \left\| \p^m \theta \right\|_{L^2} \left\| \partial^m u \right\|_{L^4} \\
&\quad + \left\| \nabla \psi \right\|_{L^\infty} \left\| \p^m \theta \right\|_{L^2} \left\| \nabla \cdot \partial^m u \right\|_{L^2} \\
&\lesssim \left\| u \right\|_{H^s} \left\| \nabla \psi \right\|_{H^s} \left\| \nabla \theta \right\|_{H^{s-1}} + \left\| \theta \right\|_{H^s} \left\| \nabla \psi \right\|_{H^s} \left\|\nabla u \right\|_{H^s} \,,
\end{aligned}
\end{equation*}
and $A_{62}$ satisfies
\begin{equation*}
\begin{aligned}
A_{62}&\lesssim \sum_{\substack{0 \neq m' \leq m}} \left\langle \left| \partial^{m'} \left( \frac{1}{1+\theta} \right) \right| \left| \partial^{m - m'} \left( \nabla \psi \cdot \nabla \theta \right) \right|, \left| \partial^m u \right| \right\rangle \\
&\lesssim \left\| \nabla \left( \frac{1}{1+\theta} \right) \right\|_{H^{s-1}} \left\| \nabla \psi \cdot \nabla \theta \right\|_{H^{s-1}} \left\| u \right\|_{H^s} \\
&\lesssim \left( \left\| \theta \right\|_{H^s} + \left\| \theta \right\|_{H^s}^2 \right) \left\| u \right\|_{H^s} \left\| \nabla \psi \right\|_{H^s} \left\| \nabla \theta \right\|_{H^{s-1}} \,.
\end{aligned}
\end{equation*}
Hence,
\begin{equation}\label{u-eq8}
\begin{aligned}
A_6 \lesssim \left( 1 + \left\| \theta \right\|_{H^s}^2 \right) \left( \left\| u \right\|_{H^s} \left\| \nabla \psi \right\|_{H^s} \left\| \nabla \theta \right\|_{H^s} + \left\| \theta \right\|_{H^s} \left\| \nabla \psi \right\|_{H^s} \left\|\nabla u \right\|_{H^s} \right) \,.
\end{aligned}
\end{equation}

For the quantity $A_7$, we decompose it as
\begin{equation*}
\begin{aligned}
A_{7}&= -\left\langle \partial^m \left[ \frac{\theta}{1+\theta} \nabla \left( \theta + \tilde{g}(U) \right) \right], (1+\theta)\partial^m u \right\rangle \\
&=\underset{A_{71}}{\underbrace{- \left\langle \partial^m \left[ \theta \nabla \theta + \theta \nabla \tilde{g}(U) \right], \partial^m u \right\rangle}} \\
&\quad \underset{A_{72}}{\underbrace{- \sum_{\substack{0 \neq m' \leq m}} C_m^{m'} \left\langle \partial^{m'} \left( \frac{1}{1+\theta} \right) \partial^{m - m'} \left[ \theta \nabla \theta + \theta \nabla \tilde{g}(U) \right],(1+\theta) \partial^m u \right\rangle}}  \,.
\end{aligned}
\end{equation*}
The term $A_{71}$ can be further split as
\begin{equation*}
\begin{aligned}
A_{71}&=  \frac{1}{2} \left\langle \partial^m \theta^2, \nabla \cdot \partial^m u \right\rangle - \left\langle  \theta \nabla \partial^m\tilde{g}(U), \partial^m u \right\rangle - \sum_{\substack{0 \neq m' \leq m}} C_m^{m'} \left\langle \partial^{m'} \theta \nabla \partial^{m - m'} \tilde{g}(U), \partial^m u \right\rangle \\
&=\underset{A_{711}}{\underbrace{ \frac{1}{2} \left\langle \partial^m \theta^2, \nabla \cdot \partial^m u \right\rangle}} + \underset{A_{712}}{\underbrace{\left\langle \nabla \theta \partial^m \tilde{g}(U), \partial^m u \right\rangle}} \underset{A_{713}}{\underbrace{  + \left\langle \theta \partial^m \tilde{g}(U), \nabla \cdot \partial^m u \right\rangle}} \\
&\quad \underset{A_{714}}{\underbrace{ - \sum_{\substack{0 \neq m' \leq m}} C_m^{m'} \left\langle \partial^{m'} \theta \nabla \partial^{m - m'} \tilde{g}(U), \partial^m u \right\rangle}}
 \,.
\end{aligned}
\end{equation*}
The subterms $A_{71i}$ ($1 \leq i \leq 4$) can be estimated as follows:
\begin{equation*}
\begin{aligned}
A_{711}&\lesssim \left( \left\| \theta \right\|_{L^4}^2 + \left\| \nabla \theta^2  \right\|_{H^{s-1}}\right) \left\|\nabla u \right\|_{H^s} \lesssim \left( \left\| \theta \right\|^{\frac{1}{2}}_{L^2} \left\| \nabla \theta \right\|^{\frac{3}{2}}_{L^2} + \left\| \theta \right\|_{H^s} \left\| \nabla \theta \right\|_{H^s} \right) \left\|\nabla u \right\|_{H^s} \\
&\lesssim \left\| \theta \right\|_{H^s} \left\| \nabla \theta \right\|_{H^{s-1}} \left\|\nabla u \right\|_{H^s} \,,
\end{aligned}
\end{equation*}
and
\begin{equation*}
\begin{aligned}
A_{712} \lesssim\left\| \nabla \theta \right\|_{L^\infty} \left\| \partial^m \tilde{g}(U) \right\|_{L^2} \left\| \partial^m u \right\|_{L^2} \lesssim \left\| \theta \right\|_{H^s} \left\| \nabla \psi \right\|_{H^s}^2 \left\| u \right\|_{H^s} \,,
\end{aligned}
\end{equation*}
and
\begin{equation*}
\begin{aligned}
A_{713} \lesssim\left\| \theta \right\|_{L^\infty} \left\| \partial^m \tilde{g}(U) \right\|_{L^2} \left\| \nabla \cdot \partial^m u \right\|_{L^2} \lesssim \left\| \theta \right\|_{H^s} \left\| \nabla \psi \right\|_{H^s}^2 \left\| \nabla u \right\|_{H^s} \,,
\end{aligned}
\end{equation*}
and
\begin{equation*}
\begin{aligned}
A_{714}& \lesssim\sum_{\substack{0 \neq m' \leq m}} \left\langle \left| \partial^{m'} \theta \right| \left| \nabla \partial^{m - m'} \tilde{g}(U) \right|, \left| \partial^m u \right| \right\rangle \lesssim \| \theta \|_{H^s} \|\tilde{g}(U)\|_{H^s}\|\p^m u\|_{L^2} \\
& \lesssim \| \theta \|_{H^s} \| u \|_{H^s} \| \nabla \psi\|^2_{H^s} \,.
\end{aligned}
\end{equation*}
Thus,
\begin{equation*}
\begin{aligned}
A_{71}& \lesssim(1+\|\theta\|_{H^s})(\|\theta\|_{H^s}+\|\nabla \psi\|_{H^s}+\|u\|_{H^s})
\times(\|\nabla u\|^2_{H^s}+\|\nabla \psi\|^2_{H^s}+\|\nabla \theta\|^2_{H^{s - 1}})
\,.
\end{aligned}
\end{equation*}
For $A_{72}$, we have
\begin{equation*}
\begin{aligned}
A_{72}& \lesssim\sum_{0 \neq m' \leq m}\langle|\partial^{m'}(\frac{1}{1 + \theta})|,|\partial^{m - m'}[\theta\nabla \theta+\theta\nabla\tilde{g}(U)]|,|\partial^m u|\rangle\\&
\lesssim\|\nabla(\frac{1}{1 + \theta})\|_{H^{s - 1}}(\|\theta\nabla \theta\|_{H^{s - 1}}+\|\theta\nabla \tilde{g}(U)\|_{H^{s - 1}})\|\Delta u\|_{H^{s - 1}}\\&
\lesssim(\|\nabla\theta\|_{H^{s - 1}}+\|\nabla\theta\|^s_{H^{s - 1}})(\|\theta\|_{H^s}\|\nabla\theta\|_{H^{s - 1}}+\|\theta\|_{H^s}\|\tilde{g}(U)\|_{H^s})\|\nabla u\|_{H^{s-1}}\\&
\lesssim(1+\|\theta\|_{H^s}^{s - 1})(\|\theta\|^2_{H^s}+\|\theta\|_{H^s}\|\nabla\psi\|^2_{H^s})\|\nabla\theta\|_{H^{s-1}}\|\nabla u\|_{H^s}
\,.
\end{aligned}
\end{equation*}
Consequently,
\begin{equation}\label{u-eq9}
\begin{aligned}
A_7 \lesssim(1+\|\theta\|^s_{H^s}+\|\nabla\psi\|_{H^s})(\|\theta\|_{H^s}+\|\nabla\psi\|_{H^s}+\|u\|_{H^s}) (\|\nabla u\|_{H^s}^2+\|\nabla\psi\|_{H^s}^2+\|\nabla\theta\|_{H^{s - 1}}^2)
 \,.
\end{aligned}
\end{equation}

Observe that the quantity $A_8$ can be rewritten as
\begin{equation*}
\begin{aligned}
A_{8}&= -\langle\partial^m[\frac{2A}{1 + \theta}\nabla\cdot(\triangledown d\odot\triangledown d)],(1 + \theta)\partial^m u\rangle\\&
=\underset{A_{81}}{\underbrace{ -2A\langle\nabla\cdot\partial^m(\triangledown d\odot\triangledown d),\partial^m u\rangle}}\\&\quad
\underset{A_{82  }}{\underbrace{ -2A\sum_{0\neq m'\leq m}C_m^{m'}\langle\partial^{m^\prime} (\frac{1}{1 + \theta})\nabla\cdot\partial^{m - m'}(\triangledown d\odot\triangledown d),(1 + \theta)\partial^m u\rangle}} \,.
\end{aligned}
\end{equation*}
The term $A_{81}$ can be dominated by
\begin{equation*}
\begin{aligned}
A_{81}&=2A\langle\partial^m(\nabla d\odot \nabla d),\nabla \cdot\partial^m u\rangle \lesssim \bigl\|\partial^m(\nabla d \odot\nabla d)\bigr\|_{L^2}\bigl\|\nabla\cdot\partial^m u\bigr\|_{L^2} \lesssim\|\nabla d\|^2_{H^s} \|\nabla u\|_{H^s}
\,.
\end{aligned}
\end{equation*}
Moreover, we can control $A_{82}$ as follows:
\begin{equation*}
\begin{aligned}
A_{82}&\lesssim\sum_{0\neq m'\leq m} \langle|\partial^{m'}\Bigl(\frac{1}{1+\theta}\Bigr)|
|\nabla \cdot\partial^{m-m'}(\nabla d\odot\nabla d)|,
|\partial^m u|\rangle\\&
\lesssim\|\nabla\Bigl(\frac{1}{1+\theta}\Bigr)\bigr\|_{H^{s-1}}
\|\nabla d \odot\nabla d\|_{H^{s}}
\|\nabla u\|_{H^{s-1}} \lesssim(\|\nabla\theta\|_{H^{s-1}}+\|\nabla \theta\|^s_{H^{s-1}})
\|\nabla d\|^2_{H^{s}}
\|\nabla u\|_{H^s}\\
& \lesssim ( \| \theta \|_{H^{s}} + \| \theta \|^s_{H^s} )
\|\nabla d\|^2_{H^s}\|\nabla u\|_{H^s} \,.
\end{aligned}
\end{equation*}
Hence,
\begin{equation}\label{u-eq10}
\begin{aligned}
A_8 \lesssim(1+\|\theta\|^s_{H^s})
\|\nabla d\|^2_{H^s}\|\nabla u\|_{H^s} \,.
\end{aligned}
\end{equation}
Finally, the term $A_9$ can be estimated by
\begin{equation}\label{u-eq11}
\begin{aligned}
A_9&=- \langle\Delta \partial^m \psi, (1+\theta)\partial^m u \rangle\\& = \langle \nabla\partial^m \psi,  \nabla\theta\p^m u+ (1+\theta)\nabla\partial^m u \rangle\\&
\leq c\|\nabla \partial^m \psi\|_{L^2} \left( \|\nabla \theta\|_{H^{s-1}}\|u\|_{H^s} + \|\theta\|_{H^s}\|\nabla u\|_{H^s} \right)
+ \langle \nabla \partial^m \psi, \nabla \partial^m u \rangle
\end{aligned}
\end{equation}
for some constant $c > 0$. Substituting the estimates \eqref{u-eq3}, \eqref{u-eq4}, \eqref{u-eq6}, \eqref{u-eq7}, \eqref{u-eq8}, \eqref{u-eq9}, \eqref{u-eq10} and \eqref{u-eq11} into \eqref{u-eq1}, and summing over $|m|\le s$ in the resulting inequality, we obtain
\begin{equation}\label{u-eq-estimate-1}
\begin{aligned}
&\frac{1}{2} \frac{\d}{\d t} \|u\|_{H_{1+\theta}^s}^2 + \mu \|\nabla u\|_{H^s}^2 + (\mu + \xi) \|\nabla \cdot u\|_{H^s}^2\\&\quad
+ \sum_{|m| \leq s} \left\langle P^\prime(1+\theta) \nabla \partial^m \theta, \partial^m u \right\rangle - \sum_{|m| \leq s} \left\langle \nabla \partial^m \psi, \nabla \partial^m u \right\rangle \\&
\lesssim \left( 1 + \|\theta\|^s_{H^s} + \|u\|^s_{H^s} + \|\nabla\psi\|^s_{H^s} \right) \left( \| u\|_{H^s} + \| \theta\|_{H^s} + \|\nabla\psi\|_{H^s} + \|\nabla d\|_{H^s} \right) \\&
\quad\times \left( \|\nabla u\|_{H^s}^2 + \|\nabla \theta\|_{H^{s-1}}^2 + \|\nabla\psi\|_{H^s}^2 + \|\nabla d\|_{H^s}^2 \right)
 \,.
\end{aligned}
\end{equation}
Together with the definitions of $\mathbf{E}_s(t)$ and $\mathbf{D}_s(t)$ in \eqref{close-eq5-1} and \eqref{close-eq6-1}, we deduce estimate \eqref{u-eq-estimate} from \eqref{u-eq-estimate-1}. This completes the proof of the lemma.
\end{proof}

\subsection{$\theta$-dissipative mechanism}

In this subsection, we mainly seek the damping effect of the density $\theta$ arising from the $u$-equation in \eqref{CMEH-eqGd}. More precisely, we establish the following lemma.

\begin{lemma}\label{Lmm-G6}
    Let $s \ge 3$ be an integer. Then one has
\begin{equation}\label{theta-eq-estimate}
\begin{aligned}
&\frac{1}{2} \frac{\d}{\d t} \left( \|u + \nabla \theta\|_{H^{s-1}}^2 - \|u\|_{H^{s-1}}^2 - \|\nabla \theta\|_{H^{s-1}}^2 \right) + a \gamma \|\nabla \theta\|^2_{H^{s-1}_{\w (\theta)}} \\
& \quad - c_0 \big[ \mu \| \nabla u \|_{H^s} \, \| \nabla \theta \|_{H^{s-1}_{\w (\theta)}} - (\mu + \xi ) \| \nabla \cdot u \|_{H^s} \, \| \nabla \theta \|_{H^{s-1}_{\w (\theta)}} \big] - c_1 \|\nabla \psi\|_{H^s} \|\nabla \theta\|_{H^{s-1}_{\w (\theta)}} \\
& \lesssim \left( 1 + \mathbf{E}_s (t) \right) \mathbf{E}_s^\frac{1}{2} (t) \mathbf{D}_s (t) \,,
\end{aligned}
\end{equation}
where the functionals $\mathbf{E}_s (t)$ and $\mathbf{D}_s (t)$ are defined  in \eqref{close-eq5-1} and \eqref{close-eq6-1}, respectively, the weighted function $\w (\theta)$ is given in \eqref{w-weight}, and the constants are $c_0 = \sup_{t,x} (1 + \theta)^{- \frac{\gamma}{2}} > 0$, $c_1 = \sup_{t,x} (1 + \theta)^\frac{2 - \gamma}{2} > 0 $.
\end{lemma}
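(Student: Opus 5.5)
The interaction functional satisfies $\|u+\nabla\theta\|_{H^{s-1}}^2-\|u\|_{H^{s-1}}^2-\|\nabla\theta\|_{H^{s-1}}^2 = 2\sum_{|m|\le s-1}\langle \partial^m u,\nabla\partial^m\theta\rangle$. The plan is therefore to compute $\frac{\d}{\d t}\langle\partial^m u,\nabla\partial^m\theta\rangle$ for each $|m|\le s-1$, splitting it as $\langle\partial_t\partial^m u,\nabla\partial^m\theta\rangle+\langle\partial^m u,\nabla\partial^m\partial_t\theta\rangle$, and then sum over $m$.

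For the first piece we substitute the rewritten momentum equation \eqref{u-eq0}. The pressure term produces $-\langle \frac{P'(1+\theta)}{1+\theta}\nabla\partial^m\theta,\nabla\partial^m\theta\rangle = -a\gamma\|\nabla\partial^m\theta\|^2_{L^2_{\w(\theta)}}$, plus commutators $\partial^{m'}\big(\frac{P'(1+\theta)}{1+\theta}\big)\nabla\partial^{m-m'}\theta$. Since $|m|\le s-1$, these commutators are bounded via Lemma 3.2 of \cite{JLT-M3AS-2020} by $(\|\theta\|_{H^s}+\|\theta\|_{H^s}^s)\|\nabla\theta\|_{H^{s-1}}^2$. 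Moving this term to the left gives the damping $a\gamma\|\nabla\theta\|^2_{H^{s-1}_{\w(\theta)}}$.

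The linear viscous terms $\frac{\mu}{1+\theta}\Delta u+\frac{\mu+\xi}{1+\theta}\nabla(\nabla\cdot u)$ are bounded at top order by Cauchy--Schwarz. We write $\nabla\partial^m\theta = \w(\theta)^{-1/2}\cdot \w(\theta)^{1/2}\nabla\partial^m\theta$ and pull out $\sup(1+\theta)^{-1}\w^{-1/2}=\sup(1+\theta)^{-\gamma/2}=c_0$. This yields $c_0[\mu\|\nabla u\|_{H^s}+(\mu+\xi)\|\nabla\cdot u\|_{H^s}]\|\nabla\theta\|_{H^{s-1}_{\w}}$, with the derivative-of-coefficient commutators being cubic. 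The elastic term $-\Delta\psi$ is handled the same way with the factor $\sup\w^{-1/2}=\sup(1+\theta)^{(2-\gamma)/2}=c_1$, giving $c_1\|\nabla\psi\|_{H^s}\|\nabla\theta\|_{H^{s-1}_\w}$. The remaining terms are all at least quadratic and bounded by $(1+\mathbf{E}_s)\mathbf{E}_s^{1/2}\mathbf{D}_s$, as in Lemma \ref{Lmm-G5}. These are $u\cdot\nabla u$, $\frac{1}{1+\theta}\nabla(\tilde g(U)-A|\nabla d|^2)$, $\nabla\psi\cdot\nabla\theta$, $\theta\nabla(\theta+\tilde g)$, and $\nabla\cdot(\nabla d\odot\nabla d)$. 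For them we use $\tilde g(U)=O(|\nabla\psi|^2)$, $H^2\hookrightarrow L^\infty$, and the fact that one derivative of room is available because $|m|\le s-1$.

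For the second piece we integrate by parts to get $-\langle\nabla\cdot\partial^m u,\partial^m\partial_t\theta\rangle$ and insert $\partial_t\theta=-u\cdot\nabla\theta-(1+\theta)\nabla\cdot u$. The linear part gives $+\|\nabla\cdot\partial^m u\|_{L^2}^2\le \|\nabla u\|_{H^s}^2$, which is not small. This is the main obstacle: it is a quadratic dissipation-size term with no small prefactor, and it does not appear on the left side of \eqref{theta-eq-estimate}. I would try to absorb it into the stated form by rewriting $\|\nabla\cdot\partial^m u\|_{L^2}^2$ via the mixed product. If that fails, I would simply accept it as an extra $\|\nabla\cdot u\|_{H^{s-1}}^2$ term; this is harmless in the final closure because the interaction is weighted by $\epsilon\eta$ with $\eta$ small, while $\epsilon\|\nabla\cdot u\|^2$ dissipation is present in $\mathbb{D}$. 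The nonlinear parts $\partial^m(u\cdot\nabla\theta)$ and $\partial^m(\theta\nabla\cdot u)$ are bounded by $\|(u,\theta)\|_{H^s}\|(\nabla u,\nabla\theta)\|^2$ at the appropriate orders.

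Finally, we sum over $|m|\le s-1$, multiply by $1/2$ conventions, and collect terms to obtain \eqref{theta-eq-estimate}.
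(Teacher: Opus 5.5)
Your route is the paper's route: write $\tfrac12\tfrac{\d}{\d t}(\cdots)=\sum_{|m|\le s-1}\tfrac{\d}{\d t}\langle\partial^m u,\nabla\partial^m\theta\rangle$, insert \eqref{u-eq0} and the mass equation, extract $a\gamma\|\nabla\theta\|^2_{H^{s-1}_{\w(\theta)}}$ from the pressure, and bound the viscous and elastic terms using $(1+\theta)^{-1}\w(\theta)^{-1/2}=(1+\theta)^{-\gamma/2}$ and $\w(\theta)^{-1/2}=(1+\theta)^{(2-\gamma)/2}$. You also read the bracket correctly as $\mu\|\nabla u\|_{H^s}+(\mu+\xi)\|\nabla\cdot u\|_{H^s}$, which is how it is used in \eqref{close-eq3}; with the printed minus sign the left side would contain an uncontrolled positive term.

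The obstacle you isolate cannot be removed, and the paper does not remove it either. Its term $B_{13}$ contains $-\langle\nabla\cdot\partial^m u,(1+\theta)\nabla\cdot\partial^m u\rangle$, which it bounds by $(1+\|\theta\|_{H^s})\|\nabla u\|^2_{H^s}$; this term then silently disappears in \eqref{theta-eq2}, whose right side is cubic. In fact the inequality as stated is false, so no rewriting through the mixed product can absorb $\|\nabla\cdot\partial^m u\|^2_{L^2}$. Take $\rho_0=1$, $F_0=I$ (so $\psi_0=0$), $M_0=M_e$ and $v_0=\varepsilon u_0$ with $\nabla\cdot u_0\not\equiv 0$. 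At $t=0$ every term containing $\nabla\theta$ vanishes, $\langle\partial_t\partial^m u,\nabla\partial^m\theta\rangle=0$, and $\partial_t\theta=-\nabla\cdot u$. Hence the left side equals $\|\nabla\cdot u\|^2_{H^{s-1}}=\varepsilon^2\|\nabla\cdot u_0\|^2_{H^{s-1}}$. Meanwhile $\mathbf{E}_s(0)=\varepsilon^2\|u_0\|^2_{H^s}$ and $\mathbf{D}_s(0)=O(\varepsilon^2)$, so $(1+\mathbf{E}_s)\mathbf{E}_s^{1/2}\mathbf{D}_s=O(\varepsilon^3)$. By continuity the failure persists on a short time interval.

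Your fallback is therefore the correct statement. The left side is bounded by $\|\nabla\cdot u\|^2_{H^{s-1}}+C(1+\mathbf{E}_s^{s/2})\mathbf{E}_s^{1/2}\mathbf{D}_s$, because the $u\cdot\nabla\theta$ and $\theta\nabla\cdot u$ parts of $\partial_t\theta$ are genuinely cubic after pairing with $\nabla\cdot\partial^m u$. Keep that term on the right rather than trying to hide it.

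It is harmless for the closure. After the multiplications by $\eta$ in \eqref{close-eq3} and by $\epsilon$ in \eqref{close-eq4}, it becomes $\epsilon\eta\|\nabla\cdot u\|^2_{H^{s-1}}\le\epsilon\eta\|\nabla\cdot u\|^2_{H^s}$. This is absorbed by half of $\epsilon(\mu+\xi)\|\nabla\cdot u\|^2_{H^s}$ as soon as $\eta\le(\mu+\xi)/2$, which only lowers the coefficient of $\|\nabla\cdot u\|^2_{H^s}$ in $\mathbb{D}_{s;\delta,\eta,\epsilon}$. This is the standard mechanism for the $\langle u,\nabla\theta\rangle$ interaction functional in compressible Navier--Stokes energy methods: the small weight of the interaction term, not a cancellation, handles the divergence contribution.

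Apart from this point, your treatment of the remaining terms matches the paper's. That includes the top-order Cauchy--Schwarz with the stated sup-constants, the commutators via the composition lemma, and using $|m|\le s-1$ to gain the derivative on $u$ and $\psi$.
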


\begin{proof}

Applying the derivative operator $\partial^m$ ($|m|\le s-1$) to equation \eqref{u-eq0} and taking the $L^2$ inner product of the resulting equation with $\nabla\partial^m\theta$, we obtain
\begin{equation}\label{theta-eq1} 
\begin{aligned}
&\underset{B_1}{\underbrace{\left\langle \partial_t \partial^m u, \nabla \partial^m \theta \right\rangle}}
+ \underset{B_2}{\underbrace{\left\langle \partial^m (u \cdot \nabla u), \nabla \partial^m \theta \right\rangle}}
+\underset{B_3}{\underbrace{ \left\langle \partial^m \left[ \frac{\mathcal{P}'(1+\theta)}{1+\theta} \nabla \theta \right], \nabla \partial^m \theta \right\rangle}}\\&
\quad\underset{B_4}{\underbrace{+ \left\langle \partial^m \left[ \frac{1}{1+\theta} \left( -A|\nabla d|^2 + \tilde{g}(U) \right) \right], \nabla \partial^m \theta \right\rangle}}\\&
= \underset{B_5}{\underbrace{\left\langle \partial^m \left[ \frac{\mu}{1+\theta} \Delta u \right], \nabla \partial^m \theta \right\rangle }}+ \underset{B_6}{\underbrace{\left\langle \partial^m \left[ \frac{\mu+\xi}{1+\theta} \nabla (\nabla \cdot u) \right], \nabla \partial^m \theta \right\rangle}}\\&
\quad\underset{B_7}{\underbrace{- \left\langle \Delta \partial^m \psi, \nabla \partial^m \theta \right\rangle}} - \underset{B_8}{\underbrace{\left\langle \partial^m \left[ \frac{2}{1+\theta} \nabla \psi \cdot \nabla \theta \right], \nabla \partial^m \theta \right\rangle}}\\&
\quad\underset{B_9}{\underbrace{- \left\langle \partial^m \left[ \frac{\theta}{1+\theta} \nabla \left( (1+\theta) \tilde{g}(U) \right) \right], \nabla \partial^m \theta \right\rangle}} \ \underset{B_{10}}{\underbrace{- \left\langle \partial^m \left[ \frac{2A}{1+\theta} \nabla \cdot (\nabla d \odot \nabla d) \right], \nabla \partial^m \theta \right\rangle}} \,.
\end{aligned}
\end{equation}
Using the first equation in \eqref{CMEH-eqGd}, we have
\begin{align*}
B_1 & = \frac{\d}{\d t} \langle \partial^m u, \nabla \partial^m \theta \rangle - \langle \partial^{m} u, \nabla \p_t\partial^m \theta \rangle \\
& = \underset{B_{11}}{ \underbrace{ \frac{\d}{\d t} \langle \partial^m u, \nabla \partial^m \theta \rangle } } + \underset{ B_{12} }{ \underbrace{ \langle \partial^m u, \nabla \partial^m (u \cdot \nabla \theta) \rangle } } + \underset{ B_{13} }{ \underbrace{ \langle \partial^m u, \nabla \partial^m [ ( 1 + \theta ) \nabla \cdot u ] \rangle } } \,.
\end{align*}
Observe that
\begin{equation*}
\begin{aligned}
B_{11}=\frac{1}{2} \frac{\d}{\d t} \left( \| \p^m u + \nabla \partial^m \theta \|_{L^2}^2 - \| \p^m u \|_{L^2}^2 - \| \nabla \partial^m \theta \|_{L^2}^2 \right) \,,
\end{aligned}
\end{equation*}
and
\begin{equation*}
\begin{aligned}
B_{12}&= \underset{B_{121}}{\underbrace{\langle \p^m u, u \cdot \nabla (\nabla \partial^m \theta) \rangle}} +  \underset{B_{122}}{\underbrace{\sum_{0 \neq m' \leq m} C_m^{m'} \langle \p^m u, \partial^{m'} u \cdot \nabla (\nabla \partial^{m - m'} \theta) \rangle }} +  \underset{B_{123}}{\underbrace{\langle \p^m u, \partial^m (\nabla u \cdot \nabla \theta) \rangle}} \,.
\end{aligned}
\end{equation*}
The terms $B_{12i}$ ($1 \leq i \leq 3$) can be estimated as follows:
\begin{equation*}
\begin{aligned}
B_{121} = - \langle \nabla \partial^m \theta, \nabla \cdot (u \otimes \p^m u) \rangle \lesssim \| \nabla \partial^m \theta \|_{L^2}  \| \nabla \cdot (u \otimes \p^m u) \|_{L^2} \lesssim \| u \|_{H^s} \| \nabla u \|_{H^s} \| \nabla \theta \|_{H^{s-1}} ,
\end{aligned}
\end{equation*}
and
\begin{equation*}
\begin{aligned}
B_{122} \lesssim \sum_{0 \neq m' \leq m} \langle |\p^mu|, |\partial^{m'} u| \, |\nabla \nabla\partial^{m - m'} \theta| \rangle \lesssim \| u \|_{H^s} \| \nabla u \|_{H^s} \| \nabla \theta \|_{H^{s-1}} \,,
\end{aligned}
\end{equation*}
and
\begin{equation*}
\begin{aligned}
B_{123} \lesssim \|\p^m u\|_{L^2} \|\p^m(\nabla u\cdot\nabla\theta)\|_{L^2} \lesssim \| u \|_{H^s} \| \nabla u \|_{H^{s-1}} \| \nabla \theta \|_{H^{s-1}} \lesssim\| u \|_{H^s} \| \nabla u \|_{H^{s}} \| \nabla \theta \|_{H^{s-1}} \,.
\end{aligned}
\end{equation*}
Hence, $B_{12}$ satisfies the bound
\begin{equation*}
\begin{aligned}
B_{12} \lesssim\| u \|_{H^s} \| \nabla u \|_{H^{s}} \| \nabla \theta \|_{H^{s-1}} \,.
\end{aligned}
\end{equation*}
For the term $B_{13}$,
\begin{equation*}
\begin{aligned}
B_{13} & = -\langle \nabla \cdot \partial^m u, (1+\theta) \nabla \cdot \partial^m u \rangle - \sum_{0 \neq m' \leq m} C_m^{m'} \langle \nabla \cdot \partial^m u, \partial^{m'} \theta  \nabla \cdot\partial^{m - m'} u \rangle \\
&\lesssim \| \nabla \partial^m u \|_{L^2}^2 + \sum_{0 \neq m' \leq m} \langle |\nabla \partial^m u|, |\partial^{m'} \theta| \, |\nabla \partial^{m - m'} u| \rangle \\
&\lesssim \| \nabla \partial^m u \|_{L^2}^2 + \| \nabla \partial^m u \|_{L^2} \| \theta \|_{H^s} \| \nabla u \|_{H^s} \lesssim (1 + \| \theta \|_{H^s}) \| \nabla u \|_{H^s}^2 \,.
\end{aligned}
\end{equation*}
Consequently, 
\begin{equation}\label{theta-eq2}
\begin{aligned}
& - B_1 + \frac{1}{2} \frac{\d}{\d t} \left( \| \p^m u + \nabla \partial^m \theta \|_{L^2}^2 - \| \p^m u \|_{L^2}^2 - \| \nabla \partial^m \theta \|_{L^2}^2 \right) \\
\lesssim & ( 1 + \| \theta \|_{H^s}) \| u \|_{H^s} \left( \| \nabla u \|_{H^s}^2 + \| \nabla \theta \|_{H^{s-1}}^2 \right) \lesssim\| u \|_{H^s} \| \nabla u \|_{H^{s}} \| \nabla \theta \|_{H^{s-1}} \,.
\end{aligned}
\end{equation}

For the term $B_2$, we have
\begin{equation}\label{theta-eq3}
\begin{aligned}
 B_2 & = - \langle \partial^m (u \cdot \nabla u), \nabla \partial^m \theta \rangle \lesssim \| \partial^m (u \cdot \nabla u) \|_{L^2} \| \nabla \partial^m \theta \|_{L^2} \\
  & \lesssim \| u \|_{H^{s-1}} \| \nabla u \|_{H^{s-1}} \| \nabla\p^m \theta \|_{L^{2}} \lesssim \| u \|_{H^{s}} \| \nabla u \|_{H^s} \| \nabla \theta \|_{H^{s-1}} \,.
\end{aligned}
\end{equation}
The term $B_3$ can be decomposed as
\begin{equation*}
\begin{aligned}
B_{3} =\|\nabla\p^m\theta\|^2_{L^2_{ \frac{P' (1+\theta) }{ 1 + \theta } }} + \underbrace{ \sum_{0\neq m^\prime\leq m } C^{m^\prime}_m \langle \p^{m^\prime} [ \frac{ {P}' (1+\theta) }{ 1+\theta } ] \nabla \p^{m-m^\prime} \theta, \nabla \p^m \theta \rangle }_{B_{31}} \,.
\end{aligned}
\end{equation*}
The term $B_{31}$ can be estimated as
\begin{equation*}
\begin{aligned}
-B_{31}& \lesssim \sum_{0 \neq m' \leq m}\left\langle \left| \p^{m^\prime} ( \frac{ {P}' (1+\theta)}{1+\theta} ) \right| \, | \nabla\partial^{m-m'} \theta| ,  | \nabla \partial^{m} \theta | \right\rangle \lesssim \left\| \nabla \left[ \frac{ {P}' (1+\theta) }{ 1+\theta} \right] \right\|_{H^{s-1}} \left\| \nabla\theta \right\|_{H^{s-1}}^2 \\ 
& \lesssim \left( \left\| \nabla\theta \right\|_{H^{s-1}} + \left\| \nabla\theta \right\|^s_{H^{s-1}} \right) \left\| \nabla\theta \right\|_{H^{s-1}}^2 \lesssim \left( \left\|\theta \right\|_{H^s} + \left\| \theta \right\|^s_{H^s} \right) \left\| \nabla\theta \right\|_{H^{s-1}}^2 \,,
\end{aligned}
\end{equation*}
where the penultimate inequality follows from Lemma 3.2 of \cite{JLT-M3AS-2020}. Therefore,
\begin{equation}\label{theta-eq4}
\begin{aligned}
 \left\|\nabla \partial^m \theta\right\|_{L^2_\frac{ {P}' (1+\theta)}{1+\theta}}^2 - B_3 \lesssim (1 + \| \theta \|_{H^s}^{s-1}) \| \theta \|_{H^s} \| \nabla \theta \|^2_{H^{s-1}} \,.
\end{aligned}
\end{equation}

For the term $B_4$, we have
\begin{equation}\label{theta-eq5}
\begin{aligned}
B_4 & = \left\langle \frac{1}{1+\theta} \nabla \partial^m \left( - A |\nabla d|^2 + \tilde{g}(U) \right), \nabla \partial^m \theta \right\rangle \\
&\quad  + \sum_{0 \neq m' \leq m} C_m^{m'} \left\langle \partial^{m'} \left( \frac{1}{1+\theta} \right) \nabla \partial^{m - m'} \left( - A |\nabla d|^2 + \tilde{g}(U) \right), \nabla \partial^m \theta \right\rangle\\&
\lesssim \left\|\nabla \partial^m \big(-A|\nabla d|^2 + \tilde{g}(U)\big)\right\|_{L^2} \left\|\nabla \partial^m \theta\right\|_{L^2} \\
&\quad + \sum_{0 \neq m' \leq m} \left\langle \left|\partial^{m'} \left(\frac{1}{1+\theta}\right)\right| |\nabla \partial^{m - m'} \big(-A|\nabla d|^2 + \tilde{g}(U)\big)|, \nabla \partial^m \theta \right\rangle \\
&\lesssim \left( \left\| |\nabla d|^2\right\|_{H^s} + \left\|\tilde{g}(U)\right\|_{H^s} \right) \left\|\nabla \partial^m \theta\right\|_{L^2} \\
&\quad + \left\| \nabla \left(\frac{1}{1+\theta}\right) \right\|_{H^{s-1}} \left( \left\||\nabla d|^2\right\|_{H^s} + \left\|\tilde{g}(U)\right\|_{H^s} \right) \left\|\nabla \partial^m \theta\right\|_{L^2} \\
&\lesssim \left( \left\|\nabla d\right\|_{H^s}^2 + \left\|\nabla \psi\right\|_{H^s}^2 \right) \left\|\nabla \theta\right\|_{H^{s-1}} \\
&\quad + \left( \left\|\nabla \theta\right\|_{H^{s-1}} + \left\|\nabla \theta\right\|_{H^{s-1}}^s \right) \left( \left\|\nabla d\right\|_{H^s}^2 + \left\|\nabla \psi\right\|_{H^s}^2 \right) \left\|\nabla \theta\right\|_{H^{s-1}} \\
&\lesssim \left( 1 + \left\|\theta\right\|_{H^s}^s \right) \left\|\theta\right\|_{H^s} \left( \left\|\nabla d\right\|_{H^s}^2 + \left\|\nabla \psi\right\|_{H^s}^2 \right) \,.
\end{aligned}
\end{equation}
For the term $B_5$, we obtain
\begin{equation}\label{theta-eq6}
\begin{aligned}
B_5 & = \left\langle \frac{\mu}{1+\theta} \Delta \partial^m u, \nabla \partial^m \theta\right\rangle + \mu \sum_{0 \neq m' \leq m} C_m^{m'} \left\langle \partial^{m^\prime} \left( \frac{1}{1+\theta} \right) \Delta \partial^{m-m'} u, \nabla \partial^m \theta\right\rangle \\
&\leq \mu \sup_{t,x} (1 + \theta)^{- \frac{\gamma}{2}} \| \Delta \partial^m u \|_{L^2} \, \| \nabla \partial^m \theta\|_{L^2_{\w (\theta)}} \\
& \quad + C \sum_{0 \neq m' \leq m} \left\langle \left| \partial^{m^\prime} \left( \frac{1}{1+\theta} \right) \right| \, \left| \Delta \partial^{m-m'} u \right| , \, | \nabla \partial^m \theta| \right\rangle \\
&\leq \mu \sup_{t,x} (1 + \theta)^{- \frac{\gamma}{2}} \| \nabla u \|_{H^s} \, \| \nabla \theta \|_{H^{s-1}_{\w (\theta)}} + C \left\| \nabla \left( \frac{1}{1+\theta} \right) \right\|_{H^{s-1}} \, \| \nabla u \|_{H^{s}} \, \| \nabla \theta \|_{H^{s-1}} \\
&\leq \mu \sup_{t,x} (1 + \theta)^{- \frac{\gamma}{2}} \| \nabla u \|_{H^s} \, \| \nabla \theta \|_{H^{s-1}_{\w (\theta)}} + C \left( \| \theta \|_{H^s} + \| \theta \|^s_{H^s} \right) \| \nabla u \|_{H^s} \, \| \nabla \theta \|_{H^{s-1}}
\end{aligned}
\end{equation}
for some harmless constant $C>0$. Similarly, 
\begin{equation}\label{theta-eq7}
\begin{aligned}
B_6 & =\left\langle \partial^m \left[ \frac{\mu + \xi}{1+\theta} \nabla (\nabla \cdot u) \right], \nabla \partial^m \theta \right\rangle \\&
\leq (\mu + \xi ) \sup_{t,x} (1 + \theta)^{- \frac{\gamma}{2}} \| \nabla \cdot u \|_{H^s} \, \| \nabla \theta \|_{H^{s-1}_{\w (\theta)}} + C \left( \|\theta\|_{H^s} + \|\theta\|_{H^s}^s \right) \|\nabla u\|_{H^s} \|\nabla \theta\|_{H^{s-1}} \,,
\end{aligned}
\end{equation}
and
\begin{equation}\label{theta-eq8}
\begin{aligned}
B_7 \leq \sup_{t,x} (1 + \theta)^\frac{2 - \gamma}{2} \|\Delta \partial^m \psi\|_{L^2} \|\nabla \partial^m \theta\|_{L^2_{\w (\theta)}} \leq \sup_{t,x} (1 + \theta)^\frac{2 - \gamma}{2} \|\nabla \psi\|_{H^s} \|\nabla \theta\|_{H^{s-1}_{\w (\theta)}} \,.
\end{aligned}
\end{equation}

For the term $B_8$, we have the bound
\begin{align}\label{theta-eq9}
\no B_8 & = -2 \left\langle \frac{1}{1+\theta} \partial^m ( \nabla \psi \cdot \nabla \theta ), \nabla \partial^m \theta \right\rangle - 2 \sum_{ 0 \neq m^\prime \leq m } C^{ m^\prime }_m \langle \p^{ m^\prime } ( \frac{1}{ 1 + \theta } ) \p^{ m - m^\prime } ( \nabla \psi \cdot \nabla \theta ), \nabla \p^m \theta \rangle \\
\no & \lesssim \| \partial^m ( \nabla \psi \cdot \nabla \theta ) \|_{L^2} \| \nabla \p^m \theta \|_{L^2} + \sum_{ 0 \neq m' \leq m } \left\langle | \partial^{m'} \left( \frac{1}{ 1 + \theta } \right) || \partial^{ m - m' } ( \nabla \psi \cdot \nabla \theta ) |, | \nabla \partial^m \theta | \right\rangle \\
\no & \lesssim \| \nabla \psi \cdot \nabla \theta \|_{ H^{s-1} } \| \nabla \theta \|_{ H^{s-1} } + \left\| \nabla \left( \frac{1}{ 1 + \theta } \right) \right\|_{ H^{s-1} } \| \nabla \psi \cdot \nabla \theta \|_{ H^{s-1} } \| \nabla \theta \|_{ H^{s-1} } \\
& \lesssim \| \nabla \psi\|_{ H^s } \| \nabla \theta \|_{ H^{s-1} }^2 + \left( \| \theta \|_{ H^s } + \| \theta \|_{ H^s }^s \right) \| \nabla \psi \|_{ H^s } \| \nabla \theta \|_{ H^{s-1} }^2 \\
\no & \lesssim ( 1 + \| \theta \|^s_{ H^s } ) \| \nabla \psi \|_{ H^s } \| \nabla \theta \|^2_{ H^{s-1} } \,.
\end{align}
The term $B_9$ can be estimated as follows:
\begin{align}\label{theta-eq10}
\no B_9 & - \left\langle \frac{1}{1+\theta} \partial^m \left[ \theta\nabla \left( \theta+\tilde{g}(U) \right) \right], \nabla \partial^m \theta \right\rangle\\
\no & \quad-\sum_{0\neq m^\prime\leq m}C^{m^\prime}_m\left\langle \p^{m^\prime}(\frac{1}{1+\theta})\p^{m-m^\prime}[\theta\nabla(\theta+\tilde{g}(U))] ,\nabla\p^m\theta \right\rangle\\
\no & \lesssim \left\| \partial^m \left[ \theta \nabla (\theta + \tilde{g}(U) )\right] \right\|_{L^2} \left\| \nabla \partial^m \theta \right\|_{L^2}\\
\no & \quad+ \sum_{\substack{0 \neq m' \leq m}} \left\langle| \partial^{m'} \left( \frac{1}{1+\theta} \right)| |\partial^{m - m'} \left[ \theta \nabla (\theta + \tilde{g}(U) )\right]|, |\nabla \partial^m \theta| \right\rangle\\
\no & \lesssim \|\theta\|_{H^{s-1}} \left( \|\nabla \theta\|_{H^{s-1}} + \|\nabla \tilde{g}(U)\|_{H^{s-1}} \right) \|\nabla \theta\|_{H^{s-1}}\\
& \quad+ \left\| \nabla \left( \frac{1}{1+\theta} \right) \right\|_{H^{s-1}} \|\theta\|_{H^{s-1}} \left( \|\nabla \theta\|_{H^{s-1}} + \|\nabla \tilde{g}(U)\|_{H^{s-1}} \right) \|\nabla \theta\|_{H^{s-1}}\\
\no & \lesssim\|\theta\|_{H^{s-1}} \left( \|\nabla \theta\|_{H^{s-1}} + \|\nabla \psi\|_{H^s}^2 \right) \|\nabla \theta\|_{H^{s-1}} \\
\no & \quad+ \left( \|\nabla \theta\|_{H^{s-1}} + \|\nabla \theta\|_{H^{s-1}}^s \right) \|\theta\|_{H^{s-1}} \left( \|\nabla \theta\|_{H^{s-1}} + \|\nabla \psi\|_{H^s}^2 \right) \|\nabla \theta\|_{H^{s-1}}\\
\no & \lesssim \left( 1 + \|\theta\|_{H^s}^s + \|\nabla \psi\|_{H^s} \right) \|\theta\|_{H^s} \left( \|\nabla \theta\|_{H^{s-1}}^2 + \|\nabla \psi\|_{H^s}^2 \right) \,.
\end{align}
Finally, the term $B_{10}$ can be bounded by
\begin{equation}\label{theta-eq11}
\begin{aligned}
B_{10} & = - \left\langle \frac{2A}{1+\theta}\partial^m \left[  \nabla \cdot (\nabla d \odot \nabla d) \right], \nabla \partial^m \theta \right\rangle\\&
\quad -2A\sum_{0\neq m^\prime\leq m}C^{m^\prime}_m\langle \p^{m^\prime}(\frac{1}{1+\theta})\p^{m-m^\prime}\nabla\cdot(\nabla d\odot\nabla d),\nabla\p^m \theta  \rangle\\&
\lesssim \left\| \partial^m \nabla \cdot (\nabla d \odot \nabla d) \right\|_{L^2} \left\| \nabla \partial^m \theta \right\|_{L^2}\\&\quad
+ \sum_{\substack{0 \neq m' \leq m}} \left\langle| \partial^{m'} \left( \frac{1}{1+\theta} \right) ||\partial^{m - m'} \nabla \cdot (\nabla d \odot \nabla d)|, |\nabla \partial^m \theta| \right\rangle\\&
\lesssim \left\| \nabla d \odot \nabla d \right\|_{H^s} \left\| \nabla \theta \right\|_{H^{s-1}}
+ \left\| \nabla \left( \frac{1}{1+\theta} \right) \right\|_{H^{s-1}} \left\| \nabla d \odot \nabla d \right\|_{H^s} \left\| \nabla \theta \right\|_{H^{s-1}}\\&
\lesssim \left\| \nabla d \right\|_{H^s}^2 \left\| \nabla \theta \right\|_{H^{s-1}} + \left( \left\| \nabla \theta \right\|_{H^{s-1}} + \left\| \nabla \theta \right\|_{H^{s-1}}^s \right) \left\| \nabla d \right\|_{H^s}^2 \left\| \nabla \theta \right\|_{H^{s-1}} \\&
\lesssim \left( 1 + \left\| \theta \right\|_{H^s}^s \right) \left\| \theta \right\|_{H^s} \left\| \nabla d \right\|_{H^s}^2 \,.
\end{aligned}
\end{equation}
Substituting the estimates \eqref{theta-eq2}, \eqref{theta-eq3}, \eqref{theta-eq4}, \eqref{theta-eq5}, \eqref{theta-eq6}, \eqref{theta-eq7}, \eqref{theta-eq8}, \eqref{theta-eq9}, \eqref{theta-eq10} and \eqref{theta-eq11} into \eqref{theta-eq1} and summing over $|m| \leq s-1$, we obtain
\begin{align}\label{theta-eq-estimate-1}
\no &\frac{1}{2} \frac{\d}{\d t} \left( \|u + \nabla \theta\|_{H^{s-1}}^2 - \|u\|_{H^{s-1}}^2 - \|\nabla \theta\|_{H^{s-1}}^2 \right) + \|\nabla \theta\|^2_{H^{s-1}_\frac{P^\prime(1+\theta)}{1+\theta}}\\
\no & \quad - \sup_{t,x} (1 + \theta)^{- \frac{\gamma}{2}} \big[ \mu \| \nabla u \|_{H^s} \, \| \nabla \theta \|_{H^{s-1}_{\w (\theta)}} - (\mu + \xi ) \| \nabla \cdot u \|_{H^s} \, \| \nabla \theta \|_{H^{s-1}_{\w (\theta)}} \big] \\
& \quad - \sup_{t,x} (1 + \theta)^\frac{2 - \gamma}{2} \|\nabla \psi\|_{H^s} \|\nabla \theta\|_{H^{s-1}_{\w (\theta)}} \\
\no & \lesssim \left( 1 + \|\theta\|_{H^s}^2 + \|\nabla \psi\|_{H^s}^2 \right) \left( \|\theta\|_{H^s} + \|u\|_{H^s} + \|\nabla \psi\|_{H^s} \right) \\
\no & \quad\times \left( \|\nabla u\|_{H^s}^2 + \|\nabla d\|_{H^s}^2 + \|\nabla \psi\|_{H^s}^2 + \|\nabla \theta\|_{H^{s-1}_\frac{P^\prime(1+\theta)}{1+\theta}}^2 \right) \,.
\end{align}
Recalling the expression of $\omega(\theta)$ in \eqref{w-weight} together with the definitions of $\mathbf{E}_s(t)$ and $\mathbf{D}_s(t)$ in \eqref{close-eq5-1} and \eqref{close-eq6-1}, we deduce estimate \eqref{theta-eq-estimate} from \eqref{theta-eq-estimate-1}. This completes the proof of the lemma.
\end{proof}

\subsection{Estimates for the deformation gradient tensor $\nabla \psi$ in \eqref{CMEH-eqGd}}

In this subsection, we aim to establish an estimate for the $H^s$ norm of the deformation gradient tensor $\nabla \psi$. From the third equation of \eqref{CMEH-eqGd}, we have
\begin{equation}\label{gpsi-eq1}
\begin{aligned}
\partial_t \Delta \psi + \frac{1}{\mu} \Delta \psi + \frac{1}{\mu} \Delta w = -\Delta \left( u \cdot \nabla \psi \right) \,,
\end{aligned}
\end{equation}
where
\begin{equation*}
  \begin{aligned}
    w = \mu  ( u - \bar{u}) - ( \psi - \bar{\psi} ) \,, \ \bar{u} = \frac{1}{|\mathbb{T}^3|} \int_{\mathbb{T}^3} u \, \d x \,, \ \bar{\psi} = \frac{1}{|\mathbb{\mathbb{T}}^3|} \int_{\mathbb{T}^3} \psi \d x \,.
  \end{aligned}
\end{equation*}
Using the second equation of \eqref{CMEH-eqGd}, the unknown $w$ satisfies 
\begin{align}\label{gpsi-eq2}
-\Delta w = (\mu + \xi) \nabla (\nabla \cdot u) - \nabla P(1+\theta) + F, 
\end{align}
where
\begin{equation*}
  \begin{aligned}
    F = & -\theta \Delta \psi - 2 \nabla \psi \cdot \nabla \theta - \theta \nabla \left( \theta + \tilde{g}(U) \right) - 2A \nabla \cdot \left( \nabla d \odot \nabla d \right) \\
    & - (1+\theta) \left( \partial_t u + u \cdot \nabla u \right) - \nabla \left( -A |\nabla d|^2 + \tilde{g}(U) \right) \,.
  \end{aligned}
\end{equation*}
We note that the quantity $w=\mu(u-\bar{u})-(\psi-\bar{\psi})$ satisfies the Stokes system
\begin{align}\label{gpsi-eq3}
\begin{cases}
-\Delta w +\nabla q=F_{incom} \\
\qquad\quad\nabla\cdot w=-\nabla\cdot \psi
\end{cases}
\end{align}
 in the incompressible model (see (1.18) of \cite{JLL-JDE-2023}), where the source term $F_{incom}$ is controllable and $\nabla\cdot \psi = \mathrm{tr} U$ is actually {\bf nonlinear} due to the incompressibility. Unfortunately, the previous mechanism fails in the present work. As shown in \eqref{gpsi-eq2}, the unknown $w$ satisfies the elliptic equation with the source term 
 $$(\mu+\xi)\nabla(\nabla\cdot u )-\nabla P(1+\theta)+F \,,$$
 which contains the linear effect $(\mu+\xi)\nabla(\nabla\cdot u )-\nabla\theta$. This linear effect is uncontrollable if we treat $w$ using standard elliptic theory. Consequently, we should exploit the relation
 $$ - \Delta \psi \sim \p_t u + \nabla \theta - \mu \Delta u + ( u + \xi ) \nabla ( \nabla \cdot u ) + \cdots $$ 
derived from the second $u$-equation in \eqref{CMEH-eqGd} to control the quantity $ w = \mu ( u - \bar{u} ) - ( \psi - \bar{\psi} )$. More precisely, we have the following lemma.

\begin{lemma}\label{Lmm-G7}
  Let $s \ge 3$ be an integer. Then one has
\begin{equation}\label{gpsi-eq-estimate}
\begin{aligned}
& \frac{1}{2} \frac{\d}{\d t} \left( \| \nabla \psi\|_{H^s}^2 + \|u - \bar{u}\|_{H^s}^2 - \frac{2}{\mu} \sum_{|m| \leq s} \langle \partial^m (\psi - \bar{\psi}), \partial^m u \rangle \right) + \frac{1}{\mu} \|\nabla \psi\|_{H^s}^2 \\
& + ( \mu - \frac{c_p}{\mu} ) \|\nabla u\|_{H^s}^2 + (\mu + \xi) \|\nabla \cdot u\|_{H^s}^2 - \sum_{|m| \leq s} \langle \nabla \partial^m \psi, \nabla \partial^m u \rangle \\
& - \frac{\mu + \xi}{\mu} \sum_{|m| \leq s} \langle \nabla \cdot \partial^m \psi, \nabla \cdot \partial^m u \rangle - c_1 a \gamma ( \mu \| \nabla u \|_{H^s} + \| \nabla \psi \|_{H^s} ) \| \nabla \theta \|_{H^{s-1}_{ \w (\theta) }} \\
& \lesssim \left( 1 + \mathbf{E}_s^\frac{s}{2} (t) \right) \mathbf{E}_s^\frac{1}{2} (t) \mathbf{D}_s (t) \,,
\end{aligned}
\end{equation}
where the functionals $\mathbf{E}_s (t)$ and $\mathbf{D}_s (t)$ are defined  in \eqref{close-eq5-1} and \eqref{close-eq6-1}, respectively, the weighted function $\w (\theta)$ is given in \eqref{w-weight}, the constant $c_1$ is from Lemma \ref{Lmm-G6}, and $c_p > 0$ is the Poincar\'e constant.
\end{lemma}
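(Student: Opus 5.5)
The plan is to prove the estimate by combining two energy identities at each derivative level $|m|\le s$, and then summing over $m$.

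\textbf{First identity: the $\psi$-equation tested against $\Delta\partial^m\psi$.} Apply $\partial^m$ to \eqref{gpsi-eq1}, pair with $-\partial^m\psi$ (equivalently, pair \eqref{gpsi-eq1} with $\Delta\partial^m\psi$ after integrating by parts), and use $\Delta w=\mu\Delta u-\Delta\psi$ on $\mathbb{T}^3$. The Laplacian kills the averages, so the mean terms drop out. This produces
\[
\tfrac12\tfrac{\d}{\d t}\|\nabla\partial^m\psi\|_{L^2}^2+\tfrac1\mu\|\nabla\partial^m\psi\|_{L^2}^2+\tfrac1\mu\langle\Delta\partial^m w,\partial^m\psi\rangle .
\]
The convective term $\langle\Delta\partial^m(u\cdot\nabla\psi),\partial^m\psi\rangle$ is handled by commutator estimates as in Lemma~\ref{Lmm-G5}. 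After one integration by parts it is bounded by $\|u\|_{H^s}\|\nabla\psi\|_{H^s}^2+\|\nabla u\|_{H^s}\|\nabla\psi\|_{H^s}^2\lesssim\mathbf{E}_s^{1/2}\mathbf{D}_s$.

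Note that $\langle\Delta\partial^m w,\partial^m\psi\rangle$ contains $\mu\langle\Delta\partial^m u,\partial^m\psi\rangle$. This is the cross term that must be supplied by the momentum equation.

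\textbf{Second identity: the momentum equation tested against $\partial^m(u-\bar u)$ and $-\frac1\mu\partial^m(\psi-\bar\psi)$.} Rewrite the $u$-equation of \eqref{CMEH-eqGd} as in the relation preceding the lemma:
\[
\partial_t u-\mu\Delta u-(\mu+\xi)\nabla\nabla\cdot u+\Delta\psi+a\gamma\nabla\theta=\mathcal N,
\]
where $\mathcal N$ collects the quadratic terms. These are $\theta\partial_t u$ and $(1+\theta)u\cdot\nabla u$; the difference $\nabla P(1+\theta)-a\gamma\nabla\theta$, which is $O(\theta\nabla\theta)$; and the remaining terms of \eqref{u-eq0} that are $O(\theta\Delta\psi,\nabla\psi\nabla\theta,\nabla|\nabla d|^2,\nabla\tilde g)$. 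Test this rewritten equation as follows.

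Testing with $\partial^m(u-\bar u)$ gives:
\begin{itemize}
\item $\tfrac12\tfrac{\d}{\d t}\|u-\bar u\|^2$, since $\partial_t\bar u$ is orthogonal to the zero-mean test function;
\item the viscous dissipation $\mu\|\nabla\partial^m u\|^2+(\mu+\xi)\|\nabla\cdot\partial^m u\|^2$;
\item the cross term $-\langle\nabla\partial^m\psi,\nabla\partial^m u\rangle$;
\item the pressure term $a\gamma\langle\nabla\partial^m\theta,\partial^m u\rangle$.
\end{itemize}

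Testing with $-\frac1\mu\partial^m(\psi-\bar\psi)$ and using $\partial_t\psi=-u-u\cdot\nabla\psi$ gives:
\begin{itemize}
\item $-\tfrac1\mu\tfrac{\d}{\d t}\langle\partial^m(\psi-\bar\psi),\partial^m u\rangle$;
\item the term $-\tfrac1\mu\|\partial^m(u-\bar u)\|^2$, which is bounded through Poincar\'e by $c_p\mu^{-2}\cdot\mu\|\nabla\partial^m u\|^2$ and explains the $-\frac{c_p}{\mu}$ loss;
\item a term $-\langle\nabla\partial^m u,\nabla\partial^m\psi\rangle$ that cancels the cross term $\mu\langle\Delta\partial^m u,\partial^m\psi\rangle/\mu$ from the first identity;
\item the term $-\frac{\mu+\xi}{\mu}\langle\nabla\cdot\partial^m\psi,\nabla\cdot\partial^m u\rangle$, which is kept in the statement;
\item the term $+\frac1\mu\|\nabla\partial^m\psi\|^2$. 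This duplicates the damping from the first identity; it is reconciled by balancing the coefficient so that exactly the damping $\frac1\mu\|\nabla\psi\|^2$ remains.
\end{itemize}

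The pressure cross terms are bounded by Cauchy--Schwarz with the weight $\w(\theta)$. After one integration by parts,
\[
a\gamma\langle\nabla\partial^m\theta,\partial^m u\rangle\le a\gamma c_1\mu\|\nabla u\|_{H^s}\|\nabla\theta\|_{H^{s-1}_{\w}},\qquad
\tfrac{a\gamma}{\mu}\langle\nabla\partial^m\theta,\partial^m\psi\rangle\le a\gamma c_1\|\nabla\psi\|_{H^s}\|\nabla\theta\|_{H^{s-1}_{\w}} .
\]
These produce the $c_1a\gamma$ terms in \eqref{gpsi-eq-estimate}. At the top order $|m|=s$ the derivative is moved off $\theta$ onto $u$ or $\psi$, which keeps $\theta$ in $H^{s-1}$.

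\textbf{Main obstacle: the nonlinear remainder $\mathcal N$.} The delicate piece is $\theta\partial_t u$ paired with $\partial^m u$ and with $\partial^m\psi$, since $\partial_t u$ is not part of $\mathbf{D}_s$. The plan is to substitute $\partial_t u$ from \eqref{u-eq0}. This expresses it in terms of $\nabla u\in H^s$, $\nabla\psi$, $\nabla\theta$ and quadratic terms, each at one derivative less. One can then integrate by parts to place at most $s+1$ derivatives on $u$ and $\psi$, and at most $s$ on $\theta$. Combined with the Moser-type bounds $\|\nabla(1+\theta)^{-1}\|_{H^{s-1}}\lesssim\|\theta\|_{H^s}+\|\theta\|_{H^s}^s$ (Lemma~3.2 of \cite{JLT-M3AS-2020}), every term in $\mathcal N$ becomes bounded by $(1+\mathbf{E}_s^{s/2})\mathbf{E}_s^{1/2}\mathbf{D}_s$, with $\tilde g(U)=O(|\nabla\psi|^2)$ treated exactly as in Lemma~\ref{Lmm-G5}. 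Summing over $|m|\le s$ then yields \eqref{gpsi-eq-estimate}.
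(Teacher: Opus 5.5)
Your argument follows the paper's route. The paper substitutes the $(1+\theta)$-divided momentum equation \eqref{gpsi-eq7} for $\Delta\partial^m\psi$ in $C_2=\frac1\mu\langle\partial^m w,\Delta\partial^m\psi\rangle$; this is the same as testing the momentum equation against $\frac1\mu\partial^m w=\partial^m(u-\bar u)-\frac1\mu\partial^m(\psi-\bar\psi)$. Your substitution of $\partial_t u$ in $\theta\partial_t u$ reproduces that division by $1+\theta$.

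One bookkeeping correction: there is no duplicated damping to balance. The $w$-term in your first identity carries the sign $-\frac1\mu\langle\Delta\partial^m w,\partial^m\psi\rangle$, and the $-(\psi-\bar\psi)$ component of $w$ there exactly cancels the explicit $\frac1\mu\|\nabla\partial^m\psi\|_{L^2}^2$. The single $\frac1\mu\|\nabla\psi\|_{H^s}^2$ therefore comes entirely from testing $\Delta\psi$ against $-\frac1\mu\partial^m(\psi-\bar\psi)$.
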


\begin{proof}

Applying the derivative operator $\partial^m$ ($|m| \le s$) to equation \eqref{gpsi-eq1} and taking the $L^2$ inner product of the resulting equation with $\partial^m \psi$, we obtain
\begin{equation}\label{gpsi-eq4}
\begin{aligned}
\frac{1}{2} \frac{\d}{\d t} \|\nabla \partial^m \psi\|_{L^2}^2 + \frac{1}{\mu} \|\nabla \partial^m \psi\|_{L^2}^2
=  \underset{C_{1}}{\underbrace{\left\langle \Delta \partial^m (u \cdot \nabla \psi), \partial^m \psi \right\rangle}}+\underset{C_{2}}{\underbrace{\frac{1}{\mu} \left\langle \Delta \partial^m w, \partial^m \psi \right\rangle}} \,.
\end{aligned}
\end{equation}
The term $C_1$ can be decomposed as
\begin{equation*}
\begin{aligned}
C_1 & =\left\langle \Delta \partial^m (u \cdot \nabla \psi), \partial^m \psi \right\rangle = -\left\langle \nabla \partial^m (u \cdot \nabla \psi), \nabla\partial^m \psi \right\rangle\\&
=-\langle \p^m(u\cdot\nabla\nabla\psi),\nabla\p^m\psi\rangle-\langle\p^m(\nabla u \cdot\nabla \psi),\nabla\p^m\psi  \rangle\\&
=\underset{C_{11}}{\underbrace{-\left\langle u \cdot \nabla \nabla \partial^m \psi, \nabla \partial^m \psi \right\rangle}}
\underset{C_{12}}{\underbrace{- \sum_{\substack{m' \leq m \\ |m'| \geq 2}} C_m^{m'} \left\langle \partial^{m'} u \cdot \nabla \nabla \partial^{m - m'} \psi, \nabla \partial^m \psi \right\rangle}}\\&
\quad\underset{C_{13}}{\underbrace{- \sum_{\substack{m' \leq m \\ |m'| = 1}} C_m^{m'} \left\langle \partial^{m'} u \cdot \nabla \nabla \partial^{m - m'} \psi, \nabla \partial^m \psi \right\rangle}}
\underset{C_{14}}{\underbrace{- \left\langle \partial^m (\nabla u \cdot \nabla \psi), \nabla \partial^m \psi \right\rangle}}  \,.
\end{aligned}
\end{equation*}
The terms $C_{1i}$ ($1 \le i \le 4$) are estimated as follows:
\begin{equation*}
\begin{aligned}
C_{11} = \frac{1}{2} \langle \nabla \cdot u, | \nabla \p^m \psi |^2  \rangle \lesssim \|\nabla \cdot u \|_{L^\infty} \| \nabla \p^m \psi \|_{L^2}^2 \lesssim \| \nabla \psi \|^2_{H^s} \| \nabla u \|_{H^s} \,,
\end{aligned}
\end{equation*}
and
\begin{equation*}
\begin{aligned}
C_{12}&\lesssim\sum_{\substack{m' \leq m \\ |m'| \geq 2}} \left\langle |\partial^{m'} u| |\nabla \nabla \partial^{m - m'} \psi|, |\nabla \partial^m \psi| \right\rangle\\&
\lesssim \sum_{\substack{m' \leq m \\ |m'| \geq 2}} \|\partial^{m'} u\|_{L^4} \|\nabla \nabla \partial^{m - m'} \psi\|_{L^4} \|\nabla \partial^m \psi\|_{L^2} \lesssim \|\nabla \psi\|_{H^s}^2 \|\nabla u\|_{H^s} \,,
\end{aligned}
\end{equation*}
and
\begin{equation*}
\begin{aligned}
C_{13}&=\sum_{\substack{m' \leq m \\ |m'| = 1}} \left\langle |\partial^{m'} u| |\nabla \nabla \partial^{m - m'} \psi|, |\nabla \partial^m \psi| \right\rangle\\&
\lesssim \sum_{\substack{m' \leq m \\ |m'| = 1}}\|\partial^{m'} u\|_{L^\infty}\|\nabla \nabla \partial^{m - m'} \psi\|_{L^2}\|\nabla \partial^m \psi\|_{L^2} \lesssim\|\nabla \psi\|^2_{H^s}\|\nabla u\|_{H^s} \,,
\end{aligned}
\end{equation*}
and
\begin{equation*}
\begin{aligned}
C_{14} \lesssim \| \p^m ( \nabla u \cdot \nabla \psi ) \|_{L^2} \| \nabla \p^m \psi \|_{L^2} \lesssim \| \nabla u \cdot \nabla \psi \|_{H^s} \| \nabla \psi \|_{H^s} \lesssim \| \nabla \psi \|^2_{H^s} \| \nabla u \|_{H^s} \,.
\end{aligned}
\end{equation*}
Consequently,
\begin{equation}\label{gpsi-eq5}
\begin{aligned}
C_1=\langle \Delta\p^m(u\cdot\nabla\psi),\p^m\psi   \rangle\lesssim\|\nabla\psi\|^2_{H^s}\|\nabla u\|_{H^s} \,.
\end{aligned}
\end{equation}

We now estimate the term $C_2 = \frac{1}{\mu} \langle \Delta \p^m w, \p^m \psi \rangle$, which involves the unknown $w$ in \eqref{gpsi-eq4}. Integrating by parts over $x\in \mathbb{T}^3$, we obtain
\begin{equation}\label{gpsi-eq6}
\begin{aligned}
C_2=\frac{1}{\mu} \left\langle \Delta \partial^m w, \partial^m \psi \right\rangle=\frac{1}{\mu} \left\langle  \partial^m w, \Delta\partial^m \psi \right\rangle  \,.
\end{aligned}
\end{equation}
From the second $u$-equation in \eqref{CMEH-eqGd}, it follows that
\begin{align}\label{gpsi-eq7}
\no -\Delta \psi &= \partial_t u + u \cdot \nabla u + \frac{P'(1+\theta)}{1+\theta} \nabla \theta + \frac{1}{1+\theta} \nabla \left(-A |\nabla d|^2 + \tilde{g}(U)\right) \\
\no & \quad -\frac{\mu}{1+\theta} \Delta u - \frac{\mu+\xi}{1+\theta} \nabla (\nabla \cdot u) + \frac{2}{1+\theta} \nabla {\psi} \cdot \nabla \theta \\
& \quad + \frac{\theta}{1+\theta} \nabla \left(\theta + \tilde{g}(U)\right) + \frac{2A}{1+\theta} \nabla \cdot \left( \nabla d \odot \nabla d \right)
 \,.
\end{align}
 Then \eqref{gpsi-eq6} and \eqref{gpsi-eq7} yield
 \begin{equation}\label{gpsi-eq8}
\begin{aligned}
C_2 & = \underset{ C_{21} }{ \underbrace{ - \frac{1}{\mu} \langle \p^m w, \p_t \p^m u \rangle }} - \underset{ C_{22} }{ \underbrace{ \frac{1}{\mu} \langle \p^m w, \p^m ( u \cdot \nabla u ) \rangle}} \\
& \quad \underset{ C_{23} }{ \underbrace{ - \frac{1}{\mu} \left\langle \partial^m w, \partial^m \left[ \frac{ P' ( 1 + \theta ) }{ 1 + \theta } \nabla \theta \right] \right\rangle }} + \underset{ C_{24} }{ \underbrace{ \frac{1}{\mu} \left\langle \partial^m w, \partial^m \left( \frac{\mu}{ 1+\theta } \Delta u \right) \right\rangle }} \\
& \quad \underset{ C_{25} }{ \underbrace{ - \frac{1}{\mu} \left\langle \partial^m w, \partial^m \left[ \frac{1}{ 1+\theta } \nabla \left( - A | \nabla d |^2 + \tilde{g} (U) \right) \right] \right\rangle }} \\
& \quad + \underset{ C_{26} }{ \underbrace{ \frac{1}{\mu} \left\langle \partial^m w, \partial^m \left( \frac{ \mu + \xi }{ 1 + \theta } \nabla ( \nabla \cdot u ) \right) \right\rangle }} \ \underset{ C_{27} }{ \underbrace{ - \frac{1}{\mu} \left\langle \partial^m w, \partial^m \left[ \frac{2}{ 1 + \theta } \nabla \psi \cdot \nabla \theta \right] \right\rangle }} \\
& \quad \underset{ C_{28} }{ \underbrace{ - \frac{1}{\mu} \left\langle \partial^m w, \partial^m \left[ \frac{\theta}{1+\theta} \nabla \left( \theta + \tilde{g} (U) \right) \right] \right\rangle }} \ \underset{ C_{29} }{ \underbrace{ - \frac{1}{\mu} \left\langle \partial^m w, \partial^m \left[ \frac{2A}{ 1 + \theta } \nabla \cdot \left( \nabla d \odot \nabla d \right) \right] \right\rangle }} \,.
\end{aligned}
\end{equation}

We now handle the terms $C_{2i}$ ($1 \le i \le 9$) one by one. The term $C_{21}$ can be computed as
\begin{align*}
C_{21} &  = -\frac{1}{\mu} \left\langle \partial^m [\mu(u-\bar{u}) -(\psi-\bar{\psi}), \partial_t \partial^m u \right\rangle \\
& = -\left\langle \partial^m (u-\bar{u}), \partial_t \partial^m (u-\bar{u}) \right\rangle + \left\langle u-\bar{u}, \partial_t \bar{u} \right\rangle + \frac{1}{\mu} \left\langle \partial^m (\psi-\bar{\psi}), \partial_t \partial^m u \right\rangle \\
&= -\frac{1}{2} \frac{\d}{\d t} \left\| \partial^m (u-\bar{u}) \right\|_{L^2}^2 + \frac{1}{\mu} \frac{\d}{\d t} \left\langle \partial^m (\psi-\bar{\psi}), \partial^m u \right\rangle \\
&\quad -\frac{1}{\mu} \left\langle \partial^m \partial_t (\psi-\bar{\psi}), \partial^m u \right\rangle + \left\langle u-\bar{u}, \partial_t \bar{u} \right\rangle \,.
\end{align*}
Note the cancellation
\begin{equation*}
\begin{aligned}
\langle u-\bar{u},\p_t \bar{u} \rangle=\p_t\bar{u}\cdot \int_{\mathbb{T}^3}{(u-\bar{u})}\d x=0 \,.
\end{aligned}
\end{equation*}
Thus,
\begin{equation*}
\begin{aligned}
C_{21}&=-\frac{1}{2} \frac{\d}{\d t}\big( \left\| \partial^m (u-\bar{u}) \right\|_{L^2}^2 - \frac{2}{\mu}\langle \partial^m (\psi-\bar{\psi}), \partial^m u \rangle\big) -\frac{1}{\mu} \left\langle \partial^m \partial_t (\psi-\bar{\psi}), \partial^m u \right\rangle \,.
\end{aligned}
\end{equation*}
For the inner product $-\frac{1}{\mu} \left\langle \partial^m \partial_t (\psi-\bar{\psi}), \partial^m u \right\rangle$, we use the third $\psi$-equation of \eqref{CMEH-eqGd}: 
$$
\partial_t \left( \psi - \bar{\psi} \right) = - \left( u - \bar{u} \right) - u \cdot \nabla \psi + \bar{u \cdot \nabla \psi}$$
Then, together with the fact $\langle u-\bar{u},\bar{u}\rangle=0$, we obtain
\begin{align*}
&-\frac{1}{\mu} \left\langle \partial^m \partial_t \left( \psi - \bar{\psi} \right), \partial^m u \right\rangle \\
&= \frac{1}{\mu} \left\langle \partial^m (u - \bar{u}), \partial^m u \right\rangle + \frac{1}{\mu} \left\langle \partial^m \left( u \cdot \nabla \psi -\overline{u \cdot \nabla \psi} \right), \partial^m u \right\rangle \\
&= \frac{1}{\mu} \left\langle \partial^m (u - \bar{u}), \partial^m (u - \bar{u}) \right\rangle + \frac{1}{\mu} \left\langle u - \bar{u}, \bar{u} \right\rangle + \frac{1}{\mu} \left\langle \partial^m \left( u \cdot \nabla \psi - \overline{u \cdot \nabla \psi} \right), \partial^m u \right\rangle \\
&= \underbrace{\frac{1}{\mu} \left\| \partial^m (u - \bar{u}) \right\|^2_{L^2}}_{{C}_{211}} + \underbrace{\frac{1}{\mu} \left\langle \partial^m (u \cdot \nabla \psi), \partial^m u \right\rangle}_{{C}_{212}} \ \underbrace{ - \frac{1}{\mu}\overline{u \cdot \nabla \psi} \left\langle \mathbf{1}, u \right\rangle}_{{C}_{213}} \,.
\end{align*}
The term $C_{211}$ satisfies
\begin{equation*}
\begin{aligned}
C_{211}= \frac{1}{\mu}\| \partial^m ( u-\bar{u}) \|^2_{L^2}\leq \frac{c_p}{\mu}\|\nabla \partial^m u\|^2_{L^2} 
\end{aligned}
\end{equation*}
by the Poincar\'e inequality. For $C_{212}$,
\begin{equation*}
\begin{aligned}
C_{212}&= \frac{1}{\mu} \sum_{0 \neq m' \leq m} \mathrm{C}_m^{m'} \left\langle \partial^{m'} u \cdot \nabla \partial^{m - m'} \psi, \partial^m u \right\rangle + \frac{1}{\mu} \left\langle u \cdot \nabla \partial^m \psi, \partial^m u \right\rangle \\
&\lesssim \sum_{0 \neq m' \leq m} \|\partial^{m'} u\|_{L^4} \|\nabla \partial^{m - m'} \psi\|_{L^4} \|\partial^m u\|_{L^2} + \|u\|_{L^4} \|\nabla \partial^m \psi\|_{L^2} \|\partial^m u\|_{L^4} \\
&\lesssim \|u\|_{H^s} \|\nabla u\|_{H^s} \|\nabla \psi\|_{H^s} + \|u\|_{L^2}^{\frac{1}{4}} \|\nabla u\|_{L^2}^{\frac{3}{4}} \|\nabla \partial^m \psi\|_{L^2} \|\partial^m u\|_{L^2}^{\frac{1}{4}} \|\nabla\partial^m u\|_{L^2}^{\frac{3}{4}} \\
&\lesssim \|u\|_{H^s} \|\nabla u\|_{H^s} \|\nabla \psi\|_{H^s} \,,
\end{aligned}
\end{equation*}
For $C_{213}$, it is easy to see that
\begin{equation*}
\begin{aligned}
C_{213}&=-\frac{1}{\mu} \frac{1}{|\mathbb{T}|^3} \int_{\mathbb{T}^3} u \cdot \nabla \psi \, \d x \int_{\mathbb{T}^3} u \, \d x = -\frac{1}{\mu |\mathbb{T}|^3} \int_{\mathbb{T}^3} (u - \overline{u})\cdot \nabla \psi \, \d x \int_{\mathbb{T}^3} u \, \d x \\
& \lesssim \| u - \overline{u} \|_{L^2} \, \| \nabla \psi \|_{L^2} \, \| u \|_{L^2} \lesssim \| u \|_{L^2} \, \| \nabla u \|_{L^2} \, \| \nabla \psi \|_{L^2} \,,
\end{aligned}
\end{equation*}
where we used $ \int_{\mathbb{T}^3} \bar{u} \cdot \nabla \psi \d x=0$ and the Poincar\'e inequality $ \| u - \bar{u} \|_{L^2} \lesssim \| \nabla u \|_{L^2}$. As a result,
\begin{equation*}
\begin{aligned}
-\frac{1}{\mu}\langle \p^m\p_t(\psi-\bar{\psi}),\p^m u\rangle\leq C\|\nabla u\|^2_{H^s}+C\|u\|_{H^s}\|\nabla u\|_{H^s}\|\nabla \psi\|_{H^s} \,,
\end{aligned}
\end{equation*}
which implies
\begin{equation}\label{gpsi-eq9}
\begin{aligned}
C_{21} & \leq - \frac{1}{2} \frac{\d}{\d t} \left( \| \partial^m (u-\bar{u})\|_{L^2}^2 - \frac{2}{\mu} \langle \partial^m(\psi - \bar{\psi}), \partial^m u \rangle \right) \\
& \quad + \frac{c_p}{\mu}\|\nabla \partial^m u\|^2_{L^2} + C \| u \|_{H^s} \| \nabla u \|_{H^s} \| \nabla \psi \|_{H^s} 
\end{aligned}
\end{equation}
for some constant $C > 0 $. Moreover, the term $C_{22}$ admits the estimate
\begin{equation}\label{gpsi-eq10}
\begin{aligned}
C_{22}&= -\frac{1}{\mu} \langle \partial^m w, \partial^m (u\cdot \nabla u) \rangle \\&= -\langle \partial^m (u-\overline{u}), \partial^m (u\cdot \nabla u) \rangle + \frac{1}{\mu} \langle \partial^m (\psi - \overline{\psi}), \partial^m (u\cdot \nabla u) \rangle\\&
\lesssim \|\partial^m (u-\overline{u})\|_{L^2} \|\partial^m (u\cdot \nabla u)\|_{L^2} + \|\partial^m (\psi - \overline{\psi})\|_{L^2} \|\partial^m (u\cdot \nabla u)\|_{L^2}\\&
\lesssim (\|u-\overline{u}\|_{H^s} + \|\psi - \overline{\psi}\|_{H^s}) \|u\cdot \nabla u\|_{H^s}\\&
\lesssim (\|\nabla u\|_{H^s} + \|\nabla \psi\|_{H^s}) \|u\|_{H^s} \|\nabla u\|_{H^s} \,,
\end{aligned}
\end{equation}
where we used the Poincar\'e inequalities $\|u-\bar{u}\|_{L^2}\lesssim\|\nabla u\|_{L^2}$ and $\|\psi-\bar{\psi}\|_{L^2}\lesssim\|\nabla \psi\|_{L^2}$.

We now turn to the term $C_{23}$. It can be split as
\begin{align}\label{gpsi-eq11}
\no C_{23}&=-\frac{1}{\mu} \left\langle \partial^m w, \partial^m \left[ \frac{P'(1+\theta)}{1+\theta} \nabla \theta \right] \right\rangle\\
\no & = -\frac{1}{\mu} \left\langle \partial^m w, \frac{P^\prime(1+\theta)}{1+\theta} \nabla \partial^m \theta \right\rangle -\frac{1}{\mu} \sum_{\substack{0\neq m^\prime \leq m}} C_m^{m'} \left\langle \partial^m w, \partial^{m'} \left[\frac{P'(1+\theta)}{1+\theta}\right] \nabla \partial^{m-m'} \theta \right\rangle \\
\no &= \underset{C_{231}}{\underbrace{\frac{1}{\mu} \left\langle \nabla \cdot \partial^m w, \frac{P'(1+\theta)}{1+\theta} \partial^m \theta \right\rangle {\mathbf{1}}_{m\neq 0} }} \ \underset{C_{232}}{\underbrace{ -\frac{1}{\mu} \left\langle w, \frac{P'(1+\theta)}{1+\theta} \nabla \theta \right\rangle}} \quad  \\
\no & \quad \underset{C_{233}}{\underbrace{+\frac{1}{\mu} \left\langle \partial^m w, \nabla \left[\frac{P'(1+\theta)}{1+\theta}\right] \partial^m \theta \right\rangle }} \\
\no &\quad \underset{C_{234}}{\underbrace{-\frac{1}{\mu} \sum_{\substack{m' \leq m \\ |m'| = 1}} C_m^{m'} \left\langle \partial^m w, \partial^{m'} \left[\frac{P'(1+\theta)}{1+\theta}\right] \nabla \partial^{m-m'} \theta \right\rangle}} \\
& \quad \underset{C_{235}}{\underbrace{ -\frac{1}{\mu} \sum_{\substack{m' \leq m \\ |m'| \geq 2}} C_m^{m'} \left\langle \partial^m w, \partial^{m'} \left[\frac{P'(1+\theta)}{1+\theta}\right] \nabla \partial^{m-m'} \theta \right\rangle}} \,.
\end{align}
The terms $C_{23i}$ ($1 \le i \le 5$) are estimated term by term. For $C_{231}$,
\begin{equation}\label{gpsi-eq12}
\begin{aligned}
C_{231} & \lesssim \left\| \nabla \cdot \partial^m w \right\|_{L^2} \left\| \partial^m \theta \right\|_{L^2_{\w (\theta)}}\mathbf{1}_{m\neq 0} \left\| \frac{P'(1+\theta)}{1+\theta} \right\|_{L^\infty} \\
& \lesssim a \gamma \left( \mu \left\| \nabla u \right\|_{H^s} + \left\| \nabla \psi \right\|_{H^s} \right) \left\| \nabla \theta \right\|_{H^{s-1}_{\w (\theta)}} \,.
\end{aligned}
\end{equation}
Similarly,
\begin{equation}\label{gpsi-eq13}
\begin{aligned}
C_{232}&\lesssim \| w \|_{L^2} \| \nabla \theta \|_{L^2_{\w (\theta)}} \left\| \frac{P'(1+\theta)}{1+\theta} \right\|_{L^\infty} \lesssim a \gamma \left( \mu \| u - \overline{u} \|_{L^2} + \| \psi - \overline{\psi} \|_{L^2} \right) \| \nabla \theta \|_{L^2_{\w (\theta)}} \\&
\lesssim a \gamma \left( \mu \| \nabla u \|_{L^2} + \| \nabla \psi \|_{L^2} \right) \| \nabla \theta \|_{L^2_{\w (\theta)}} \,.
\end{aligned}
\end{equation}
The remaining terms are estimated as
\begin{equation}\label{gpsi-eq14}
\begin{aligned}
C_{233}&\lesssim\mathbf{1}_{m \neq 0} \|\partial^m w\|_{L^2} \left\| \left( \frac{P'(1+\theta)}{1+\theta} \right)' \right\|_{L^\infty}\|\nabla \theta\|_{L^\infty}\|\partial^m \theta\|_{L^2} \\
&\lesssim \left( \|\nabla u\|_{H^s} + \|\nabla \psi\|_{H^s} \right) \|\theta\|_{H^s} \|\nabla \theta\|_{H^{s-1}}
 \,,
\end{aligned}
\end{equation}
and
\begin{equation}\label{gpsi-eq15}
\begin{aligned}
C_{234}&\lesssim \sum_{\substack{m' \leq m \\ |m'|=1}} \|\partial^m w\|_{L^2} \left\| \partial^{m'} \left[ \frac{P'(1+\theta)}{1+\theta} \right] \right\|_{L^\infty} \|\nabla \partial^{m-m'} \theta\|_{L^2} \\
&\lesssim \left( \|\nabla u\|_{H^s} + \|\nabla \psi\|_{H^s} \right) \|\theta\|_{H^s} \|\nabla \theta\|_{H^{s-1}}
 \,,
\end{aligned}
\end{equation}
and
\begin{equation}\label{gpsi-eq16}
\begin{aligned}
C_{235}&\lesssim \sum_{\substack{m' \leq m \\ |m'|\geq 2}} \|\partial^m w\|_{L^4} \left\| \partial^{m'} \left[ \frac{P'(1+\theta)}{1+\theta} \right] \right\|_{L^2} \|\nabla \partial^{m-m'} \theta\|_{L^4} \\
&\lesssim \left( \|\nabla u\|_{H^s} + \|\nabla \psi\|_{H^s} \right) \left\| \nabla \left[ \frac{P'(1+\theta)}{1+\theta} \right] \right\|_{H^{s-1}} \|\nabla \theta\|_{H^{s-1}} \\
&\lesssim \left( \|\theta\|_{H^s} + \|\theta\|_{H^s}^s \right) \left( \|\nabla u\|_{H^s} + \|\nabla \psi\|_{H^s} \right) \|\nabla \theta\|_{H^{s-1}} \,.
\end{aligned}
\end{equation}
Substituting the estimates \eqref{gpsi-eq12}, \eqref{gpsi-eq13}, \eqref{gpsi-eq14}, \eqref{gpsi-eq15} and \eqref{gpsi-eq16} into \eqref{gpsi-eq11} yields
\begin{equation}\label{gpsi-eq17}
\begin{aligned}
C_{23} &\lesssim a \gamma \left( \mu \left\| \nabla u \right\|_{H^s} + \left\| \nabla \psi \right\|_{H^s} \right) \left\| \nabla \theta \right\|_{H^{s-1}_{\w (\theta)}} \\
&\quad + \left( \|\theta\|_{H^s} + \|\theta\|_{H^s}^s \right) \left( \|\nabla u\|_{H^s} + \|\nabla \psi\|_{H^s} \right) \|\nabla \theta\|_{H^{s-1}} \,.
\end{aligned}
\end{equation}

The quantity $C_{24}$ can be decomposed as
\begin{align*}
C_{24}&=\frac{1}{\mu} \left\langle \partial^m w, \partial^m \left( \frac{\mu}{1+\theta} \Delta u \right) \right\rangle \\&
= \left\langle \partial^m w, \frac{1}{1+\theta} \Delta \partial^m u \right\rangle + \sum_{\substack{0 \neq m' \leq m}} C_m^{m'} \left\langle \partial^m w, \partial^{m'} \left( \frac{1}{1+\theta} \right) \Delta \partial^{m-m'} u \right\rangle \\&
= \underset{C_{241}}{\underbrace{-\left\langle \nabla \partial^m w, \frac{1}{1+\theta} \nabla \partial^m u \right\rangle }} - \underset{C_{242}}{\underbrace{\left\langle \partial^m w, \nabla \left( \frac{1}{1+\theta} \right) \cdot \nabla \partial^m u \right\rangle }} \\&
\quad+ \underset{C_{243}}{\underbrace{\sum_{\substack{m' \leq m \\ |m'|=1}} C_m^{m'} \left\langle \partial^m w, \partial^{m'} \left( \frac{1}{1+\theta} \right) \Delta \partial^{m-m'} u \right\rangle}} \\
& \quad+ \underset{C_{244}}{\underbrace{\sum_{\substack{m' \leq m \\ |m'| \geq 2}} C_m^{m'} \left\langle \partial^m w, \partial^{m'} \left( \frac{1}{1+\theta} \right) \Delta \partial^{m-m'} u \right\rangle}} \,.
\end{align*}
By H\"older's inequality and Sobolev embedding, we obtain
\begin{equation*}
\begin{aligned}
C_{241}&=-\langle\nabla\p^m w,\nabla \p^m u\rangle+\langle\nabla\p^m w,\frac{\theta}{1+\theta}\nabla \p^m u\rangle
\\&\lesssim \left\| \frac{1}{1+\theta} \right\|_{L^\infty} \left\| \nabla \partial^m w \right\|_{L^2} \left\| \nabla \partial^m u \right\|_{L^2} - \mu \left\| \nabla \partial^m u \right\|_{L^2}^2 + \left\langle \nabla\partial^m \psi, \nabla \partial^m u \right\rangle \\&
\lesssim C\|\theta\|_{H^s}\left( \left\| \nabla u \right\|_{H^s} + \left\| \nabla \psi \right\|_{H^s} \right) \left\| \nabla u \right\|_{H^s} - \mu \left\| \nabla \partial^m u \right\|_{L^2}^2 + \left\langle \nabla\partial^m \psi, \nabla \partial^m u \right\rangle \,,
\end{aligned}
\end{equation*}
and
\begin{equation*}
\begin{aligned}
C_{242}&\lesssim \left\| \partial^m w \right\|_{L^4} \left\| \nabla \left( \frac{1}{1+\theta} \right) \right\|_{L^4} \left\| \nabla \partial^m u \right\|_{L^2} \\&
\lesssim \left( \left\| \nabla \psi \right\|_{H^s} + \left\| \nabla u \right\|_{H^s} \right) \left\| \theta \right\|_{H^s} \left\| \nabla u \right\|_{H^s} \,,
\end{aligned}
\end{equation*}
and
\begin{equation*}
\begin{aligned}
C_{243}&\lesssim\sum_{\substack{m' \leq m \\ |m'|=1}} \left\| \partial^m w \right\|_{L^2} \left\| \partial^{m'} \left( \frac{1}{1+\theta} \right) \right\|_{L^\infty} \left\| \Delta \partial^{m-m'} u \right\|_{L^2} \\&
\lesssim \left( \left\| \nabla \psi \right\|_{H^s} + \left\| \nabla u \right\|_{H^s} \right) \left\| \theta \right\|_{H^s} \left\| \nabla u \right\|_{H^s} \,,
\end{aligned}
\end{equation*}
and
\begin{equation*}
\begin{aligned}
C_{244}&\lesssim \sum_{\substack{m' \leq m \\ |m'| \geq 2}} \left\| \partial^m w \right\|_{L^4} \left\| \partial^{m'} \left( \frac{1}{1+\theta} \right) \right\|_{L^2} \left\| \Delta \partial^{m-m'} u \right\|_{L^4} \\&
\lesssim \left\| w \right\|_{H^{s+1}} \left\| \nabla \left( \frac{1}{1+\theta} \right) \right\|_{H^{s-1}} \left\| \nabla u \right\|_{H^s} \\&
\lesssim \left( \left\| \nabla \psi \right\|_{H^s} + \left\| \nabla u \right\|_{H^s} \right) \left( \left\| \theta \right\|_{H^s} + \left\| \theta \right\|_{H^s}^s \right) \left\| \nabla u \right\|_{H^s} \,.
\end{aligned}
\end{equation*}
Consequently,
\begin{equation}\label{gpsi-eq18}
\begin{aligned}
C_{24} \leq -\mu \| \nabla \partial^m u \|_{L^2}^2 + \langle \nabla \partial^m \psi, \nabla \partial^m u \rangle + ( \|\theta\|_{H^s} + \|\theta\|_{H^s}^s ) ( \|\nabla \psi\|_{H^s} + \|\nabla u\|_{H^s} ) \| \nabla u \|_{H^s} \,.
\end{aligned}
\end{equation}

For the term $C_{25}$, we split it as
\begin{align*}
C_{25} & = \frac{1}{\mu} \left\langle \nabla \cdot \partial^m w, \partial^m \left[ \frac{1}{1+\theta} (-A | \nabla d |^2 + \tilde{g}(U)) \right] \right\rangle \\
&\quad + \frac{1}{\mu} \left\langle \partial^m w, \partial^m \left[ \nabla \left(\frac{1}{1+\theta}\right) (-A | \nabla d |^2 + \tilde{g}(U)) \right] \right\rangle \\
&= \underset{C_{251}}{\underbrace{\frac{1}{\mu} \left\langle \nabla \cdot \partial^m w, \partial^m \left[ \frac{1}{1+\theta} (-A | \nabla d |^2 + \tilde{g}(U)) \right] \right\rangle}} \\
&\quad +\underset{C_{252}}{\underbrace{ \frac{1}{\mu} \sum_{0 \neq m' \leq m} C_m^{m'} \left\langle \partial^m w, \nabla \partial^{m-m'} \left(\frac{1}{1+\theta}\right) \partial^{m'} (-A | \nabla d |^2 + \tilde{g}(U)) \right\rangle}}{} \\
&\quad \underset{C_{253}}{\underbrace{- \frac{1}{\mu} \left\langle \partial^m w, \partial^m \left(\frac{1}{1+\theta}\right) \nabla (-A | \nabla d |^2 + \tilde{g}(U)) \right\rangle}} \,.
\end{align*}
We then obtain 
\begin{equation*}
\begin{aligned}
C_{251}&\lesssim\| \nabla w \|_{H^s} \left\| \frac{1}{1+\theta} (-A | \nabla d |^2 + \tilde{g}(U)) \right\|_{H^s} \\&
\lesssim ( \| \nabla u \|_{H^s} + \| \nabla \psi \|_{H^s} ) ( 1 + \left\| \nabla \left(\frac{1}{1+\theta}\right) \right\|_{H^{s-1}} ) ( \| |\nabla d|^2 \|_{H^s} + \| \tilde{g}(U) \|_{H^s} ) \\&
\lesssim( 1 + \| \theta \|_{H^s}^s ) ( \| \nabla u \|_{H^s} + \| \nabla \psi \|_{H^s} ) ( \| \nabla d \|_{H^s}^2 + \| \nabla \psi \|_{H^s}^2 ) \\&
\lesssim ( 1 + \| \theta \|_{H^s}^s ) ( \| \nabla d \|_{H^s} + \| \nabla \psi \|_{H^s} ) ( \| \nabla u \|_{H^s}^2 + \| \nabla d \|_{H^s}^2 + \| \nabla \psi \|_{H^s}^2 )
\,,
\end{aligned}
\end{equation*}
and
\begin{equation*}
\begin{aligned}
C_{252}&\lesssim\sum_{0\neq m' \leq m} \| \partial^m w \|_{L^4} \left\| \nabla \partial^{m-m'} \left(\frac{1}{1+\theta}\right) \right\|_{L^2} \| \partial^{m'} (-A | \nabla d |^2 + \tilde{g}(U)) \|_{L^4} \\
&\quad + \| \partial^m w \|_{L^4} \left\| \nabla \left(\frac{1}{1+\theta}\right) \right\|_{L^4} \| \partial^{m'} (-A | \nabla d |^2 + \tilde{g}(U)) \|_{L^2}\\&
\lesssim \| w \|_{H^{s+1}} \left\| \nabla \left( \frac{1}{1+\theta} \right) \right\|_{H^{s-1}} \left( \left\| |\nabla d|^2 \right\|_{H^s} + \left\| \tilde{g}(U) \right\|_{H^s} \right) \\&
\lesssim \left( \| \theta \|_{H^s} + \| \theta \|_{H^s}^s \right) \left( \| \nabla u \|_{H^s} + \| \nabla \psi \|_{H^s} \right) \left( \| \nabla d \|_{H^s}^2 + \| \nabla \psi \|_{H^s}^2 \right) \\&
\lesssim (1+\| \theta \|_{H^s}^s) \left( \| \nabla d \|_{H^s} + \| \nabla \psi \|_{H^s} \right) \left( \| \nabla u \|_{H^s}^2 + \| \nabla d \|_{H^s}^2 + \| \nabla \psi \|_{H^s}^2 \right) \,,
\end{aligned}
\end{equation*}
and
\begin{equation*}
\begin{aligned}
C_{253}&\lesssim \left\| \partial^m w \right\|_{L^4} \left\| \partial^m \left( \frac{1}{1+\theta} \right) \right\|_{L^2} \left\| \nabla \left( -A |\nabla d|^2 + \tilde{g}(U) \right) \right\|_{L^4} \\&
\lesssim \| w \|_{H^{s+1}} \left\| \frac{1}{1+\theta} \right\|_{H^s} \left( \left\| \nabla d^2 \right\|_{H^2} + \left\| \tilde{g}(U) \right\|_{H^2} \right) \\&
\lesssim (1+\| \theta \|_{H^s}^s) \left( \| \nabla u \|_{H^s} + \| \nabla \psi \|_{H^s} \right) \left( \| \nabla d \|_{H^s}^2 + \| \nabla \psi \|_{H^s}^2 \right) \\&
\lesssim (1+\| \theta \|_{H^s}^s) \left( \| \nabla d \|_{H^s} + \| \nabla \psi \|_{H^s} \right) \left( \| \nabla u \|_{H^s}^2 + \| \nabla d \|_{H^s}^2 + \| \nabla \psi \|_{H^s}^2 \right) \,.
\end{aligned}
\end{equation*}
Hence,
\begin{equation}\label{gpsi-eq19}
\begin{aligned}
C_{25}&\lesssim (1+\| \theta \|_{H^s}^s) \left( \| \nabla d \|_{H^s} + \| \nabla \psi \|_{H^s} \right) \left( \| \nabla u \|_{H^s}^2 + \| \nabla d \|_{H^s}^2 + \| \nabla \psi \|_{H^s}^2 \right) \,.
\end{aligned}
\end{equation}

By arguments analogous to those in \eqref{gpsi-eq18}, we obtain
\begin{equation}\label{gpsi-eq20}
\begin{aligned}
C_{26}&=\frac{1}{\mu} \left\langle \partial^m w, \partial^m \left( \frac{\mu+\xi}{1+\theta} \nabla (\nabla \cdot u) \right) \right\rangle \\&
\leq -\left( \mu+\xi \right) \left\| \nabla \cdot \partial^m u \right\|_{L^2}^2 + \frac{\mu+\xi}{\mu} \left\langle \nabla \cdot \partial^m \psi, \nabla \cdot \partial^m u \right\rangle  \\&
+  C\left( \| \theta \|_{H^s} + \| \theta \|_{H^s}^s \right) \left( \| \nabla \psi \|_{H^s} + \| \nabla u \|_{H^s} \right) \| \nabla u \|_{H^s}  
\end{aligned}
\end{equation}
for some constant $C > 0$.

We now split $C_{27}$ as
\begin{align*}
C_{27}& = -\frac{1}{\mu} \left\langle \partial^m w, \partial^m \left[ \frac{2}{1+\theta} \nabla \psi \cdot \nabla \theta \right] \right\rangle \\&
= \underset{C_{271}}{\underbrace{\frac{1}{\mu} \left\langle \nabla \cdot \partial^m w, \frac{2}{1+\theta} \nabla \psi \, \partial^m \theta \right\rangle}} + \underset{C_{272}}{\underbrace{\frac{1}{\mu} \left\langle \partial^m w, \nabla \cdot \left( \frac{2}{1+\theta} \nabla \psi \right) \partial^m \theta \right\rangle}} \\&
\quad\underset{C_{273}}{\underbrace{- \frac{1}{\mu} \sum_{\substack{0 \neq m' \leq m}} C_m^{m'} \left\langle \partial^m w, \partial^{m'} \left( \frac{2}{1+\theta} \nabla \psi \right) \cdot \nabla \partial^{m-m'} \theta \right\rangle}} \,.
\end{align*}
These terms are bounded as follows:
\begin{equation*}
\begin{aligned}
C_{271}&\lesssim \left\| \nabla \cdot \partial^m w \right\|_{L^2} \left\| \frac{1}{1+\theta} \nabla \psi \right\|_{L^\infty} \left\| \partial^m \theta \right\|_{L^2} \\
&\lesssim (\|\nabla u\|_{H^s} + \|\nabla \psi\|_{H^s}) \|\nabla \psi\|_{H^s} \|\theta\|_{H^s} \,,
\end{aligned}
\end{equation*}
and
\begin{equation*}
\begin{aligned}
C_{272}&\lesssim \left\| \partial^m w \right\|_{L^4} \left\| \nabla \cdot \left(\frac{1}{1+\theta} \nabla \psi\right) \right\|_{L^4} \|\partial^m \theta\|_{L^2} \\
&\lesssim (\|\nabla u\|_{H^s} + \|\nabla \psi\|_{H^s}) (\|\theta\|_{H^s} + 1) \|\nabla \psi\|_{H^s} \|\theta\|_{H^s} \,,
\end{aligned}
\end{equation*}
and
\begin{equation*}
\begin{aligned}
C_{273}&\lesssim \sum_{\substack{0\neq m' \leq m }} \langle |\partial^m w|, |\partial^{m'} \left(\frac{1}{1+\theta} \nabla \psi\right)| |\nabla \partial^{m-m'} \theta| \rangle \\
&\lesssim \sum_{\substack{m' \leq m \\ |m'| =1}} \|\partial^m w\|_{L^4} \left\| \partial^{m'} \left(\frac{1}{1+\theta} \nabla \psi\right) \right\|_{L^4} \|\nabla \partial^{m-m'} \theta\|_{L^2} \\
&\quad + \sum_{\substack{m' \leq m \\ |m'| \geq 2}} \|\partial^m w\|_{L^4} \left\| \partial^{m'} \left(\frac{1}{1+\theta} \nabla \psi\right) \right\|_{L^2} \|\nabla \partial^{m-m'} \theta\|_{L^4} \\
&\lesssim \|w\|_{H^{s+1}} \left\| \frac{1}{1+\theta} \nabla \psi \right\|_{H^s} \|\nabla \theta\|_{H^{s-1}} \\
&\lesssim (\|\nabla u\|_{H^s} + \|\nabla \psi\|_{H^s}) (1 + \|\theta\|_{H^s}^2) \|\nabla \psi\|_{H^s} \|\theta\|_{H^s} \\
&\lesssim (1 + \|\theta\|_{H^s}^2) \|\theta\|_{H^s} (\|\nabla u\|_{H^s}^2 + \|\nabla \psi\|_{H^s}^2) \,.
\end{aligned}
\end{equation*}
Therefore,
\begin{equation}\label{gpsi-eq21}
\begin{aligned}
C_{27}&\lesssim \left( 1 + \|\theta\|_{H^s}^2 \right) \|\theta\|_{H^s} (\|\nabla u\|_{H^s}^2 + \|\nabla \psi\|_{H^s}^2) \,.
\end{aligned}
\end{equation}

By similar reasoning, the terms $C_{28}$ and $C_{29}$ can be controlled as
\begin{equation}\label{gpsi-eq22}
\begin{aligned}
C_{28}&= -\frac{1}{\mu} \left\langle \partial^m w, \partial^m \left[\frac{\theta}{1+\theta} \nabla (\theta + \tilde{g}(U))\right] \right\rangle \\
&\lesssim (1 + \|\theta\|_{H^s}^s) \|\theta\|_{H^s} (\|\nabla u\|_{H^s} + \|\nabla \psi\|_{H^s}) (\|\nabla \theta\|_{H^{s-1}} + \|\nabla \psi\|_{H^s}^2) \,,
\end{aligned}
\end{equation}
and 
\begin{equation}\label{gpsi-eq23}
\begin{aligned}
C_{29}&=-\frac{1}{\mu} \left\langle \partial^m w, \partial^m \left[\frac{2A}{1+\theta} \nabla \cdot (\nabla d \odot \nabla d)\right] \right\rangle \\
&\lesssim (1 + \|\theta\|_{H^s}^s) \|\nabla d\|_{H^s}^2 (\|\nabla u\|_{H^s} + \|\nabla \psi\|_{H^s}) \,.
\end{aligned}
\end{equation}

Substituting the estimates \eqref{gpsi-eq9}, \eqref{gpsi-eq10}, \eqref{gpsi-eq17}, \eqref{gpsi-eq18}, \eqref{gpsi-eq19}, \eqref{gpsi-eq20}, \eqref{gpsi-eq21}, \eqref{gpsi-eq22} and \eqref{gpsi-eq23} into \eqref{gpsi-eq8}, we obtain
\begin{equation}\label{gpsi-eq24}
\begin{aligned}
C_2 = &\frac{1}{\mu} \langle \Delta \partial^m w, \partial^m \psi \rangle \leq -\frac{1}{2} \frac{\d}{\d t} \left( \|\partial^m(u-\overline{u})\|_{L^2}^2 - \frac{2}{\mu} \langle \partial^m(\psi - \overline{\psi}), \partial^m u \rangle \right) \\
& + ( \frac{c_p}{\mu} - \mu ) \|\nabla \partial^m u\|_{L^2}^2 - (\mu+\xi) \|\nabla\cdot \partial^m u\|_{L^2}^2 + \langle \nabla \partial^m \psi, \nabla \partial^m u \rangle \\
& + \frac{\mu+\xi}{\mu} \langle \nabla \cdot\partial^m \psi, \nabla \cdot\partial^m u \rangle + c_0 a \gamma ( \mu \| \nabla u \|_{H^s} + \| \nabla \psi \|_{H^s} ) \| \nabla \theta \|_{H^{s-1}_{\w (\theta)}}\\
& + C (1 + \|\theta\|_{H^s}^s) (\|u\|_{H^s} + \|\theta\|_{H^s} + \|\nabla d\|_{H^s} + \|\nabla \psi\|_{H^s}) \\
&\quad \times (\|\nabla u\|_{H^s}^2 + \|\nabla d\|_{H^s}^2 + \|\nabla \theta\|_{H^{s-1}}^2 + \|\nabla \psi\|_{H^s}^2) \,.
\end{aligned}
\end{equation}
Inserting the estimates \eqref{gpsi-eq5} and \eqref{gpsi-eq24} into \eqref{gpsi-eq4} and summing over $|m|\le s$, we deduce
\begin{align}\label{gpsi-eq-estimate-1}
\no &\frac{1}{2} \frac{\d}{\d t} \left( \|\nabla \psi\|_{H^s}^2 + \|u - \bar{u}\|_{H^s}^2 - \frac{2}{\mu} \sum_{|m| \leq s} \langle \partial^m (\psi - \bar{\psi}), \partial^m u \rangle \right) + \frac{1}{ \mu} \|\nabla \psi\|_{H^s}^2 \\
\no & \quad + ( \mu - \frac{c_p}{\mu} ) \|\nabla u\|_{H^s}^2 + (\mu + \xi) \|\nabla \cdot u\|_{H^s}^2 - \frac{\mu + \xi}{\mu} \sum_{|m| \leq s} \langle \nabla \cdot \partial^m \psi, \nabla \cdot \partial^m u \rangle \\
& \quad- \sum_{|m| \leq s} \langle \nabla \partial^m \psi, \nabla \partial^m u \rangle - c_0 a \gamma ( \mu \| \nabla u \|_{H^s} + \| \nabla \psi \|_{H^s} ) \| \nabla \theta \|_{H^{s-1}_{\w (\theta)}} \\
\no & \lesssim \left( 1 + \|\theta\|_{H^s}^s \right) \left( \|u\|_{H^s} + \|\theta\|_{H^s} + \|\nabla d\|_{H^s} + \|\nabla \psi\|_{H^s} \right) \\
\no & \quad\times \left( \|\nabla u\|_{H^s}^2 + \|\nabla d\|_{H^s}^2 + \|\nabla \theta\|_{H^{s-1}}^2 + \|\nabla \psi\|_{H^s}^2 \right) \,.
\end{align}
Recalling the definitions of $\mathbf{E}_s(t)$ and $\mathbf{D}_s(t)$ in \eqref{close-eq5-1} and \eqref{close-eq6-1}, we obtain estimate \eqref{gpsi-eq-estimate} from \eqref{gpsi-eq-estimate-1}. This completes the proof of the lemma.
\end{proof}

\subsection{Closing the energy estimates: Proof of Proposition \ref{Prop-Global}}

We first add equation \eqref{d13} to \eqref{nd4}. This yields
\begin{equation}\label{close-eq1}
\begin{aligned}
&\frac{1}{2} \frac{\d}{\d t} \left( |\bar{d}|^2 + \|d - \bar{d}\|_{H^s}^2 + \|\nabla d\|_{H^s}^2 \right) + 2A \lambda \|\nabla d\|_{H^s}^2 + 2A \lambda \|\Delta d\|_{H^s}^2 \\
& \lesssim ( 1 + \mathbf{E}_s^\frac{1}{2} (t) ) \mathbf{E}_s^\frac{1}{2} (t) \mathbf{D}_s (t) \,.
\end{aligned}
\end{equation}
Next, we add \eqref{nd16} to \eqref{u-eq-estimate}. It then follows that
\begin{equation}\label{close-eq2}
\begin{aligned}
&\frac{1}{2} \frac{\d}{\d t} \left( \| \theta\|_{H_{\frac{P^\prime(1+\theta)}{1+\theta}}^s}^2 + \|u\|_{H_{1+\theta}^s}^2 \right) + \mu \|\nabla u\|_{H^s}^2 + (\mu + \xi) \|\nabla \cdot u\|_{H^s}^2 - \sum_{|m| \leq s} \langle \nabla \partial^m \psi, \nabla \partial^m u \rangle \\&
\lesssim ( 1 + \mathbf{E}_s^\frac{s}{2} (t) ) \mathbf{E}_s^\frac{1}{2} (t) \mathbf{D}_s (t) \,.
\end{aligned}
\end{equation}
Third, we multiply \eqref{theta-eq-estimate} by an arbitrary small $\eta\in (0,1)$ and add the result to \eqref{close-eq2}. This gives
\begin{equation}\label{close-eq3}
\begin{aligned}
&\frac{1}{2} \frac{\d}{\d t} \left( \|\theta\|_{H_{\frac{P^\prime(1+\theta)}{1+\theta}}^s}^2+ \|u\|_{H_{1+\theta}^s}^2 + \eta \|u + \nabla \theta\|_{H^{s-1}}^2 - \eta \|u\|_{H^{s-1}}^2 - \eta \|\nabla \theta\|_{H^{s-1}}^2 \right) \\
& \quad+ \mu \|\nabla u\|_{H^s}^2 + (\mu + \xi) \|\nabla \cdot u\|_{H^s}^2 + a \gamma \eta \|\nabla \theta\|_{H_{\w (\theta)}^{s-1}}^2 - \sum_{|m| \leq s} \langle \nabla \partial^m \psi, \nabla \partial^m u \rangle \\
& \quad- c_0 \eta \left( \mu \|\nabla u\|_{H^s} + (\mu + \xi) \| \nabla \cdot u \|_{H^s} + \|\nabla \psi\|_{H^s} \right) \| \nabla \theta \|_{ H^{ s-1 }_{ \w (\theta) }} \\
& \lesssim ( 1 + \mathbf{E}_s^\frac{s}{2} (t) ) \mathbf{E}_s^\frac{1}{2} (t) \mathbf{D}_s (t)  \,.
\end{aligned}
\end{equation}
Fourth, for any $\delta \in (0,1)$ and $\epsilon > 0$, we multiply \eqref{gpsi-eq-estimate} by $\delta$ and add the resulting relation together with $\epsilon$ times \eqref{close-eq3} to \eqref{close-eq1}. This yields
\begin{align}\label{close-eq4}
\no & \frac{1}{2} \frac{\d}{\d t} ( \epsilon a \gamma \|\theta\|_{H_{ \w (\theta) }^s}^2 + \epsilon \|u\|_{H_{1+\theta}^s}^2 + \epsilon \eta \|u + \nabla \theta\|_{H^{s-1}}^2 - \epsilon \eta \|u\|_{H^{s-1}}^2 - \epsilon \eta \|\nabla \theta\|_{H^{s-1}}^2 + |\bar{d}|^2 \\
\no & \quad + \|d - \bar{d}\|_{H^s}^2 + \|\nabla d\|_{H^s}^2 + \delta \|\nabla \psi\|_{H^s}^2 + \delta \|u - \bar{u}\|_{H^s}^2 - \frac{2\delta}{\mu} \sum_{|m| \leq s} \langle \partial^m (\psi - \bar{\psi}), \partial^m u \rangle ) \\
\no & \quad+ (\epsilon \mu + \delta \mu  - \delta \frac{c_p}{\mu} ) \|\nabla u\|_{H^s}^2 + (\epsilon + \delta) (\mu + \xi ) \|\nabla \cdot u\|_{H^s}^2 + \epsilon \eta a \gamma \|\nabla \theta\|_{H_{ \w (\theta) }^s }^2 + \frac{\delta}{\mu} \| \nabla \psi\|_{H^s}^2 \\
\no & \quad- (\epsilon + \delta) \sum_{|m| \leq s} \langle \nabla \partial^m \psi, \nabla \partial^m u \rangle - \frac{\mu + \xi}{\mu} \delta \sum_{|m| \leq s} \langle \nabla \cdot \partial^m \psi, \nabla \cdot \partial^m u \rangle \\
& \quad - c_0 \epsilon \eta [ \mu \| \nabla u \|_{H^s} + (\mu + \xi) \| \nabla \cdot u \|_{H^s} + \| \nabla \psi \|_{H^s} ] \| \nabla \theta \|_{ H^{s-1}_{ \w (\theta) } } \\
\no & \quad - c_1 \delta a \gamma ( \mu \| \nabla u \|_{H^s} + \| \nabla \psi \|_{H^s} ) \| \nabla \theta \|_{ H^{s-1}_{ \w (\theta) } }  + 2A\lambda \|\nabla d\|_{H^s}^2 + 2A\lambda \|\Delta d\|_{H^s}^2 \\
\no & \lesssim ( 1 + \mathbf{E}_s^\frac{s}{2} (t) ) \mathbf{E}_s^\frac{1}{2} (t) \mathbf{D}_s (t) \,.
\end{align}
Together with Lemma \ref{Lmm-Glob-ED}, estimate \eqref{close-eq4} implies the bound \eqref{close-eq21}. This completes the proof of Proposition \ref{Prop-Global}.

\section*{Acknowledgments}

This work was supported by National Key R\&D Program of China under the grant 2023YFA 1010300 of Y. -L. Luo. B. L. Guo is supported by National Science Foundation of China with Grant No. 12471228. N. Jiang was supported by grants from the National Natural Science Foundation of China under contract No. 11971360 and No.11731008, and also supported by the Strategic Priority Research Program of Chinese Academy of Sciences under grant No. XDA25010404.  H. Liu is supported by Scientific Research Project of Universities in Anhui Province under contract No. 2024AH051744 and Huainan Normal University Research Fund Program under contract No. 824001.  Y.-L. Luo was supported by grants from the Guang Dong Basic and Applied Basic Research Foundation under contract No. 2024A1515012358, and the Fundamental Research Funds for the Central Universities under contract No. 531118011008. T.-F. Zhang was supported by the National Natural Science Foundation of China No. 12371228, and the Guang Dong Basic and Applied Basic Research Foundation No. 2024A1515011040. 
 
\bibliography{reference}

\end{document}